\theoremstyle{plain} 
\newtheorem{theorem}{Theorem}[section]
\newtheorem{lem}[theorem]{Lemma}
\newtheorem{prop}[theorem]{Proposition}
\theoremstyle{definition}
\newtheorem{definition}[theorem]{Definition}
\theoremstyle{remark}
\newtheorem{rem}[theorem]{Remark}
\numberwithin{equation}{section}
\newcommand{\step}[2][]{\\\indent \textbf{#2:} 
    \ifthenelse{\equal{#1}{}}{}{(\textit{#1})}}
\providecommand{\newoperator}[3]{%
  \newcommand*{#1}{\mathop{#2}#3}}
\newoperator{\supp}{\mathrm{supp}}{\nolimits}
\newoperator{\diam}{\mathrm{diam}}{\nolimits}
\newoperator{\dist}{\mathrm{dist}}{\nolimits}
\newoperator{\dd}{\mathrm{d}}{\nolimits}
\newcommand{\dsum}[3][]{\sum_{\begin{smallmatrix}#2\\#3\end{smallmatrix}}
	\ifthenelse{\equal{#1}{}}{}{^{#1}}}
\newcommand{\N}{\ensuremath{\mathbb{N}}}   
\newcommand{\Z}{\ensuremath{\mathbb{Z}}}   
\newcommand{\R}{\ensuremath{\mathbb{R}}}   
\newcommand{\eg}{{\it e.g.}, }
\newcommand{\ie}{{\it i.e.}, }
\newcommand{\Rn}{\R^{n}} 
\newcommand{\Rd}{\R^{d}} 
\newcommand{\Rdn}{\R^{d \times n}} 
\newcommand{\tomega}{\tilde{\Omega}}
\newcommand{\tF}{\tilde{F}}
\newcommand{\tphi}{\tilde{\phi}}
\newcommand{\tu}{\tilde{u}}
\newcommand{\clA}{\overline{A}}
\newcommand{\tA}{\tilde{A}}
\newcommand{\wcont}{\subset\subset}
\newcommand{\I}{\ensuremath{\mathcal{I}}}
\newcommand{\J}{\ensuremath{\mathcal{J}}}
\newcommand{\M}{\ensuremath{\mathcal{M}}}
\newcommand{\Areg}{\ensuremath{\mathcal{A}^{reg}}}
\newcommand{\A}{\ensuremath{\mathcal{A}}}
\newcommand{\LL}{\ensuremath{\mathcal{L}}}
\newcommand{\NN}{\ensuremath{\mathcal{N}}}
\newcommand{\m}{\ensuremath{\mathbf{m}}}
\newcommand{\Hn}{\mathcal{H}^{n-1}}
\newcommand{\clBone}{\ensuremath{\overline{B}^{|\cdot|_1}}}
\newcommand{\e}{\ensuremath{\varepsilon}}
\newcommand{\eps}{\ensuremath{\varepsilon}}
\newcommand{\loc}{\ensuremath{\mathrm{loc}}}
\newcommand{\nequiv}{\ensuremath{\not\equiv}}
\newcommand{\dx}{\ensuremath{\,dx}}
\newcommand{\dt}{\ensuremath{\,dt}}
\newcommand{\dHn}{\ensuremath{\,d\Hn}}
\newcommand{\scalarproduct}[2]{\langle#1, #2\rangle}
\newcommand{\jumpset}[1]{S_{#1}}
\newcommand{\jumpsetu}{\jumpset{u}}
\newcommand{\clsu}{\overline{\jumpsetu}}
\begin{document}
\title[Free-discontinuity functionals from discrete to continuum]{Discrete-to-continuum limits of multi-body systems with bulk and surface long-range interactions}

\author{Annika Bach}
\address[Annika Bach]{Zentrum Mathematik, Technische Universit\"at M\"unchen, Boltzmannstra{\ss}e 3, 85748 Garching bei M\"unchen, Germany}
\email{annika.bach@ma.tum.de}	
\author{Andrea Braides}
\address[Andrea Braides]{Dipartimento di Matematica, Universit\`a di Roma ``Tor Vergata'',
via della Ricerca Scientifica 1, 00133 Rome, Italy} 
\email{braides@mat.uniroma2.it}
\author{Marco Cicalese}
\address[Marco Cicalese]{Zentrum Mathematik, Technische Universit\"at M\"unchen, Boltzmannstra{\ss}e 3, 85748 Garching bei M\"unchen, Germany}
\email{cicalese@ma.tum.de}
\date{}

\begin{abstract}\noindent 
We study the atomistic-to-continuum limit of a class of energy functionals for crystalline materials via $\Gamma$-convergence. We consider energy densities that may depend on interactions between all points of the lattice and we give conditions that ensure compactness and integral-representation of the continuum limit on the space of special functions of bounded variation. This abstract result is complemented by a homogenization theorem, where we provide sufficient conditions on the energy densities under which bulk- and surface contributions decouple in the limit. The results are applied to long-range and multi-body interactions in the setting of weak-membrane energies.
\end{abstract}
\subjclass[2010]{49J55, 49M25, 74Q99, 74R99}
\keywords{$\Gamma$-convergence, discrete-to-continuum, multi-body interactions, homogenization, free-discontinuity functionals}

\maketitle

\section{Introduction}
The passage from atomistic to continuum models is of major interest in the description and understanding of many physical phenomena and in models in applied sciences. Even for those atomistic systems which are driven by simple lattice energies, the choice of the method to analyse their asymptotic behavior as the interatomic distance tends to zero is nontrivial. Compare for instance the results obtained by taking pointwise limits (\cite{BBL02, BBL07, EM07}) to those obtained by variational methods (see \cite{BG06, Braides14} for an overview). There, the choice of the limit process underlines some assumptions on the model, which are translated in the definition of convergence of discrete to continuum functions, and may lead to different results.

In this paper we work within the variational framework, which amounts to allow for a very general definition of convergence of discrete functions and is translated in analyzing the asymptotic behavior of discrete systems in terms of $\Gamma$-convergence. This has proven to be a powerful tool in Materials Science to predict or better understand the macroscopic response of a material to microscopic deformations, but has also been used in other applied fields such as Computer Vision to provide discrete approximations of given continuum energies that might be used, \eg for numerical simulations, or in Data Science, to provide continuum minimal-cut approximations to problems in Machine Learning. We will use the terminology of `atoms' and keep the application to physical problems in mind, even though in the frameworks just mentioned discrete domains can be thought of as composed by pixels or labels of data.  We restrict our description to the case when the reference configuration of a material at the atomistic scale can be assumed to be a (Bravais) lattice ({\em crystallization}); this assumption could be relaxed to considering non-Bravais or disordered lattices, at the expense of a more complex notation.
In our case it is not restrictive to assume the reference configuration to be (a portion of) the cubic lattice $\Z^n$ in $\Rn$, scaled by a small parameter.
More precisely, fixing $\e>0$ one describes the atomistic deformation of a material occupying an open bounded domain $\Omega\subset\Rn$ through a map $u:Z_\e(\Omega)\to\Rd$, where $Z_\e(\Omega):=\Omega\cap\e\Z^n$ denotes the set of $\e$-spaced material points (or simply atoms) of the system. In the most general case, one can assume such a system to be driven by an energy of the form
\begin{equation}\label{intro:energy}
F_{\e}(u)=\displaystyle\sum_{i\in Z_\e(\Omega)}\e^{n}\phi^{\e}_i(\{u^{i+j}\}_{j\in Z_\e(\Omega-i)}).
\end{equation}
Here for fixed $i$ the function $\phi_i^\e:(\R^d)^{Z_\e(\Omega -i)}\to [0,+\infty)$ should be thought of as the potential energy at scale $\e$ describing the interaction between the atom at position $i$ and the whole configuration $\{u^{j}\}_{j\in Z_{\e}(\Omega)}$. As a consequence, energies as in \eqref{intro:energy} can model systems which are (at the same time) non-homogeneous, multi-body, non-local and multi-scale.

\subsection{Aim of the paper}
In this paper we are interested in the variational description (via $\Gamma$-convergence) of the limit of the $F_{\e}$ above as the lattice spacing $\e$ vanishes while the density of the atoms is kept constant thanks to the scaling factor $\e^{n}$. We refer to such a coarse-graining procedure as discrete-to-continuum limit. As a matter of fact a fine description of the discrete-to-continuum limit of physical systems driven by energies as those in \eqref{intro:energy} turns out to be a very challenging task unless the potentials are explicitly known and take some very special form. Until now the most general result in this direction has been obtained in \cite{BK18}, where the authors establish a set of assumptions on the potential energies $\phi_i^\e$ which ensure that up to subsequences the $\Gamma$-limit of energies as in \eqref{intro:energy} is an integral functional defined on a Sobolev space. The aim of the present paper is the extension of such a general result to the setting of special functions of bounded variations, that is to find sufficient conditions on $\phi_i^\e$ under which the variational limit energy of the sequence $(F_\e)$ is of the form
\begin{equation}\label{intro:energy-lim}
F(u)=\displaystyle\int_\Omega f(x,\nabla u)\dx\,+\int_{S_u}g(x,u^{+}-u^{-},\nu_u)\dHn
\end{equation}
defined on those $u$ (here we use the same notation $u$ for both microscopic and macroscopic fields) belonging to $SBV(\Omega;\R^{d})$. Energies of this type are usually referred to as free-discontinuity functionals and are widely used to model a number of phenomena in fracture mechanics, image reconstruction or in the theory of liquid crystals, to make only a few examples (\cite{Aubert-Kornprobst,braides98,BourdinFrancfortMarigo,MorelSolimini}). The discrete-to-continuum analysis performed in the present paper thus provides a very general framework on the one hand for atomistic systems whose macroscopic behavior can be studied in the context of fracture mechanics and on the other hand for possible discrete approximations of energies used in image reconstruction, such as for instance the approximations studied in \cite{Chambolle95, Chambolle99, BG02, Ruf17}. We point out that our analysis is also connected to some recent results in Data Science  \cite{Slepcev1,SlepcevChambolle,slepcev2019analysis}.

The assumptions on the potentials $\phi_i^\e$ that are needed to restrict the class of possible discrete-to-continuum limits to functionals of the form \eqref{intro:energy-lim} are carefully listed in Section \ref{SOP}. Here we limit ourselves to highlight the main ideas behind them in the case that $u$ represents the elastic deformation field of a physical system to be studied within the theory of fracture mechanics. In this case the two energy terms in \eqref{intro:energy-lim} can be interpreted as follows. The bulk integral represents the (hyper-)elastic energy stored in the system due to the contribution of bounded microscopic deformation gradients, that is of deformations with $|u^{i}-u^{j}|/\e$ of order one. The surface term represents the energy the system needs to produce the fracture $S_{u}$ in $\Omega$ with opening $u^{+}-u^{-}$. Such an energy is instead due to microscopic deformation gradients of order $1/\e$. In the simplest possible case $f(x,M)=|M|^{p}$ and $g=const$ the bulk and surface energies are proportional to the $p$-th power of the $L^p$ norm of the macroscopic deformation gradient $\nabla u$ and to the length of the fracture, respectively. 

Within this framework the assumptions on the potentials $\phi_i^\e$ read as follows.
\begin{enumerate}[label={\rm (H\arabic*)}]
\item\label{intro:H1} invariance under translations in $u$: This ensures that the integrand $f$ in \eqref{intro:energy-lim} does not depend explicitly on $u$ and $g$ depends on $u^+$ and $u^-$ only through their difference;
\item\label{intro:H2} monotonicity in the strain: the potential energy is assumed to be a non-decreasing in the finite differences $|u^{i}-u^{j}|$ - in the simple case of pairwise interactions this translates to the fact that the elastic energy increases as the modulus of the deformation gradient increases;
\item\label{intro:H3} weak Cauchy-Born type upper bound: we only require that the potential energy of any microscopic affine deformation is bounded from above by the $p$-th power ($p>1$) of the norm of its gradient;
\item\label{intro:H4} lower bound that allows to deduce that the limit is defined on $SBV(\Omega)$: keeping in mind the interpretation above, of finite differences as deformation gradients, $\phi_i^\e(\{u^{i+j}\})$ is assumed to be bounded from below by $|(u^i-u^j)/\e|^p$ whenever this quantity is of order $1$, and otherwise by $1/\e$; 
\item\label{intro:H5} mild non-locality: the potential energies $\phi_{i}^{\e}$ of different deformations that agree in a cube of side length $\alpha$ centred at a point $i$ are comparable up to an error that vanishes for large $\alpha$ as $\e\to 0$ uniformly in $i$. This ensures that the $\Gamma$-limit is a local integral functional;
\item\label{intro:H6} controlled non-convexity: the energy stored by a convex combination of two deformations is asymptotically controlled by the sum of the energies corresponding to each single deformation. This technical assumption allows us to use the abstract methods of $\Gamma$-convergence (see below) and is needed here to tame the effect of the possibly diverging number of multi-body interactions.
\end{enumerate} 
We take the discrete-to-continuum limit of the energies in \eqref{intro:energy} under this set of assumptions. To this end we regard a discrete field $u$ as belonging to $L^{1}(\Omega;\R^{d})$ by identifying it with its piecewise constant interpolation on the cells of the $\e$ lattice. Outside this set of functions we extend $F_{\e}$ to $L^{1}(\Omega;\R^{d})$ by setting it equal to $+\infty$. We then define the discrete-to-continuum limit of $F_{\e}$ as its $\Gamma$-limit as $\e\to 0$ with respect to the strong $L^{1}$-convergence. 
\subsection{Main results, methods of proof and comparison with existing results}
In this paper we prove compactness, integral-representation and homogenization results for energies of the form \eqref{intro:energy}. More precisely, in Theorem \ref{thm:int:rep} we show that, up to subsequences, the discrete energies $F_{\e}$ $\Gamma$-converge to a free-discontinuity functional of the type \eqref{intro:energy-lim}. Using this integral representation we then prove the homogenization Theorem \ref{thm:homogenization}. There we show that under additional assumptions on $\phi_i^\e$ which will be discussed at the end of this section the whole sequence $(F_\e)$ $\Gamma$-converges to
\begin{align}\label{intro:energy-hom}
F_{\rm hom}(u)=\int_\Omega f_{\rm hom}(\nabla u)\dx+\int_{S_u}g_{\rm hom}(u^+-u^-,\nu_u)\dHn,
\end{align}
where $f_{\rm hom}$ and $g_{\rm hom}$ are some homogenized bulk and surface-energy densities, respectively.

\medskip

The proof of Theorem \ref{thm:int:rep} relies on the so-called localization method of $\Gamma$-convergence (see \cite[Chapters 14--20]{DalMaso93} and also \cite[Chapter 16]{braides02}). 
Following this method we consider energies $F_\e$ as functions defined both on $u$ and on the open subsets of $\Omega$ by defining for every pair $(u,A)$ with $u:Z_\e(\Omega)\to\Rd$ and $A\subset\Omega$ open the localized energy $E_\e(u,A)$ according to \eqref{intro:energy} where now the sum is taken only over $i\in Z_\e(A)$. We then prove a general compactness result (Theorem \ref{thm:gbar:comp}) which ensures that for every sequence of positive numbers converging to zero there exist a subsequence $(\e_j)$ and a functional $F$ such that for every $A\subset\Omega$ open and with Lipschitz boundary the localized energies $F_{\e_j}(\cdot,A)$ $\Gamma$-converge to $F(\cdot,A)$. Subsequently, thanks to assumptions \ref{intro:H1}--\ref{intro:H6} we recover enough information on $F$ both as a function in $u$ and as a set function to write it as a free-discontinuity functional of the form \eqref{intro:energy-lim} by using the general integral-representation result in \cite{BFLM02}. Before we comment on the homogenization result below we give a short overview on the use of the localization method in the context of discrete systems.

The method was originally proposed by De Giorgi and has been successfully used in the context of homogenization of multiple integrals in the continuum setting (see \cite{braides98a} and references therein). It has been first adapted to study discrete-to-continuum limits in \cite{AC04} in the context of pairwise-interacting discrete systems modeling nonlinear hyper-elastic materials and giving rise to continuum functionals finite on Sobolev spaces of the form $\int_\Omega f(x,\nabla u)\dx$. After that the application of the localization method to discrete systems at a bulk scaling has been extended into several directions including stochastic lattices \cite{ACG11, CGR18}, more general interaction potentials \cite{BrSch13, BCS15, BK18} and has also been combined with dimension-reduction techniques \cite{ABC08}. The most general result for discrete systems on deterministic lattices with limit energies on Sobolev spaces is by now contained in \cite{BK18}. 

At the surface scaling the analysis of discrete systems has required the use of the abstract method for the first time in \cite{ACR15}. That paper derives the continuum domain-wall theory in ferromagnetism from pairwise interacting Ising-type spin systems on (possibly stochastic) lattices (see also \cite{BCR17} for thin films). The extension of this result to more general magnetic interactions has been considered in \cite{AG16}. There the authors give examples of systems not satisfying (the analog of) assumption \ref{intro:H5} whose discrete-to-continuum limit is a nonlocal functional (see also \cite{B00}). A first general result for discrete systems with multi-body and long-range interactions at this scaling has been obtained in \cite{BC17} in the context of spin-like systems with spatially modulated phases.

We point out that in the above mentioned papers the discrete energies under consideration involve either a pure bulk or a pure surface scaling. In order to obtain a $\Gamma$-limit of the type \eqref{intro:energy-lim} one needs to consider discrete energies where both scalings are present at the same time. In this case, however, it becomes more difficult to find the correct set of assumptions which makes the localization method applicable. A first result in this direction has been obtained in \cite{Ruf17}, where the author considers energies of the form \eqref{intro:energy} on a possibly stochastic lattice. The interaction potentials $\phi_i^\e$ however are independent of $i$ and $\e$, have finite range and depend on finitely many particles uniformly in $\e$. Moreover, they depend on the configuration $\{u^j\}_j$ through the set of discrete differences $\{|u^i-u^j|\}_{i,j}$. This type of dependence is essential to decouple the contribution of bulk and surface scalings in the continuum limit, which finally allows to prove the full $\Gamma$-limit result (without extraction of a subsequence) in the case of a stationary stochastic lattice. This is done by exploiting for the first time in the discrete setting the theory of maximal functions introduced in \cite{FMP} and used in \cite{BDV96} in the context of homogenization. This technique turns out to be useful also in the proof of the present homogenization result Theorem \ref{thm:homogenization}, which we finally describe below.

\medskip

Theorem \ref{thm:homogenization} falls into the framework of periodic homogenization and thus requires the restriction to a special class of periodic interaction-energy densities. As our interaction-energy densities at a point $i$ may depend on the whole configuration $\{u^{i+j}\}_{j\in Z_\e(\Omega-i)}$ the meaning of periodicity needs to be clarified. A proper definition of periodicity (at least in the interior of $\Omega$) is possible when restricting to finite-range interactions. This modeling assumption also helps to decouple the bulk and the surface scaling in the $\Gamma$-limit, which is central to characterize the homogenized integrands $f_{\rm  hom}$ and $g_{\rm hom}$ in \eqref{intro:energy-hom}. We highlight that even under the finite-range assumption this task still requires a major effort due to the lack of a gradient structure in the interaction potentials. In fact, a crucial step in proving the homogenization result consists in establishing sufficient conditions on the potential $\phi_i^\e$ (without enforcing an explicit gradient structure) which make it possible to distinguish between the discretization of a macroscopic affine deformation of the form $u_M(x)=Mx$ with $M\in\R^{d\times n}$ and of a macroscopic jump, that is, a mapping of the form $u_\zeta(x_1,\ldots,x_n)=\zeta\chi_{\{x_n>0\}}$ with $\zeta\in\Rd$. More in detail, to derive formulas for the homogenized integrands $f_{\rm hom}$ and $g_{\rm hom}$ in \eqref{intro:energy-hom} it is essential that the potentials $\phi_i^\e$ reflect the different scaling properties of $u_M$ and $u_\zeta$ when passing from the scaled lattice $\e\Z^n$ to the integer lattice $\Z^n$. Indeed, the affine function $u_M$ satisfies $u_M(j)=\e\, u_M(j/\e)$ for every $j\in\e\Z^n$, while for the jump function $u_\zeta$ there holds $u_\zeta(j)=u_\zeta(j/\e)$ for every $j\in\e\Z^n$. It thus seems natural to require that for a given discrete function $u:\Z^n\to\Rd$ and $i\in\Z^n$ asymptotically there holds
\begin{align*}
\e^n\phi_{\e i}^\e(\{\e u^{j/\e}\})\sim\e^n\psi_i^b(\{u^j\}),\qquad\e^n\phi_{\e i}^\e(\{u^{j/\e}\})\sim\e^{n-1}\psi_i^s(\{u^j\}),
\end{align*}
for some discrete bulk and surface potentials $\psi_i^b$, $\psi_i^s$. This heuristic argument is made rigorous in Section \ref{sect:separation}, where we carefully state the correct hypotheses on the interaction potentials and we refer the reader to this section for more details.
\subsection{Plan of the paper}
The paper is organized as follows. In Section \ref{SOP} we recall some basic notation and we introduce the discrete functionals under consideration together with the precise assumptions on the potential $\phi_i^\e$. Section \ref{sect:int:rep} is then devoted to the proof of the integral-representation Theorem \ref{thm:int:rep} and to the treatment of Dirichlet boundary problems. The latter allows us to obtain asymptotic minimization formulas for the integrands $f$ and $g$ in \eqref{intro:energy-lim} (see Remark \ref{rem:dirichlet}), which are a key ingredient to prove the homogenization result Theorem \ref{thm:homogenization}. This is done in Section \ref{sect:homogenization}, where we also state precisely the periodicity- and the separation-of-scales assumptions. We conclude the paper by giving some examples that fall into the framework of our discrete energies in Section \ref{sect:examples}.

\section{Setting of the problem}\label{SOP}
\noindent{\bf Notation.} Let $n\geq 1$ be a fixed integer and $\Omega\subset\R^n$ an open, bounded set with Lipschitz boundary. We denote by $\A(\Omega)$ the family of all open subsets of $\Omega$ and by $\Areg(\Omega)$ the family of all open subsets of $\Omega$ with Lipschitz boundary. 

Let $\{e_1,\ldots,e_n\}$ denote the standard orthonormal basis in $\Rn$.
If $\nu,\xi\in\Rn$ we use the notation $\scalarproduct{\nu}{\xi}$ for the scalar product between $\nu$ and $\xi$ and by $|\nu|:=\sqrt{\scalarproduct{\nu}{\nu}}$ and $|\nu|_\infty:=\sup_{1\leq k\leq n}|\scalarproduct{\nu}{e_k}|$ we denote the euclidian norm and the supremum norm of $\nu$, respectively. 
Moreover, we set $S^{n-1}:=\{\nu\in\Rn\colon |\nu|=1\}$ and for every $\nu\in S^{n-1}$ we denote by $\Pi_\nu:=\{x\in\Rn\colon \scalarproduct{x}{\nu}=0\}$ the hyperplane orthogonal to $\nu$ and passing through the origin and $p_\nu:\Rn\to\Pi_\nu$ is the orthogonal projection onto $\Pi_\nu$.
Further, $Q^\nu$ denotes a unit cube centered at the origin and with one face orthogonal to $\nu$, and for every $x_0\in\Rn$ and $\rho>0$ we set $Q_\rho^\nu(x_0):=x_0+\rho Q^\nu$. If $\nu=e_k$ for some $k\in\{1,\ldots,n\}$ we simply write $Q$ and $Q_\rho(x_0)$ in place of $Q^{e_k}$ and $Q_\rho^{e_k}(x_0)$.

For every $A\subset\Rn$ we write $|A|$ for the $n$-dimensional Lebesgue measure of $A$, while $\Hn$ denotes the $(n-1)$-dimensional Hausdorff measure in $\Rn$. If $p\in[1,+\infty]$ and $d\geq 1$ is a fixed integer we use standard notation for Lebesgue spaces $L^p(\Omega;\Rd)$ and Sobolev spaces $W^{1,p}(\Omega;\Rd)$. Moreover, $SBV(\Omega;\R^d)$ denotes the space of $\Rd$-valued special functions of bounded variation in $\Omega$ (see, \eg \cite{AFP} for the general theory). If $u\in SBV(\Omega;\Rd)$ we write $\nabla u$ for the approximate gradient of $u$, $S_u$ for the approximate discontinuity set of $u$ and $\nu_u$ is the generalized outer normal to $S_u$. Moreover, $u^+$ and $u^-$ are the the traces of $u$ on both sides of $S_u$ and we set $[u]:=u^+-u^-$. We also consider the larger space $GSBV(\Omega;\Rd)$ defined as the space of all functions $u:\Omega\to\Rd$ such that $\varphi\circ u\in SBV_\loc(\Omega;\Rd)$ for every $\varphi\in C^1(\Rd;\Rd)$ with $\supp(\nabla\varphi)\wcont\Rd$. For $p\in (1,+\infty)$ it is also convenient to consider the spaces
\[SBV^p(\Omega;\Rd):=\{u\in SBV(\Omega;\Rd)\colon \nabla u\in L^p(\Omega;\Rdn),\ \Hn(S_u)<+\infty\}\]
and
\[GSBV^p(\Omega;\Rd):=\{u\in GSBV(\Omega;\Rd)\colon \nabla u\in L^p(\Omega;\Rdn),\ \Hn(S_u)<+\infty\}.\]
Note that $GSBV^p(\Omega;\Rd)$ is a vector space and for every $u\in GSBV^p(\Omega;\Rd)$ and $\varphi\in C^1(\Rd;\Rd)$ with $\supp(\nabla\varphi)\wcont\Rd$ there holds $\varphi\circ u\in SBV^p(\Omega;\Rd)\cap L^\infty(\Omega;\Rd)$ (see, \eg \cite[Section 2]{DMFT05}).

For $x_0\in\Rn$, $\nu\in S^{n-1}$, $\zeta\in\Rd$ and $M\in\R^{d\times n}$ we will frequently consider the jump function $u_{\zeta,x_0}^\nu:\Rn\to\Rd$ and the affine function $u_{M,x_0}:\Rn\to\Rd$ defined by setting
\begin{align}\label{def:boundarydata}
u_{\zeta,x_0}^\nu(x):=
\begin{cases}
\zeta &\text{if}\ \langle x-x_0,\nu\rangle\geq 0,\\
0 &\text{if}\ \langle x-x_0,\nu\rangle <0,
\end{cases}
\qquad\text{and}\qquad u_{M,x_0}(x):=M(x-x_0),
\end{align}
for every $x\in\Rn$.

\medskip

{\bf Setting.} In all that follows $\e>0$ denotes a parameter varying in a strictly decreasing sequence of positive real numbers converging to zero. For any $\e>0$, $u:\Rn\to\Rd$, $\xi\in\Z^n$ and $x\in\Rn$ we denote by
\[D_\e^\xi u(x):=\frac{u(x+\e\xi)-u(x)}{\e|\xi|}\]
the difference quotient of $u$ at $x$ in direction $\xi$. If $\xi=e_k$ for some $k\in\{1,\ldots,n\}$ we write $D_\e^k u(x)$ in place of $D_\e^{e_k}u(x)$.

We now introduce the discrete functionals considered in this paper. To this end, for every $A\subset\R^n$ let $Z_\e(A):=A\cap\e\Z^n$ and set  $\A_\e(\Omega;\Rd):=\{u: Z_\e(\Omega)\to\Rd\}$. It is then convenient to identify discrete functions $u\in\A_\e(\Omega;\Rd)$ with their piecewise-constant counterpart belonging to $L^1(\Omega;\Rd)$ defined by setting
\begin{equation}\label{def:pcint}
u(x):=u(i)=:u^i\quad\text{for every}\ x\in i+[0,\e)^n,\ i\in Z_\e(\Omega).
\end{equation}
If $(u_\e)$ is a sequence in $\A_\e(\Omega;\Rd)$  we say that $(u_\e)$ converges in $L^1(\Omega;\Rd)$ to a function $u\in L^1(\Omega;\Rd)$ if the sequence of the piecewise-constant interpolations of $u_\e$ defined as in \eqref{def:pcint} does so.

Finally, for every $i\in Z_\e(\Omega)$ it is convenient to consider the translated set $\Omega_i:=\Omega-i$. We then consider functions $\phi_i^\e:(\R^d)^{Z_\e(\Omega_i)}\to [0,+\infty)$ and we define the discrete functionals $F_\e:L^1(\Omega;\R^d)\times\A(\Omega)\to [0,+\infty]$ as
\begin{equation}\label{def:energy}
F_\e(u,A):=
\begin{cases}
\displaystyle\sum_{i\in Z_\e(A)}\e^n\phi_i^\e(\{u^{i+j}\}_{j\in Z_\e(\Omega_i)}) &\text{if}\ u\in\A_\e(\Omega;\Rd),\\
+\infty &\text{otherwise in}\ L^1(\Omega;\Rd).
\end{cases}
\end{equation}
In the case $A=\Omega$ we omit the dependence on the set and simply write $F_\e(u)$ in place of $F_\e(u,\Omega)$.
With the identification as in \eqref{def:pcint} and the corresponding $L^1(\Omega;\Rd)$-convergence we aim to describe the $\Gamma$-limit of the functionals $F_\e$ in the strong $L^1(\Omega)$-topology under suitable conditions on the energy densities $\phi_i^\e$.
Namely, we assume that the functions $\phi_i^\e:(\R^d)^{Z_\e(\Omega_i)}\to [0,+\infty)$ satisfy the following hypotheses for every $\e>0$ and $i\in Z_\e(\Omega)$.
\begin{enumerate}[label={\rm (H\arabic*)}]
\item (translational invariance)\label{H1} For all $w\in\R^d$ and $z:Z_\e(\Omega_i)\to\R^d$,
\[\phi_i^\e(\{z^j+w\}_{j\in Z_\e(\Omega_i)})=\phi_i^\e(\{z^j\}_{j\in Z_\e(\Omega_i)});\]
\item (monotonicity)\label{H6} for all $z,w: Z_\e(\Omega_i)\to\R^d$ with $|z^j-z^l|\leq|w^j-w^l|$ for every $j,l\in Z_\e(\Omega_i)$ we have
\[\phi_i^\e(\{z^j\}_{j\in Z_\e(\Omega_i)})\leq\phi_i^\e(\{w^j\}_{j\in Z_\e(\Omega_i)});\]
\item (upper bound for linear functions)\label{H2} there exist $c_1>0$ and $p\in (1,+\infty)$ such that for every $M\in\R^{d\times n}$ we have
\[\phi_i^\e(\{(Mx)^j\}_{j\in Z_\e(\Omega_i)})\leq c_1 (|M|^p+1),\]
where by $(Mx)$ we denote the linear function defined by $(Mx)^j:=Mj$;
\item (lower bound)\label{H3} there exists $c_2>0$ such that
\[\phi_i^\e(\{z^j\}_{j\in Z_\e(\Omega_i)})\geq c_2\min\left\{\sum_{k=1}^n|D_\e^kz(0)|^p,\frac{1}{\e}\right\},\]
for all $i\in Z_\e(\Omega)$ with $i+\e e_k\in Z_\e(\Omega)$ for every $k\in\{1,\ldots, n\}$ and every $z: Z_\e(\Omega_i)\to\R^d$.
\end{enumerate}
Moreover, we require that the following is satisfied.
\begin{enumerate}[label={\rm (H\arabic*)}]
\setcounter{enumi}{4}
\item (mild non-locality)\label{H4} For every $\e>0$, $\alpha\in\N$, $j\in Z_\e(\Rn)$ and $\xi\in\Z^n$ there exists $c_{\e,\alpha}^{j,\xi}\geq 0$ such that for every $i\in Z_\e(\Omega)$ and for all $z,w:Z_\e(\Omega_i)\to\R^d$ with $z^j=w^j$ for all $j\in Z_\e(\e \alpha Q)$ there holds
\begin{align*}
\phi_i^\e(\{z^j\}_{j\in Z_\e(\Omega_i)}) &\leq \phi_i^\e(\{w^j\}_{j\in Z_\e(\Omega_i)})\\
&+\sum_{j\in Z_\e(\Omega_i)}\sum_{\begin{smallmatrix}\xi\in\Z^n\\j+\e\xi\in\Omega_i\end{smallmatrix}}c_{\e,\alpha}^{j,\xi}\min\left\{|D_\e^\xi z(j)|^p,\frac{1+|z(j+\e\xi)-w(j+\e\xi)|}{\e}\right\},
\end{align*}
and the sequence $(c_{\e,\alpha}^{j,\xi})$ satisfies that following:
\begin{equation}\label{H4:summability1}
\limsup_{\e\to 0}\sum_{\alpha\in\N}\sum_{j\in Z_\e(\R^n)}\sum_{\xi\in \Z^n}c_{\e,\alpha}^{j,\xi}<+\infty
\end{equation}
and for every $\eta>0$ there exists a sequence $(M_\eta^\e)$ with $\e M_\eta^\e\to 0$ as $\e\to 0$ such that
\begin{equation}\label{H4:summability2}
\limsup_{\e\to 0}\sum_{\max\left\{\alpha,\frac{1}{\e}|j|,|\xi|\right\}> M_\eta^\e}c_{\e,\alpha}^{j,\xi}<\eta;
\end{equation}
\item (controlled non-convexity)\label{H5} there exists $c_3>0$ and for every $\e>0$, $j\in Z_\e(\Rn)$ and $\xi\in\Z^n$ there exists $c_\e^{j,\xi}\geq 0$ with
\begin{equation}\label{H5:summability}
\limsup_{\e\to 0}\sum_{j\in Z_\e(\R^n)}\sum_{\xi\in \Z^n}c_{\e}^{j,\xi}<+\infty
\end{equation}
such that for all $i\in Z_\e(\Omega)$, every $z,w:Z_\e(\Omega_i)\to\R^d$ and every cut-off $\varphi:\Rn\to[0,1]$ we have
\begin{align*}
\phi_i^\e(\{\varphi^jz^j+(1-\varphi^j)w^j\}_{j\in Z_\e(\Omega_i)}) &\leq c_3\left(\phi_i^\e(\{z^j\}_{j\in Z_\e(\Omega_i)})+\phi_i^\e(\{w^j\}_{j\in Z_\e(\Omega_i)})\right)\\
&+R_i^\e(z,w,\varphi),
\end{align*}
where
\begin{align*}
R_i^\e(z,w,\varphi) &:=\sum_{j\in Z_\e(\Omega_i)}\sum_{\begin{smallmatrix}\xi\in\Z^n\\j+\e\xi\in\Omega_i\end{smallmatrix}}c_\e^{j,\xi}\Bigg(\sup_{\begin{smallmatrix}l\in Z_\e(\Omega_i)\\k\in\{1,\ldots,n\}\end{smallmatrix}}|D_\e^k\varphi(l)|^p|z(j+\e\xi)-w(j+\e\xi)|^p\Bigg)\\
&+\sum_{j\in Z_\e(\Omega_i)}\sum_{\begin{smallmatrix}\xi\in\Z^n\\j+\e\xi\in\Omega_i\end{smallmatrix}}c_\e^{j,\xi}\Bigg(\min\left\{|D_\e^\xi z(j)|^p,\frac{1}{\e|\xi|}\right\}+\min\left\{|D_\e^\xi w(j)|^p,\frac{1}{\e|\xi|}\right\}\Bigg);
\end{align*} 
\end{enumerate}
\begin{rem}\label{rem:hypotheses:phi}
Hypotheses \ref{H1} together with \ref{H2} imply that for every $\e>0$, $i\in Z_\e(\Omega)$ and for any constant function $z:Z_\e(\Omega_i)\to\R^d$, $z^j=w$ for all $j\in Z_\e(\Omega_i)$ we have
\begin{align}\label{upperbound:constant}
\phi_i^\e(\{z^j\}_{j\in Z_\e(\Omega_i)})=\phi_i^\e(\{0j+w\}_{j\in\Z_\e(\Omega_i)})=\phi_i^\e(\{0j\}_{j\in\Z_\e(\Omega_i)})\leq c_1+1.
\end{align}
Note that the condition on the decaying tail of the sequence $(c_{\e,\alpha}^{j,\xi})$ in \ref{H4} is slightly more genereal then the corresponding conditions in \cite{AC04} and \cite{BK18}. In fact, therein the authors choose for every $\eta>0$ a constant $M_\eta>0$ uniformly in $\e$ such that the analog of \eqref{H4:summability2} is satisfied. Here we show that this assumption can be weakened by allowing $M_\eta^\e$ to depend on $\e$ as long as $\e M_\eta^\e\to 0$. This weaker condition makes it possible to rephrase an example considered in \cite{B00} in our framework (see Section \ref{sect:example:nonlocal}).
\end{rem}
\begin{rem}[Smooth truncation]\label{rem:truncation}
In order to apply \ref{H6} we will need to truncate $\R^d$-valued functions in a suitable way. To this end, following the approach in \cite{CDSZ17} we consider $\varphi\in C_c^\infty(\R)$ with $\varphi(t)=t$ for all $t\in\R$ with $|t|\leq 1$, $\varphi(t)=0$ for all $t\geq 3$ and $\|\varphi'\|_\infty\leq 1$ and we define $\phi\in C_c^\infty(\R^d;\R^d)$ by setting
\[\phi(\zeta):=
\begin{cases}
\varphi(|\zeta|)\frac{\zeta}{|\zeta|} &\text{if}\ \zeta\neq 0,\\
0 &\text{if}\ \zeta=0.
\end{cases}\]
The function $\phi$ is $1$-Lipschitz \cite[Section 4]{CDSZ17} and for every $k\in\N$ the function $\phi_k$ defined as $\phi_k(\zeta):=k\phi(\frac{\zeta}{k})$ is also $1$-Lipschitz. In particular, since $\phi_k(0)=0$, we have 
\begin{equation}\label{est:trunc1}
|\phi_k(\zeta)|\leq|\zeta|\ \text{for every}\ \zeta\in\R^d.
\end{equation}
For every $u:\Rn\to\Rd$ we now define the truncation $T_ku:=\phi_k(u)$ and we observe that thanks to \eqref{est:trunc1} and the $1$-Lipschitzianity of $\phi_k$ \ref{H6} yields
\begin{equation}\label{est:trunc2}
F_\e(T_ku,A)\leq F_\e(u,A),
\end{equation}
for every $k\in\N$, $\e>0$, $A\in\A(\Omega)$ and $u\in \A_\e(\Omega;\Rd)$.
Moreover, for every $u\in GSBV^p(\Omega;\R^d)$ and every $k\in\N$ the truncation $T_ku$ belongs to $SBV^p(\Omega;\R^d)\cap L^\infty(\Omega;\R^d)$ and $\|T_ku\|_{L^\infty}\leq 3k$. Finally, if $u\in GSBV^p(\Omega;\Rd)\cap L^1(\Omega;\Rd)$ there holds (see \cite[Lemma 2.1]{Ruf17})
\begin{enumerate}[label=(\roman*)]
\item $T_ku\to u$ a.e.~and in $L^1(\Omega;\R^d)$ as $k\to +\infty$,
\item $\nabla T_ku(x)=\nabla\phi_k(u(x))\nabla u(x)$ and in particular $|\nabla T_ku(x)|\leq|\nabla u(x)|$ for a.e.~$x\in\Omega$ and every $k\in\N$,
\item $S_{T_ku}\subset S_u$ and $([u],\nu_{u})=([T_k u],\nu_{T_ku})$ $\Hn$-a.e.~on $S_{T_ku}$ up to a simultaneous change of sign of $[T_ku]$ and $\nu_{T_ku}$, and by Lipschitzianity $|(T_ku)^+-(T_ku)^-|\leq |u^+-u^-|$ for every $k\in\N$. Moreover $\lim_{k\to +\infty}\Hn(S_{T_ku})=\Hn(S_u)$.
\end{enumerate}
\end{rem}
\begin{rem}[$\Gamma$-liminf and $\Gamma$-limsup]\label{rem:g-li-ls}
In all that follows we use standard notation for the $\Gamma$-liminf and the $\Gamma$-limsup, \ie for every pair $(u,A)\in L^1(\Omega;\Rd)\times\A(\Omega)$ we set
\begin{align*}
F'(u,A) &:=\Gamma\hbox{-}\liminf_{\e\to 0}F_\e(u,A):=\inf\{\liminf_{\e\to 0}F_\e(u_\e,A)\colon u_\e\to u\ \text{in}\ L^1(\Omega;\Rd)\},\\
F''(u,A) &:=\Gamma\hbox{-}\limsup_{\e\to 0}F_\e(u,A):=\inf\{\limsup_{\e\to 0}F_\e(u_\e,A)\colon u_\e\to u\ \text{in}\ L^1(\Omega;\Rd)\}.
\end{align*}
If $A=\Omega$ we write $F'(u)$ and $F''(u)$ in place of $F'(u,\Omega)$ and $F''(u,\Omega)$.

The functional $F'$ is superadditive as a set function \cite[Proposition 16.12]{DalMaso93} and both the functionals $F'$ and $F''$ are increasing as set functions \cite[Proposition 6.7]{DalMaso93} and $L^1(\Omega;\Rd)$-lower semicontinuous in $u$ \cite[Proposition 6.8]{DalMaso93}. Moreover, from \eqref{est:trunc2} we deduce that $F'(T_ku,A)\leq F'(u,A)$ and $F''(T_ku,A)\leq F''(u,A)$ for every $(u,A)\in L^1(\Omega;\Rd)\times\A(\Omega)$ and $k\in\N$. Hence, the $L^1(\Omega;\Rd)$-lower semicontinuity together with (i) in Remark \ref{rem:truncation} ensure that
\begin{align}\label{conv:trunc}
\lim_{k\to +\infty}F'(T_ku,A) &=F'(u,A),\nonumber\\
\lim_{k\to +\infty}F''(T_ku,A) &=F''(u,A).
\end{align}
Finally, we also consider the inner-regular envelopes of $F'$ and $F''$ defined as
\begin{align}\label{def:irenv}
F'_{-}(u,A) &:=\sup\{F'(u,A')\colon A'\in\A(\Omega),\ A'\wcont A\},\nonumber\\
F''_{-}(u,A) &:=\sup\{F''(u,A')\colon A'\in\A(\Omega),\ A'\wcont A\},
\end{align}
respectively. Then $F'_{-}$ and $F''_{-}$ are inner regular by definition, increasing and $L^1(\Omega;\Rd)$-lower semicontinuous \cite[Remark 15.10]{DalMaso93}.
\end{rem}
\section{Compactness and integral representation}\label{sect:int:rep}
In this section we state and prove the first main result of the paper, which is the following integral-representation result for the $\Gamma$-limit of the functionals $F_\e$.
\begin{theorem}[Integral representation]\label{thm:int:rep}
Let $F_\e$ be as in \eqref{def:energy} and suppose that $\phi_i^\e:(\R^d)^{Z_\e(\Omega_i)}\to[0,+\infty)$ satisfy \ref{H1}-\ref{H5}. For every sequence of positive numbers converging to $0$ there exists a subsequence $(\e_j)$ such that $(F_{\e_j})$ $\Gamma$-converges to a functional $F:L^1(\Omega;\R^d)\to[0,+\infty]$ of the form
\begin{equation}\label{int:form}
F(u)=
\begin{cases}
\displaystyle\int_\Omega f(x,\nabla u)\dx+\int_{S_u}g(x,[u],\nu_u)\dHn &\text{if}\ u\in GSBV^p(\Omega;\R^d)\cap L^1(\Omega;\Rd),\\
+\infty &\text{otherwise in}\ L^1(\Omega;\R^d).
\end{cases}
\end{equation}
Here, for every $x_0\in\Rn$, $\nu\in S^{n-1}$, $\zeta\in\Rd$ and $M\in\R^{d\times n}$ the integrands are given by the formulas
\begin{equation}\label{derivationformula}
f(x_0,M) =\limsup_{\rho\to 0}\frac{1}{\rho^n}\m(u_{M,x_0},Q_\rho^\nu(x_0)),\qquad
g(x_0,\zeta,\nu) =\limsup_{\rho\to 0}\frac{1}{\rho^{n-1}}\m(u_{\zeta,x_0}^\nu, Q_\rho^\nu(x_0)),
\end{equation}
where $u_{M,x_0}, u_{\zeta,x_0}^\nu$ are given by \eqref{def:boundarydata} and for every $\bar{u}\in SBV^p(\Omega;\R^d)$ and every $A\in\Areg(\Omega)$ we have set
\begin{align}\label{def:min:prob}
\m(\bar{u},A):=\inf\{F(u,A)\colon u\in SBV^p(A;\R^d),\ u=\bar{u}\ \text{in a neighborhood of}\ \partial A\}.
\end{align}
In particular, $g(x,t,\nu)=g(x,-t,-\nu)$ for every $(x,t,\nu)\in\Omega\times\R^d\times S^{n-1}$. 
Moreover, for every $A\in\Areg(\Omega)$ and every $u\in GSBV^p(\Omega;\Rd)$ there holds
\begin{align}\label{localrepresentation}
\Gamma\hbox{-}\lim_{j\to+\infty}F_{\e_j}(u,A)=\int_A f(x,\nabla u)\dx+\int_{S_u\cap A}g(x,[u],\nu_u)\dHn.
\end{align}
\end{theorem}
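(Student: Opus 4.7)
The plan is to follow the localization method of $\Gamma$-convergence as outlined in the introduction: view $F_\e$ as a function of pairs $(u,A)$, extract by abstract compactness a subsequence whose localized $\Gamma$-limit $F(\cdot,A)$ exists for every $A\in\Areg(\Omega)$, and then verify the hypotheses of the Bouchitt\'e--Fonseca--Leoni--Mascarenhas representation theorem in \cite{BFLM02} to write $F$ in the form \eqref{int:form}. The subsequence comes from Theorem \ref{thm:gbar:comp}, whose proof is the standard diagonal argument applied to a countable dense subfamily of $\Areg(\Omega)$. Setting $F:=F'_-=F''_-$ (after taking such a subsequence) gives a candidate functional that is inner regular and $L^1$-lower semicontinuous in $u$ by Remark \ref{rem:g-li-ls}, and translation-invariant in the traces of $u$ by \ref{H1}.

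Next I would identify the domain of $F$. Using the lower bound \ref{H3}, for a sequence $u_\e\to u$ with $\sup_\e F_\e(u_\e,A)<+\infty$ one obtains a weak-membrane type control
\[
\sum_{k=1}^n\int_{A'}\min\Bigl\{|D_\e^ku_\e(x)|^p,\tfrac{1}{\e}\Bigr\}\,dx\leq C
\]
on each $A'\wcont A$, which, combined with the truncation procedure in Remark \ref{rem:truncation} and a standard $SBV$ compactness theorem applied to $T_ku_\e$, forces $T_ku\in SBV^p(A';\R^d)\cap L^\infty$ uniformly and hence $u\in GSBV^p(\Omega;\R^d)\cap L^1$. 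Together with the global upper bound obtained by comparing with piecewise constant competitors via \ref{H2} and \ref{rem:hypotheses:phi}, this produces the natural two-sided growth estimates
\[
c\Bigl(\int_A|\nabla u|^p\,dx+\Hn(S_u\cap A)\Bigr)\leq F(u,A)\leq C\Bigl(\int_A(|\nabla u|^p+1)\,dx+\Hn(S_u\cap A)\Bigr)
\]
that the representation theorem requires.

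The central technical step is the \emph{fundamental estimate} which, together with the De Giorgi--Letta criterion, yields that $F(u,\cdot)$ is the restriction of a Borel measure and is local. Concretely, given $A'\wcont A\wcont\Omega$, $B\in\A(\Omega)$, and recovery sequences $u_\e\to u$ on $A$ and $v_\e\to u$ on $B$, I would construct a third sequence by a discrete cut-off $\varphi$ with $\mathrm{supp}\,\varphi\subset A$, $\varphi\equiv 1$ on $A'$ and $|D_\e^k\varphi|\leq C/\dist(A',\partial A)$, and set $w_\e^j:=\varphi^j u_\e^j+(1-\varphi^j)v_\e^j$. Hypothesis \ref{H5} gives
\[
F_\e(w_\e,A'\cup B)\leq c_3\bigl(F_\e(u_\e,A'')+F_\e(v_\e,B\setminus\overline{A'})\bigr)+\text{remainder},
\]
where the remainder is controlled via the summability conditions \eqref{H5:summability} and the average on annular layers (using a Fubini/pigeonhole argument on many nested strips between $A'$ and $A$ to absorb the $|u_\e-v_\e|^p$ term, and the weak-membrane lower bound \ref{H3} to control the saturation parts). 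Hypothesis \ref{H4} is then invoked exactly to localize $F_\e(u_\e,A'')$ to $A$ (and likewise for $v_\e$) up to an asymptotically vanishing error, using the decay \eqref{H4:summability1}--\eqref{H4:summability2}. This is \emph{the} technical heart of the argument: the discrete cut-off produces non-local cross terms that would not appear in the continuum and whose control is the sole purpose of \ref{H4} and \ref{H5}.

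With the fundamental estimate established, $F'_-(u,\cdot)=F''_-(u,\cdot)$ extends to a Borel measure; combined with translation invariance and the growth bounds this places us exactly in the hypotheses of \cite[Theorem 1]{BFLM02}, yielding the integral representation \eqref{int:form} with densities
\[
f(x_0,M)=\limsup_{\rho\to 0}\frac{\m(u_{M,x_0},Q_\rho^\nu(x_0))}{\rho^n},\qquad g(x_0,\zeta,\nu)=\limsup_{\rho\to 0}\frac{\m(u_{\zeta,x_0}^\nu,Q_\rho^\nu(x_0))}{\rho^{n-1}},
\]
where $\m$ is defined in \eqref{def:min:prob}. To replace the Dirichlet-type infimum in \eqref{def:min:prob} (which involves a boundary condition in a neighborhood of $\partial A$) with the $\Gamma$-limit itself, I would prove the auxiliary statement that $\m(\bar u,A)=\inf\{F(u,A):u=\bar u\text{ near }\partial A\}$ coincides with the $\Gamma$-$\lim$ of the corresponding discrete minimum problems for boundary data $u_{M,x_0}$ and $u_{\zeta,x_0}^\nu$; this is done by the classical joining-and-cut-off construction near $\partial A$ using once more \ref{H4}--\ref{H5}, and this is also what gives \eqref{localrepresentation}. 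The symmetry $g(x,t,\nu)=g(x,-t,-\nu)$ follows from the obvious relation $u_{\zeta,x_0}^\nu=u_{-\zeta,x_0}^{-\nu}+\zeta$ and \ref{H1}. The anticipated main obstacle is the fundamental estimate in the surface regime, since the discrete interpolation of $\varphi$ together with \ref{H5} produces an error term of the form $|D_\e^k\varphi|^p|u_\e-v_\e|^p$ that is not automatically summable along a single annulus and must be averaged over $\sim 1/\sqrt[p]{\e}$ parallel layers while simultaneously respecting the non-locality range controlled by \ref{H4}.
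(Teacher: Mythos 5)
Your overall strategy -- localize, extract via abstract compactness, prove lower/upper growth bounds, establish a fundamental estimate via cut-offs and layer-averaging controlled by \ref{H4}--\ref{H5}, then apply \cite[Theorem~1]{BFLM02} -- is exactly the paper's route, and your description of the subadditivity argument (nested strips, Fubini/pigeonhole, Lemma~\ref{lem:fin:diff}-type control on saturated finite differences) matches Propositions~\ref{prop:loc} and~\ref{prop:subad} well. There are, however, three genuine gaps.

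First, your upper growth bound is wrong. Under \ref{H1}--\ref{H5} the surface contribution of the $\Gamma$-limit cannot be controlled by $\Hn(S_u\cap A)$ alone; it necessarily grows linearly in the jump amplitude. The correct estimate (Proposition~\ref{prop:ub}) is
\begin{equation*}
F''(u,A)\leq c\Big(\int_A(|\nabla u|^p+1)\,dx+\int_{S_u\cap A}\bigl(1+|u^+-u^-|\bigr)\,d\Hn\Big),
\end{equation*}
and this comes not from ``comparing with piecewise constant competitors'' but from a genuine density argument via \cite{CT99,cortesani97}: one approximates $u$ by piecewise affine functions on a regular triangulation with polyhedral essentially closed jump set, estimates the interaction-energy error across the jump faces using \ref{H4}, and here the factor $(1+|z^{j+\e\xi}-w^{j+\e\xi}|)/\e$ in \ref{H4} is precisely what produces the $|[u]|$ dependence. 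Your piecewise constant comparison cannot see this.

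Second, and more structurally, your asserted lower bound $F(u,A)\geq c(\int_A|\nabla u|^p+\Hn(S_u\cap A))$ is correct as a statement, but it does \emph{not} satisfy the hypotheses of \cite[Theorem~1]{BFLM02}, which additionally require the surface term of the functional to dominate $\int_{S_u}|[u]|\,d\Hn$. Nothing in \ref{H1}--\ref{H5} forces the discrete energy to grow with the jump amplitude (\ref{H3} only gives $1/\e$), so this lower bound genuinely fails and one cannot ``exactly'' fall into BFLM02. The paper fixes this with a perturbation: one represents $F_\sigma(u,A):=F(u,A)+\sigma\int_{S_u\cap A}|[u]|\,d\Hn$ for $\sigma>0$, observes from the derivation formulas that the resulting densities decrease in $\sigma$, and passes to the limit by monotone convergence. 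Without this (or some equivalent device), the representation theorem is not applicable.

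Third, you write ``set $F:=F'_-=F''_-$, which is inner regular by construction,'' but the theorem requires identifying $F(\cdot,A)$ with the actual $\Gamma$-limit $F''(\cdot,A)$ for every $A\in\Areg(\Omega)$, i.e.\ you must show $F''=F''_-$ on regular sets. This is nontrivial near $\partial\Omega$ (e.g.\ for $A=\Omega$ itself): the upper-bound/subadditivity argument requires competitor sets to be compactly contained, so the paper first extends $F_\e$ to a slightly larger domain $\tomega\wcont\Rn$ (Remark~\ref{rem:extension}), proves the upper bound and subadditivity for the extended functional, and only then deduces inner regularity (Proposition~\ref{prop:ir}). This extension step is missing from your outline and is needed to close the argument.

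Finally, a minor remark: your last paragraph, discussing Dirichlet minimum problems and discrete boundary-value problems, is not needed for Theorem~\ref{thm:int:rep} itself -- the derivation formulas \eqref{derivationformula} and the local statement \eqref{localrepresentation} come directly from \cite[Theorem~1]{BFLM02} together with $F(\cdot,A)=F''(\cdot,A)$ on $\Areg(\Omega)$. The convergence of the discrete Dirichlet problems is a separate statement (Lemma~\ref{lem:dirichlet}) proved afterwards, and only used in the homogenization section.
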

\noindent We will prove Theorem \ref{thm:int:rep} gathering Propositions \ref{prop:loc}, \ref{prop:lb}, \ref{prop:ub}, \ref{prop:subad} and \ref{prop:ir}  below which together with the general compactness result Theorem \ref{thm:gbar:comp} ensure that the $\Gamma$-limit $F$ exists up to subsequences and that a suitable perturbation of $F$ satisfies all hypotheses of \cite[Theorem 1]{BFLM02}. As a first step we show that $F''(\cdot,A)$ is local for every $A\in\Areg(\Omega)$.
\begin{prop}[Locality]\label{prop:loc}
Let $\phi_i^\e:(\R^d)^{Z_\e(\Omega_i)}\to[0,+\infty)$ satisfy hypotheses \ref{H1}-\ref{H5}. Then for any $A\in\Areg(\Omega)$ and $u,v\in GSBV^p(\Omega;\Rd)\cap L^1(\Omega;\Rd)$ with $u=v$ a.e.~in $A$ we have
\[F''(u,A)=F''(v,A).\]
\end{prop}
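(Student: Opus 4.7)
By symmetry it suffices to prove $F''(v,A)\leq F''(u,A)$, and we may assume the right-hand side is finite. Since $u=v$ a.e.\ in $A$ implies $T_ku=T_kv$ a.e.\ in $A$ for every $k\in\N$, the convergence \eqref{conv:trunc} of Remark \ref{rem:g-li-ls} reduces the problem to the case $u,v\in SBV^p(\Omega;\R^d)\cap L^\infty(\Omega;\R^d)$.

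Fix $\eta>0$ and let $(u_\e)\subset\A_\e(\Omega;\R^d)$ be a near-recovery sequence with $u_\e\to u$ in $L^1(\Omega;\R^d)$ and $\limsup_\e F_\e(u_\e,A)\leq F''(u,A)+\eta$. Let $\tilde v_\e$ be the cell-average interpolation of $v$ on $\e\Z^n$, so $\tilde v_\e\to v$ in $L^1(\Omega;\R^d)$. For subsets $A'\wcont A''\wcont A$ with Lipschitz boundary and a cutoff $\varphi\in C_c^\infty(A'';[0,1])$ with $\varphi\equiv 1$ on a neighbourhood of $\overline{A'}$, I would form the glued configuration
\[
w_\e^i:=\varphi(i)u_\e^i+(1-\varphi(i))\tilde v_\e^i,\qquad i\in Z_\e(\Omega),
\]
and note that $w_\e\to v$ in $L^1(\Omega;\R^d)$, because $u=v$ in $A\supset A''$.

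The energy $F_\e(w_\e,A)$ is then split according to three regimes of $i\in Z_\e(A)$. On the deep core where the $\e\alpha$-dependence cube of $\phi_i^\e$ lies in $\{\varphi\equiv 1\}$, the configurations $w_\e$ and $u_\e$ agree on that cube, and \ref{H4} yields $\phi_i^\e(w_\e)\leq\phi_i^\e(u_\e)+R_i^{\mathrm{H4}}$; the total remainder vanishes as $\e\to 0$ provided $\alpha=\alpha(\e)\to\infty$ is compatible with \eqref{H4:summability1}--\eqref{H4:summability2}, thanks to the finite total energy of $u_\e$ (via \ref{H3}) and the uniform $L^\infty$-bound on $u_\e-\tilde v_\e$ inherited from the truncation. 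On the transition region where $\nabla\varphi\neq 0$, \ref{H5} gives $\phi_i^\e(w_\e)\leq c_3\bigl(\phi_i^\e(u_\e)+\phi_i^\e(\tilde v_\e)\bigr)+R_i^{\mathrm{H5}}$. The unwanted factor $c_3$ is neutralised by the De Giorgi--Letta averaging trick: partition a neighbourhood of $\partial A'$ into $N$ thin slabs, consider $N$ cutoffs whose gradients concentrate on distinct slabs, and pick an optimal index so that the selected transition contribution is bounded by $N^{-1}$ times the sum over all slabs, hence arbitrarily small. Outside $A''$, $w_\e=\tilde v_\e$, and $F_\e(\tilde v_\e,A\setminus A'')$ is controlled using \eqref{upperbound:constant} cellwise away from $S_v$ plus an $O(1)$ per-cell bound (from \ref{H2} applied to an elementary jump model) on the $O(\e^{1-n}\Hn(S_v\cap(A\setminus A'')))$ cells straddling $S_v$, so this contribution vanishes as $A''\to A$. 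Collecting the three estimates and sending the parameters to their limits in the order $\e\to 0$, $A'',A'\to A$, $N\to\infty$, $\eta\to 0$ yields $F''(v,A)\leq F''(u,A)$.

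\textbf{Main obstacle.} The central difficulty is to reconcile hypotheses \ref{H4} and \ref{H5}: the former requires the truncation range $\e\alpha$ to be much larger than the thickness of the transition layer (so that the long-range dependence of $\phi_i^\e$ on values outside $\{\varphi\equiv 1\}$ can be erased), while the latter, through its multiplicative factor $c_3$, forces that layer to be narrow in order for the De Giorgi--Letta averaging to absorb $c_3$. The layer construction and the choice $\alpha=\alpha(\e)\to\infty$ must be done in a single step respecting both scales. A secondary subtlety is the estimate of $F_\e(\tilde v_\e,\cdot)$ near $S_v$ in the absence of a general upper bound on $\phi_i^\e$; here one exploits that $\tilde v_\e$ takes values in the image of $v\in L^\infty$, so \eqref{upperbound:constant} supplies an $O(1)$ per-cell bound while the $SBV^p$-regularity $\Hn(S_v)<\infty$ controls the number of bad cells.
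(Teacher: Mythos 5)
There is a genuine gap in your third regime, and it is not a technicality. Outside $A''$ (and again in the transition layer, where \ref{H5} produces the term $c_3\,\phi_i^\e(\tilde v_\e)$) you need an a priori bound on $F_\e(\tilde v_\e,\cdot)$ for the raw pointwise discretization of a \emph{general} $v\in SBV^p\cap L^\infty$. No such bound follows from \ref{H1}--\ref{H5}: the hypotheses only bound $\phi_i^\e$ on constant configurations (via \eqref{upperbound:constant}) and on affine ones (via \ref{H2}); for a configuration that is neither, $\phi_i^\e$ — a multi-body, long-range density depending on the whole state $\{z^j\}_{j\in Z_\e(\Omega_i)}$ — admits no per-point upper bound whatsoever, not even of order $1/\e$. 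Your proposed "$O(1)$ per-cell bound from \ref{H2} applied to an elementary jump model" does not exist, and the pointwise values of an $SBV^p$ function on the lattice need not have difference quotients controlled by $\nabla v$ and $[v]$. In the paper this upper bound is precisely the content of Proposition \ref{prop:ub}, which is established only for piecewise affine functions plus density, and whose Step 2 \emph{uses} the locality proposition — so your route is also circular as a matter of proof architecture.

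The paper's actual argument avoids all of this (and uses neither \ref{H5} nor the De Giorgi--Letta averaging, which it saves for subadditivity). One takes recovery sequences $u_\e\to u$ and $v_\e\to v$ for $F''(u,A)$ and $F''(v,A)$, truncates, and glues \emph{sharply}: $w_\e^i:=v_\e^i$ if $\dist_\infty(i,A)\le\e M_\eta^\e$ and $w_\e^i:=u_\e^i$ otherwise. Then $w_\e\to u$ in $L^1(\Omega;\Rd)$ (on $A$ one has $w_\e=v_\e\to v=u$ a.e., and the exterior collar has vanishing measure since $\e M_\eta^\e\to0$), so $F''(u,A)\le\limsup_\e F_\e(w_\e,A)$; yet every $i\in Z_\e(A)$ sees $w_\e=v_\e$ on a cube of side $>\e M_\eta^\e$, so \ref{H4} together with the $L^\infty$ bounds, the Minkowski content of $\partial A$ and the tail condition \eqref{H4:summability2} gives $F_\e(w_\e,A)\le F_\e(v_\e,A)+c\eta$. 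This yields $F''(u,A)\le F''(v,A)+c\eta$ with no upper-bound estimate on discretizations, no cutoff, and no transition-layer analysis. I recommend you adopt this one-sided sharp-interface gluing; your reduction to the truncated case and the symmetry remark are fine as they stand.
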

\begin{proof}
Let $A,u,v$ be as in the statement. Thanks to \eqref{conv:trunc} it suffices to consider the case $u,v\in SBV^p(\Omega;\Rd)\cap L^\infty(\Omega;\Rd)$. We first show that $F''(u,A)\leq F''(v,A)$. To this end, choose $u_\e,v_\e\in\A_\e(\Omega;\Rd)$ converging in $L^1(\Omega;\Rd)$ to $u,v$, respectively  and satisfying
\begin{equation}\label{conv:locality}
\lim_{\e\to 0} F_\e(u_\e,A)=F''(u,A),\qquad\lim_{\e\to 0} F_\e(v_\e,A)=F''(v,A).
\end{equation}
Up to considering the truncated functions $T_{\|u\|_{L^\infty}}u_\e$, $T_{\|v\|_{L^\infty}}v_\e$ we can assume that $\|u_\e\|_{L^\infty}\leq 3\|u\|_{L^\infty}$, $\|v_\e\|_{L^\infty}\leq 3\|u\|_{L^\infty}$.

For fixed $\eta>0$ and every $\e>0$ let $M_\eta^\e>0$ be given by \eqref{H4:summability2} and define $w_\e\in\A_\e(\Omega;\Rd)$ by setting
\begin{align*}
w_\e^i:=
\begin{cases}
v_\e^i &\text{if}\ \dist_\infty(i,A)\leq\e M_\eta^\e,\\
u_\e^i &\text{otherwise in}\ Z_\e(\Omega).
\end{cases}
\end{align*}
Since the sequences $(u_\e),(v_\e)$ are bounded in $L^\infty(\Omega;\Rd)$ uniformly in $\e$ and $u=v$ a.e.~in $A$ we have
\begin{align*}
\|w_\e-u\|_{L^1(\Omega)}\leq\|v_\e-v\|_{L^1(A)}+\|u_\e-u\|_{L^1(\Omega\setminus A)}+c\e^n\#\{i\in Z_\e(\Omega)\colon\dist(i,\partial A)<\e M_\eta^\e\}.
\end{align*}
Moreover, since $\partial A$ is Lipschitz, it admits an upper Minkowsky content, hence
\begin{align*}
(\e M_\eta^\e)^{n-1}\#\{i\in Z_\e(\Omega)\colon\dist(i,\partial A)<\e M_\eta^\e\}\leq c\Hn(\partial A)+o_{\e M_\eta^\e}(1).
\end{align*}
Thus, the assumption on $M_\eta^\e$ ensures that $w_\e\to u$ in $L^1(\Omega;\Rd)$, which implies that
\begin{align}\label{est:locality}
F''(u,A)\leq\limsup_{\e\to 0}F_\e(w_\e,A).
\end{align}
We now come to estimate $F_\e(w_\e,A)$. For every $i\in Z_\e(A)$ we set
\begin{align*}
\alpha_\e(i):=\sup\{\alpha\in\N\colon w_\e^j=v_\e^j\ \text{for every}\ j\in Z_\e(i+\e\alpha Q)\},
\end{align*}
so that condition \ref{H4} yields
\begin{align}\label{est:locality1}
&F_\e(w_\e,A)\leq\sum_{i\in Z_\e(A)}\e^n\phi_i^\e(\{v_\e^{i+j}\}_{j\in Z_\e(\Omega_i)})\nonumber\\
&\hspace{1em}+\sum_{i\in Z_\e(A)}\e^n\sum_{j\in Z_\e(\Omega_i)}\dsum{\xi\in\Z^n}{j+\e\xi\in\Omega_i}c_{\e,\alpha_\e(i)}^{j,\xi}\min\Bigg\{|D_\e^\xi w_\e^{i+j}|^p,\frac{1+|w_\e^{i+j+\xi}-v_\e^{i+j+\xi}|}{\e}\Bigg\}.
\end{align}
We observe that by construction $\alpha_\e(i)>M_\eta^\e$ for every $i\in Z_\e(A)$.
Estimating the minimum in \eqref{est:locality1} with $(1+|w_\e^{i+j+\xi}-v_\e^{i+j+\xi}|)/\e$ and using the uniform bound on $\|v_\e\|_{L^\infty}$ and $\|w_\e\|_{L^\infty}$ thus gives
\begin{align*}
F_\e(w_\e,A)\leq F_\e(v_\e,A)+(1+3\|u\|_{L^\infty}+3\|v\|_{L^\infty})\sum_{\alpha>M_\eta^\e}\sum_{j\in Z_\e(\Rn)}\sum_{\xi\in\Z^n}c_{\e,\alpha}^{j,\xi}\e^{n-1}\#\{i\in Z_\e(A)\colon \alpha_\e(i)=\alpha\}.
\end{align*}
Moreover, the Lipschitz regularity of $A$ yields
\begin{align*}
\e^{n-1}\#\{i\in Z_\e(A)\colon \alpha_\e(i)=\alpha\}\leq c\Hn(\partial A)
+o_\e(1),
\end{align*}
which in view of the choice of $M_\eta^\e$ and \eqref{H4:summability2} gives
\begin{align*}
\limsup_{\e\to 0} F_\e(w_\e,A)\leq\limsup_{\e\to +\infty} F_\e(v_\e,A)+c\eta.
\end{align*}
Gathering \eqref{conv:locality} and \eqref{est:locality} we thus obtain
\begin{align*}
F''(u,A)\leq F''(v,A)+c\eta,
\end{align*}
and the desired inequality follows by the arbitrariness of $\eta>0$.
\end{proof}
As a next step towards the proof of Theorem \ref{thm:int:rep} the following two propositions show that $F'$ and $F''$ are finite only on $GSBV^p(\Omega;\Rd)\cap L^1(\Omega;\Rd)$ and satisfy suitable growth conditions.
\begin{prop}[Lower bound]\label{prop:lb}
Let $F_\e$ be given by \eqref{def:energy} and suppose that the functions $\phi_i^\e:(\R^d)^{Z_\e(\Omega_i)}\to[0,+\infty)$ satisfy \ref{H3}. Let $A\in \Areg(\Omega)$ and $u\in L^1(\Omega;\R^d)$ with $F'(u,A)<+\infty$. Then $u\in GSBV^p(A;\R^d)$ and
\begin{equation}\label{lowerbound}
F'(u,A)\geq c\left(\int_A|\nabla u|^p\dx+\Hn(S_u\cap A)\right)
\end{equation}
for some $c>0$ independent of $u$ and $A$.
\end{prop}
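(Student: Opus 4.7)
Given a sequence $(u_\e) \subset \A_\e(\Omega;\Rd)$ converging in $L^1(\Omega;\Rd)$ to $u$ with $\liminf_\e F_\e(u_\e, A) = F'(u,A) < +\infty$, hypothesis \ref{H3} applied at each $i \in Z_\e(A)$ whose forward neighbours $i + \e e_k$ all lie in $\Omega$ yields
\[
F_\e(u_\e, A) \geq \frac{c_2}{n} \sum_{k=1}^n \sum_{i \in Z_\e(A_\e)} \e^n \min\Bigl\{|D_\e^k u_\e(i)|^p,\, \tfrac{1}{\e}\Bigr\},
\]
using the elementary inequality $\min\{\sum_k a_k, c\} \geq \frac{1}{n}\sum_k \min\{a_k, c\}$; here $A_\e$ differs from $A$ only by an $O(\e)$-thin lattice layer along $\partial\Omega\cap\overline{A}$ whose contribution is negligible. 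Reducing to a single scalar component $u_\e^\alpha$ by means of $|D_\e^k u_\e|^p \geq c_{d,p}|D_\e^k u_\e^\alpha|^p$, I would then appeal to Ambrosio's slicing characterisation of $GSBV^p$ (see \cite{AFP}): a discrete Fubini rewrites each $k$-th sum as an integral over the hyperplane $\Pi_{e_k}$ of one-dimensional discrete energies along lattice lines parallel to $e_k$, and Fatou's lemma yields, for $\Hn$-a.e.~$y \in \Pi_{e_k}$, a uniform-in-$\e$ bound on the 1D discrete energy of the slice $(u_\e^\alpha)_y^k$.

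\textbf{One-dimensional core.} The main step is the following 1D assertion: if $v_\e : \e\Z \cap I \to \R$ converges in $L^1(I)$ to $v$ and $\sum_i \e \min\{|D_\e^1 v_\e(i)|^p, 1/\e\} \leq C$, then $v \in GSBV^p(I)$ with $\int_I |v'|^p\dt + \#(S_v) \leq C'$. To accommodate unbounded limits I would first truncate: thanks to the $1$-Lipschitz property of $\phi_m$ from Remark \ref{rem:truncation}, each difference quotient can only decrease, so the bound persists with $T_m v_\e$ in place of $v_\e$, and the resulting sequence is uniformly bounded. Partitioning indices into the elastic set $E_\e := \{i : |D_\e^1 T_m v_\e(i)|^p \leq 1/\e\}$ and its complement $J_\e$, each $i \in J_\e$ contributes at least $1$, so $\#J_\e \leq C$. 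A piecewise-affine companion $\tilde v_\e$ that interpolates $T_m v_\e$ over elastic cells and carries a jump at each $i \in J_\e$ belongs to $SBV^p(I)$ with $\|\tilde v_\e'\|_{L^p}^p + \#(S_{\tilde v_\e}) \leq C$ and, thanks to the uniform $L^\infty$-bound, converges to $T_m v$ in $L^1(I)$; Ambrosio's $SBV^p$ lower semicontinuity then forces $T_m v \in SBV^p(I)$ with the same estimates. Sending $m \to +\infty$ and invoking Remark \ref{rem:truncation}(i)--(iii) upgrades the conclusion to $v \in GSBV^p(I)$.

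\textbf{Main obstacle and conclusion.} The delicate point is the 1D construction of $\tilde v_\e$: it must simultaneously respect the bulk/surface dichotomy encoded in the $\min$ of \ref{H3} and preserve $L^1$-convergence to $T_m v$; this is precisely where the asymmetric $\e$ versus $1/\e$ scaling is decoded into an elastic gradient contribution and a discrete surface contribution, and is the reason truncation is essential. Once the 1D lemma is available, integrating the slicewise bounds in $y$ over $\Pi_{e_k}$, summing in $k$ and in the components $\alpha$, and using the integral slicing formulas for $\int_A |\nabla u|^p\dx$ and $\Hn(S_u \cap A)$ yields \eqref{lowerbound} with a constant $c$ depending only on $c_2$, $n$, $d$ and $p$.
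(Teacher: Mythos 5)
Your proof is correct and follows the same route as the paper, except that the paper applies \ref{H3} to reduce to the nearest-neighbour comparison energy $c_2\sum_{i}\e^n\min\bigl\{\sum_k|D_\e^ku_\e(i)|^p,1/\e\bigr\}$ and then invokes \cite[Lemma 3.3]{Ruf17} as a black box, whereas you have unpacked precisely that lemma and reproved it with the standard slicing-plus-1D-truncation machinery (discrete Fubini across hyperplanes, scalar truncation, elastic/jump index decomposition, piecewise-affine companion, Ambrosio $SBV$ lower semicontinuity, passage $m\to\infty$, $GSBV^p$ slicing characterisation). Since the cited lemma performs exactly these steps internally, your argument and the paper's coincide in substance; you have simply spelled out the inner workings of the reference.
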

\begin{proof}
Let $(u,A)\in L^1(\Omega;\R^d)\times\Areg(\Omega)$ be as in the statement and let $(u_\e)\subset\A_\e(\Omega;\Rd)$ be a sequence converging to $u$ in $L^1(\Omega;\R^d)$ and satisfying $\sup_\e F_\e(u_\e,A)<+\infty$. In view of \ref{H3} we have
\begin{equation}\label{est:lb:1}
F_\e(u_\e,A)\geq c_2\sum_{i\in Z_\e(A)}\e^n\min\left\{\sum_{k=1}^n|D_\e^ku_\e(i)|^p,\frac{1}{\e}\right\},
\end{equation}
hence \cite[Lemma 3.3]{Ruf17} applied to $\mathcal{L}=\Z^n$ and $f(p)=\min\{\|p\|_1,\frac{1}{\e}\}$ together with the uniform bound on $F_\e(u_\e,A)$ yield $u\in GSBV^p(A;\R^d)$. Moreover, from \cite[Lemma 3.3]{Ruf17} and \eqref{est:lb:1} we also deduce
\[\liminf_{\e\to 0}F_\e(u_\e,A)\geq c\left(\int_A|\nabla u|^p\dx+\Hn(S_u\cap A)\right)\]
for some $c>0$ independent of $u$ and $A$, hence \eqref{lowerbound} follows.
\end{proof}
In order to prove an upper bound for $F''(u)$ we need to restrict to a suitable dense class of functions. To this end, it is convenient to introduce the following definition of a regular triangulation.
\begin{definition}\label{def:triangualation}
Let $A\subset\R^n$ be open, bounded and with Lipschitz boundary. We say that a family $(U_l)_{l=1,\ldots,N}$ of pairwise disjoint open $n$-simplices $U_1,\ldots,U_N$ is a regular triangulation of $A$ if $A\subset\bigcup_{l=1}^N\overline{U}_l$ and if for any $(l,l')\in\{1,\ldots,N\}^2$ the intersection $S_{l,l'}:=\overline{U}_l\cap\overline{U}_{l'}$ is either the empty set or an $(n-k)$-dimensional simplex for some $k\in\{1,\ldots,n\}$. The $(n-1)$-dimensional simplices $S_{l,l'}$ are called the faces of the triangulation and by $\theta\in (0,\pi)$ we denote the minimal angle between two faces of such a triangulation.
\end{definition}
\begin{prop}[Upper bound]\label{prop:ub}
Let $A\in\Areg(\Omega)$ and $u\in GSBV^p(A;\R^d)\cap L^1(\Omega;\R^d)$ and suppose that the functions $\phi_i^\e$ satisfy \ref{H1}--\ref{H5}. Then
\begin{equation}\label{upperbound}
F''(u,A)\leq c\left(\int_A\left(|\nabla u|^p+1\right)\dx+\int_{S_u\cap A}(1+|u^+(y)-u^-(y)|)\dHn(y)\right)
\end{equation}
for some $c>0$ independent of $u$ and $A$.
\end{prop}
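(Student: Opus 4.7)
The plan is to establish \eqref{upperbound} first on a dense subclass of piecewise affine functions with polyhedral jump sets and then extend it to general $u \in GSBV^p(A;\Rd) \cap L^1(\Omega;\Rd)$ via the truncation of Remark \ref{rem:truncation}, the $L^1$-lower semicontinuity of $F''$, and \eqref{conv:trunc} (noting that the right-hand side of \eqref{upperbound} is continuous under the truncation $T_k u \to u$). First I would invoke a Cortesani--Toader--type density result to reduce, at no loss of generality, to $u \in SBV^p(A;\Rd) \cap L^\infty(A;\Rd)$ that is affine on each simplex of a regular triangulation $(U_l)_{l=1}^N$ of $A$ in the sense of Definition \ref{def:triangualation}, with $S_u$ contained in the union of the faces $S_{l,l'}$. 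On each $U_l$ we write $u(x) = M_l x + b_l$ and we denote by $v_l(x) := M_l x + b_l$ its affine extension to all of $\Rn$.

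The candidate recovery sequence is the canonical discretization $u_\e \in \A_\e(\Omega;\Rd)$ defined by $u_\e^i := u(i)$ (with some convention for $i$ lying on a face), extended arbitrarily outside $A$; then $u_\e \to u$ in $L^1(\Omega;\Rd)$. Fix $\eta > 0$, let $M_\eta^\e$ be as in \eqref{H4:summability2}, and split the sum in $F_\e(u_\e,A)$ according to $I_\e^{\mathrm{bulk}} := \{ i \in Z_\e(A) : \dist(i, S_u) > \e M_\eta^\e\}$ and $I_\e^{\mathrm{surf}} := Z_\e(A) \setminus I_\e^{\mathrm{bulk}}$. For $i \in I_\e^{\mathrm{bulk}} \cap U_l$ the values $u_\e^{i+j}$ agree with $v_l(i+j)$ for every $j \in Z_\e(\e M_\eta^\e Q)$, so applying \ref{H4} with $\alpha = M_\eta^\e$, $z = u_\e$, $w = v_l$, and then \ref{H1} together with \ref{H2}, we obtain
\begin{equation*}
\phi_i^\e(\{u_\e^{i+j}\}) \leq c_1(|M_l|^p + 1) + \sum_{j \in Z_\e(\Omega_i)} \sum_{\xi} c_{\e,M_\eta^\e}^{j,\xi}\min\!\left\{ |D_\e^\xi u_\e(i+j)|^p,\; \tfrac{1+|u_\e^{i+j+\e\xi}-v_l^{i+j+\e\xi}|}{\e}\right\}.
\end{equation*}
Summing $\e^n$ times the leading term over $Z_\e(U_l)$ and over $l$ produces $c\int_A(|\nabla u|^p + 1)\dx$; the residual is controlled by the $L^\infty$ bound on $u_\e$ and the summability conditions \eqref{H4:summability1}--\eqref{H4:summability2}, giving $o_\e(1) + C\eta$.

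For the surface contribution, I would further partition $I_\e^{\mathrm{surf}}$ by the unique closest face $S_{l,l'}$ and by the layer $k \in \{0, 1, \dots, M_\eta^\e\}$ with $k\e \leq \dist(i, S_{l,l'}) < (k+1)\e$, and apply \ref{H5} with $\varphi$ the discrete indicator of the $U_l$-side of $S_{l,l'}$ and $z \equiv u^+$, $w \equiv u^-$ the two constant traces. By \ref{H1} and Remark \ref{rem:hypotheses:phi} we have $\phi_i^\e(z), \phi_i^\e(w) \leq c_1 + 1$, and the residual $R_i^\e$ of \ref{H5} simplifies substantially since $D_\e^\xi z = D_\e^\xi w = 0$; only the first term involving $\sup_l |D_\e^k \varphi(l)|^p |u^+-u^-|^p$ survives. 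Using the $\min\{\cdot, 1/(\e|\xi|)\}$ device from \ref{H5} in combination with a decomposition of cross-face interactions by range $|\xi|$, and the summability \eqref{H5:summability}, one converts the $\e^{-p}|[u]|^p$ growth into an $\e^{-1}(1+|[u]|)$ bound per face point; multiplication by $\e^n$ and summation over $I_\e^{\mathrm{surf}}$ yields exactly $c\int_{S_u\cap A}(1+|u^+-u^-|)\dHn$. The main obstacle is precisely this last bookkeeping step: organizing the layered estimate so that the two branches of the minimum in the residual of \ref{H5} absorb the super-linear $|[u]|^p$ and deliver only the asserted linear factor $(1+|u^+-u^-|)$, while simultaneously cutting off the long-range tail with the same $M_\eta^\e$-cutoff used in the bulk part so as not to double-count interactions.
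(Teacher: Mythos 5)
Your bulk estimate is sound and matches the paper's strategy (density reduction to piecewise affine $u$ with polyhedral jump, canonical sampling $u_\e^i = u(i)$, comparison with the affine extension $v_l$ via \ref{H4}, and leading term from \ref{H1}--\ref{H2}). The structural split into $I_\e^{\mathrm{bulk}}$ and $I_\e^{\mathrm{surf}}$ is a different decomposition from the paper, which instead applies \ref{H4} with $w=v_l$ at \emph{every} $i\in Z_\e(U_l)$ and harvests both the gradient term and the surface term from the single residual $I^l_{\e,2}$.

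The real problem is your surface argument, which cannot work with \ref{H5}. You take $\varphi$ to be the discrete indicator of one side of a face, so $\sup_{l,k}|D_\e^k\varphi(l)|\sim 1/\e$, and $z\equiv u^+$, $w\equiv u^-$ constant. With constant $z,w$ the second block of $R_i^\e$ in \ref{H5} (the one that actually carries the $\min\{\cdot,1/(\e|\xi|)\}$ truncation) vanishes identically, and you are left only with the first block
\[
\sum_{j,\xi} c_\e^{j,\xi}\,\sup_{l,k}|D_\e^k\varphi(l)|^p\,|z(j+\e\xi)-w(j+\e\xi)|^p \;\sim\; \e^{-p}\,|[u]|^p\sum_{j,\xi}c_\e^{j,\xi},
\]
which has \emph{no} $\min$ attached. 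Your proposed fix, ``using the $\min\{\cdot,1/(\e|\xi|)\}$ device from \ref{H5},'' refers to a term that is zero in your setting, so there is nothing to truncate the $\e^{-p}|[u]|^p$ growth. Multiplying by $\e^n$ and summing over the $O(M_\eta^\e\,\e^{1-n})$ indices in $I_\e^{\mathrm{surf}}$ yields $\sim M_\eta^\e\,\e^{1-p}|[u]|^p$, which diverges since $p>1$, and is superlinear in $|[u]|$ rather than the linear bound $(1+|[u]|)$ required by \eqref{upperbound}. Interpolating $|[u]|^p\le(2\|u\|_\infty)^{p-1}|[u]|$ does not rescue this either: the divergence in $\e$ persists, and moreover the resulting constant would depend on $\|u\|_\infty$, while the proposition requires $c$ independent of $u$. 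The mechanism that produces the correct linear factor is already built into \ref{H4}: its minimum includes the branch $\frac{1+|z(j+\e\xi)-w(j+\e\xi)|}{\e}$, which scaled by $\e^n$ and projected onto a jump face gives exactly $\e^{n-1}(1+|u^+-u^-|+O(\e m_\e))$ per lattice point (cf.\ the chain of estimates culminating in \eqref{est:ub:09}). So the surface contribution should also be extracted from the residual of \ref{H4}, not from \ref{H5}; \ref{H5} plays no role in Proposition \ref{prop:ub}.
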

\begin{proof}
Let $\tomega\subset\Rn$ be any open bounded set with Lipschitz boundary such that $\Omega\wcont\tomega$.
\begin{step}{Step 1}
As a preliminary step we prove the existence of some constant $c>0$ such that for any $u\in SBV^p(\tomega;\R^d)\cap L^\infty(\tomega;\R^d)$ and any $A\in\Areg(\Omega)$ there holds
\begin{equation}\label{ub:prel}
F''(u,A)\leq c\left(\int_A\left(|\nabla u|^p+1\right)\dx+\int_{S_u\cap\clA}(1+|u^+(y)-u^-(y)|)\dHn(y)\right).
\end{equation}
We first prove \eqref{ub:prel} for $A$ polyhedral set.

Thanks to \cite[Theorem 3.1]{CT99} (see also \cite[Theorem 3.9]{cortesani97}), employing a standard density argument it suffices to prove \eqref{ub:prel} for $u\in SBV^p(\tomega;\R^d)\cap L^\infty(\tomega;\R^d)$ such that $S_u$ is essentially closed (i.e., $\Hn(\clsu\setminus S_u)=0$), $\clsu$ is the intersection of $\Omega$ with a finite union of $(n-1)$-dimensional simplices and $u\in W^{1,\infty}(\tomega\setminus\clsu;\R^d)$. Moreover, since $u\in W^{1,\infty}(\tomega\setminus\clsu;\R^d)$, arguing again by density we may assume that $u$ is piecewise affine on $\tomega\setminus\clsu$. 
More precisely, we may assume that there exist a regular triangulation $(U_l)_{l=1,\ldots,N}$ of $\tomega$ and $M_1,\ldots,M_N\in\R^{d\times n}$, $b_1,\ldots,b_N\in\R^d$ such that $u$ satisfies the following.
\begin{enumerate}[label=(\roman*)]
\item $u(x)=\sum_{l=1}^N\chi_{U_l\cap\tomega}(x)(M_lx+b_l)$ for any $x\in\tomega\cap\bigcup_{l=1}^NU_l$;
\item $\clsu=\tomega\cap\bigcup_{k=1}^K S_{l_k,l_k'}$, where $(S_{l_k,l_k'})_{k=1,\ldots,K}$ is a collection of faces of the triangulation;
\item for any face $S_{l,l'}$ with $(l,l')\neq (l_k,l_k')$ for every $k\in\{1,\ldots,K\}$ we have
\[u(x)= M_lx+b_l=M_{l'}x+b_{l'}\quad\text{for every}\ x\in S_{l,l'}.\]
\end{enumerate}
Since $A$ is a polyhedral set, up to refining the triangulation and renumbering the simplices we may also assume that 
\[\clA=\bigcup_{l=1}^L \overline{U}_l\]
for some $L<N$.
Finally, we can assume that $\bigcup_{l,l'} S_{l,l'}\cap\e\Z^n=\emptyset$, since otherwise we may consider the shifted lattice $\e\Z^n+\xi_\e$ for a suitable sequence $\xi_\e\to 0$. 
We then define a sequence $(u_\e)\subset\A_\e(\tomega;\Rd)$ by setting
\[u_\e^i:=u(i)\quad\text{for every}\ i\in Z_\e(\tomega)\]
and we note that $u_\e\to u\in L^1(\Omega;\R^d)$. Moreover, we write
\begin{equation}\label{est:ub:01}
F_\e(u_\e,A)=\sum_{l=1}^L F_\e(u_\e,U_l),
\end{equation}
and we estimate $F_\e(u_\e,U_l)$ for every $l\in\{1,\ldots,L\}$. To this end, for $l\in\{1,\ldots,L\}$ fixed and for $i\in Z_\e(U_l)$ set
\[\alpha_\e^l(i):=\sup\{\alpha\in\N\colon u_\e^j=M_lj+b_l\ \text{for every}\ j\in i+\e\alpha Q\}.\]
Thanks to \ref{H1} and \ref{H4} we deduce
\begin{align}\label{est:ub:02}
&F_\e(u_\e,U_l)\leq\sum_{i\in Z_\e(U_l)}\e^n\phi_i^\e(\{(M_lx)(i+j)\}_{j\in Z_\e(\Omega_i)})\nonumber\\
&+\sum_{i\in Z_\e(U_l)}\hspace*{-0.5em}\e^n\sum_{j\in Z_\e(\Omega_i)}\hspace*{-0.5em}\sum_{\begin{smallmatrix}\xi\in\Z^n\\j+\e\xi\in \Omega_i\end{smallmatrix}}\hspace*{-0.5em} c_{\e,\alpha_\e^l(i)}^{j,\xi}\min\left\{|D_\e^\xi u(i+j)|^p,\frac{1+|u(i+j+\e\xi)-(M_lx+b_l)(i+j+\e\xi)|}{\e}\right\}\nonumber\\
&=:I_{\e,1}^l+I_{\e,2}^l.
\end{align}
Moreover, \ref{H2} gives 
\begin{equation}\label{est:ub:03}
I_{\e,1}^l\leq c_1\sum_{i\in Z_\e(U_l)}\e^n(|M|^p+1)=c_1\int_{U_l}\left(|\nabla u|^p+1\right)\dx+ o(1),
\end{equation}
so that it remains to estimate $I_{\e,2}^l$. To do so, we need to introduce some notation.
In what follows for $\e>0$, $i\in Z_\e(U_l)$, $j\in Z_\e(\Omega_i)$ and $\xi\in\Z^n$ we use the abbreviation
\[\m_{\e,l}^{j,\xi}u(i):=\min\left\{|D_\e^\xi u(i+j)|^p,\frac{1+|u(i+j+\e\xi)-(M_lx+b_l)(i+j+\e\xi)|}{\e}\right\}.\]
Further, by
\[\NN(l):=\big\{l'\in\{1,\ldots,N\}\colon S_{l,l'}\ \text{is an $(n-1)$-dimensional simplex}\big\}\]
we denote the set of all indices which label the ``neighboring'' simplices of $U_l$. 
Moreover, for $\eta>0$ fixed and every $\e>0$ we choose $M_\eta>0$ such that 
\[\limsup_{\e\to 0}\sum_{\max\left\{\alpha,\frac{1}{\e}|j|,|\xi|\right\}> M_\eta^\e}c_{\e,\alpha}^{j,\xi}<\eta,\]
and we find $m_\e\in\N$ such that $\e m_\e\to 0$ and $m_\e>\tfrac{4M_\eta\cos\theta	}{\sin\theta}$, where $\theta\in (0,\pi)$ is as in Definition \ref{def:triangualation}.
Finally, for any $l'\in\NN(l)$ set
\[\I^{l'}_{\e}:= \{i\in Z_\e(U_l)\colon\dist_\infty(i,U_{l'})\leq\e m_\e\}\]
and
\[\J^{l'}_{\e}:=Z_\e(U_l)\setminus\I^{l'}_{\e}.\]
Setting $U_l^\e:=\{x\in U_l\colon\dist_\infty(x,\Rn\setminus U_l)>\e\}$ we get
\[\bigcap_{l'\in\NN(l)}\J_{\e}^{l'}=Z_\e(U_l^\e).\]
For $l'\in\NN(l)$ we also set
\[\LL_{\e}^{l'}:=\bigcap_{\begin{smallmatrix}l''\in\NN(l)\\l''\neq l'\end{smallmatrix}}\J_{\e}^{l''}\]
and we rewrite $I_{\e,2}^l$ as
\begin{align}\label{est:ub:04}
I_{\e,2}^{l} &=\sum_{i\in Z_\e(U_l^\e)}\e^n\sum_{j\in Z_\e(\Omega_i)}\sum_{\begin{smallmatrix}\xi\in\Z^n\\j+\e\xi\in\Omega_i\end{smallmatrix}}c_{\e,\alpha_\e^l(i)}^{j,\xi}\m_{\e,l}^{j,\xi}u(i)\nonumber\\
&+\dsum{l',l''\in\NN(l)}{l'\neq l''}\sum_{i\in\I_{\e,m}^{l'}\cap\I_{\e,m}^{l''}}\e^n\sum_{j\in Z_\e(\Omega_i)}\sum_{\begin{smallmatrix}\xi\in\Z^n\\j+\e\xi\in\Omega_i\end{smallmatrix}}c_{\e,\alpha_\e^l(i)}^{j,\xi}\m_{\e,l}^{j,\xi}u(i)\nonumber\\
&+\sum_{l'\in\NN(l)}\sum_{i\in\I^{l'}_{\e}\cap\mathcal{L}_{\e}^{l'}}\e^n\sum_{j\in Z_\e(\Omega_i)}\sum_{\begin{smallmatrix}\xi\in\Z^n\\j+\e\xi\in\Omega_i\end{smallmatrix}}c_{\e,\alpha_\e^l(i)}^{j,\xi}\m_{\e,l}^{j,\xi}u(i).
\end{align}
In order to estimate the first term in \eqref{est:ub:04} we note that
\[\e^{n-1}\#\big\{i\in Z_\e(U_l^\e)\colon \alpha_\e^l(i)=\alpha\big\}\leq c\Hn(\partial U_l)+o_\e(1)\]
for every $\alpha\in\N$. Moreover, for every $i\in Z_\e(U_l^\e)$ we have $\alpha_\e^l(i)\geq 2m_\e$.
Thus, the estimate $\m_{\e,l}^{j,\xi}u(i)\leq (2\|u\|_{L^\infty}+1)\e^{-1}$ yields
\begin{align}\label{est:ub:05}
 &\sum_{i\in Z_\e(U_l^\e)}\e^n\sum_{j\in Z_\e(\Omega_i)}\sum_{\begin{smallmatrix}\xi\in\Z^n\\j+\e\xi\in\Omega_i\end{smallmatrix}}c_{\e,\alpha_\e^l(i)}^{j,\xi}\m_{\e,l}^{j,\xi}u(i)\nonumber\\
 &\leq(1+2\|u\|_{L^\infty})\sum_{\alpha\geq 2m}\sum_{j\in Z_\e(\R^n)}\sum_{\xi\in\Z^n}c_{\e,\alpha}^{j,\xi}\e^{n-1}\#\big\{i\in Z_\e(U_l^\e)\colon \alpha_\e^l(i)=\alpha\big\}\nonumber\\
 &\leq c(u)\sum_{\max\left\{\alpha,\frac{1}{\e}|j|,|\xi|\right\}> M_\eta^\e}c_{\e,\alpha}^{j,\xi}.
\end{align}
To bound the second term in \eqref{est:ub:04} we observe that for every $l',l''\in\NN(l)$ with $l'\neq l''$ and every $\alpha\in\N$ we have
\[\e^{n-1}\#\big\{i\in \I_{\e}^{l'}\cap\I_{\e}^{l''}\colon \alpha_\e^l(i)=\alpha\big\}\leq \e m_\e c\left(\Hn(S_{l,l'})+\Hn(S_{l,l''})+o_\e(1)\right).\]
Hence, as in \eqref{est:ub:05} we obtain
\begin{align}\label{est:ub:07}
&\dsum{l',l''\in\NN(l)}{l'\neq l''}\sum_{i\in\I_{\e,m}^{l'}\cap\I_{\e,m}^{l''}}\e^n\sum_{j\in Z_\e(\Omega_i)}\sum_{\begin{smallmatrix}\xi\in\Z^n\\j+\e\xi\in\Omega_i\end{smallmatrix}}c_{\e,\alpha_\e^l(i)}^{j,\xi}\m_{\e,l}^{j,\xi}u(i)\nonumber\\
&\leq(1+2\|u\|_{L^\infty})\dsum{l',l''\in\NN(l)}{l'\neq l''}\sum_{\alpha\in\N}\sum_{j\in Z_\e(\Rn)}\sum_{\xi\in\Z^n}c_{\e,\alpha}^{j,\xi}\e^{n-1}\#\big\{i\in \I_{\e,m}^{l'}\cap\I_{\e,m}^{l''}\colon \alpha_\e^l(i)=\alpha\big\}\nonumber\\
&\leq c(u,n)\e m\sum_{\alpha\in\N}\sum_{j\in Z_\e(\Rn)}\sum_{\xi\in\Z^n}c_{\e,\alpha}^{j,\xi}\ \to 0\ \text{as}\ \e\to 0.
\end{align}
Finally, the last term in \eqref{est:ub:04} can be estimated as follows. If $j\in\e\Z^n$ and $\xi\in\Z^n$ are such that $\max\{\tfrac{1}{\e}|j|,|\xi|\}\geq \tfrac{m_\e\sin\theta}{4\cos\theta}$ then the choice of $m_\e$ allows us to deduce that
\begin{align}\label{est:ub:08a}
&\dsum{l',l''\in\NN(l)}{l'\neq l''}\sum_{i\in\I^{l'}_{\e}\cap\mathcal{L}_{\e}^{l'}}\e^n\sum_{\max\{\frac{1}{\e}|j|,|\xi|\}\geq\frac{m_\e\sin\theta}{4\cos\theta}}\hspace{-1.5em}c_{\e,\alpha_\e^l(i)}^{j,\xi}\m_{\e,l}^{j,\xi}u(i)\nonumber\\
&\leq(1+2\|u\|_{L^\infty})\dsum{l',l''\in\NN(l)}{l'\neq l''}\sum_{\alpha\in\N}\sum_{\max\{\frac{1}{\e}|j|,|\xi|\}>M_\eta^\e}\hspace{-1.5em}c_{\e,\alpha}^{j,\xi}\e^{n-1}\#\big\{i\in \I_{\e}^{l'}\cap\LL_{\e}^{l'}\colon\alpha_\e^l(i)=\alpha\big\}\nonumber\\
&\leq c(u,n)\sum_{\max\{\alpha,\frac{1}{\e}|j|,|\xi|\}> M_\eta^\e}\hspace{-1.5em}c_{\e,\alpha}^{j,\xi},
\end{align}
where in the last step we have used that 
\[\e^{n-1}\#\{i\in\I_{\e}^{l'}\cap\LL_{\e}^{l'}\colon \alpha_\e^l(i)=\alpha\}\leq c\Hn(S_{l,l'})+o_\e(1).\]
Otherwise, for every $l'\in\NN(l)$, $i\in\I_{\e}^{l'}\cap\LL_{\e}^{l'}$ and $j\in\Z_\e(\Omega_i)$, $\xi\in\Z^n$ with $\max\{\tfrac{1}{\e}|j|,|\xi|\}< \tfrac{m_\e\sin\theta}{4\cos\theta}$ we have $[i+j,i+j+\e\xi]\subset U_{l}\cup U_{l'}$. We now distinguish between the case where $S_{l,l'}$ does not belong to $\overline{S_u}$ (\ie $(l,l')\neq (l_k,l_k')$ for every $k\in\{1,\ldots,K\}$) and the case where $(l,l')=(l_k,l_k')$ for some $k\in\{1,\ldots,K\}$. 

In the first case we have $u\in W^{1,\infty}(U_l\cup U_{l'};\R^d)$; hence, the inclusion $[i+j,i+j+\e\xi]\subset U_{l}\cup U_{l'}$ together with Jensen's inequality yield
\begin{align*}
\m_{\e,l}^{j,\xi}u(i) &\leq|D_\e^\xi u(i+j)|^p=\frac{1}{|\xi|^p}\left|\int_0^1\nabla u(i+j+\e t\xi)\xi\dt\right|^p\nonumber\\
&\leq\frac{1}{|\xi|^p}\int_0^1|\nabla u(i+j+\e t\xi)|^p|\xi|^p\dt\leq \|\nabla u\|_{L^\infty(U_l\cup U_{l'};\R^d)},
\end{align*}
so that
\begin{align}\label{est:ub:08}
&\sum_{i\in\I^{l'}_{\e}\cap\mathcal{L}_{\e}^{l'}}\dsum{j\in Z_\e(\Omega_i)}{|j|<\tfrac{\e m_\e\sin\theta}{4\cos\theta}}\sum_{\begin{smallmatrix}\xi\in\Z^n\\|\xi|<\tfrac{m_\e\sin\theta}{4\cos\theta}\end{smallmatrix}}c_{\e,\alpha_\e^l(i)}^{j,\xi}\e^n\m_{\e,l}^{j,\xi}u(i)\nonumber\\
&\leq\sum_{\alpha\in\N}\sum_{j\in Z_\e(\Rn)}\sum_{\xi\in\Z^n}\|\nabla u\|_{L^\infty(U_l\cup U_{l'};\R^d)}c_{\e,\alpha}^{j,\xi}\e^{n}\#\big\{i\in \I_{\e}^{l'}\cap\LL_{\e}^{l'}\colon\alpha_\e^l(i)=\alpha\big\}\nonumber\\
&\leq \e c(u)\sum_{\alpha\in\N}\sum_{j\in Z_\e(\R^n)}\sum_{\xi\in\Z^n}c_{\e,\alpha}^{j,\xi}\ \to 0\ \text{as}\ \e\to 0.
\end{align}
Suppose finally that $S_{l,l'}=S_{l_k,l_k'}$ for some $k\in\{1,\ldots,K\}$. Then we may estimate $\e^n\m_{\e,l}^{j,\xi}u(i)$ as follows,
\begin{align*}
&\e^n\m_{\e,l_k}^{j,\xi}u(i)\leq \e^{n-1}\big(1+|(M_{l_k'}x+b_{l_k'})(i+j+\e\xi)-(M_{l_k}x+b_{l_k})(i+j+\e\xi)|\big)\\
&\leq\e^{n-1}\big(1+|(M_{l_k'}x+b_{l_k'})(p_{\nu_k}(i)+\dist(i,\Pi_{\nu_k})+j+\e\xi)\\
&\hspace*{4.6em}-(M_{l_k}x+b_{l_k})(p_{\nu_k}(i)+\dist(i,\Pi_{\nu_k})+j+\e\xi)|\big)\\
&\leq \e^{n-1}\Big(1+|M_{l_k'}p_{\nu_k}(i)+b_{l_k'}-(M_{l_k}p_{\nu_k}(i)+b_{l_k})|+|M_{l_k'}-M_{l_k}|\Big(\sqrt{n}+\frac{\sin\theta}{4\cos\theta}\Big)\e m_\e\Big)\\
&\leq c\int_{p_{\nu_k}(i)+[0,\e)^{n-1}}\Big(1+|M_{l_k'}p_{\nu_k}(i)+b_{l_k'}-(M_{l_k}p_{\nu_k}(i)+b_{l_k})|+\e m_\e|M_{l_k'}-M_{l_k}|\Big)\dHn(y)\\
&\leq c\int_{p_{\nu_k}(i)+[0,\e)^{n-1}}\Big(1+|M_{l_k'}y+b_{l_k'}-(M_{l_k}y+b_{l_k})|+\e(m_\e+1)|M_{l_k'}-M_{l_k}|\Big)\dHn(y).
\end{align*}
Note that $M_{l'_k}y+b_{l_k'}=u^+(y)$, $M_{l_k}y+b_{l_k}=u^-(y)$ for $\Hn$-a.e.~$y\in S_{l_k,l_k'}$.
Hence, we obtain
\begin{align}\label{est:ub:09}
&\sum_{i\in\I^{l_k'}_{\e}\cap\mathcal{L}_{\e}^{l_k'}}\dsum{j\in Z_\e(\Omega_i)}{|j|<\tfrac{\e m_\e\sin\theta}{4\cos\theta}}\sum_{\begin{smallmatrix}\xi\in\Z^n\\|\xi|<\tfrac{m_\e\sin\theta}{4\cos\theta}\end{smallmatrix}}c_{\e,\alpha_\e^l(i)}^{j,\xi}\e^n\m_{\e,l}^{j,\xi}u(i)\nonumber\\
&\leq c\sum_{\alpha\in\N}\sum_{j\in Z_\e(\R^n)}\sum_{\xi\in\Z^n}c_{\e,\alpha}^{j,\xi}\dsum{i\in\I_{\e}^{l_k'}\cap\LL_{\e}^{l_k'}}{\alpha_{\e}^{l_k}(i)=\alpha}\int_{p_{\nu_k}(i)+[0,\e)^{n-1}}\Big(1+|u^+(y)-u^-(y)|+c(u)\e m_\e\Big)\dHn(y)\nonumber\\
&\leq \Big(c\int_{S_{l_k,l_k'}}(1+|u^+(y)-u^-(y)|)\dHn(y)+c(u)\e m_\e\Hn(S_{l_k,l_k'})\Big)\sum_{\alpha\in\N}\sum_{j\in\Z_\e(\R^n)}\sum_{\xi\in\Z^n}c_{\e,\alpha}^{j,\xi}.
\end{align}
Eventually, summing up over $l$ and gathering \eqref{est:ub:01}-\eqref{est:ub:09}, thanks to the choice of $M_\eta^\e$ and $m_\e$ we deduce that
\[\limsup_{\e\to 0}F_\e(u_\e,A)\leq c\left(\int_A\left(|\nabla u|^p+1\right)\dx+\int_{S_u\cap\clA}(1+|u^+(y)-u^-(y)|)\dHn(y)\right)+c(u)\eta,\]
hence \eqref{ub:prel} follows by the arbitrariness of $\eta>0$. 

In the general case $A\in\Areg(\Omega)$ we choose $A'$ polyhedral with $A\wcont A'\wcont\tomega$. Since $F''$ is increasing in $A$ we then obtain
\[F''(u,A)\leq F''(u,A')\leq c\left(\int_{A'}\left(|\nabla u|^p+1\right)\dx+\int_{S_u\cap\overline{A'}}(1+|u^+(y)-u^-(y)|)\dHn(y)\right),\]
and \eqref{ub:prel} follows by letting $A'\searrow A$.
\end{step}
\begin{step}{Step 2}
We now prove \eqref{upperbound} for $A\in\Areg(\Omega)$ and $u\in SBV^p(A;\R^d)\cap L^\infty(A;\R^d)$. Thanks to the Lipschitz-regularity of $A$, using a local reflection argument we can extend $u$ to a function $\tu\in SBV^p(\tomega)\cap L^\infty(\tomega)$ in such a way that $\Hn(S_{\tu}\cap\partial A)=0$. Thus Step 1 together with Proposition \ref{prop:loc} give
\begin{align*}
F''(u,A)=F''(\tu_{|A},A) &\leq c\left(\int_{A}\left(|\nabla u|^p+1\right)\dx+\int_{S_u\cap\clA}(1+|u^+(y)-u^-(y)|)\dHn(y)\right)\\
&=c\left(\int_A\left(|\nabla u|^p+1\right)\dx+\int_{S_u\cap A}(1+|u^+(y)-u^-(y)|)\dHn(y)\right).
\end{align*}
\end{step}
\begin{step}{Step 3}
We finally remove the assumption $u\in SBV^p(A;\R^d)\cap L^\infty(A;\R^d)$ by considering the truncated functions introduced in Remark \ref{rem:truncation}. More precisely, for any $u\in GSBV^p(A;\R^d)\cap L^1(\Omega;\R^d)$ and any $k\in\N$ consider the truncation $T_ku\in SBV^p(A;\R^d)\cap L^\infty(\Omega;\R^d)$. Combining Step 2 with \eqref{conv:trunc} we then obtain
\begin{align*}
F''(u,A) &=\lim_{k\to+\infty}F''(u_k,A)\\
&\leq c\limsup_{k\to +\infty}\left(\int_A\left(|\nabla T_ku|^p+1\right)\dx+\int_{S_{T_ku}\cap\clA}(1+|(T_ku)^+(y)-(T_ku)^-(y)|)\dHn(y)\right),
\end{align*}
hence \eqref{upperbound} follows by Properties (ii) and (iii) in Remark \ref{rem:truncation}.
\end{step}
\end{proof}
As a next step we establish an almost subadditivity of the functionals $F''$. As a preliminary step we prove a version of \cite[Lemma 3.6]{AC04} adapted to our setting.
\begin{lem}\label{lem:fin:diff}
There exists $c>0$ depending only on $n$ such that for any $u\in\A_\e(\Omega;\Rd)$ and any $\xi\in\Z^n$ we have
\[\dsum{i\in Z_\e(\Omega)}{i+\e\xi\in\Omega}\min\left\{|D_\e^\xi u(i)|^p,\frac{1}{\e|\xi|}\right\}\leq c\sum_{i\in Z_\e(B_R)}\min\left\{\sum_{k=1}^n|D_\e^k u(i)|^p,\frac{1}{\e}\right\},\]
where $B_R\subset\R^n$ is any open ball with $\Omega\wcont B_R$.
\end{lem}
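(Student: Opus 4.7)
The strategy is a classical telescoping-along-a-discrete-path argument, adapted from \cite{AC04} to handle the minimum with $1/\e$. I would extend $u$ by $0$ outside $\Omega$ and, for each pair $i,i+\e\xi\in Z_\e(\Omega)$, fix a canonical axis-aligned path $i=i_0,i_1,\ldots,i_N=i+\e\xi$ of length $N:=|\xi|_1$, with unit increments $i_l-i_{l-1}=\e\eta_l$, $\eta_l\in\{\pm e_1,\ldots,\pm e_n\}$, chosen translation-covariantly in $i$ (for instance, traverse first the $e_1$-coordinate, then $e_2$, and so on). Such a path is contained in a neighborhood of $\Omega$ whose diameter is at most $\sqrt n\,\e|\xi|\le\sqrt n\,\diam(\Omega)$, hence in $B_R$ provided $R$ is sufficiently large; this is the implicit content of the hypothesis $\Omega\wcont B_R$.

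The first step is the pointwise bound
\begin{equation*}
\min\!\Big\{|D_\e^\xi u(i)|^p,\tfrac{1}{\e|\xi|}\Big\}\le \tfrac{c(n)}{|\xi|}\sum_{l=1}^{N}\min\!\Big\{|D_\e^{\eta_l}u(i_{l-1})|^p,\tfrac{1}{\e}\Big\}.
\end{equation*}
For its first argument, telescoping combined with Jensen's inequality and $N\le\sqrt n\,|\xi|$ gives
$|D_\e^\xi u(i)|^p\le N^{p-1}|\xi|^{-p}\sum_l|D_\e^{\eta_l}u(i_{l-1})|^p\le (n^{(p-1)/2}/|\xi|)\sum_l|D_\e^{\eta_l}u(i_{l-1})|^p$,
while its second argument factors as $\tfrac{1}{|\xi|}\cdot\tfrac{1}{\e}$. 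The pointwise bound then follows from the two elementary facts $\min\{ca,b\}\le c\min\{a,b\}$ for $c\ge 1$ and $\min\{\sum_l a_l,b\}\le\sum_l\min\{a_l,b\}$.

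The final step is to sum the pointwise bound over $i$ and exchange the order of summation. For each fixed $l\in\{1,\ldots,N\}$ the map $i\mapsto i_{l-1}$ is a translation of $\e\Z^n$, hence injective; writing $\eta_l=\pm e_{k(l)}$ and using the identity $|D_\e^{-e_k}u(j)|=|D_\e^{e_k}u(j-\e e_k)|$, one obtains
\begin{equation*}
\dsum{i\in Z_\e(\Omega)}{i+\e\xi\in\Omega}\min\!\Big\{|D_\e^{\eta_l}u(i_{l-1})|^p,\tfrac{1}{\e}\Big\}\le\sum_{j\in Z_\e(B_R)}\min\!\Big\{\sum_{k=1}^n|D_\e^k u(j)|^p,\tfrac{1}{\e}\Big\}.
\end{equation*}
Summing over $l=1,\ldots,N$ contributes a factor $N\le\sqrt n\,|\xi|$, which cancels the $1/|\xi|$ from the pointwise estimate and leaves a constant depending only on $n$.

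The main point to watch is the boundary bookkeeping: the axis-aligned path may momentarily leave $\Omega$, and one must extend $u$ (by $0$, say) to make every nearest-neighbor difference along it well-defined. This is harmless, because any artificially large differences at bonds crossing $\partial\Omega$ are automatically capped at $1/\e$ on the right-hand side and therefore contribute at worst a bounded overcount, which is absorbed into $c(n)$.
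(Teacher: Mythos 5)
Your proof is correct and follows essentially the same route as the paper's, namely the telescoping-plus-Jensen argument of \cite[Lemma 3.6]{AC04} adapted to the truncated quantity by using monotonicity and subadditivity of the minimum. The only difference is cosmetic bookkeeping: the paper enlarges each path contribution to the full tube $\I_\e^\xi(i)$ of lattice points near the segment $[i,i+\e\xi]$ and then bounds the multiplicity $\#\{i\colon j\in\I_\e^\xi(i)\}\leq c(n)|\xi|$ when exchanging sums, whereas you keep the individual path points and exploit the translation-covariance (hence injectivity) of $i\mapsto i_{l-1}$ for each fixed $l$ before summing over $l$; both routes produce the factor $|\xi|_1\leq\sqrt n\,|\xi|$ that cancels the $1/|\xi|$ coming from Jensen's inequality.
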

\begin{proof}
Following the same procedure as in \cite[Lemma 3.6]{AC04} for $\xi\in\Z^n$ and $i\in Z_\e(\R^n)$ we set
\[\I_\e^\xi(i):=\{j\in Z_\e(\R^n)\colon (j+[-\e,\e]^n)\cap [i,i+\e\xi]\neq\emptyset\},\]
and for $i\in Z_\e(\Omega)$ with $i+\e\xi\in\Omega$ we choose a sequence $(i_h)_{h=0}^{|\xi|_1}\subset\I_\e^\xi(i)$ satisfying
\[i_0=i,\quad i_{|\xi|_1}=i+\e\xi,\quad i_h=i_{h-1}+\e e_{i(h)}\ \text{for some}\ i(h)\in\{1,\ldots,n\},\]
so that
\[D_\e^\xi u(i)=\frac{1}{|\xi|}\sum_{h=1}^{|\xi|_1}D_\e^{i(h)}u(i_{h-1}).\]
As in \cite[Lemma 3.6]{AC04}, applying Jensen's inequality we obtain
\[|D_\e^\xi u(i)|^p\leq\frac{n^\frac{p}{2}}{|\xi|_1}\sum_{h=1}^{|\xi|_1}|D_\e^{i(h)}u(i_{h-1})|^p,\]
hence the fact that $\min$ is non-decreasing yields
\begin{align}\label{est:fin:diff:01}
\min &\left\{|D_\e^\xi u(i)|^p,\frac{1}{\e|\xi|}\right\}\leq\min\left\{\frac{n^\frac{p}{2}}{|\xi |_1}\sum_{h=1}^{|\xi|_1}|D_\e^{i(h)}u(i_{h-1})|^p,\frac{1}{\e|\xi|}\right\}\nonumber\\
&=\frac{n^\frac{p}{2}}{|\xi|_1}\min\left\{\sum_{h=1}^{|\xi|_1}|D_\e^{i(h)}u(i_{h-1})|^p,\frac{|\xi|_1}{\e|\xi|n^\frac{p}{2}}\right\} \leq\frac{n^\frac{p}{2}}{|\xi|_1}\min\left\{\sum_{j\in\I_\e^\xi(i)}\sum_{k=1}^n|D_\e^ku(j)|^p,\frac{|\xi|_1}{\e|\xi|n^\frac{p}{2}}\right\}\nonumber\\
&\leq\frac{n^\frac{p}{2}}{|\xi|_1}\sum_{j\in\I_\e^\xi(i)}\min\left\{\sum_{k=1}^n|D_\e^ku(j)|^p,\frac{|\xi|_1}{\e|\xi|n^\frac{p}{2}}\right\},
\end{align}
where in the last step we have used the subadditivity of $\min$. Let $B_R\subset\R^n$ be any open ball with $\Omega\wcont B_R$. Note that for $\xi\in\Z^n$, $i\in Z_\e(\Omega)$ with $i+\e\xi\in\Omega$ and $\e$ sufficiently small there holds $\I_\e^\xi(i)\subset Z_\e(B_R)$. Thus, from \eqref{est:fin:diff:01} together with the fact that $\frac{|\xi|_1}{|\xi|n^\frac{p}{2}}\leq 1$ we deduce
\begin{align}\label{est:fin:diff:02}
\dsum{i\in Z_\e(\Omega)}{i+\e\xi\in\Omega}\left\{|D_\e^\xi u(i)^p,\frac{1}{\e|\xi|}\right\}\leq\frac{n^\frac{p}{2}}{|\xi|_1}\sum_{j\in Z_\e(B_R)}\#\J_\e^\xi(j)\min\left\{\sum_{k=1}^n|D_\e^ku(j)|^p,\frac{1}{\e}\right\},
\end{align}
where for any $j\in Z_\e(B_R)$ we have set
\[\J_\e^\xi(j):=\{i\in Z_\e(\Omega)\colon i+\e\xi\in\Omega,\ j\in\I_\e^\xi(i)\}.\]
In \cite[Lemma 3.6]{AC04} it has been proved that $\#\J_\e^\xi(j)\leq c(n)|\xi|$ for some $c(n)>0$ independent of $\e,j,\xi$, hence the result follows from \eqref{est:fin:diff:02} taking $c=c(n)n^\frac{p}{2}$ upon noticing that $|\xi|\leq|\xi|_1$.
\end{proof}
\begin{prop}[Subadditivity]\label{prop:subad}
Let $u\in GSBV^p(\Omega;\R^d)\cap L^1(\Omega;\Rd)$ and $A,B\in\A(\Omega)$ and suppose that $\phi_i^\e$ satisfy \ref{H1}--\ref{H5}. For every $A',B'\in\Areg(\Omega)$ with $A'\wcont A$ and $B'\wcont B$ we have
\begin{equation}\label{subadditivity}
F''(u,A'\cup B')\leq F''(u,A)+F''(u,B).
\end{equation}
\end{prop}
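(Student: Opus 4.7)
By Remark~\ref{rem:truncation} and \eqref{conv:trunc} it suffices to prove \eqref{subadditivity} for $u\in SBV^p(\Omega;\Rd)\cap L^\infty(\Omega;\Rd)$. Pick recovery sequences $(u_\e),(v_\e)\subset\A_\e(\Omega;\Rd)$ realising $F''(u,A)$ and $F''(u,B)$; by \eqref{est:trunc2} we may assume $\|u_\e\|_{L^\infty},\|v_\e\|_{L^\infty}\le 3\|u\|_{L^\infty}$ uniformly in $\e$, so that $\|u_\e-v_\e\|_{L^p(\Omega)}\to 0$ follows from their $L^1$-convergence to $u$. Set $d:=\min\{\dist(A',\Rn\setminus A),\dist(B',\Rn\setminus B)\}>0$, fix $N\in\N$, $\eta>0$, and let $M=M_\eta^\e$ be as in \eqref{H4:summability2}. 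Choose nested open sets $A'=A_0\wcont A_1\wcont\cdots\wcont A_N\wcont A$ with $\dist(A_{k-1},\Rn\setminus A_k)\ge d/(2N)$, arranged so that $A_0\supseteq A\setminus\overline{B}$; in this case every shell $A_k\setminus A_{k-1}$ together with its $\e M$-neighbourhood lies in $A\cap B$ as soon as $\e M\ll d/N$. Pick smooth cut-offs $\varphi_k\in C_c^\infty(\Rn;[0,1])$ equal to $1$ on $A_{k-1}$, supported in $A_k$, with $\|\nabla\varphi_k\|_\infty\le 4N/d$, and define the interpolants $w_\e^k(i):=\varphi_k(i)u_\e(i)+(1-\varphi_k(i))v_\e(i)$, which converge to $u$ in $L^1(\Omega;\Rd)$ uniformly in $k$.

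\textbf{Pointwise estimate.} Partition $Z_\e(A'\cup B')$ into the deep interior $I_k^+=\{i\colon i+\e MQ\subset A_{k-1}\}$, the far exterior $I_k^-=\{i\colon(i+\e MQ)\cap A_k=\emptyset\}$, and the buffer $I_k^0$ consisting of the remaining sites; by the layering choice $I_k^+\subset A$, $I_k^-\subset B$, and $I_k^0\subset A\cap B$. On $I_k^+$ the configuration $w_\e^k$ coincides with $u_\e$ on a cube of side $\e M$ around $i$, so \ref{H4} (with $\alpha\ge M$) yields $\phi_i^\e(w_\e^k)\le\phi_i^\e(u_\e)+\rho_i^+$; analogously $\phi_i^\e(w_\e^k)\le\phi_i^\e(v_\e)+\rho_i^-$ on $I_k^-$. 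On $I_k^0$ we invoke \ref{H5} to obtain
\[\phi_i^\e(w_\e^k)\le c_3\bigl[\phi_i^\e(u_\e)+\phi_i^\e(v_\e)\bigr]+R_i^\e(u_\e,v_\e,\varphi_k).\]
Summing over $i\in Z_\e(A'\cup B')$ produces
\[F_\e(w_\e^k,A'\cup B')\le F_\e(u_\e,A)+F_\e(v_\e,B)+\mathcal{E}_k^\e,\]
with $\mathcal{E}_k^\e$ collecting the \ref{H4}-corrections on $I_k^\pm$, the surplus $(c_3-1)\sum_{I_k^0}\e^n[\phi_i^\e(u_\e)+\phi_i^\e(v_\e)]$ and $\sum_{I_k^0}\e^n R_i^\e$.

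\textbf{Averaging and conclusion.} Summing the previous inequality over $k=1,\dots,N$ and using that the $\e M$-enlarged shells overlap with multiplicity $\le 2$ (since $\e MN/d\to 0$), the surplus is bounded by $2(c_3-1)[F_\e(u_\e,A)+F_\e(v_\e,B)]$; the \ref{H4}-corrections total at most $CN\eta\Hn(\partial A)$ by \eqref{H4:summability2} and the upper Minkowsky content of $\partial A_{k-1}$; the gradient part of $\sum_k\sum_{I_k^0}\e^n R_i^\e$ is controlled by $2(N/d)^p\|u_\e-v_\e\|_{L^p(\Omega)}^p\sum_{j,\xi}c_\e^{j,\xi}$ and vanishes as $\e\to 0$ with $N$ fixed thanks to \eqref{H5:summability}; finally the min-part of $R_i^\e$ is dominated, via Lemma~\ref{lem:fin:diff} and \ref{H3}, by $C\bigl[F_\e(u_\e,\Omega)+F_\e(v_\e,\Omega)\bigr]\sum_{j,\xi}c_\e^{j,\xi}$, so is $O(1)$ and disappears after division by $N$. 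Picking $k^*(\e)\in\{1,\dots,N\}$ minimising $F_\e(w_\e^k,A'\cup B')$, bounding the minimum by the average, and passing to $\liminf_{\e\to 0}$ (with $N,\eta$ fixed), then $\eta\to 0$ and finally $N\to\infty$, yields \eqref{subadditivity}. The principal difficulty is engineering the layering so that every transition shell lies in $A\cap B$, which is what allows the $c_3\phi_i^\e(v_\e)$-contributions on the transition to be absorbed into $F_\e(v_\e,B)$; beyond that, the blow-up $(N/d)^p$ of $\nabla\varphi_k$ is defeated by the $L^p$-smallness of $u_\e-v_\e$, which dictates the order of limits $\e\to 0$ before $N\to\infty$.
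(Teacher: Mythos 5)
Your argument follows essentially the same layering-and-averaging strategy as the paper (cut-offs between nested shells, \ref{H4} on the two ``pure'' regions, \ref{H5} on the transition, averaging over the shell index), but it has a genuine gap at the point where you control the min-part of $R_i^\e$.

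You assert that the terms
\[\sum_{i\in I_k^0}\e^n\sum_{j,\xi}c_\e^{j,\xi}\Bigl(\min\bigl\{|D_\e^\xi u_\e(j)|^p,\tfrac{1}{\e|\xi|}\bigr\}+\min\bigl\{|D_\e^\xi v_\e(j)|^p,\tfrac{1}{\e|\xi|}\bigr\}\Bigr)\]
are ``dominated, via Lemma~\ref{lem:fin:diff} and \ref{H3}, by $C[F_\e(u_\e,\Omega)+F_\e(v_\e,\Omega)]\sum_{j,\xi}c_\e^{j,\xi}$.'' But there is no reason for $F_\e(u_\e,\Omega)$ or $F_\e(v_\e,\Omega)$ to be bounded: $(u_\e)$ is a recovery sequence only on $A$, so $F_\e(u_\e,A)$ is controlled but $F_\e(u_\e,\Omega\setminus A)$ can be arbitrarily large (for instance, $u_\e$ could oscillate wildly outside $A$ while still converging to $u$ in $L^1(\Omega)$). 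Since the inner sums in $R_i^\e$ run over \emph{all} $j\in Z_\e(\Omega_i)$ (the nonlocality in \ref{H5} is not range-limited), the contribution from $\Omega\setminus A$ is picked up and is uncontrolled. This is exactly why the paper's proof has a preliminary step: it extends $u$ to an $SBV^p\cap L^\infty$ function on a larger ball $B_R\supset\supset\Omega$, uses Proposition~\ref{prop:ub} to get a \emph{globally} near-optimal sequence $(w_\e)$ with $\sup_\e\sum_{i\in Z_\e(B_R)}\e^n\min\{\sum_k|D_\e^kw_\e(i)|^p,1/\e\}<+\infty$, and then glues $u_\e$ (on a slightly smaller $A''\wcont A$) with $w_\e$ outside — and similarly for $v_\e$ — so that the resulting $\tilde u_\e,\tilde v_\e$ have globally controlled difference quotients (as in \eqref{est:subad:07}--\eqref{est:subad:08}) while $F_\e(\tilde u_\e,\cdot)$ and $F_\e(\tilde v_\e,\cdot)$ still compare to $F_\e(u_\e,A)$ and $F_\e(v_\e,B)$ via \ref{H4}. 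Without this modification Lemma~\ref{lem:fin:diff} has no finite right-hand side to feed on, and the averaging argument does not close.

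A secondary issue: you impose $A_0=A'\supseteq A\setminus\overline{B}$ so that the transition shells fall into $A\cap B$, but $A\setminus\overline{B}$ need not be compactly contained in $A$, so this constraint can conflict with $A'\wcont A$. The paper sidesteps the whole question by decomposing $A'\cup B'$ as $A_{l-1}\cup\bigl((A_{l+2}\setminus A_{l-1})\cap B'\bigr)\cup(B'\setminus A_{l+2})$, with all layers $A_l$ sitting between $A'$ and some $A''\wcont A$; the intersection with $B'$ then automatically places the \ref{H5}-region inside $A\cap B$, with no extra geometric hypothesis. You should adopt that decomposition (or equivalently, choose the interpolation profile so that $w_\e^k\equiv v_\e$ outside some $A''\wcont A$ and only estimate $F_\e(w_\e^k,\cdot)$ on the three displayed sets) rather than constraining $A_0$.
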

\begin{proof}
It suffices to prove the result for $u\in SBV^p(\Omega;\R^d)\cap L^\infty(\Omega;\R^d)$, then the general case follows by arguing as in Step 3 of Proposition \ref{prop:ub}. Moreover, we can assume that $F''(u,A)+F''(u,B)<+\infty$, otherwise the inequality trivially holds. Let $(u_\e),(v_\e)\subset\A_\e(\Omega;\Rd)$ be two sequences converging in $L^1(\Omega;\R^d)$ to $u$ with
\begin{align}
\limsup_{\e\to 0}F_\e(u_\e,A) &=F''(u,A)<+\infty,\label{est:subad:01}\\
\limsup_{\e\to 0}F_\e(v_\e,B) &=F''(u,B)<+\infty.\label{est:subad:01a}
\end{align}
Thanks to \ref{H6}, upon considering the truncated sequences $(T_Mu_\e),(T_Mv_\e)$ with $M=\|u\|_{L^\infty(\Omega;\R^d)}$ we can always assume that $\|u_\e\|_{L^\infty(\Omega;\R^d)},\|v_\e\|_{L^\infty(\Omega;\R^d)}\leq 3\|u\|_{L^\infty(\Omega;\R^d)}$ for every $\e>0$, which implies that $u_\e\to u$, $v_\e\to u$ also in $L^p(\Omega;\R^d)$.
Moreover, in view of \ref{H3} we get
\begin{align}
&\sup_{\e>0}\sum_{i\in Z_\e(A'')}\e^n\min\left\{\sum_{k=1}^n|D_\e^ku_\e(i)|^p,\frac{1}{\e}\right\}<+\infty,\label{est:subad:02}\\
&\sup_{\e>0}\sum_{i\in Z_\e(B'')}\e^n\min\left\{\sum_{k=1}^n|D_\e^kv_\e(i)|^p,\frac{1}{\e}\right\}<+\infty,\label{est:subad:03}
\end{align}
for every $A''\wcont A$, $B''\wcont B$. 
\begin{step}{Step 1}
We first replace $(u_\e)$ and $(v_\e)$ by sequences $(\tu_\e),(\tilde{v}_\e)$ satisfying \eqref{est:subad:02} and \eqref{est:subad:03} with $B_R$ in place $A''$ (respectively $B''$), where $B_R\subset\R^n$ is an open ball with $\Omega\wcont B_R$. To do so, arguing as in Proposition \ref{prop:ub} we extend $u\in SBV^p(\Omega;\R^d)\cap L^\infty(\Omega;\R^d)$ to a function $\tu\in SBV^p(B_R;\R^d)\cap L^\infty(B_R;\R^d)$ with
\begin{equation}\label{est:subad:04}
F''(\tu_{|\Omega},\Omega)\leq c\left(\int_\Omega(|\nabla u|^p+1)\dx+\int_{S_u}(1+|u^+(y)-u^-(y)|)\dHn(y)\right)<+\infty.
\end{equation}
In view of \eqref{est:subad:04} there exists a sequence $(w_\e)\subset\A_\e(\Omega;\Rd)$ converging in $L^1(\Omega;\R^d)$ to $\tu_{|\Omega}=u$ with
\[\limsup_{\e\to 0}F_\e(w_\e,\Omega)=F''(\tu_{|\Omega},\Omega)<+\infty.\]
Arguing again by truncation we can assume that $\|w_\e\|_{L^\infty(\Omega;\R^d)}\leq 3\|u\|_{L^\infty(\Omega;\R^d)}$ for every $\e>0$ and thus $w_\e\to u$ in $L^p(\Omega;\R^d)$. Moreover, appealing once more to \ref{H3}, upon extending $w_\e$ by $0$ outside of $\Omega$ we get
\begin{equation}\label{est:subad:05}
\sup_{\e>0}\sum_{i\in Z_\e(B_R)}\e^n\min\left\{\sum_{k=1}^n|D_\e^kw_\e(i)|^p,\frac{1}{\e}\right\}<+\infty.
\end{equation}
We now choose $A'',A''',B'',B'''\in\Areg(\Omega)$ with $A'\wcont A''\wcont A'''\wcont A$ and $B'\wcont B''\wcont B'''\wcont B$ and cut-off functions $\varphi_A$ between $A''$ and $A'''$  and $\varphi_B$ between $B''$ and $B'''$. Set
\begin{align*}
&\tu_\e:=\varphi_A u_\e+(1-\varphi_A)w_\e\\
&\tilde{v}_\e:=\varphi_B v_\e+(1-\varphi_B)w_\e.
\end{align*}
We still have $\tu_\e,\tilde{v}_\e\to u$ in $L^p(\Omega;\R^d)$, so that
\begin{equation}\label{est:subad:06}
\lim_{\e\to 0}\sum_{i\in Z_\e(\Omega)}\e^n|\tu_\e-\tilde{v}_\e|^p=0.
\end{equation}
Further, for every $i\in Z_\e(B_R)$ and every $k\in\{1,\ldots,n\}$ there holds
\[D_\e^k\tu_\e(i)=\varphi_A(i+\e e_k)D_\e^ku_\e(i)+(1-\varphi_A(i+\e e_k))D_\e^kw_\e(i)+D_\e^k\varphi_A(i)(v_\e^i-w_\e^i).\]
Thus, \eqref{est:subad:03} and \eqref{est:subad:05} together with the equi-boundedness of $\|v_\e\|_{L^p(\Omega;\R^d)}, \|w_\e\|_{L^p(\Omega;\R^d)}$ and the fact that $\{\varphi_A>0\}\wcont A$ yield
\begin{equation}\label{est:subad:07}
\sup_{\e>0}\sum_{i\in Z_\e(B_R)}\e^n\min\left\{\sum_{k=1}^n|D_\e^k\tu_\e(i)|^p,\frac{1}{\e}\right\}<+\infty.
\end{equation}
Analogously we also obtain
\begin{equation}\label{est:subad:08}
\sup_{\e>0}\sum_{i\in Z_\e(B_R)}\e^n\min\left\{\sum_{k=1}^n|D_\e^k\tilde{v}_\e(i)|^p,\frac{1}{\e}\right\}<+\infty.
\end{equation}
\end{step}
\begin{step}{Step 2}
For fixed $\eta>0$ we now construct a sequence $(\tilde{w}_\e)\subset\A_\e(\Omega;\Rd)$ converging to $u$ in $L^1(\Omega;\R^d)$ and satisfying
\begin{equation}\label{est:subad:08a}
\limsup_{\e\to 0} F_\e(\tilde{w}_\e,A'\cup B')\leq (1+\eta)\big(F''(u,A)+F''(u,B)\big)+c(u,A',B')\eta,
\end{equation}
then \eqref{subadditivity} follows by the arbitrariness of $\eta>0$.

Let $\eta>0$ be arbitrary and for every $\e>0$ let $M^\e_\eta>0$ be as in \eqref{H4:summability2} in \ref{H4} with
\[\limsup_{\e\to 0}\sum_{\max\left\{\alpha,\frac{1}{\e}|j|,|\xi|\right\}> M_\eta^\e}c_{\e,\alpha}^{j,\xi}<\eta.\]
Moreover, set $\dd_A:=\dist(A',\Rn\setminus A'')$, choose $L\in\N$ and for every $l\in\{1,\ldots,L\}$ set
\[A_l:=\Big\{x\in A''\colon \dist(x,A')<\frac{l\dd_A}{L}\Big\},\]
and let $A_0:=A'$. Note that up to choosing $A''$ such that $\dd_A$ is small enough the sets $A_l$ have Lipschitz-boundary for every $l\in\{1,\ldots,L\}$ and satisfy $\Hn(\partial A_l)\leq\Hn(\partial A')+1$.

For every $l\in\{1,\ldots,L-1\}$ let $\varphi_l$ be a cut-off function between $A_l$ and $A_{l+1}$, so that $\varphi_l\equiv 1$ on $A_l$, $\varphi_l\equiv 0$ on $\Omega\setminus A_{l+1}$ and $\|\nabla \varphi_l\|_{L^\infty(\Omega,\R^n)}\leq\frac{2L}{\dd_A}$.

We also set $\dd_B:=\dist(B',\Rn\setminus B'')$ and we choose $\e_0>0$ such that $\e\sqrt{n}M_\eta^\e <\min\{\dd_B,\tfrac{\dd_A}{L}\}$ for every $\e\in (0,\e_0)$. For every $l\in\{1,\ldots,L-3\}$ and $\e\in (0,\e_0)$ we then define a function $w_{\e,l}\in\A_\e(\Omega;\Rd)$ by setting
\[w_{\e,l}^i:=\varphi_l(i)\tu_\e^i+(1-\varphi_l(i))\tilde{v}_\e^i,\]
and we remark that $w_{\e,l}\to u$ in $L^1(\Omega;\R^d)$ as $\e\to 0$. Moreover,
\begin{equation}\label{est:subad:09}
F_\e(w_{\e,l},A'\cup B') = F_\e(w_{\e,l},A_{l-1})+F_\e(w_{\e,l},(A_{l+2}\setminus A_{l-1})\cap B')+F_\e(w_{\e,l},B'\setminus A_{l+2}).
\end{equation}
We estimate the three terms on the right-hand side of \eqref{est:subad:09} separately. We start with the estimate for $F_\e(w_{\e,l},A_{l-1})$. To this end, for every $i\in Z_\e(A_{l-1})$ we set
\[\alpha_\e^l(i):=\sup\{\alpha\in\N\colon i+\e\alpha Q\subset A_l\}.\]
Since $\e\sqrt{n}M_\eta^\e<\tfrac{\dd_A}{L}$, we have $\alpha_\e^l(i)>M_\eta^\e$ for every $i\in Z_\e(A_{l-1})$. Further, 
\[w_{\e,l}^{i+j}=\tu_\e^{i+j}=u_\e^{i+j}\ \text{for every}\ j\in Z_\e(\e\alpha_\e^l(i)Q),\]
and for every $\alpha\in\N$ we have
\[\e^{n-1}\#\{i\in Z_\e(A_{l-1})\colon \alpha_\e^l(i)=\alpha\}\leq c\Hn(\partial A_l)+o_\e(1)\leq c(\Hn(\partial A')+1).\]
Hence, \ref{H2} yields
\begin{align}\label{est:subad:10}
&F_\e(w_{\e,l},A_{l-1})\leq\sum_{i\in Z_\e(A_{l-1})}\e^n\phi_i^\e(\{u_\e^{i+j}\}_{j\in Z_\e(\Omega_i)})\nonumber\\
&+\sum_{i\in Z_\e(A_{l-1})}\sum_{j\in Z_\e(\Omega_i)}\dsum{\xi\in \Z^n}{j+\e\xi\in\Omega_i}c_{\e,\alpha_\e^l(i)}^{j,\xi}\min\left\{|D_\e^\xi w_{\e,l}(i+j)|^p,\frac{1+|w_{\e,l}(i+j+\e\xi)-u_\e(i+j+\e\xi)|}{\e}\right\}\nonumber\\
&\leq F_\e(u_\e,A)+(1+6\|u\|_{L^\infty})\sum_{\alpha>M_\eta^\e}\sum_{j\in Z_\e(\Rn)}\sum_{\xi\in\Z^n}c_{\e,\alpha}^{j,\xi}\e^{n-1}\#\{i\in Z_\e(A_{l-1})\colon \alpha_\e^l(i)=\alpha\}\nonumber\\
&\leq F_\e(u_\e,A)+c(1+6\|u\|_{L^\infty})(\Hn(\partial A')+1)\sum_{\alpha>M^\e_\eta}\sum_{j\in Z_\e(\Rn)}\sum_{\xi\in\Z^n}c_{\e,\alpha}^{j,\xi}.
\end{align}
Analogously, for every $i\in Z_\e(B'\setminus A_{l+2})$ we set
\[\beta_\e^l(i):=\sup\{\beta\in\N\colon i+\e\beta Q\subset B''\setminus A_{l+1}\},\]
and we observe that $\beta_\e^l(i)>M_\eta^\e$ for every $i\in Z_\e(B'\setminus A_{l+2})$ and
\[w_{\e,l}^{i+j}=\tilde{v}_\e^{i+j}=v_\e^{i+j}\ \text{for every}\ j\in Z_\e(\e\beta_\e^l(i)Q).\]
Thus, an analogous computation as in \eqref{est:subad:10} leads to
\begin{align}\label{est:subad:11}
&F_\e(w_{\e,l},B'\setminus A_{l+2})\nonumber\\
&\leq F_\e(v_\e,B)\nonumber+(1+6\|u\|_{L^\infty})\sum_{\beta>M_\eta^\e}\sum_{j\in Z_\e(\Rn)}\sum_{\xi\in\Z^n}c_{\e,\beta}^{j,\xi}\e^{n-1}\#\{i\in Z_\e(B'\setminus A_{l+2})\colon \beta_\e^l(i)=\beta\}\nonumber\\
&\leq F_\e(v_\e,B)+c(1+6\|u\|_{L^\infty})(\Hn(\partial A')+\Hn(\partial B')+1)\sum_{\beta>M_\eta^\e}\sum_{j\in Z_\e(\Rn)}\sum_{\xi\in\Z^n}c_{\e,\beta}^{j,\xi}.
\end{align}
Finally, in view of \ref{H5} we have
\begin{align}\label{est:subad:12}
F_\e(w_{\e,l},(A_{l+2}\setminus A_{l-1})\cap B')&\leq c_3\Bigg(\sum_{i\in Z_\e(S_l)}\e^n\phi_i^\e(\{\tu_\e^{i+j}\}_{j\in Z_\e(\Omega_i)})+\sum_{i\in Z_\e(S_l)}\e^n\phi_i^\e(\{\tilde{v}_\e^{i+j}\}_{j\in Z_\e(\Omega_i)})\Bigg)\nonumber\\
&+\sum_{i\in Z_\e(S_l)}\e^n R_i^\e(\tu_\e,\tilde{v}_\e,\varphi_l),
\end{align}
where $S_l:=(A_{l+2}\setminus A_{l-1})\cap B'$ and
\begin{align*}
R_i^\e(\tu_\e,\tilde{v}_\e,\varphi_l) &=\left(\frac{2L}{\dd_A}\right)^p\sum_{j\in Z_\e(\Omega)}\dsum{\xi\in\Z^n}{j+\e\xi\in\Omega}c_\e^{j-i,\xi}|\tu_\e(j+\e\xi)-\tilde{v}_\e(j+\e\xi)|^p\\
&+\sum_{j\in Z_\e(\Omega)}\dsum{\xi\in\Z^n}{j+\e\xi\in\Omega}c_\e^{j-i,\xi}\left(\min\left\{|D_\e^\xi\tu_\e(j)|^p,\frac{1}{\e|\xi|}\right\}+\min\left\{|D_\e^\xi\tilde{v}_\e(j)|^p,\frac{1}{\e|\xi|}\right\}\right).
\end{align*}
Note that the same computations as in \eqref{est:subad:10} and \eqref{est:subad:11} lead to
\begin{equation}\label{est:subad:13}
\sum_{i\in Z_\e(S_l)}\e^n\phi_i^\e(\{\tu_\e^{i+j}\}_{j\in Z_\e(\Omega_i)})\leq F_\e(u_\e,S_l)+c(u,A')\sum_{\alpha>M^\e_\eta}\sum_{j\in Z_\e(\Rn)}\sum_{\xi\in\Z^n}c_{\e,\alpha}^{j,\xi}
\end{equation}
and
\begin{equation}\label{est:subad:14}
\sum_{i\in Z_e(S_l)}\e^n\phi_i^\e(\{\tilde{v}_\e^{i+j}\}_{j\in Z_\e(\Omega_i)})\leq F_\e(v_\e,S_l)+c(u,A',B')\sum_{\alpha>M^\e_\eta}\sum_{j\in Z_\e(\Rn)}\sum_{\xi\in\Z^n}c_{\e,\alpha}^{j,\xi},
\end{equation}
respectively. Moreover, Lemma \ref{lem:fin:diff} together with \eqref{est:subad:07} and \eqref{est:subad:08} give
\begin{equation}\label{est:subad:15}
\sup_{\e>0}\sup_{\xi\in\Z^n}\dsum{j\in Z_\e(\Omega)}{j+\e\xi\in\Omega}\e^n\left(\min\left\{|D_\e^\xi\tu_\e(j)|^p,\frac{1}{\e|\xi|}\right\}+\min\left\{|D_\e^\xi\tilde{v}_\e(j)|^p,\frac{1}{\e|\xi|}\right\}\right)\leq M
\end{equation}
for some $M>0$.
For every $l$ we have $\#\{l'\neq l\colon S_l\cap S_{l'}\neq\emptyset\}\leq 5$. Thus, gathering \eqref{est:subad:09}-\eqref{est:subad:15}, summing up over $l$ and averaging we find $l(\e)\in\{1,\ldots,L-3\}$ such that
\begin{align*}
&F_\e(w_{\e,l(\e)},A'\cup B')\leq\frac{1}{L-4}\sum_{l=1}^{L-3}F_\e(w_{\e,l},A'\cup B')\\
&\leq \left(1+\frac{5c_3}{L-4}\right)(F_\e(u_\e,A)+F_\e(v_\e,B))+c(u,A',B')\sum_{\alpha>M^\e_\eta}\sum_{j\in Z_\e(\Rn)}\sum_{\xi\in\Z^n}c_{\e,\alpha}^{j,\xi}\\
&+\frac{5}{L-4}\left(\frac{2L}{\dd_A}\right)^p\sum_{i\in Z_\e(A''\cap B')}\sum_{j\in Z_\e(\Omega)}\dsum{\xi\in\Z^n}{j+\e\xi\in\Omega}\e^nc_{\e}^{j-i,\xi}|\tu_\e(j+\e\xi)-\tilde{v}_\e(j+\e\xi)|^p\\
&+\frac{5}{L-4}\sum_{i\in Z_\e(A''\cap B')}\e^n\sum_{j\in Z_\e(\Omega)}\dsum{\xi\in\Z^n}{j+\e\xi\in\Omega}c_{\e}^{j-i,\xi}\left(\min\left\{|D_\e^\xi\tu_\e(j)|^p,\frac{1}{\e|\xi|}\right\}+\min\left\{|D_\e^\xi\tilde{v}_\e(j)|^p,\frac{1}{\e|\xi|}\right\}\right)\\
&\leq \left(1+\frac{5c_3}{L-4}\right)(F_\e(u_\e,A)+F_\e(v_\e,B))+c(u,A',B')\sum_{\alpha>M_\eta^\e}\sum_{j\in Z_\e(\Rn)}\sum_{\xi\in\Z^n}c_{\e,\alpha}^{j,\xi}\\
&+\frac{5}{L-4}\left(\frac{2L}{\dd_A}\right)^p\sum_{\xi\in\Z^n}\sum_{z\in Z_\e(\R^n)}c_\e^{z,\xi}\dsum{j\in Z_\e(\Omega)}{j+\e\xi\in\Omega}|\tu_\e(j+\e\xi)-\tilde{v}_\e(j+\e\xi)|^p\\
&+\frac{5}{L-4}\sum_{\xi\in\Z^n}\sum_{z\in Z_\e(\R^n)}c_\e^{z,\xi}\dsum{j\in Z_\e(\Omega)}{j+\e\xi\in\Omega}\e^n\left(\min\left\{|D_\e^\xi\tu_\e(j)|^p,\frac{1}{\e|\xi|}\right\}+\min\left\{|D_\e^\xi\tilde{v}_\e(j)|^p,\frac{1}{\e|\xi|}\right\}\right)\\
&\leq \left(1+\frac{5c_3}{L-4}\right)(F_\e(u_\e,A)+F_\e(v_\e,B))+c(u,A',B')\sum_{\alpha>M^\e_\eta}\sum_{j\in Z_\e(\Rn)}\sum_{\xi\in\Z^n}c_{\e,\alpha}^{j,\xi}\\
&+\frac{5}{L-4}\left(\frac{2L}{\dd_A}\right)^p\left(\sum_{\xi\in\Z^n}\sum_{z\in Z_\e(\R^n)}c_\e^{z,\xi}\right)\sum_{i\in Z_\e(\Omega)}\e^n|\tu_\e^i-\tilde{v}_\e^i|^p+\frac{5M}{L-4}\sum_{\xi\in\Z^n}\sum_{z\in Z_\e(\R^n)}c_\e^{z,\xi},
\end{align*}
hence \eqref{est:subad:01},\eqref{est:subad:01a} and \eqref{est:subad:06} together with the choice of $M_\eta$ yield
\[\limsup_{\e\to 0}F_\e(w_{\e,l(\e)},A'\cup B')\leq\left(1+\frac{5c_3}{L-4}\right)(F''(u,A)+F''(u,B))+c(u,A',B')\eta+\frac{c}{L-4}.\]
It remains to choose $L\in\N$ sufficiently large such that $\frac{5c_3}{L-4}<\eta$ and $\frac{c}{L-4}<\eta$, then $\tilde{w}_\e:=w_{\e,l(\e)}$ is the required sequence satisfying \eqref{est:subad:08a}.
\end{step}
\end{proof}
\begin{rem}[Extension]\label{rem:extension}
As a last step we establish the inner regularity of $F''(u,\cdot)$ on Lipschitz sets. To this end it is convenient to extend the functionals $F_\e(\cdot,\cdot)$ to $\A_\e(\tomega;\Rd)\times\A(\tomega)\to[0,+\infty)$ for $\tomega\subset\R^n$ open bounded and with Lipschitz boundary such that $\Omega\wcont\tomega$ similar as in \cite[Proposition 3.6]{BK18}. More precisely, for every $\e>0$ and $i\in Z_\e(\tomega)$ set $\tomega_i:=\tomega-i$ and define $\tphi_i^\e:(\R^d)^{Z_\e(\tomega_i)}$ by setting
\begin{align*}
\tphi_i^\e(\{z^j\}_{j\in Z_\e(\tomega_i)}):=
\begin{cases}
\phi_i^\e(\{(z_{|\Omega})^j\}_{j\in Z_\e(\Omega_i)}) &\text{if}\ i\in Z_\e(\Omega),\\
\min\left\{\sum_{k=1}^n|D_\e^kz(0)|^p,\frac{1}{\e}\right\} &\text{if}\ i\in Z_\e(\tomega\setminus\Omega).
\end{cases}
\end{align*}
Then, for every $(u,A)\in\A_\e(\tomega;\Rd)\times\A(\tomega)$ we set
\begin{equation}\label{def:ftilde}
\tF_\e(u,A):=\sum_{i\in Z_\e(\tomega)}\e^n\tphi_i^\e(\{u^{i+j}\}_{j\in Z_\e(\tomega_i)}).
\end{equation}
Note that the functions $\tphi_i^\e$ still satisfy \ref{H1}-\ref{H5} with $\tomega$ in place of $\Omega$ and $c_1,c_2,c_3$ replaced by $\max\{c_1,\sqrt{n}\}$, $\min\{c_2,1\}$ and $\max\{c_3,3^{p-1}\}$. In particular, Propositions \ref{prop:ub} and \ref{prop:subad} hold true also with $\tomega$ and $\tF$ in place of $\Omega$ and $F$. Moreover, for every $u\in\A_\e(\Omega;\Rd)$, $\tu\in\A_\e(\tomega;\Rd)$ with $\tu^i=u^i$ for every $i\in Z_\e(\Omega)$ and $A\in\A(\Omega)$ the definition of $\tphi_i^\e$ implies that
\[\tF_\e(\tu,A)=F_\e(u,A).\]
Thus, for every $u\in GSBV^p(\Omega;\R^d)\cap L^1(\Omega;\Rd)$, $\tu\in GSBV^p(\tomega;\R^d)\cap L^1(\tomega;\Rd)$ with $\tu=u$ a.e.~in $\Omega$ and every $A\in\A(\Omega)$ we obtain
\begin{equation}\label{eq:ext}
\tF''(\tu,A)=F''(u,A).
\end{equation}
\end{rem}
\noindent The extension described above allows us to proof the following result.
\begin{prop}[Inner regularity]\label{prop:ir}
Suppose that $\phi_i^\e:(\R^d)^{Z_\e(\Omega_i)}\to[0,+\infty)$ satisfy \ref{H1}-\ref{H5}. Then for every $(u,A)\in GSBV^p(\Omega;\Rd)\cap L^1(\Omega;\Rd)\times \Areg(\Omega)$ there holds
\[F''(u,A)=F''_{-}(u,A),\]
where $F''_{-}(u,A)$ is as in \eqref{def:irenv}.
\end{prop}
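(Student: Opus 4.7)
The bound $F''_{-}(u,A)\le F''(u,A)$ is immediate from monotonicity, so my plan is to prove the reverse. By the truncation property \eqref{conv:trunc} I may reduce to $u\in SBV^p(\Omega;\R^d)\cap L^\infty(\Omega;\R^d)$. The strategy is to cover $A$ by a piece compactly contained in $A$ plus a thin neighborhood of $\partial A$, and control each via the subadditivity (Proposition \ref{prop:subad}) and the upper bound (Proposition \ref{prop:ub}), working with the extension of Remark \ref{rem:extension}.

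First I extend $u$ to $\tu\in SBV^p(\tomega;\R^d)\cap L^\infty(\tomega;\R^d)$, where $\Omega\wcont\tomega$, via local reflections through $\partial\Omega$ and through $\partial A\cap\Omega$, so that $\tu\equiv u$ on $A$ and
\[\Hn(S_{\tu}\cap\partial\Omega)=\Hn(S_{\tu}\cap\partial A)=0.\]
The key point is that the reflection of $u|_A$ across $\partial A$ has matching $A$-side trace on $\partial A$ and thus introduces no new jump there; redefining $u$ on $\Omega\setminus A$ by this reflection near $\partial A\cap\Omega$ and then performing the standard $\partial\Omega$-reflection produces such a $\tu$. By the locality Proposition \ref{prop:loc} and \eqref{eq:ext}, $F''(u,A)=\tF''(\tu,A)$.

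Next, for $s>0$ put $A^{-}_{s}:=\{x\in A\colon\dist(x,\partial A)>s\}$ and $V_{s}:=\{x\in\tomega\colon\dist(x,\partial A)<s\}$. Since $\dist(\cdot,\partial A)$ is $1$-Lipschitz and $\Hn(S_{\tu})<+\infty$, only countably many of its level sets carry positive $\Hn$-mass on $S_{\tu}$, and (up to a mild smoothing of the parallel sets) for a.e.~small $s$ the sets $A^{-}_{s}, V_{2s}, V_{3s}$ lie in $\Areg(\tomega)$. Choose such $s$. Then $A\subset A^{-}_{s}\cup V_{2s}$, $A^{-}_{s}\wcont A^{-}_{s/2}\wcont A$, and $V_{2s}\wcont V_{3s}$, so monotonicity together with the subadditivity of $\tF''$ (Proposition \ref{prop:subad} via Remark \ref{rem:extension}) yield
\[F''(u,A)\le \tF''(\tu, A^{-}_{s/2})+\tF''(\tu, V_{3s}).\]
The first term equals $F''(u, A^{-}_{s/2})$ by \eqref{eq:ext} and is bounded by $F''_{-}(u,A)$ since $A^{-}_{s/2}\wcont A$. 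For the second, Proposition \ref{prop:ub} applied to $\tF$ gives the bound $c\int_{V_{3s}}(|\nabla\tu|^p+1)\dx+c\int_{S_{\tu}\cap V_{3s}}(1+|[\tu]|)\dHn$, which vanishes as $s\to 0^{+}$: the bulk term since $|V_{3s}|\to 0$, the surface term by continuity from above of the finite measure $\Hn\llcorner S_{\tu}$ together with $\bigcap_{s>0}V_{3s}=\partial A$ and the extension property $\Hn(S_{\tu}\cap\partial A)=0$.

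The main obstacle is the extension of the second paragraph: producing $\tu$ with both $\Hn(S_{\tu}\cap\partial\Omega)=0$ and $\Hn(S_{\tu}\cap\partial A)=0$ -- in particular, reconciling the two reflections on the possibly nontrivial intersection $\partial A\cap\partial\Omega$ -- which relies on the Lipschitz regularity of both $\Omega$ and $A$.
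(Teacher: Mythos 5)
Your argument is correct and follows essentially the same route as the paper: reduce to $SBV^p\cap L^\infty$ by truncation, extend across $\partial A$ so that $\Hn(S_{\tu}\cap\partial A)=0$, split $A$ into an interior piece compactly contained in $A$ plus a shrinking boundary layer, and combine the subadditivity of $\tF''$ (via the extension of Remark \ref{rem:extension}) with the upper bound of Proposition \ref{prop:ub} on the layer. The two technical points you flag are handled in the paper more simply: only $u|_A$ is extended (a single reflection across the Lipschitz boundary $\partial A$, so no reconciliation with $\partial\Omega$ is needed, locality taking care of the values off $A$), and instead of parallel sets one just picks abstract nested Lipschitz sets $U'\wcont U''\wcont V'\wcont V''\wcont A\wcont\tA$ with small energy on $\tA\setminus\overline{U'}$.
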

\begin{proof}
Let $(u,A)\in GSBV^p(\Omega;\Rd)\cap L^1(\Omega;\Rd)\times\Areg(\Omega)$. Since $F''$ is increasing as a set function it suffices to prove $F''(u,A)\leq\sup\{F''(u,A')\colon A'\wcont A\}$. A standard way to prove this inequality consists in using the subadditivity together with the upper bound. In order to apply the same reasoning in our case we need to consider an open bounded set $\tomega\subset\Rn$ with Lipschitz boundary such that $\Omega\wcont\tomega$ and extend $F_\e$ to a functional $\tF_\e:\A_\e(\tomega;\Rd)\times\A(\tomega)\to[0,+\infty)$ as described in Remark \ref{rem:extension}. Then we apply Proposition \ref{prop:ub} and Proposition \ref{prop:subad} to $\tF$. 

Let $\tomega$ be as above; arguing as in Step 2 and Step 3 in the proof of Proposition \ref{prop:ub} we can assume that $u\in SBV^p(A;\Rd)\cap L^\infty(A;\Rd)$ and extend $u$ to a function $\tu\in SBV^p(\tomega;\Rd)\cap L^\infty(\tomega;\Rd)$ satisfying $\Hn (S_{\tu}\cap\partial A)=0$. 

Let $\eta>0$ be fixed; since $A$ has Lipschitz boundary and $\Hn (S_{\tu}\cap\partial A)=0$ we can find open bounded Lipschitz sets
\[U'\wcont U''\wcont V'\wcont V''\wcont A\wcont\tA\subset\tomega\]
such that $A\setminus\overline{U''}\in\Areg(\Omega)$, $\tA\setminus\overline{U'}\in\Areg(\tomega)$ and
\[\int_{\tA\setminus U'}(|\nabla\tu|^p+1)\dx+\int_{S_{\tu}\cap(\tA\setminus\overline{U'})}(1+|\tu^+(y)-\tu^-(y)|)\dHn(y)\leq\eta.\]
Note that $A\setminus\overline{U''}\wcont\tA\setminus\overline{U'}$. Thus, appealing to Propositions \ref{prop:ub} and \ref{prop:subad} with $\tF$ and $\tomega$ in place of $F$ and $\Omega$ we obtain
\begin{align*}
\tF''(\tu,A) &\leq\tF''(\tu,(A\setminus\overline{U''})\cup V')\leq\tF''(\tu,\tA\setminus\overline{U'})+\tF(\tu,V'')\\
&\leq c\left(\int_{\tA\setminus U'}(|\nabla\tu|^p+1)\dx+\int_{S_{\tu}\cap(\tA\setminus\overline{U'})}(1+|\tu^+(y)-\tu^-(y)|)\dHn(y)\right)+\tF(\tu,V'')\\
&\leq\sup\{\tF''(\tu,A')\colon A'\wcont A\}+c\eta.
\end{align*}
Thanks to \eqref{eq:ext} we deduce that
\[F''(u,A)\leq \sup\{F''(u,A')\colon A'\wcont A\}+c\eta\]
and we conclude by the arbitrariness of $\eta>0$.
\end{proof}
\begin{rem}\label{rem:ir}
Note that Proposition \ref{prop:ir} holds true also when $F''_{-}(u,A)$ is replaced by $$\sup\{F''(u,A')\colon A'\in\Areg(\Omega),\ A'\wcont A\}.$$
\end{rem}
\noindent On account of Propositions \ref{prop:loc}, \ref{prop:lb}, \ref{prop:ub}, \ref{prop:subad} and \ref{prop:ir} we can now prove the following compactness result.
\begin{theorem}[Compactness by $\Gamma$-convergence]\label{thm:gbar:comp}
Let $F_\e$ be as in \eqref{def:energy} and suppose that $\phi_i^\e:(\R^d)^{Z_\e(\Omega_i)}\to[0,+\infty)$ satisfy \ref{H1}-\ref{H5}. For every sequence of positive numbers converging to $0$ there exist a subsequence $(\e_j)$ and a functional $F:L^1(\Omega;\R^d)\times\A(\Omega)\to[0,+\infty)$ with
\begin{equation}\label{gbar:comp}
F(\cdot,A)=F'_{-}(\cdot,A)=F''_{-}(\cdot,A)\quad\text{on}\ GSBV^p(\Omega;\R^d)\cap L^1(\Omega;\Rd).
\end{equation}
Moreover, $F$ satisfies the following properties:
\begin{enumerate}[label={\rm(\roman*)}]
\item For every $A\in\A(\Omega)$ the functional $F(\cdot,A)$ is lower semicontinuous in the strong $L^1(\Omega;\R^d)$-topology and local;
\item there exists $c>0$ such that for every $(u,A)\in GSBV^p(\Omega;\R^d)\cap L^1(\Omega;\Rd)\times\A(\Omega)$ we have
\begin{align*}
\frac{1}{c}\left(\int_A|\nabla u|^p\dx+\Hn(S_u\cap A)\right) &\leq F(u,A)\\
&\leq c\left(\int_A(|\nabla u|^p+1)\dx+\int_{S_u\cap A}(1+|[u]|)\dHn\right);
\end{align*}
\item for every $u\in GSBV^p(\Omega;\R^d)\cap L^1(\Omega;\Rd)$ the set function $F(u,\cdot)$ is the restriction to $\A(\Omega)$ of a Radon measure;
\item for every $A\in\Areg(\Omega)$ there holds
\[F(\cdot,A)=F'(\cdot,A)=F''(\cdot,A)\quad\text{on}\ GSBV^p(\Omega;\R^d)\cap L^1(\Omega;\Rd).\]
\item $F$ is invariant under translations in $u$.
\end{enumerate}
\end{theorem}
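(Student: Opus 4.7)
The argument starts from the general compactness theorem for $\Gamma$-convergence of increasing, $L^1$-lower semicontinuous set-functions (\cite[Theorem 16.9]{DalMaso93}; cf.~\cite[Theorem 10.3]{braides02}): a diagonal procedure over a countable dense family of open sets extracts a subsequence $(\e_j)$ along which $F'_{-}(u,\cdot)=F''_{-}(u,\cdot)$ for every $u\in L^1(\Omega;\R^d)$. I denote this common set function by $F(u,\cdot)$, so that \eqref{gbar:comp} holds by definition. Property (v) is inherited from \ref{H1} at the discrete level. The first half of (i) is automatic, since $F(\cdot,A)=\sup_{A'\wcont A}F''(\cdot,A')$ is the supremum of $L^1$-lsc functions; the locality statement follows by applying Proposition \ref{prop:loc} on each Lipschitz $A'\wcont A$ and passing to the sup. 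Property (iv) is exactly Proposition \ref{prop:ir} together with Remark \ref{rem:ir}: for $A\in\Areg(\Omega)$ one has $F''_{-}(u,A)=F''(u,A)$, and combined with the trivial chain $F'_{-}\le F'\le F''$ this forces $F'(\cdot,A)=F''(\cdot,A)$ as well.

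For the bounds in (ii) I would rely on inner approximation by sets in $\Areg(\Omega)$. Given $A\in\A(\Omega)$ and $A'\in\Areg(\Omega)$ with $A'\wcont A$, Proposition \ref{prop:ub} controls $F''(u,A')$ from above by the stated integral expression on $A'$; taking the supremum over such $A'$ and using inner regularity of the Lebesgue measure and of $\Hn\llcorner S_u$ yields the upper bound for $F(u,A)$. The lower bound is symmetric: by definition $F(u,A)=F'_{-}(u,A)\ge F'(u,A')$ for each Lipschitz $A'\wcont A$, which by Proposition \ref{prop:lb} is bounded below by $c\bigl(\int_{A'}|\nabla u|^p\dx+\Hn(S_u\cap A')\bigr)$; the sup then gives the claimed bound on all of $A$.

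Property (iii) is the main step, which I would obtain via the De Giorgi--Letta criterion (\cite[Theorem 14.23]{DalMaso93}). Inner regularity and monotonicity are immediate from the definition $F=F''_{-}$. Superadditivity on disjoint open sets passes from $F'$ (itself superadditive by \cite[Proposition 16.12]{DalMaso93}) to $F=F'_{-}$, since for disjoint $A,B\in\A(\Omega)$ and Lipschitz $A_0\wcont A$, $B_0\wcont B$ one has $A_0\cup B_0\wcont A\cup B$ and $F'(u,A_0\cup B_0)\ge F'(u,A_0)+F'(u,B_0)$, after which one takes the sup over $A_0,B_0$. The delicate point is subadditivity: given $A,B\in\A(\Omega)$ and Lipschitz $C\wcont A\cup B$, I would construct Lipschitz sets $A_0\wcont A''\wcont A$ and $B_0\wcont B''\wcont B$ in $\Areg(\Omega)$ with $C\subset A_0\cup B_0$, apply Proposition \ref{prop:subad} to obtain $F''(u,A_0\cup B_0)\le F''(u,A'')+F''(u,B'')$, and bound each summand via the observation that $F''(u,A'')\le F''_{-}(u,A)=F(u,A)$ whenever $A''$ is Lipschitz with $A''\wcont A$ (similarly for $B$); passing to the sup over $C$ then yields $F(u,A\cup B)\le F(u,A)+F(u,B)$. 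The main obstacle is precisely this combinatorial bookkeeping: the inner Lipschitz approximations must be chosen simultaneously so that the hypotheses of Proposition \ref{prop:subad} are met while the resulting bound involves only $F(u,A)$ and $F(u,B)$, rather than their (in general strictly larger) counterparts $F''(u,A)$ and $F''(u,B)$.
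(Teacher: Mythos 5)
Your proposal follows essentially the same route as the paper: the general compactness theorem of De Giorgi--Franzoni/Dal Maso to extract the subsequence and define $F=F'_{-}=F''_{-}$, Remark~\ref{rem:g-li-ls}, Propositions~\ref{prop:loc}, \ref{prop:lb}, \ref{prop:ub}, \ref{prop:subad}, \ref{prop:ir} and Remark~\ref{rem:ir} to recover (i)--(v), and the De Giorgi--Letta criterion for (iii) with subadditivity reduced to Proposition~\ref{prop:subad} via nested Lipschitz approximations. The ``combinatorial bookkeeping'' you flag at the end as the main obstacle is in fact handled exactly as you sketch it --- choosing $A'\wcont A''\wcont A$ and $B'\wcont B''\wcont B$ in $\Areg(\Omega)$ with $U\wcont A'\cup B'$, applying Proposition~\ref{prop:subad} to the pair $(A'',B'')$, and noting $F''(u,A'')\le F''_{-}(u,A)=F(u,A)$ --- so there is no gap there.
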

\begin{proof}
Thanks to the general compactness theorem \cite[Theorem 16.9]{DalMaso93} we obtain a subsequence $(\e_j)$ and a functional $F$ satisfying \eqref{gbar:comp}. Moreover, Remark \ref{rem:g-li-ls} yields the $(L^1(\Omega;\R^d)$-lower semicontinuity, while Proposition \ref{prop:loc} combined with Remark \ref{rem:ir} ensures that $F(\cdot,A)$ is local for every $A\in\A(\Omega)$. Further, for every $u\in GSBV^p(\Omega;\R^d)\cap L^1(\Omega;\Rd)$ the estimates in (ii) are a consequence of the corresponding estimates for regular sets in Propositions \ref{prop:lb} and \ref{prop:ub} together with the inner regularity of the set functions $F_1(u,\cdot),F_2(u,\cdot)$ defined as $F_1(u,A):=\int_A|\nabla u|^p\dx+\Hn(S_u\cap A)$ and $F_2(u,A):=\int_A(|\nabla u|^p+1)\dx+\int_{S_u\cap A}(1+|[u]|)\dHn$. 

Since the set function $F(u,\cdot)$ is inner regular by construction, increasing and superadditive (Remark \ref{rem:g-li-ls}), in order to obtain (iii) it suffices to prove that $F(u,\cdot)$ is also subadditive, then the claim follows thanks to the De Giorgi and Letta measure criterion and the upper bound in (ii). Let $u\in GSBV^p(\Omega;\R^d)\cap L^1(\Omega;\Rd)$ and $A,B\in\A(\Omega)$ and $U\in\A(\Omega)$ with $U\wcont A\cup B$. We now show that $F''(u,U)\leq F(u,A)+F(u,B)$, then the subadditivity follows by passing to the supremum over $U$. To this end we remark that we can find $A',A'',B',B''\in\Areg(\Omega)$ with $A'\wcont A''\wcont A$ and $B'\wcont B''\wcont B$ such that $U\wcont A'\cup B'$. Thus, since $F''$ is increasing as a set function from Proposition \ref{prop:subad} we deduce
\begin{align*}
F''(u,U)\leq F''(u,A'\cup B')\leq F''(u,A'')+F''(u,B'')\leq F(u,A)+F(u,B).
\end{align*}
Finally, in view of Proposition \ref{prop:subad} we have $F''(u,A)=F(u,A)$ for every $(u,A)\in GSBV^p(\Omega;\R^d)\cap L^1(\Omega;\Rd)\times\Areg(\Omega)$, hence (iv) follows by \eqref{gbar:comp} together with the trivial inequality $F'_{-}(u,A)\leq F'(u,A)\leq F''(u,A)$. It remains to remark that (v) is a direct consequence of the fact that thanks to \ref{H6} the functionals $F_\e$ are invariant under translation in $u$.
\end{proof}
We are now in a position to prove Theorem \ref{thm:int:rep}.
\begin{proof}[Proof of Theorem \ref{thm:int:rep}]
Let $(\e_j)$ and $F$ be as in Theorem \ref{thm:gbar:comp}. Then Propositions \ref{prop:lb} and \ref{prop:ub} ensure that the domain of $F$ coincides with $GSBV^p(\Omega;\R^d)\times L^1(\Omega;\Rd)$. Moreover, in view of Theorem \ref{thm:gbar:comp} the restriction of the functional $F$ to $SBV^p(\Omega;\R^d)\times\A(\Omega)$ satisfies all hypotheses of \cite[Theorem 1]{BFLM02} except for the lower bound. In order to recover the lower bound we use a standard perturbation argument, that is, for every $\sigma>0$ we consider the functional $F_\sigma:SBV^p(\Omega;\R^d)\times\A(\Omega)\to [0,+\infty)$ defined as
\[F_\sigma(u,A):=F(u,A)+\sigma\int_{S_u\cap A}|[u]|\dHn.\]
We observe that for every $\sigma>0$ $F_\sigma$ satisfies all hypotheses of \cite[Theorem 1]{BFLM02} which thus provides us with two functions $f_0^\sigma:\Omega\times\R^d\times\R^{d\times n}\to [0,+\infty)$ and $g_0^\sigma:\Omega\times\R^d\times\R^d\times S^{n-1}\to[0,+\infty)$ such that
\[F_\sigma(u,A)=\int_A f_0^\sigma(x,u,\nabla u)\dx+\int_{S_u\cap A}g_0^\sigma(x,u^+,u^-,\nu_u)\dHn,\]
for every $u\in SBV^p(\Omega;\R^d)$ and $A\in\A(\Omega)$. Moreover, since $F$ and then also $F_\sigma$ is invariant under translation in $u$, formulas (2) and (3) in \cite[Theorem 1]{BFLM02} imply that $f_0^\sigma$ does not depend on $u$ and $g_0^\sigma$ depends on the values $u^+$ and $u^-$ only through their difference $[u]$, i.e., $f_0^\sigma(x,u,\xi)=f^\sigma(x,\xi)$ and $g_0^\sigma(x,a,b,\nu)=g^\sigma(x,a-b,\nu)$ for some functions $f^\sigma:\Omega\times\R^{d\times n}\to [0,+\infty)$, $g^\sigma:\Omega\times\R^d\times S^{n-1}\to [0,+\infty)$. Finally, formulas (2) and (3) in \citep[Theorem 1]{BFLM02} also imply that $f^\sigma$ and $g^\sigma$ decrease as $\sigma$ decreases. Hence, setting $f(x,\xi):=\lim_{\sigma\to 0^+}f^\sigma(x,\xi)$, $g(x,t,\nu):=\lim_{\sigma\to 0^+}g^\sigma(x,t,\nu)$, from the pointwise convergence of $F_\sigma$ to $F$ and the Monotone Convergence Theorem we deduce
\[F(u,A)=\int_A f(x,\nabla u)\dx+\int_{S_u\cap A}g(x,[u],\nu_u)\dHn,\]
for every $u\in SBV^p(\Omega;\R^d)$ and $A\in\A(\Omega)$. In particular, thanks to Theorem \ref{thm:gbar:comp} (iv) we deduce that \eqref{localrepresentation} holds for every $u\in SBV^p(\Omega;\Rd)$ and $A\in\Areg(\Omega)$, and choosing $A=\Omega$ in the formula above we obtain the desired integral representation on $SBV^p(\Omega;\R^d)$. We finally observe that formulas (2) and (3) in \citep[Theorem]{BFLM02} imply that the integrands $f$ and $g$ are given by \eqref{derivationformula}.

Eventually, we show that the integral representation also extends to $GSBV^p(\Omega;\R^d)\cap L^1(\Omega;\Rd)$. To this end, for every $u\in GSBV^p(\Omega;\R^d)\cap L^1(\Omega;\Rd)$ and every $k\in\N$ we consider again the truncation $T_ku$ as in Remark \ref{rem:truncation}. Using (ii) and (iii) in Remark \ref{rem:truncation} together with \eqref{conv:trunc} and appealing to the Monotone Convergence Theorem we get
\begin{align*}
\Gamma\hbox{-}\lim_{j\to +\infty}F_{\e_j}(u) &=\lim_{k\to +\infty}F(T_ku)=\lim_{k\to +\infty}\left(\int_\Omega f(x,\nabla T_ku)\dx+\int_{S_{T_ku}}g(x,[T_ku],\nu_{T_ku})\dHn\right)\\
&=\int_{\Omega}f(x,\nabla u)\dx+\int_{S_u}g(x,[u],\nu_u)\dHn.
\end{align*}
\end{proof}
\subsection{Treatment of Dirichlet problems}
For further use in Section \ref{sect:homogenization}, we study here the asymptotic behavior of minimum problems for $F_{\e}$ when suitable Dirichlet boundary conditions are taken into account. More precisely, for every $\delta>0$, every $A\in\Areg(\Omega)$ and every pointwise well-defined function $\bar{u}\in L^1(\Omega;\R^d)$ we consider the minimization problem
\[\m_{\e}^\delta(\bar{u},A):=\inf\{F_{\e}(u,A)\colon u\in\A_{\e}^\delta(\bar{u},A)\},\]
where
\[\A_{\e}^\delta(\bar{u},A):=\{u\in\A_{\e}(\Omega;\Rd)\colon  u(i)=\bar{u}(i)\  \text{if}\ \dist(i,\R^n\setminus A)<\delta\},\]
and we study the asymptotic behavior of $\m_{\e}^\delta(\bar{u},A)$ when first $\e\to 0$ and then $\delta\to 0$. For our purpose it is sufficient to consider boundary data $\bar{u}\in SBV^p(\Omega;\Rd)\cap L^\infty(\Omega;\Rd)$ satisfying $\Hn(S_{\bar{u}}\cap\partial A)=0$ and such that
\begin{equation}\label{cond:ubar}
\bar{u}_\e\to \bar{u}\quad\text{in}\ L^1(\Omega;\Rd),\quad\limsup_{\e\to 0}F_\e(\bar{u}_\e,B)\leq c\left(\int_B|\nabla\bar{u}|^p\dx+\Hn(S_{\bar{u}}\cap\overline{B})\right)
\end{equation}
where $\bar{u}_\e\in\A_\e(\Omega;\Rd)$ is defined by setting $\bar{u}_\e^i:=\bar{u}(i)$ and $B\in\Areg(\Omega)$. For $\bar{u}$ as above we can prove the following convergence result.
\begin{lem}\label{lem:dirichlet}
Let $\phi_i^\e:(\R^d)^{Z_\e(\Omega_i)}\to[0,+\infty)$ satisfy hypotheses \ref{H1}--\ref{H5} and let $F_{\e_j}$ be the subsequence provided by Theorem \ref{thm:int:rep}. Moreover, let $A\in\Areg(\Omega)$ with $A\wcont\Omega$. For every pointwise well-defined function $\bar{u}\in SBV^p(\Omega;\Rd)\cap L^\infty(\Omega;\Rd)$ with $\Hn(S_{\bar{u}}\cap\partial A)=0$ and satisfying \eqref{cond:ubar} we have
\[\lim_{\delta\to 0}\liminf_{j\to+\infty}\m_{\e_j}^\delta(\bar{u},A)=\lim_{\delta\to 0}\limsup_{j\to +\infty}\m_{\e_j}^\delta(\bar{u},A)=\m(\bar{u},A),\]
where $\m(\bar{u},A)$ is as in \eqref{def:min:prob}.
\end{lem}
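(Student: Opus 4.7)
The plan is to prove the matching bounds
\begin{equation*}
\m(\bar u,A)\leq\lim_{\delta\to 0}\liminf_{j\to\infty}\m_{\e_j}^\delta(\bar u,A),\qquad \lim_{\delta\to 0}\limsup_{j\to\infty}\m_{\e_j}^\delta(\bar u,A)\leq\m(\bar u,A).
\end{equation*}
Both limits in $\delta$ exist by monotonicity of $\m_{\e_j}^\delta(\bar u,A)$ in $\delta$, and all quantities are finite since $\bar u_{\e_j}(i):=\bar u(i)$ is admissible in $\m_{\e_j}^\delta(\bar u,A)$ for every $\delta$ and \eqref{cond:ubar} gives a uniform energy bound.

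For the \emph{liminf} bound, fix $\delta>0$ and take a near-minimizer $u_{\e_j}\in\A_{\e_j}^\delta(\bar u,A)$. The truncation $T_{\|\bar u\|_{L^\infty}}u_{\e_j}$ still belongs to $\A_{\e_j}^\delta(\bar u,A)$ and, by \ref{H6} and Remark \ref{rem:truncation}, has no larger energy, so I may assume $\sup_j\|u_{\e_j}\|_{L^\infty}<+\infty$. By construction $u_{\e_j}=\bar u_{\e_j}$ at every lattice point outside $A_\delta:=\{x\in A\colon\dist(x,\partial A)>\delta\}$, so combining the discrete $GSBV^p$-compactness on $A_\delta$ from \cite[Lemma 3.3]{Ruf17} with the $L^\infty$-bound, Ambrosio's compactness, and $\bar u_{\e_j}\to\bar u$ in $L^1(\Omega\setminus A_\delta)$ (via \eqref{cond:ubar}), I extract a subsequence $u_{\e_j}\to u$ in $L^1(\Omega;\Rd)$ with $u\in GSBV^p(\Omega;\Rd)\cap L^\infty(\Omega;\Rd)$ and $u=\bar u$ a.e.~on $\Omega\setminus A_\delta$. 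Hence $u$ is admissible in \eqref{def:min:prob}, and \eqref{localrepresentation} together with the $\Gamma$-liminf inequality give $\m(\bar u,A)\leq F(u,A)\leq\liminf_j F_{\e_j}(u_{\e_j},A)$, which is the desired bound.

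For the \emph{limsup} bound, fix $\sigma>0$ and pick $v\in SBV^p(A;\Rd)$ with $v\equiv\bar u$ on an open neighborhood $U\subset A$ of $\partial A$ and $F(v,A)\leq\m(\bar u,A)+\sigma$; extend $v$ by $\bar u$ outside $A$, obtaining $v\in SBV^p(\Omega;\Rd)\cap L^\infty(\Omega;\Rd)$ with $\Hn(S_v\cap\partial A)=0$. By \eqref{localrepresentation} and Remark \ref{rem:truncation} choose a recovery sequence $v_{\e_j}\to v$ in $L^1$ with $\sup_j\|v_{\e_j}\|_{L^\infty}<+\infty$ and $\limsup_j F_{\e_j}(v_{\e_j},A)\leq F(v,A)$. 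The task is to modify $v_{\e_j}$ near $\partial A$ so as to enforce the discrete boundary datum. For $\delta>0$ small enough that $\{x\in A\colon\dist(x,\partial A)<2\delta\}\subset U$, I partition this $2\delta$-strip into $L$ concentric annular layers, take cut-offs $\varphi_l$ localised between consecutive layers (so $\|\nabla\varphi_l\|_\infty\lesssim L/\delta$), and set $w_{\e_j,l}:=\varphi_l v_{\e_j}+(1-\varphi_l)\bar u_{\e_j}$. By construction $w_{\e_j,l}\in\A_{\e_j}^{\delta'}(\bar u,A)$ for a suitable $\delta'=\delta'(\delta,l,L)>0$. Expanding $F_{\e_j}(w_{\e_j,l},A)$ with \ref{H5} and averaging over $l$, as in the proof of Proposition \ref{prop:subad}, the averaged energy is bounded by $F_{\e_j}(v_{\e_j},A)$ plus corrections of order $1/L$ (from the joining-layer energies $F_{\e_j}(\bar u_{\e_j},S_l)$ and $F_{\e_j}(v_{\e_j},S_l)$, uniformly bounded by \eqref{cond:ubar}) plus the averaged remainders. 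The latter split into an $L^p$-piece dominated by $(L/\delta)^p\sum_i\e_j^n|v_{\e_j}^i-\bar u_{\e_j}^i|^p$, which vanishes as $j\to\infty$ because $v\equiv\bar u$ on $U$ forces $v_{\e_j}-\bar u_{\e_j}\to 0$ in $L^p(U)$, and a finite-difference piece controlled via Lemma \ref{lem:fin:diff} and \eqref{H5:summability}, again negligible after averaging. Selecting the best layer and passing to the limit first in $j$, then in $L$, then in $\delta$ and finally in $\sigma$ yields the limsup bound.

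The main obstacle is the joining step in the limsup inequality: the non-convexity remainder of \ref{H5} couples $|\nabla\varphi_l|^p\sim (L/\delta)^p$ with $|v_{\e_j}-\bar u_{\e_j}|^p$, and this product would diverge in the joining strip without the \emph{pointwise} identity $v\equiv\bar u$ on the full neighborhood $U$. Precisely this identity, combined with the slicing-and-averaging trick of \cite{AC04}, lets the $L^p$-smallness of $v_{\e_j}-\bar u_{\e_j}$ on $U$ absorb the blow-up of $\nabla\varphi_l$ and produces only a vanishing interfacial error on $\partial A$.
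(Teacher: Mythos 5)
Your proposal is correct and follows essentially the same route as the paper: the lower bound via truncation, discrete $GSBV^p$-compactness and admissibility of the limit for $\m(\bar u,A)$, and the upper bound by taking a recovery sequence for a near-minimizer extended by $\bar u$ and gluing it to $\bar u_{\e_j}$ with the layered cut-off/averaging argument of Proposition \ref{prop:subad}, where the identity $v\equiv\bar u$ near $\partial A$ kills the $(L/\delta)^p|v_{\e_j}-\bar u_{\e_j}|^p$ remainder. No substantive differences.
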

\begin{rem}\label{rem:dirichlet}
Lemma \ref{lem:dirichlet} together with \eqref{derivationformula} provide us with asymptotic formulas for the integrands $f$ and $g$ given by Theorem \ref{int:form}. Indeed, for $x_0\in\Omega$, $\nu\in S^{n-1}$ and $\rho>0$ sufficiently small we have $Q_\rho^\nu(x_0)\wcont\Omega$. Moreover, for every $\zeta\in\Rd$ and $M\in\R^{d\times n}$ the functions $u_{M,x_0},u_{\zeta,x_0}^\nu$ as in \eqref{def:boundarydata} satisfy the hypotheses of Lemma \ref{lem:dirichlet}. Thus, passing to the upper limit as $\rho\to 0$ we obtain the following formulas for $f$ and $g$
\begin{align*}
f(x_0,M) &=\limsup_{\rho\to 0}\frac{1}{\rho^n}\lim_{\delta\to 0}\liminf_{j\to+\infty}\m_{\e_j}^\delta(u_{M,x_0})=\limsup_{\rho\to 0}\frac{1}{\rho^n}\lim_{\delta\to 0}\limsup_{j\to+\infty}\m_{\e_j}^\delta(u_{M,x_0}),\\
g(x_0,\zeta,\nu) &=\limsup_{\rho\to 0}\frac{1}{\rho^{n-1}}\lim_{\delta\to 0}\liminf_{j\to+\infty}\m_{\e_j}^\delta(u_{\zeta,x_0}^\nu)=\limsup_{\rho\to 0}\frac{1}{\rho^{n-1}}\lim_{\delta\to 0}\limsup_{j\to+\infty}\m_{\e_j}^\delta(u_{\zeta,x_0}^\nu).
\end{align*}
\end{rem}
\begin{proof}[Proof of Lemma \ref{lem:dirichlet}]
Let $A,\bar{u}$ be as in the statement. Observe that due to monotonicity the limit as $\delta\to 0$ exists.
We show that $\mathbf{m}(\bar{u},A)$ is both an asymptotic lower and an asymptotic upper bound for $\mathbf{m}_{\e_j}^\delta(\bar{u},A)$.
\begin{step}{Step 1} We first establish the inequality
\begin{align}\label{lowerbound:dirichlet}
\m(\bar{u},A)\leq \lim_{\delta\to 0}\liminf_{j\to+\infty}\m_{\e_j}^\delta(\bar{u},A).
\end{align}
To this end, let $\delta>0$ be fixed and let $u_j\in\A_{\e_j}(\Omega;\Rd)$ be admissible for $\m_{\e_j}^\delta(\bar{u},A)$ with
\[F_{\e_j}(u_j,A)=\m_{\e_j}^\delta(\bar{u},A).\]
Thanks to Remark \ref{rem:truncation} we can assume that $\|u_j\|_{L^\infty}\leq 3\|\bar{u}\|_{L^\infty}$. In particular, the sequence $(u_j)$ is equi-integrable, hence \ref{H2} together with \cite[Lemma 5.6]{Ruf17} yield the existence of a subsequence (not relabeled) converging in $L^1(\Omega;\Rd)$ to some $u\in GSBV^p(A;\Rd)\cap L^1(A;\Rd)$. Since $u_j=\bar{u}_{\e_j}$ on $\partial A+B_\delta(0)$, \eqref{cond:ubar} ensures that $u=\bar{u}$ on $\partial A+B_\delta(0)$, hence $u$ is admissible for $\m(\bar{u},A)$. Thus, Theorem \ref{thm:int:rep} yields
\[\m(\bar{u},A)\leq F(u,A)\leq\liminf_{j\to+\infty}F_{\e_j}(u_j,A)=\liminf_{j\to+\infty}\m^\delta_{\e_j}(\bar{u},A)\]
hence \eqref{lowerbound:dirichlet} follows by letting $\delta\to 0$.
\end{step}
\begin{step}{Step 2}
We now prove that
\[\lim_{\delta\to 0}\limsup_{j\to+\infty}\m_{\e_j}^\delta(\bar{u},A)\leq\m(\bar{u},A).\]
To this end, for fixed $\eta>0$ we choose $u\in SBV^p(A;\Rd)$ with $u=\bar{u}$ in a neighborhood of $\partial A$ and $F(u,A)\leq\m(\bar{u},A)+\eta$. Thanks to Proposition \ref{prop:loc} we can extend $u$ to $\Omega\setminus A$ by $\bar{u}$ without changing $F(u,A)$. Moreover, Theorem \ref{thm:int:rep} provides us with a sequence of functions $u_j\in A_{\e_j}(\Omega;\Rd)$ converging to $u$ in $L^1(\Omega;\Rd)$ and satisfying
\begin{equation}\label{dirich:recovery}
\limsup_{j\to+\infty}F_{\e_j}(u_j,A)=F(u,A).
\end{equation}
We now modify $u_j$ to fulfill the required discrete boundary condition. Since $u=\bar{u}$ in a neighborhood of $\partial A$, we can find $A'\in\Areg(\Omega)$, $A'\wcont A$ such that $u=\bar{u}$ on $A\setminus\overline{A'}$ (and by extension $u=\bar{u}$ on $\Omega\setminus\overline{A'}$). Since moreover $\Hn(S_{\bar{u}}\cap\partial A)=0$ we can choose further sets $A'',\tilde{A}\in\Areg(\Omega)$ with $A'\wcont A''\wcont A\wcont \tilde{A}$ and 
\[\int_{\tilde{A}\setminus A'}|\nabla\bar{u}|^p\dx+\Hn(S_{\bar{u}}\cap\overline{\tilde{A}\setminus A'})\leq\eta.\]
Arguing as in the proof of Proposition \ref{prop:subad} we can construct a sequence $(w_j)$ with $w_j=u_j$ on $A''$, $w_j=\bar{u}$ on $\Omega\setminus\overline{A}$ and 
\begin{align}\label{est:dirich:01}
\limsup_{j\to +\infty} F_{\e_j}(w_j,A) &=\limsup_{j\to+\infty}F_{\e_j}(w_j,A''\cup A\setminus\overline{A''})\nonumber\\
&\leq(1+\eta)\left(\limsup_{j\to+\infty}F_{\e_j}(u_j,A)+\limsup_{j\to+\infty}F_{\e_j}(\bar{u}_j,\tilde{A}\setminus \overline{A'})\right)+\eta,
\end{align}
where $\bar{u}_j$ is as in \eqref{cond:ubar} with $\e_j$ in place of $\e$. In view of \eqref{cond:ubar} and the choice of $A',\tilde{A}$ we have
\[\limsup_{j\to+\infty}F_{\e_j}(\bar{u}_j,\tilde{A}\setminus \overline{A'})\leq c\left(\int_{\tilde{A}\setminus A'}|\nabla\bar{u}|^p\dx+\Hn(S_{\bar{u}}\cap\overline{\tilde{A}\setminus A'})\right)\leq c\eta.\]
Moreover, for $\delta$ sufficiently small $w_j$ is admissible for $\m_{\e_j}^\delta(\bar{u},A)$. Thus, gathering \eqref{dirich:recovery}--\eqref{est:dirich:01} thanks to the choice of $u$ we deduce that
\[\lim_{\delta\to 0}\limsup_{j\to+\infty}\m_{\e_j}^\delta(\bar{u},A)\leq\limsup_{j\to+\infty} F_{\e_j}(w_j,A)\leq (1+\eta)\m(\bar{u},A)+c\eta\]
and we conclude by the arbitrariness of $\eta>0$.
\end{step}
\end{proof}
\section{Homogenization}\label{sect:homogenization}
In this section we consider a special class of periodic interaction-energy densities $\phi_i^\e$ for which we can show that the $\Gamma$-limit provided by Theorem \ref{thm:int:rep} does not depend on the $\Gamma$-converging subsequence, which in turn implies that the whole sequence $(F_\e)$ $\Gamma$-converges. We first need to specify what periodicity means in the case of interaction-energy densities $\phi_i^\e:(\Rd)^{Z_\e(\Omega_i)}\to[0,+\infty)$ that may depend on the whole state $\{z^j\}_{j\in Z_\e(\Omega_i)}$. This difficulty is also present in \citep[Section 5]{BK18}. To avoid the dependence of $\phi_i^\e$ on $\Omega_i$ in \citep{BK18} the authors use a sequence of periodic finite-range interactions $\phi_i^k$ defined on the entire lattice $(\Rd)^{\Z^n}$ whose range increases as $k$ increases and which converge for every $i\in\Z^n$ to a long-range interaction-energy density $\phi_i:(\Rd)^{\Z^n}\to[0,+\infty)$ as $k\to +\infty$. For $i\in Z_\e(\Omega)$ the functions $\phi_i^\e$ are then obtained by a rescaling of a suitably chosen $\phi_{\frac{i}{\e}}^{k(\e)}$, where $\e k(\e)$ is proportional to the distance of $i$ to the boundary of $\Omega$. Since the energy densities $\phi_i^\e$ that we consider here contain both a bulk and a surface scaling the approach in \citep{BK18} cannot be adapted to our setting. Instead, here we consider functions $\psi_i^\e:(\R^d)^{Z_\e(\Rn)}\to [0,+\infty)$ defined on the entire scaled lattice $\e\Z^n$ which have only finite range. This finite-range assumption will be crucial to decouple the bulk and the surface scaling in the $\Gamma$-limit. 

We now state our precise hypotheses. Let $K\in\N$, $L\in\N$ and consider functions $\psi_i^\e:(\R^d)^{Z_\e(\R^n)}\to[0,+\infty)$ which are $\e K$-periodic in $i$ and satisfy hypotheses \ref{H1}--\ref{H5} with $Z_\e(\Omega_i)$ replaced by $Z_\e(\R^n)$; where in addition the sequences $(c_{\e,\alpha}^{j,\xi})$ and $(c_{\e}^{j,\xi})$ provided by \ref{H4} and \ref{H5}, respectively, satisfy
\begin{align}\label{cond:finite-range0}
\begin{split}
c_{\e,\alpha}^{j,\xi} &=0\qquad\text{if}\ \max\{\alpha,2|\tfrac{j}{\e}|_\infty,2|\xi|_\infty,2|\tfrac{j}{\e}+\xi|_\infty\}\geq L,\\
c_{\e}^{j,\xi} &=0\qquad\text{if}\ \max\{2|\tfrac{j}{\e}|_\infty,2|\xi|_\infty,2|\tfrac{j}{\e}+\xi|_\infty\}\geq L.
\end{split}
\end{align}
In particular, whenever $z,w:Z_\e(\Rn)\to\R^d$ are such that $z^j=w^j$ for all $j\in Z_\e(\e LQ)$, we have
\begin{align}\label{cond:finite-range1}
\psi_i^\e(\{z^j\}_{j\in Z_\e(\e LQ)})=\psi_i^\e(\{w^j\}_{j\in Z_\e(\e LQ)}).
\end{align}
We also set
\[\Omega_\e^L:=\{x\in\Omega\colon\dist_\infty(x,\partial\Omega)>L\e\}\]
and we define $\phi_i^\e: (\R^d)^{Z_\e(\Omega_i)}\to[0,+\infty)$ by setting
\begin{align}\label{def:phi:periodic}
\phi_i^\e(\{z^j\}_{j\in Z_\e(\Omega_i)}):=
\begin{cases}
\psi_i^\e(\{z^j\chi^j_{\e LQ}\}_{j\in Z_\e(\R^n)}) &\text{if}\ i\in Z_\e(\Omega_\e^L),\\
\cr
\displaystyle\min\Big\{\hspace*{-0.5em}\dsum{k=1}{\e e_k\in\Omega_i}^n\hspace*{-0.5em}|D_\e^kz(0)|^p,\frac{1}{\e}\Big\} &\text{if}\ i\in Z_\e(\Omega\setminus \Omega_\e^L),
\end{cases}
\end{align}
which is well-defined thanks to \eqref{cond:finite-range1}. By construction, $\phi_i^\e:(\R^d)^{Z_\e(\Omega_i)}\to [0,+\infty)$ satisfy hypotheses \ref{H1}--\ref{H5}. We now aim to prove that for $\phi_i^\e:(\R^d)^{Z_\e(\Omega_i)}\to[0,+\infty)$ defined as in \eqref{def:phi:periodic} the integrands $f$ and $g$ provided by Theorem \ref{thm:int:rep} are independent of the position $x$.
\begin{prop}\label{cor:trans-inv}
Let $F_\e$ be as in \eqref{def:energy} with $\phi_i^\e:(\R^d)^{Z_\e(\Omega_i)}\to[0,+\infty)$ given by \eqref{def:phi:periodic}, where $\psi_i^\e:(\Rd)^{Z_\e(\Rn)}\to[0,+\infty)$ are $\e K$-periodic in $i$, satisfy \ref{H1}--\ref{H5} with $Z_\e(\Omega_i)$ replaced by $Z_\e(\Rn)$, and \eqref{cond:finite-range0}. Let $(\e_j)$ and $F$ be the subsequence and the functional provided by Theorem \ref{thm:int:rep}. Then $F$ is of the form
\begin{align}\label{hom:integrands}
F(u)=\int_\Omega\bar{f}(\nabla u)\dx+\int_{S_u}\bar{g}([u],\nu_u)\dHn,\qquad u\in GSBV^2(\Omega;\Rd),
\end{align}
for some functions $\bar{f}:\R^{d\times n}\to[0,+\infty)$ and $\bar{g}:\R^d\times S^{n-1}\to[0,+\infty)$ possibly depending on the $\Gamma$-converging subsequence. Moreover, for every $A\in\Areg(\Omega)$ and $u\in GSBV^2(\Omega;\Rd)$ there holds
\begin{align*}
\Gamma\hbox{-}\lim_{j\to +\infty}F_{\e_j}(u,A)=\int_A\bar{f}(\nabla u)\dx+\int_{S_u\cap A}\bar{g}([u],\nu_u)\dHn.
\end{align*}
\end{prop}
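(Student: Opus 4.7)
The plan is to exploit the $\e K$-periodicity of $\psi_i^\e$ in $i$, combined with hypothesis \ref{H1}, to show that the integrands $f(x_0,M)$ and $g(x_0,\zeta,\nu)$ produced by Theorem~\ref{thm:int:rep} are independent of $x_0$. The natural starting point is the asymptotic minimum-problem characterization in Remark~\ref{rem:dirichlet}: given $x_0,x_0'\in\Omega$, I will compare $\m_{\e_j}^\delta(u_{M,x_0},Q_\rho^\nu(x_0))$ with $\m_{\e_j}^\delta(u_{M,x_0'},Q_\rho^\nu(x_0'))$ for $\rho$ small enough so that both cubes lie compactly in $\Omega$, and similarly for $u_{\zeta,x_0}^\nu$.

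Fix a near-optimal competitor $u_j$ for $\m_{\e_j}^\delta(u_{M,x_0},Q_\rho^\nu(x_0))$. I choose a lattice-compatible shift $\tau_j\in\e_j K\Z^n$ with $|\tau_j-(x_0'-x_0)|_\infty\leq\e_j K$, and set
\[
\tilde u_j(i):=u_j(i-\tau_j)+c_j,\qquad c_j:=M(\tau_j+x_0-x_0').
\]
By construction $c_j=O(\e_j)$, and when $i-\tau_j$ sits in the Dirichlet layer of $Q_\rho^\nu(x_0)$, one checks directly that $\tilde u_j(i)=M(i-x_0')=u_{M,x_0'}(i)$, so $\tilde u_j$ is admissible for $\m_{\e_j}^{\delta'}(u_{M,x_0'},Q_\rho^\nu(x_0'))$ with $\delta'=\delta-\e_j K\to\delta$. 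The crucial computation is then: for every $i\in Z_{\e_j}(Q_\rho^\nu(x_0'))$ with $i\in\Omega_{\e_j}^L$, hypothesis \ref{H1} eliminates the constant $c_j$, the finite-range truncation commutes with the shift, and the $\e_j K$-periodicity in $i$ gives $\psi_i^{\e_j}=\psi_{i-\tau_j}^{\e_j}$; hence
\[
\phi_i^{\e_j}\bigl(\{\tilde u_j^{i+j'}\}\bigr)=\phi_{i-\tau_j}^{\e_j}\bigl(\{u_j^{(i-\tau_j)+j'}\}\bigr).
\]
Summing over $i$ and reindexing yields $F_{\e_j}(\tilde u_j,Q_\rho^\nu(x_0'))=F_{\e_j}(u_j,Q_\rho^\nu(x_0)+(\text{small shift}))+E_j$, where $E_j$ collects the contribution from the $L\e_j$-thick layer $Q_\rho^\nu(x_0')\setminus\Omega_{\e_j}^L$ and from the symmetric difference between $Q_\rho^\nu(x_0)-\tau_j$ and $Q_\rho^\nu(x_0')$.

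The main technical point is showing $E_j/\rho^n\to 0$ (and $E_j/\rho^{n-1}\to 0$ in the surface case). For $\rho$ fixed and $\e_j$ small the boundary layer has $\Hn$-measure of order $\rho^{n-1}\e_j$, and on this layer $\phi_i^{\e_j}$ is bounded on $\tilde u_j$ by a multiple of $1/\e_j$ via the $\min$-formula in \eqref{def:phi:periodic}, so the contribution to $F_{\e_j}$ is $O(\rho^{n-1}\e_j^{n-1})$, which vanishes after division by $\rho^n$ or $\rho^{n-1}$ as $\e_j\to 0$. The symmetric-difference term is controlled analogously. Combining this with Lemma~\ref{lem:dirichlet} and symmetry in $(x_0,x_0')$ gives
\[
\m(u_{M,x_0},Q_\rho^\nu(x_0))=\m(u_{M,x_0'},Q_\rho^\nu(x_0'))+o_\e(1),
\]
and the same for $u_{\zeta,x_0}^\nu$, whence the formulas in Remark~\ref{rem:dirichlet} yield $f(x_0,M)=f(x_0',M)=:\bar f(M)$ and $g(x_0,\zeta,\nu)=g(x_0',\zeta,\nu)=:\bar g(\zeta,\nu)$.

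The hardest part is the careful bookkeeping in the shift argument: one must simultaneously (i) keep the shifted competitor admissible for the Dirichlet problem on $Q_\rho^\nu(x_0')$ (handled by the constant $c_j$ and by shrinking $\delta$ by $\e_j K$), (ii) ensure that every $i$ used in the energy sum lies in $\Omega_{\e_j}^L$ so that the periodic formula in \eqref{def:phi:periodic} applies, and (iii) control the leftover boundary and $L$-layer terms uniformly in $\rho$ so that they vanish after normalisation. Once $\bar f$ and $\bar g$ are identified, the representation \eqref{hom:integrands} follows by substituting into \eqref{int:form}, and the localized statement is inherited from \eqref{localrepresentation} in Theorem~\ref{thm:int:rep}.
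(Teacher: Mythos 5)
Your approach is essentially the same as the paper's: a lattice-compatible shift $\tau_j\in\e_j K\Z^n$ combined with the $\e K$-periodicity, the finite-range locality \eqref{cond:finite-range1}, and the translational invariance \ref{H1} to move an almost-optimal competitor from one cube to another. The difference is one of level: the paper first proves an abstract translation-invariance lemma (Lemma~\ref{lem:trans-inv}) for the \emph{continuum} minimum problem $\m(\bar u,A)$, working with a near-optimal $u\in SBV^p$, its recovery sequence, and a modified discrete competitor $v_\e^i:=u_\e^{i-y_\e}$, and then applies it to $u_{M,x_0}$ and $u_{\zeta,x_0}^\nu$; you instead translate the \emph{discrete} near-minimizers of $\m_{\e_j}^\delta(\cdot,\cdot)$ directly and invoke Lemma~\ref{lem:dirichlet} afterwards. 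Both routes work and use the same mechanism, but the paper's route avoids a subtlety that your sketch glosses over.

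The subtlety is in the jump case. For the affine datum $u_{M,x_0}$, your additive constant $c_j=M(\tau_j+x_0-x_0')$ makes the shifted competitor match $u_{M,x_0'}$ \emph{exactly} on the Dirichlet layer; that part is fine. For $u_{\zeta,x_0}^\nu$, however, no additive constant can repair the shift: $u_j(\cdot-\tau_j)$ equals $u_{\zeta,x_0+\tau_j}^\nu$ on the boundary layer, not $u_{\zeta,x_0'}^\nu$, and the two differ on a slab of thickness $|\tau_j+x_0-x_0'|\le\e_j K\sqrt n$ around the jump hyperplane. Hence $\tilde u_j$ is \emph{not} admissible for $\m_{\e_j}^{\delta'}(u_{\zeta,x_0'}^\nu,Q_\rho^\nu(x_0'))$ without a further modification on that slab, and you must additionally check that forcing the correct boundary values there costs an energy that vanishes after dividing by $\rho^{n-1}$ (using \ref{H4} or \ref{H3} and counting lattice points in a set of thickness $O(\e_j)$ inside the $\delta'$-layer). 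The paper sidesteps this by translating the continuum competitor, for which $\tau_y u_{\zeta,x_0}^\nu=u_{\zeta,x_0+y}^\nu$ with $y=x_0'-x_0$ is exact, and the discrete mismatch is absorbed in the $L^1$-convergence and in Lemma~\ref{lem:dirichlet}'s structure. If you add the slab-correction estimate, your argument closes; as written there is a genuine gap in the surface case.

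One further small remark: to use Remark~\ref{rem:dirichlet} you implicitly rely on $u_{M,x_0}$ and $u_{\zeta,x_0}^\nu$ satisfying \eqref{cond:ubar}; the paper asserts this in the remark, and it follows from \ref{H2} and the construction in the proof of Proposition~\ref{prop:ub}, but it is worth stating explicitly if you take this route.
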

\noindent We prove \ref{cor:trans-inv} by adapting a well-known argument (see, \eg \citep[Lemma 2.7]{BDV96}) to our setting showing that the minimization problem $\m(\bar{u},A)$ defined in \eqref{def:min:prob} is invariant under translation for a suitable class of functions $\bar{u}$. We start by introducing some notation. For every $A\in\A(\Omega)$ and $y\in\R^n$ we set $\tau_y A:=A+y$. Moreover, for every $u:\Omega\to\R^d$ and every $A\in\A(\Omega)$ with $\tau_y A\subset\Omega$ we define $\tau_y u:\tau_y A\to\R^d$ by setting $\tau_y u(x):=u(x-y)$ for every $x\in\tau_y A$. For our purpose it is sufficient to consider pointwise well-defined functions $\bar{u}\in SBV^p_{\rm loc}(\Rn;\Rd)$ which satisfy
\begin{align}\label{cond:translation}
\tau_y\bar{u}_\e^i\to\tau_y\bar{u}\quad\text{in}\ L^1(\Omega;\Rd)\quad\text{for every}\ y\in\Rn,
\end{align}
where for every $y\in\Rn$ the function $\tau_y\bar{u}_\e\in\A_\e(\Omega;\Rd)$ is defined by setting $\tau_y\bar{u}_\e^i:=\tau_y\bar{u}(i)$ for every $i\in Z_\e(\Rn)$. We now prove the following auxiliary lemma.
\begin{lem}\label{lem:trans-inv}
Suppose that $\phi_i^\e:(\R^d)^{Z_\e(\Omega_i)}\to[0,+\infty)$ are given by \eqref{def:phi:periodic},  where $\psi_i^\e:(\Rd)^{Z_\e(\Rn)}\to[0,+\infty)$ are $\e K$-periodic in $i$, satisfy \ref{H1}--\ref{H5} with $Z_\e(\Omega_i)$ replaced by $Z_\e(\Rn)$, and \eqref{cond:finite-range0}. Let $A\in\Areg(\Omega)$ with $A\wcont\Omega$ and let $\bar{u}\in SBV^p_{\rm loc}(\Rn;\R^d)$ be a pointwise well-defined function satisfying \eqref{cond:translation}. For any $y\in\R^n$ with $\tau_y A\wcont\Omega$ there holds
\[\m(\bar{u},A)=\m(\tau_y\bar{u},\tau_y A),\]
where $\m(\bar{u},A), \m(\tau_y\bar{u},\tau_y A)$ are defined according to \eqref{def:min:prob}.
\end{lem}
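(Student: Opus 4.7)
By symmetry (applying the identity with $\tau_y \bar u$, $\tau_y A$ and $-y$ in place of $\bar u$, $A$ and $y$; $\tau_y \bar u$ inherits \eqref{cond:translation} from $\bar u$), it suffices to prove $\m(\tau_y \bar u, \tau_y A) \leq \m(\bar u, A)$. Up to truncation on a large bounded neighborhood of $A \cup \tau_y A$ (which does not alter either value of $\m$, since both $\Hn(S_{\bar u}\cap\partial A)$ and $\Hn(S_{\tau_y \bar u}\cap\partial\tau_y A)$ vanish and the obstacles are local in $\bar u$ near the boundary), we may assume $\bar u \in SBV^p\cap L^\infty$ in a neighborhood of $A \cup \tau_y A$, so that Lemma \ref{lem:dirichlet} applies both to $(\bar u, A)$ and to $(\tau_y \bar u, \tau_y A)$. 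The strategy is then to reduce the inequality to a comparison between the discrete obstacle problems $\m_{\e_j}^\delta(\bar u, A)$ and $\m_{\e_j}^\delta(\tau_y \bar u, \tau_y A)$ as first $j\to\infty$ and then $\delta\to 0$.

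The crucial intermediate object is the lattice-aligned pair $(\tau_{y_j}\bar u, \tau_{y_j} A)$, where $y_j\in \e_j K\Z^n$ is chosen so that $|y - y_j|_\infty\leq \e_j K$. For $j$ large, both $A$ and $\tau_{y_j} A$ are compactly contained in $\Omega_{\e_j}^L$. Given any $u\in \A_{\e_j}(\Omega;\R^d)$, define $\tau_{y_j} u$ on $Z_{\e_j}(\tau_{y_j} A)$ via $(\tau_{y_j}u)(i) := u(i-y_j)$ and extend by $\tau_{y_j}\bar u$ elsewhere. Because $y_j\in \e_j K\Z^n$ and $\psi_i^{\e_j}$ is $\e_j K$-periodic in $i$, the finite-range property \eqref{cond:finite-range1} together with the definition \eqref{def:phi:periodic} give
\[\phi_i^{\e_j}\big(\{(\tau_{y_j}u)^{i+l}\}_l\big) \;=\; \phi_{i-y_j}^{\e_j}\big(\{u^{(i-y_j)+l}\}_l\big)\]
for every $i\in Z_{\e_j}(\tau_{y_j} A)$, and summing over $i$ yields the discrete translation invariance $F_{\e_j}(\tau_{y_j}u,\tau_{y_j} A) = F_{\e_j}(u,A)$. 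Moreover, if $u\in \A_{\e_j}^\delta(\bar u, A)$ then $\tau_{y_j} u\in \A_{\e_j}^\delta(\tau_{y_j}\bar u, \tau_{y_j} A)$, so that
\[\m_{\e_j}^\delta(\tau_{y_j}\bar u, \tau_{y_j} A) \;\leq\; \m_{\e_j}^\delta(\bar u, A).\]

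It remains to bridge from $\m_{\e_j}^\delta(\tau_{y_j}\bar u,\tau_{y_j} A)$ to $\m_{\e_j}^{\delta'}(\tau_y\bar u,\tau_y A)$, where $\delta'$ may be taken as $\delta - C\e_j$. Given a near-optimal competitor $u$ for the former, one modifies $u$ in the thin layer $\tau_y A \triangle \tau_{y_j} A$ (of width at most $\e_j K\sqrt n$) via a smooth cut-off supported in a neighborhood of $\partial\tau_y A$, replacing the values $\tau_{y_j}\bar u_{\e_j}$ there by the discretization $(\tau_y\bar u)_{\e_j}$; the additional cost is controlled using \ref{H5} exactly as in Step 2 of Proposition \ref{prop:subad}. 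The resulting error is the sum of two contributions that both vanish as $j\to\infty$: (a) the energy of $(\tau_y\bar u)_{\e_j}$ on the layer, which is $o_j(1)$ by \eqref{cond:ubar} applied to $\tau_y\bar u$ together with $\Hn(S_{\tau_y\bar u}\cap\partial\tau_y A)=0$; and (b) an $L^p$-mismatch $\|\tau_{y_j}\bar u_{\e_j} - (\tau_y\bar u)_{\e_j}\|_{L^p(\mathrm{layer})}$, which tends to $0$ by continuity of translation in $L^p$ combined with \eqref{cond:translation}. Letting first $j\to\infty$ and then $\delta\to 0$, and invoking Lemma \ref{lem:dirichlet} on both sides, finishes the argument. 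The main obstacle is precisely this bridging step: one must glue two competitors carrying different boundary data across a shrinking layer while paying only a vanishing price, and this is exactly the scenario for which \ref{H5} was designed — the partition-of-unity machinery from Proposition \ref{prop:subad} adapts with only minor bookkeeping modifications.
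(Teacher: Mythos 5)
Your proof takes a genuinely different route from the paper's, and it contains two real gaps.

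The paper works directly at the level of the $\Gamma$-limit $F$: it picks an admissible competitor $u$ for $\m(\bar u,A)$ (extended by $\bar u$ outside $A$), takes a recovery sequence $u_\e\to u$, defines $v_\e^i := u_{\e}^{i-y_\e}$ inside $\tau_y A'''$ with $y_\e := \e K\lfloor y/(\e K)\rfloor$ and $v_\e^i := \tau_y\bar u(i)$ elsewhere, checks $v_\e\to\tau_y u$ in $L^1$ using \ref{cond:translation} and $u=\bar u$ off $\overline{A'}$, and then uses only the finite-range locality \eqref{cond:finite-range1} and $\e K$-periodicity of $\psi_i^\e$ to get $F_\e(v_\e,\tau_y A'')\leq F_\e(u_\e,A)$, hence $F(\tau_y u,\tau_y A)\leq F(u,A)$ after letting $A''\nearrow A$ via inner regularity. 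Crucially, the energy computation never looks at $v_\e$ on the patching region, so no energy estimate on $\bar u$ near $\partial A$ is needed; $\tau_y\bar u$ enters only through the $L^1$-convergence \ref{cond:translation}. You instead route the whole argument through the discrete Dirichlet problems $\m_{\e_j}^\delta$ and Lemma \ref{lem:dirichlet}, which forces exactly the estimates the paper's construction sidesteps.

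The first gap is the invocation of Lemma \ref{lem:dirichlet}. That lemma requires $\Hn(S_{\bar u}\cap\partial A)=0$ \emph{and} the energy upper bound \eqref{cond:ubar} on the pointwise discretization $\bar u_\e$. Neither is a hypothesis of Lemma \ref{lem:trans-inv}: the statement assumes only $\bar u\in SBV^p_{\rm loc}(\Rn;\Rd)$ and \ref{cond:translation}. You assert "both $\Hn(S_{\bar u}\cap\partial A)$ and $\Hn(S_{\tau_y\bar u}\cap\partial\tau_y A)$ vanish," but there is nothing in the hypotheses that guarantees this, and \eqref{cond:ubar} is a genuine quantitative restriction on $\bar u$ (controlling jumps and gradients of the discrete interpolant) that does not follow from $SBV^p_{\rm loc}$ regularity or from $L^1$-convergence of translates. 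The paper's proof requires none of this.

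The second gap is the bridging step from $\m_{\e_j}^\delta(\tau_{y_j}\bar u,\tau_{y_j}A)$ to $\m_{\e_j}^{\delta'}(\tau_y\bar u,\tau_y A)$. You propose a cut-off supported in the layer $\tau_y A\triangle\tau_{y_j}A$, of width $O(\e_j)$. But then $\sup_{l,k}|D_{\e_j}^k\varphi(l)|^p \sim \e_j^{-p}$, and in $R_i^{\e}(z,w,\varphi)$ this multiplies $|z-w|^p$ over a layer of measure $O(\e_j)$; if $\bar u$ has a jump meeting $\partial\tau_y A$, $\|\tau_{y_j}\bar u_{\e_j}-(\tau_y\bar u)_{\e_j}\|_{L^p(\text{layer})}^p$ is only $O(\e_j)$, and the total error scales like $\e_j^{1-p}\to\infty$ for $p>1$. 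Widening the transition region to the full $\delta$-layer fixes the cut-off-gradient issue, but then the remaining $\min$-terms in $R_i^\e$ involve discrete gradients of $\bar u_{\e_j}$ on the layer, whose control is precisely what \eqref{cond:ubar} would supply — bringing you back to the first gap. Proposition \ref{prop:subad} was designed to glue two competitors that converge to the \emph{same} $u$ in $L^p$ with vanishing $L^p$-mismatch, after an averaging over slices of a fixed-width layer; gluing mismatched boundary data across a shrinking layer is a materially different situation, not a "minor bookkeeping modification."

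In short: the Dirichlet-problem detour is not wrong in spirit, but it imports hypotheses the lemma does not grant, and the gluing step fails quantitatively. The paper's construction — translating a recovery sequence by the lattice-commensurate $y_\e$, patching by $\tau_y\bar u$ far outside the region whose energy is being computed, and invoking only finite range, periodicity, and inner regularity — avoids both problems and works under the stated hypotheses.
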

\begin{proof}
Let $A,\bar{u}$ and $y$ be as in the statement and let us prove that
\begin{equation}\label{est:inv:01}
\m(\tau_y\bar{u},\tau_y A)\leq\m(\bar{u},A).
\end{equation}
To this end let $u\in SBV^p(A;\R^d)$ be admissible for $\m(\bar{u},A)$ and $A'\wcont A$ with $u=\bar{u}$ in $A\setminus\overline{A'}$. In view of Proposition \ref{prop:loc} we can extend $u$ to $\Omega\setminus A$ by $\bar{u}$ without changing $F(u,A)$. In order to simplify notation we still denote the subsequence provided by Theorem \ref{thm:int:rep} by $\e$ and we choose a sequence $(u_\e)\subset\A_\e(\Omega;\R^d)$ converging to $u$ in $L^1(\Omega;\R^d)$ and satisfying
\[\lim_{\e\to 0}F_\e(u_\e,A)=F(u,A).\]
We now construct a suitable sequence $(v_\e)$ converging to $\tau_y u$ in $L^1(\Omega;\R^d)$. We choose $A'',A'''\in\Areg(\Omega)$ with $A'\wcont A''\wcont A'''\wcont A$ and $\e_0$ sufficiently small such that for all $\e\in (0,\e_0)$ the following conditions are satisfied.
\begin{enumerate}[label=(\roman*)]
\setlength{\itemsep}{3pt}
\item $A\cup\tau_y A\subset\Omega_\e^L$;
\item $\tau_y A''\subset\tau_{y_\e}A'''$ and $\tau_y A'''\subset\tau_{y_\e}A$, where $y_\e:=\e K\lfloor\frac{y}{\e K}\rfloor$;
\item $\e L<\dist_\infty(A'',\partial A''')$.
\end{enumerate}
For $\e\in (0,\e_0)$ we then define $v_\e\in\A_\e(\Omega;\R^d)$ by setting
\begin{align*}
v_\e^i:=
\begin{cases}
u_\e^{i-y_\e} &\text{if}\ i\in Z_\e(\tau_y A'''),\\
\tau_y\bar{u}(i) &\text{if}\ i\in Z_\e(\Omega\setminus\tau_y A'''),
\end{cases}
\end{align*}
which is well-defined thanks to the second inclusion in (ii).

Since $u=\bar{u}$ in $\Omega\setminus\overline{A'}$, thanks to \eqref{cond:translation} we have that $v_\e\to\tau_y u$ in $L^1(\Omega;\R^d)$. Moreover, for all $i\in Z_\e(\tau_y A'')$ and $j\in Z_\e(\e LQ)$ assumption (iii) yields $i+j\in\tau_y A'''$, and hence
\[v_\e^{i+j}=u_\e^{i-y_\e+j}.\]
Thanks to the locality property \eqref{cond:finite-range1} and the periodicity assumption we thus obtain
\begin{align*}
F_\e(v_\e,\tau_y A'') &=\hspace*{-1em}\sum_{i\in Z_\e(\tau_y A'')}\hspace*{-1em}\e^{n}\psi_i^\e(\{u_\e^{i-y_\e+j}\chi_{\e LQ}^j\}_{j\in Z_\e(\R^n)})
\leq\hspace*{-0.7em}\sum_{i\in Z_\e(A''')}\hspace*{-0.7em}\e^{n}\psi_i^\e(\{u_\e^{i+j}\chi_{\e LQ}^j\}_{j\in Z_\e(\R^n)})\\
&\leq\hspace*{-0.4em}\sum_{i\in Z_\e(A)}\hspace*{-0.4em}\e^n\phi_i^\e(\{u_\e^{i+j}\}_{j\in\Z^n}) = F_\e(u_\e,A),
\end{align*}
where in the second inequality we have used the first inclusion in (ii). Together with the fact that $F(\cdot,\tau_y A'')=\Gamma\hbox{-}\lim_\e F_\e(\cdot,\tau_y A'')$ and $v_\e\to \tau_y u$ in $L^1(\Omega;\R^d)$ the above inequality allows us to deduce that
\[F(\tau_y u,\tau_y A'')\leq\liminf_{\e\to 0}F_\e(v_\e,\tau_y A'')\leq\lim_\e F_\e(u_\e,A)=F(u,A).\]
In view of Proposition \ref{prop:ir}, Remark \ref{rem:ir} and the arbitrariness of $A''\wcont A$ we finally get
\begin{equation}\label{est:inv:02}
F(\tau_yu,\tau_y A)\leq F(u,A).
\end{equation}
Hence, since $\tau_y u$ is admissible for $\m(\tau_y\bar{u},A)$ and $u$ was arbitrarily chosen we obtain \eqref{est:inv:01} by passing to the infimum on both sides of \eqref{est:inv:02}. To deduce the result it then suffices to remark that the opposite inequality follows by applying \eqref{est:inv:01} with $\tau_{-y}$.
\end{proof}
On account of Lemma \ref{lem:trans-inv} we now prove Proposition \ref{cor:trans-inv}.
\begin{proof}[Proof of Proposition \ref{cor:trans-inv}]
Let $F$ be as in Theorem \ref{thm:int:rep}. We claim that the integrands $f$ and $g$ as in \eqref{derivationformula} are independent of the position $x_0$, then $F$ can be written in the form \eqref{hom:integrands}.
To prove the claim we fix $x_0,y_0\in\Omega$ and choose $\rho>0$ sufficiently small such that $Q_\rho^\nu(x_0)\cup Q_\rho^\nu(y_0)\wcont\Omega$.
For every $M\in\R^{d\times n}$ and every $(\zeta,\nu)\in\R^d\times S^{n-1}$ the functions $u_{M,x_0}$ and $u_{\zeta,x_0}^\nu$ defined as in \eqref{def:boundarydata} satisfy the hypotheses of Lemma \ref{lem:trans-inv}. Thus, we obtain 
\[\m(u_{\zeta,y_0}^\nu,Q_\rho^\nu(y_0))=\m(\tau_{y_0-x_0}u_{\zeta,x_0}^\nu,\tau_{y_0-x_0}Q_\rho^\nu(x_0))=\m(u_{\zeta,x_0}^\nu,Q_\rho^\nu(x_0))\]
and
\[\m(u_{M,y_0},Q_\rho^\nu(y_0))=\m(\tau_{y_0-x_0}u_{M,x_0},\tau_{y_0-x_0}Q_\rho^\nu(x_0))=\m(u_{M,x_0},Q_\rho^\nu(x_0)).\]
We conclude by letting $\rho\to 0$.
\end{proof}
\subsection{Separation of bulk and surface effects}\label{sect:separation} In this subsection we give sufficient conditions on the functions $\psi_i^\e$ under which a separation of energy contributions takes place in the limit. We state the precise hypotheses after introducing some notation. For every $\e>0$, every $u:Z_\e(\Rn)\to\R^d$ and every $i\in Z_\e(\Rn)$ set
\[
|\nabla_\e u|(i):=\sum_{k=1}^n\big(|D_\e^{e_k} u(i)|+|D_\e^{-e_k} u(i)|\big),\qquad |\nabla_{\e,L}u|(i):=\sum_{\xi\in Z_1(LQ)}\left|\frac{u^i-u^{i+\e\xi}}{\e}\right|.
\]
We then assume that for every $i\in\Z^n$ there exist $\psi_i^b,\psi_i^s:(\R^d)^{\Z^n}\to[0,+\infty)$ such that the following holds. 
\begin{enumerate}[label={\rm (${\rm H}_\psi$\arabic*)}]
\item\label{Hpsi1} For every $\eta>0$ and every $\Lambda>0$ there exists $\bar{\e}=\bar{\e}(\eta,\Lambda)>0$ such that for every $\e\in (0,\bar{\e})$, for every $i\in\Z^n$ and for every $z:\Z^n \to\R^d$ with $|\nabla_{1,L} z|(0)<\Lambda$ we have
\begin{equation*}
|\psi_{\e i}^\e(\{\e z^{\tfrac{j}{\e}}\}_{j\in Z_\e(\R^n)})-\psi_i^b(\{z^j\}_{j\in\Z^n})|<\eta.
\end{equation*}
\item\label{Hpsi2} For every $\eta>0$ there exist $\Lambda(\eta)>0$ and $\hat{\e}=\hat{\e}(\eta)>0$ such that for every $\e\in(0,\hat{\e})$, for every $i\in\Z^n$ and every $z:\Z^n\to\R^d$ with $\e^{\frac{1-p}{p}}|\nabla_{1,L} z|(0)\geq\Lambda(\eta)$ or $|\nabla_{1,L}z|(0)=0$ we have
\begin{equation*}
|\e\psi_{\e i}^\e(\{z^{\tfrac{j}{\e}}\}_{j\in Z_\e(\R^n)})-\psi_i^s(\{z^j\}_{j\in\Z^n})|<\eta.
\end{equation*}
\end{enumerate}
Moreover, we assume that the functions $\psi_i^s$ satisfy the following continuity hypotheses.
\begin{enumerate}[label={\rm (${\rm H}_\psi$\arabic*)}]
\setcounter{enumi}{2}
\item\label{Hpsi3} There exists a constant $c_s>0$ such that for every $z,w:\Z^n\to\R^d$ with $|\nabla_{1,L}z|(0)>0$, $|\nabla_{1,L}w|(0)>0$ and for every $i\in\Z^n$ there holds
\begin{equation*}
|\psi_i^s(\{z^j\}_{j\in\Z^n})-\psi_i^s(\{w^j\}_{j\in\Z^n})|\leq c_s\sum_{j\in Z_1(Q_L(i))}\dsum{\xi\in Z_1(Q_L(i))}{j+\xi\in Q_L(i)}|z^{j+\xi}-w^{j+\xi}|.
\end{equation*}
\end{enumerate}
The main result of this section is the following theorem which states that under the additional assumptions \ref{Hpsi1}--\ref{Hpsi3} the bulk and surface interactions decouple in the $\Gamma$-limit. As a consequence we obtain asymptotic minimization formulas for the bulk and the surface energy density that are independent of the $\Gamma$-converging subsequence.
\begin{theorem}[Homogenization]\label{thm:homogenization}
Assume that $\phi_i^\e:(\R^d)^{Z_\e(\Omega_i)}\to[0,+\infty)$ are given by \eqref{def:phi:periodic},  where $\psi_i^\e:(\Rd)^{Z_\e(\Rn)}\to[0,+\infty)$ are $\e K$-periodic in $i$, satisfy \ref{H1}--\ref{H5} with $Z_\e(\Omega_i)$ replaced by $Z_\e(\Rn)$, and \eqref{cond:finite-range0}, and suppose that in addition \ref{Hpsi1}--\ref{Hpsi3} are satisfied. Then the functionals $F_\e:L^1(\Omega;\R^d)\to[0,+\infty]$ defined as in \eqref{def:energy} $\Gamma$-converge in the strong $L^1(\Omega;\R^d)$-topology to the functional $F_{\rm hom}:L^1(\Omega;\R^d)\to[0,+\infty]$ given by
\begin{align*}
F_{\rm hom}(u)=
\begin{cases}
\displaystyle\int_\Omega f_{\rm hom}(\nabla u)\dx+\int_{S_u} g_{\rm hom}([u],\nu_u)\dHn &\text{if}\ u\in GSBV^p(\Omega;\R^d),\\
+\infty &\text{otherwise in}\ L^1(\Omega;\R^d),
\end{cases}
\end{align*}
where $f_{\rm hom}:\R^{d\times n}\to[0,+\infty)$ and $g_{\rm hom}:\R^d\times S^{n-1}\to[0,+\infty)$ are given by
\begin{align}\label{fhom}
f_{\rm hom}(M)=\lim_{T\to+\infty}\frac{1}{T^n}\inf\Big\{\sum_{i\in Z_1(TQ)}\psi_i^b(\{u^{i+j}\}_{j\in\Z^n})\colon u\in\A_1^{\sqrt{n}L}(u_{M},TQ)\Big\}
\end{align}
and
\begin{align}\label{ghom}
g_{\rm hom}(\zeta,\nu)=\lim_{T\to+\infty}\frac{1}{T^{n-1}}\inf\Big\{\sum_{i\in Z_1(TQ^\nu)}\psi_i^s(\{u^{i+j}\}_{j\in\Z^n})\colon u\in\A_1^{\sqrt{n}L}(u_{\zeta,\nu},TQ^\nu)\Big\}.
\end{align}
\end{theorem}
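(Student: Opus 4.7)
\medskip

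\noindent\textbf{Plan of proof.} The strategy is to extract a $\Gamma$-converging subsequence $(\e_j)$ via Theorem \ref{thm:int:rep} with limit $F$, then invoke Proposition \ref{cor:trans-inv} so that
\[F(u)=\int_\Omega\bar f(\nabla u)\dx+\int_{S_u}\bar g([u],\nu_u)\dHn\]
with $\bar f$, $\bar g$ independent of $x$. It then suffices to identify $\bar f(M)=f_{\rm hom}(M)$ and $\bar g(\zeta,\nu)=g_{\rm hom}(\zeta,\nu)$ for all $M,\zeta,\nu$. Since the right-hand sides of \eqref{fhom}--\eqref{ghom} are defined intrinsically through $\psi^b,\psi^s$ and do not depend on the chosen subsequence, the Urysohn property of $\Gamma$-convergence will upgrade subsequential convergence to convergence of the full sequence.

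For the bulk integrand I would start from the Dirichlet formula of Remark \ref{rem:dirichlet}, which gives $\bar f(M)=\limsup_{\rho\to 0}\rho^{-n}\lim_{\delta\to 0}\liminf_{j\to\infty}\m^\delta_{\e_j}(u_M,Q_\rho)$ (any base point $x_0$ works by translation invariance). The natural change of variables is $v(i):=u(\e_j i)/\e_j$ for $i\in\Z^n$, which sends a competitor for $\m^\delta_{\e_j}(u_M,Q_\rho)$ to a competitor on the rescaled cube $T_j Q$ with $T_j:=\rho/\e_j$ and Dirichlet datum $u_M$. Using \eqref{def:phi:periodic} together with the $\e K$-periodicity in $i$, the discrete energy becomes $\sum_{i\in Z_1(T_jQ)}\e_j^n\psi^{\e_j}_{\e_j i}(\{\e_j v^{i+j'}\chi_{\e_j LQ}^{\e_j j'}\})$. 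After a truncation (Remark \ref{rem:truncation}) and using the coercivity \ref{H3} to restrict to competitors whose finite differences $|\nabla_{1,L}v|(i)$ are uniformly bounded in terms of $|M|$, hypothesis \ref{Hpsi1} replaces each term by $\e_j^n\psi_i^b(\{v^{i+j'}\})$ at a per-site cost $\leq\e_j^n\eta$. Dividing by $\rho^n$ and letting $\eta\to 0$, $j\to\infty$, $\delta\to 0$, $\rho\to 0$ identifies $\bar f(M)$ with the cell formula \eqref{fhom}, conditional on the existence of the $T\to\infty$ limit.

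For the surface integrand the relevant scaling is the jump-preserving $v(i):=u(\e_j i)$, reflecting $u_{\zeta}^\nu(\e i)=u_{\zeta}^\nu(i)$. A competitor for $\m^\delta_{\e_j}(u_{\zeta}^\nu,Q_\rho^\nu)$ now has $|\nabla_{1,L}v|(i)$ of order one across the discrete jump and either zero or at least of order $\e_j^{(1-p)/p}$ elsewhere (the latter by \ref{H3} after a further cut-off argument restricting competitors to be essentially piecewise constant outside a thin transition layer). Hypothesis \ref{Hpsi2} then converts $\e_j^n\psi^{\e_j}_{\e_j i}$ into $\e_j^{n-1}\psi_i^s$ up to $\e_j^{n-1}\eta$ per site, and the continuity bound \ref{Hpsi3} is used to absorb the small $L^\infty$ perturbation incurred by the piecewise-constant reduction into a correction of order $o(T_j^{n-1})$. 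Dividing by $\rho^{n-1}$ and passing to the iterated limits as in the bulk case gives $\bar g(\zeta,\nu)=g_{\rm hom}(\zeta,\nu)$.

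It remains to show that the limits $T\to\infty$ in \eqref{fhom}--\eqref{ghom} actually exist. This is done by a standard almost-subadditivity/Fekete argument: a large cube $T'Q$ (resp.\ $T'Q^\nu$) is tiled by translates of $TQ$ (resp.\ $TQ^\nu$) leaving a boundary corridor of thickness $\sqrt{n}L+K$ on which the admissible boundary data can be imposed; quasi-minimizers on the small cubes are glued by a cut-off construction of the same type as in Proposition \ref{prop:subad}, and the periodicity of $\psi^b$, $\psi^s$ ensures translation invariance of the cell problem. The main obstacle, as expected, is the glueing step: the error terms from \ref{H5}, specifically those proportional to $|z-w|^p\|\nabla\varphi\|_\infty^p$, must be absorbed by using a layered cut-off as in the proof of Proposition \ref{prop:subad} and by relying on the uniform $L^\infty$ bound coming from truncation, while the matching of the continuum Dirichlet data $u_M$, $u_\zeta^\nu$ with the discrete collar condition in $\A_1^{\sqrt n L}$ requires a careful use of the $\e K$-periodicity so that the inherited boundary layer contributes only $O(T^{n-1})$ (resp.\ $O(T^{n-2})$), which is negligible after normalization. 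Once the subadditivity inequality is established, the existence of $f_{\rm hom}$ and $g_{\rm hom}$ follows at once, completing the proof.
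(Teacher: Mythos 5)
Your high-level plan (compactness via Theorem \ref{thm:int:rep}, translation invariance via Proposition \ref{cor:trans-inv}, then identifying $\bar f$ and $\bar g$ with cell formulas, and upgrading by Urysohn) matches the paper, which assembles the theorem from Propositions \ref{cor:trans-inv}, \ref{prop:bulk} and \ref{prop:surface}. But the identification steps, as you describe them, contain a real gap on the bulk side. You assert that the coercivity \ref{H3} lets one ``restrict to competitors whose finite differences $|\nabla_{1,L}v|(i)$ are uniformly bounded in terms of $|M|$'' and then apply \ref{Hpsi1} sitewise. This fails: \ref{H3} only controls $\sum_i \e^n\min\{|\nabla_\e u|^p(i),1/\e\}$, so a recovery sequence for an affine $u_M$ can (and in general does) contain a sparse set of sites where the discrete gradient is of order $1/\e$, on which \ref{Hpsi1} is not applicable for any fixed $\Lambda$. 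Separating those ``bad'' sites and showing their contribution is negligible is the heart of the matter. The paper resolves it with the discrete maximal-function machinery of Ruf \citep{Ruf17} (Lemmas \ref{lem:maxfunction}, \ref{lem:equiint}): a Lipschitz-truncation via Kirszbraun extension, a Vitali covering and coarea-type counting argument to bound the measure of the bad set, and a further modification producing competitors with equiintegrable $|\nabla_{\sigma_h}w_h|^p$. Without this step, the per-site comparison via \ref{Hpsi1} cannot be justified, and the bulk identification does not go through. Relatedly, the paper does not prove the existence of the $T\to\infty$ limit in \eqref{fhom} by a direct Fekete tiling; instead it introduces the auxiliary Sobolev-scaling functionals $G_\e$ (whose density is $\psi_i^b$ rescaled) and invokes the homogenization theorem of \citep{BK18} (Theorem \ref{Gammaconvergence:bulk}) as a black box, both to obtain the cell limit and to compare $F_\e$ with $G_\e$; your proposal omits this entirely.

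On the surface side, your scaling intuition is right, but the phrase ``a further cut-off argument restricting competitors to be essentially piecewise constant outside a thin transition layer'' skips the substantive construction. The paper's argument runs a componentwise coarea slicing of the competitors into superlevel sets, selects good levels $t_l^m$ by an averaging argument so that the corresponding discrete interfaces $\mathcal{R}_\e^m(t_l^m)$ outside the jump set $\J_\e$ have negligible cardinality, and then defines a piecewise-constant replacement $w_\e$ whose $L^\infty$-distance from the original is $O(\rho^\alpha)$; only then can \ref{Hpsi2} (which requires the quantitative threshold $\e^{(1-p)/p}|\nabla_{1,L}z|(0)\geq\Lambda(\eta)$ on the jump set) and \ref{Hpsi3} (to absorb the $O(\rho^\alpha)$ perturbation) be applied. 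Your proposal gestures at these ideas but does not give the mechanism for producing the piecewise-constant competitor with controlled interfacial measure, which is where the $p>1$ assumption, the choice of $\alpha\in(0,(p-1)/p)$, and the counting estimates enter. Finally, for the existence of the surface cell limit, the paper relies on the almost-subadditivity argument of \citep{BC17} rather than giving a self-contained Fekete proof, but your sketch of that step is at least in the right spirit; the genuine issues are the ones above.
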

\noindent The proof of Theorem \ref{thm:homogenization} will be established in Sections \ref{sect:bulk} and \ref{sect:surface} below in which we treat separately the bulk and the surface energy density. As a preliminary step it is useful to compare the two operators $|\nabla_{\e,L}|$ and $|\nabla_{\e}|$.
\begin{lem}\label{lem:comp:gradients}
There exist constants $\hat{c}_1, \hat{c}_2>0$ depending only on $n,p$ and $L$ such that for every $u:Z_\e(\Rn)\to\Rd$ and every $i\in Z_\e(\Rn)$ there holds
\begin{align}\label{comparison:gradients}
|\nabla_{\e,L}u|^p(i)\leq \hat{c}_1\hspace{-0.5em}\sum_{j\in Z_\e(Q_{\e L}(i))}\hspace{-1em}|\nabla_\e u|^p(j),
\end{align}
and for every $A\subset\Rn$ we have
\begin{align}\label{comparison:gradients:min}
\sum_{i\in Z_\e(A)}\min\Big\{|\nabla_{\e,L}u|^p(i),\frac{1}{\e}\Big\}\leq\hat{c}_2\hspace{-0.5em}\sum_{i\in Z_\e(A+\e L[-1,1]^n)}\hspace{-1.5em}\min\Big\{|\nabla_{\e}u|^p(i),\frac{1}{\e}\Big\}.
\end{align}
\end{lem}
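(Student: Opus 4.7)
The plan is to reduce long-range differences to telescoping sums of nearest-neighbor differences along canonical lattice paths, and then combine discrete Jensen's inequality with the subadditivity of $\min$.

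\emph{Step 1 (canonical paths).} For each $\xi\in Z_1(LQ)$ fix once and for all a nearest-neighbor path $0=\xi_0,\xi_1,\ldots,\xi_{|\xi|_1}=\xi$ in $\Z^n$ with $\xi_h-\xi_{h-1}\in\{\pm e_k:k=1,\ldots,n\}$ and $\xi_h\in LQ$ for every $h$ (this is possible since $|\xi|_\infty<L/2$). Translating this path by $i$, we obtain for any $u:Z_\e(\Rn)\to\Rd$ and $i\in Z_\e(\Rn)$ the telescoping identity $u^{i+\e\xi}-u^i=\sum_{h=1}^{|\xi|_1}(u^{i+\e\xi_h}-u^{i+\e\xi_{h-1}})$, all of whose intermediate points lie in $Z_\e(Q_{\e L}(i))$, and each summand is $\pm\e D_\e^{e_{k(h)}}u(i+\e\xi_{h-1})$ for some $k(h)\in\{1,\ldots,n\}$. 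Setting $N:=\#Z_1(LQ)\leq L^n$ and $\ell:=\max_\xi|\xi|_1\leq nL$, the triangle inequality and the discrete Jensen inequality give
\[
\Bigl|\frac{u^{i+\e\xi}-u^i}{\e}\Bigr|^{p}\leq \ell^{p-1}\sum_{h=1}^{|\xi|_1}|D_\e^{e_{k(h)}}u(i+\e\xi_{h-1})|^{p}.
\]

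\emph{Step 2 (first inequality).} Applying Jensen once more to the outer sum in the definition of $|\nabla_{\e,L}u|(i)$ yields $|\nabla_{\e,L}u|^p(i)\leq N^{p-1}\sum_\xi|(u^{i+\e\xi}-u^i)/\e|^p$. Combining this with Step 1 and observing that, for each nearest-neighbor bond $(j,j+\e e_k)$ with $j\in Z_\e(Q_{\e L}(i))$, the pair $(i+\e\xi_{h-1},i+\e\xi_h)$ coincides with that bond for at most $C(n,L)$ choices of $(\xi,h)$, we deduce \eqref{comparison:gradients} with $\hat{c}_1=\hat{c}_1(n,p,L)$.

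\emph{Step 3 (second inequality).} Here the key tools are the identities $\min\{\lambda a,c\}=\lambda\min\{a,c/\lambda\}$ for $\lambda>0$, and the subadditivity $\min\{a+b,c\}\leq \min\{a,c\}+\min\{b,c\}$. Applying the chain of estimates from Step 2 inside the $\min$ we obtain
\[
\min\Bigl\{|\nabla_{\e,L}u|^p(i),\tfrac{1}{\e}\Bigr\}\leq N^{p-1}\sum_\xi\min\Bigl\{\Bigl|\tfrac{u^{i+\e\xi}-u^i}{\e}\Bigr|^p,\tfrac{1}{\e N^{p-1}}\Bigr\}\leq N^{p-1}\sum_\xi\min\Bigl\{\Bigl|\tfrac{u^{i+\e\xi}-u^i}{\e}\Bigr|^p,\tfrac{1}{\e}\Bigr\},
\]
and then, using Step 1 together with the same two properties of $\min$,
\[
\min\Bigl\{\Bigl|\tfrac{u^{i+\e\xi}-u^i}{\e}\Bigr|^p,\tfrac{1}{\e}\Bigr\}\leq\ell^{p-1}\sum_{h=1}^{|\xi|_1}\min\Bigl\{|D_\e^{e_{k(h)}}u(i+\e\xi_{h-1})|^p,\tfrac{1}{\e}\Bigr\}.
\]
Summing over $i\in Z_\e(A)$ and swapping the order of summation, each nearest-neighbor term $\min\{|D_\e^{e_k}u(j)|^p,1/\e\}$ with $j\in Z_\e(A+\e L[-1,1]^n)$ is counted at most $C(n,L)$ times (again by the uniform bound $|\xi|_\infty<L/2$ on the length of the paths). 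This produces \eqref{comparison:gradients:min} with $\hat{c}_2=\hat{c}_2(n,p,L)$, after noting that $|D_\e^{e_k}u(j)|\leq|\nabla_\e u|(j)$.

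\emph{Main obstacle.} The only subtlety is the handling of the truncation $1/\e$: one has to insert the $\min$ correctly through Jensen and the triangle inequality, for which the rescaling identity $\min\{\lambda a,c\}=\lambda\min\{a,c/\lambda\}$ together with the subadditivity of $\min\{\cdot,c\}$ is crucial. Once this is observed, the combinatorics of counting how often a given nearest-neighbor bond appears across the family of canonical paths is elementary and depends only on $n$ and $L$.
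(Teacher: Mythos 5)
Your proof is correct and follows essentially the same route as the paper's: telescope each long-range difference along a nearest-neighbor lattice path contained in $Q_{\e L}(i)$, use discrete Jensen's inequality for the first estimate, and then thread the $\min$ through via its subadditivity and the rescaling identity $\min\{\lambda a,c\}\leq\lambda\min\{a,c\}$ for $\lambda\geq 1$. The only (cosmetic) difference is that the paper deduces \eqref{comparison:gradients:min} by applying subadditivity of $\min$ directly to the already-proved bound \eqref{comparison:gradients}, rather than re-running the telescoping argument inside the $\min$; both yield constants of the same $n,p,L$ dependence.
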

\begin{proof}
Let $u:\Z_\e(\Rn)\to\Rd$ and $i\in Z_\e(\Rn)$. By Jensen's inequality we have
\begin{align}\label{est:comp:gradients:1}
|\nabla_{\e,L}u|^p(i)\leq (\#Z_1(LQ))^{p-1}\sum_{\xi\in Z_1(LQ)}\left|\frac{u^i-u^{i+\e\xi}}{\e}\right|^p.
\end{align} 
Moreover, for any $\xi\in Z_1(LQ)$ there exists a sequence of lattice points $i_0,\ldots, i_{|\xi|_1}\in Z_\e(Q_{\e L}(i))$ with the following properties: $i_0=i$, $i_{|\xi|_1}=\xi$ and for every $h\in\{1,\ldots,n\}$ there exists $i(h)\in\{1,\ldots,n\}$ such that $i_h\in\{i_{h-1}+e_{i(h)},i_{h-1}-e_{i(h)}\}$. Thus, using again Jensen's inequality we obtain
\begin{align*}
\left|\frac{u^i-u^{i+\e\xi}}{\e}\right|^p &=\Big|\sum_{h=1}^{|\xi|_1}D_\e^{\pm e_{i(h)}}u(i_{h-1})\Big|^p\leq |\xi|_1^{p-1}\sum_{h=1}^{|\xi|_1}|D_\e^{\pm e_{i(h)}}u(i_{h-1})|^p\leq |\xi|_1^p\sum_{j\in Z_\e(Q_{\e l}(i))}|\nabla_\e u|^p(j).
\end{align*}
Summing the above estimate over $\xi\in Z_1(LQ)$ from \eqref{est:comp:gradients:1} we deduce 
\begin{align*}
|\nabla_{\e,L}u|^p(i)\leq (\#(Z_1(LQ))^{p-1}\hspace{-0.5em}\sum_{\xi\in Z_1(LQ)}\hspace{-0.7em}|\xi|_1^p\hspace{-0.7em}\sum_{j\in Z_\e(Q_{\e L}(i))}\hspace{-1em}|\nabla_{\e}u|^p(j)\leq \frac{n^pL^p}{2^p}(\#(Z_1(LQ))^{p}\hspace{-0.5em}\sum_{j\in Z_\e(Q_{\e L}(i))}\hspace{-1em}|\nabla_{\e}u|^p(j),
\end{align*}
which gives \eqref{comparison:gradients} with $\hat{c}_1:=\frac{n^pL^p}{2^p}(\#(Z_1(LQ))^{p}$.

Now \eqref{comparison:gradients:min} is a direct consequence of \eqref{comparison:gradients}. In fact, using \eqref{comparison:gradients} together with the subadditvity of the $\min$, for any $A\subset\Rn$ we obtain
\begin{align*}
&\sum_{i\in Z_\e(A)}\min\Big\{|\nabla_{\e,L}u|^p(i),\frac{1}{\e}\Big\}\leq\sum_{i\in Z_\e(A)}\min\Big\{\hat{c}_1\hspace{-1em}\sum_{j\in Z_\e(Q_{\e L}(i))}\hspace{-1em}|\nabla_{\e}u|^p(j),\frac{1}{\e}\Big\}\\
&\hspace{1em}\leq\sum_{i\in Z_\e(A)}\sum_{j\in Z_\e(Q_{\e L}(i))}\hspace{-1em}\min\Big\{\hat{c}_1|\nabla_{\e}u|^p(j),\frac{1}{\e}\Big\}\leq\max\{\hat{c}_1,1\}\sum_{j\in Z_1(LQ)}\sum_{i\in Z_\e(A)}\min\Big\{|\nabla_\e u|^p(i+\e j),\frac{1}{\e}\Big\}\\
&\hspace{1em}\leq\max\{\hat{c}_1,1\}\#Z_1(LQ)\sum_{i\in Z_\e(A+\e L[-1,1]^n)}\min\Big\{|\nabla_\e u|^p(i),\frac{1}{\e}\Big\},
\end{align*}
hence \eqref{comparison:gradients:min} follows by setting $\hat{c}_2:=\max\{\hat{c}_1,1\}\#Z_1(LQ)$.
\end{proof}
\subsubsection{The bulk energy density}\label{sect:bulk}
In this section we show that the bulk energy density $\bar{f}$ in \eqref{hom:integrands} coincides with $f_{\rm hom}$ as in \eqref{fhom}. This will be done by comparing our functionals with a class of functionals that fall into the framework of \citep{BK18}. More precisely, we introduce rescaled interaction-energy densities $\psi_i^{\e,b}:(\Rd)^{Z_\e(\Omega_i)}\to[0,+\infty)$ given by
\begin{align*}
\psi_i^{\e,b}(\{z^j\}_{j\in Z_\e(\Omega_i)}):=
\begin{cases}
\psi_{\frac{i}{\e}}^b(\{\tfrac{1}{\e}z^{\e j}\chi_{LQ}^{\e j}\}_{j\in\Z^n}) &\text{if}\ i\in Z_\e(\Omega_\e^L),\\
\displaystyle\dsum{k=1}{\e e_k\in\Omega_i}^n|D_\e^kz(0)|^p &\text{if}\ i\in Z_\e(\Omega\setminus\Omega_\e^L),
\end{cases}
\end{align*}
and we consider the functionals $G_\e:L^1(\Omega;\Rd)\times\A(\Omega)\to[0,+\infty]$ defined by setting
\begin{align}\label{def:bulkfunctionals}
G_\e(u,A):=\sum_{i\in Z_\e(A)}\e^n\psi_{i}^{\e,b}(\{u^{i+j}\}_{j\in Z_\e(\Omega_i)}),\quad\text{for}\ u\in\A_\e(\Omega;\Rd),
\end{align}
and extended to $+\infty$ on $L^1(\Omega;\Rd)\setminus\A_\e(\Omega;\Rd)$. 

We show that the functions $\psi_i^b$ have the same properties as the functions $\phi_i^k:(\R^d)^{\Z^n}\to[0,+\infty)$ defined in \citep[Section 5]{BK18} for $k=L$ fixed. In addition, they satisfy a suitable upper bound (see \ref{Hb7} below).
\begin{lem}[Properties of $\psi_i^b$]\label{lem:hypotheses:psi}
Suppose that $\psi_i^\e:(\R^d)^{Z_\e(\R^n)}\to[0,+\infty)$ are $\e K$-periodic in $i$, satisfy \ref{H1}--\ref{H5} with $Z_\e(\Omega_i)$ replaced by $Z_\e(\R^n)$, and suppose that in addition  \eqref{cond:finite-range0} is satisfied. Assume moreover that there exists $\psi_i^b:(\R^d)^{\Z^n}\to[0,+\infty)$ such that \ref{Hpsi1} holds true. Then the functions $\psi_i^b$ are $K$-periodic in $i$ and satisfy conditions \ref{H1}--\ref{H2} with $Z_\e(\Omega_i)$ replaced by $\Z^n$. Moreover, the following holds true for every $i\in\Z^n$.
\begin{enumerate}[label={\rm(${\rm H_b}$\arabic*)}]
\setcounter{enumi}{3}
\item\label{Hb4} {\rm (lower bound)} For every $z:\Z^n\to\R^d$ there holds
\begin{align*}
\psi_i^b(\{z^j\}_{j\in\Z^n})\geq c_2\sum_{k=1}^n|D_1^kz(0)|^p;
\end{align*}
\item\label{Hb5} {\rm (locality)} for all $z,w:\Z^n\to\R^d$ with $z^j=w^j$ for all $j\in Z_1(LQ)$ we have
\begin{align*}
\psi_i^b(\{z^j\}_{j\in\Z^n})=\psi_i^b(\{w^j\}_{j\in\Z^n});
\end{align*}
\item\label{Hb6} {\rm (controlled non-convexity)} there exists $c_4>0$ such that for all $z,w:\Z^n\to\R^d$ and every cut-off $\varphi:\R^n\to[0,1]$ we have
\begin{align*}
&\psi_i^b(\{\varphi^jz^j+(1-\varphi^j)w^j\}_{j\in\Z^n})\leq c_3\big(\psi_i^b(\{z^j\}_{j\in\Z^n}+\psi_i^b(\{w^j\}_{j\in\Z^n}\big)\\
&\hspace*{1em}+c_4\sum_{j\in Z_1(LQ)}\dsum{\xi\in Z_1(LQ)}{j+\xi\in LQ}\Big(\sup_{\substack{l\in Z_1(LQ)\\k\in\{1,\ldots,n\}}}\hspace*{-0.5em}|D_1^k\varphi(l)|^p|z(j+\xi)-w(j+\xi)|^p+|D_1^\xi z(j)|^p+|D_1^\xi w(j)|^p\Big);
\end{align*}
\item\label{Hb7} {\rm (upper bound)} there exists $c_5=c_5(n,L,p)>0$ such that for all $z:\Z^n\to\R^d$ there holds
\begin{align*}
\psi_i^b(\{z^j\}_{j\in\Z^n})\leq c_5(|\nabla_{1,L}z|^p(0)+1).
\end{align*}
\end{enumerate}
\end{lem}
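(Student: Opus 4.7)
The strategy is to transfer each of the hypotheses on $\psi_i^\e$ to the corresponding property for $\psi_i^b$ by applying that hypothesis to rescaled configurations of the form $\{\e z^{j/\e}\}_{j\in Z_\e(\Rn)}$ and then passing to the limit $\e\to 0$ via \ref{Hpsi1}. For any fixed $z,w:\Z^n\to\R^d$ one chooses $\Lambda>\max\{|\nabla_{1,L} z|(0),|\nabla_{1,L} w|(0)\}$ so that \ref{Hpsi1} applies simultaneously.

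The $K$-periodicity of $\psi_i^b$ is immediate from $\psi_{\e(i+Kk)}^\e=\psi_{\e i}^\e$, which holds by $\e K$-periodicity of $\psi_i^\e$. Translational invariance (analogue of \ref{H1}) follows by applying \ref{H1} on $\psi_{\e i}^\e$ to the translation by the constant $\e w$: the identity $\psi_{\e i}^\e(\{\e z^{j/\e}+\e w\})=\psi_{\e i}^\e(\{\e z^{j/\e}\})$ passes to the limit to give $\psi_i^b(\{z^j+w\})=\psi_i^b(\{z^j\})$. Monotonicity transfers since $|z^j-z^l|\leq|w^j-w^l|$ rescales by a factor $\e$ to the corresponding inequality on $\{\e z^{j/\e}\}$, $\{\e w^{j/\e}\}$. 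The upper bound on linear functions for $\psi_i^b$ uses the key identity $\e z^{j/\e}=Mj$ when $z^k=Mk$, so that the rescaled configuration is again linear and \ref{H2} on $\psi_{\e i}^\e$ gives the uniform bound $c_1(|M|^p+1)$. The lower bound \ref{Hb4} follows from $D_\e^k(\e z^{\cdot/\e})(0)=D_1^k z(0)$, so that \ref{H3} reads $\psi_{\e i}^\e(\{\e z^{j/\e}\})\geq c_2\min\{\sum_k|D_1^k z(0)|^p,\,1/\e\}$, with the $\min$ tending to its first argument as $\e\to 0$. Locality \ref{Hb5} is immediate from the finite-range assumption \eqref{cond:finite-range1}: agreement of $z,w$ on $Z_1(LQ)$ rescales to agreement of $\e z^{\cdot/\e}, \e w^{\cdot/\e}$ on $Z_\e(\e LQ)$.

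For the controlled non-convexity \ref{Hb6}, apply \ref{H5} to the rescaled triple $(\e\tilde z^{\cdot/\e},\e\tilde w^{\cdot/\e},\varphi)$ with $\varphi(x):=\tilde\varphi(x/\e)$, where $\tilde z,\tilde w:\Z^n\to\R^d$ and $\tilde\varphi:\R^n\to[0,1]$ are given. Under this rescaling $D_\e^\xi(\e\tilde z^{\cdot/\e})(j)=D_1^\xi\tilde z(j/\e)$, and the product $|D_\e^k\varphi(l)|^p\cdot|\e\tilde z-\e\tilde w|^p$ transforms to $|D_1^k\tilde\varphi(l/\e)|^p\cdot|\tilde z-\tilde w|^p$, matching the form of the remainder in \ref{Hb6}. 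The finite-range property \eqref{cond:finite-range0} restricts the sums defining $R_{\e i}^\e$ to indices $(j/\e,\xi)\in Z_1(LQ)\times Z_1(LQ)$, and the $\min$-terms converge to $|D_1^\xi\tilde z(j/\e)|^p$ and $|D_1^\xi\tilde w(j/\e)|^p$ since $1/(\e|\xi|)\to\infty$. Combining the uniform summability $\limsup_\e\sum_{j,\xi}c_\e^{j,\xi}<\infty$ with the crude bound $R_{\e i}^\e\leq\bigl(\sum c_\e^{j,\xi}\bigr)\cdot\max_{j,\xi}[\,\cdots\,]\leq c_4\sum_{j,\xi}[\,\cdots\,]$ then yields \ref{Hb6} in the limit.

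For the upper bound \ref{Hb7}, the already-established translational invariance lets me assume $z^0=0$, so $|z^\xi|\leq|\nabla_{1,L} z|(0)=:\Lambda$ for every $\xi\in Z_1(LQ)$, and locality \ref{Hb5} allows me to replace $z$ by its restriction to $Z_1(LQ)$ extended by zero, yielding $|\tilde z^j-\tilde z^l|\leq 2\Lambda$ for distinct $j,l\in\Z^n$. Comparing with a linear $w^j=Mj$ with $|M(j-l)|\geq 2\Lambda$ on distinct lattice pairs (taking $M$ to be $2\Lambda$ times a suitable embedding of $\R^n$ into $\R^d$), monotonicity gives $\psi_i^b(z)=\psi_i^b(\tilde z)\leq\psi_i^b(Mx)$, and the already-established linear-function bound produces $\psi_i^b(Mx)\leq c_1(|M|^p+1)\leq c_5(\Lambda^p+1)$ with $c_5=c_5(n,L,p)$. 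The main technical obstacle is \ref{Hb6}: the structural matching between the remainder $R_{\e i}^\e$ in \ref{H5} and the form required in \ref{Hb6} relies crucially on the finite-range assumption to truncate otherwise infinite summations and on the uniform summability of $c_\e^{j,\xi}$ to extract a single constant $c_4$ in the limit.
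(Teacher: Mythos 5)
Your overall strategy -- transfer each hypothesis through the rescaling $z\mapsto\{\e z^{j/\e}\}_j$ and pass to the limit via \ref{Hpsi1} -- coincides with the paper's, and your treatment of $K$-periodicity, of the analogues of \ref{H1}, \ref{H6}, \ref{H2}, and of \ref{Hb4}, \ref{Hb5}, \ref{Hb6} essentially reproduces the paper's argument (including the cancellation of the $\e$-factors between $D_\e\varphi$ and $\e(z-w)$ in the remainder for \ref{Hb6}, and the extraction of $c_4$ from the finite-range tail of the sums). The one genuine departure is \ref{Hb7}: the paper compares $\e z^{\cdot/\e}$ with the constant function $\hat z\equiv z^0$ through the mild-nonlocality condition \ref{H4} with $\alpha=1$ and \eqref{upperbound:constant}, using \eqref{H4:summability1} to produce a uniform constant $\bar c_5$, whereas you compare with a scaled linear map via monotonicity and the transferred \ref{H2}.

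Your route for \ref{Hb7} has a gap. To invoke monotonicity you need $|\tilde z^j-\tilde z^l|\le|M(j-l)|$ for \emph{all} pairs $j,l\in\Z^n$. The inequality is nontrivial whenever $\tilde z^j\neq\tilde z^l$, which (since $\tilde z$ is supported in $Z_1(LQ)$) allows one of $j,l$ to be inside $Z_1(LQ)$ and the other an arbitrary lattice point; hence $j-l$ ranges over all of $\Z^n$, and you actually need $|M\xi|\geq 2\Lambda$ for every nonzero $\xi\in\Z^n$. Your choice of $M$ as ``$2\Lambda$ times an embedding of $\R^n$ into $\R^d$'' works exactly when $d\geq n$. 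When $d<n$ (e.g.\ the scalar case $d=1$, $n\geq 2$, which the paper explicitly covers) no such $M$ exists: any $M\in\R^{d\times n}$ has rank at most $d<n$, so its kernel is a nontrivial subspace whose rational directions produce lattice vectors $\xi$ with $|M\xi|$ arbitrarily small, and the monotonicity comparison cannot be set up. The paper's argument for \ref{Hb7} avoids any such constraint on $d,n$ at the cost of one more application of \ref{H4} with the constant comparison function, so you should replace your \ref{Hb7} step by that argument (or restrict your claim to $d\geq n$, which is weaker than the lemma).
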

\begin{proof}
We first show that $\psi_i^b$ is $K$-periodic in $i$. Fix $\eta>0$ and let $z:\Z^n\to\R^d$ be arbitrary. We find $\bar{\e}=\bar{\e}(z,\eta)>0$ corresponding to \ref{Hpsi1} with $\Lambda_z=|\nabla_{1,L}z|(0)<+\infty$ such that for all $\e\in(0,\bar{\e})$ and for all $i\in\Z^n$ we have
\begin{align}\label{est:periodicity1}
\psi_{\e i}^\e(\{\e z^{\frac{j}{\e}}\}_{j\in Z_\e(\R^n)})-\eta<(\psi_i^b(\{z^j\}_{j\in\Z^n})<\psi_{\e i}^\e(\{\e z^{\frac{j}{\e}}\}_{j\in Z_\e(\R^n)})+\eta.
\end{align}
Thus, for all $k\in\{1,\ldots,n\}$ the $K$-periodicity of $\psi_{\e i}^\e$ together with the fact that \eqref{est:periodicity1} holds uniformly in $i$ ensure that
\begin{align*}
\psi_{i+Ke_k}^b(\{z^j\}_{j\in\Z^n})<\psi_{\e (i+Ke_k)}^\e(\{\e z^{\frac{j}{\e}}\}_{j\in Z_\e(\R^n)})+\eta=\psi_{\e i}^\e(\{\e z^{\frac{j}{\e}}\}_{j\in Z_\e(\R^n)})+\eta<\psi_{i}^b(\{z^j\}_{j\in\Z^n})+2\eta.
\end{align*}
Using the first inequality in \eqref{est:periodicity1} the same argument as above then leads to
\begin{align*}
\psi_i^b(\{z^j\}_{j\in\Z^n})-2\eta<\psi_{i+Ke_k}^b(\{z^j\}_{j\in\Z^n})<\psi_i^b(\{z^j\}_{j\in\Z^n})+2\eta,
\end{align*}
and we conclude by the arbitrariness of $\eta>0$. 

An analogous argument shows that \ref{H1}--\ref{H2} transfer from $\psi_{\e i}^\e$ to $\psi_i^b$ and that \ref{Hb5} follows from \eqref{cond:finite-range1}.
Moreover, for every $\eta>0$ and $z:\Z^n\to\R^d$  there exists $\bar{\e}=\bar{\e}(z,\eta)>0$ such that for all $\e\in (0,\bar{\e})$ and every $i\in\Z^n$ we have
\begin{align*}
\psi_i^b(\{z^j\}_{j\in\Z^n})>\psi_{\e i}^\e(\{\e z^{\frac{j}{\e}}\}_{j\in Z_\e(\e LQ)})-\eta\geq c_2\min\Big\{\sum_{k=1}^n|D_1^k z(0)|^p,\frac{1}{\e}\Big\}-\eta=c_2\sum_{k=1}^n|D_1^k z(0)|^p-\eta,
\end{align*}
hence \ref{Hb4} follows again by the arbitrariness of $\eta>0$.

We continue proving \ref{Hb6}. Let $(c_{\e}^{j,\xi})$ be the sequence provided by \ref{H5}. In view of \eqref{H5:summability} there exists $\e_0>0$ such that
\begin{align*}
c_4:=\sup_{\e\in(0,\e_0)}\sum_{j\in Z_\e(\e LQ)}\dsum{\xi\in Z_1(LQ)}{j+\e\xi\in\e LQ}c_\e^{j,\xi}<+\infty.
\end{align*}
Fix $\eta>0$; for any $z,w:\Z^n\to\R^d$ and $\varphi:\Z^n\to[0,1]$ we find $\bar{\e}=\bar{\e}(z,w,\varphi,\eta)\in (0,\e_0)$ such that for all $\e\in (0,\bar{\e})$ and for all $i\in\Z^n$ there holds
\begin{align*}
&\psi_i^b(\{\varphi^jz^j+(1-\varphi^j)w^j\}_{j\in\Z^n})\leq\psi_{\e i}^\e(\{\varphi^{\frac{j}{\e}}\e z^{\frac{j}{\e}}+(1-\varphi^{\frac{j}{\e}})\e w^{\frac{j}{\e}}\}_{j\in Z_\e(\R^n)})+\eta,\\
&\psi_{\e i}^\e(\{\e z^{\frac{j}{\e}}\}_{j\in Z_\e(\R^n)})+\psi_{\e i}^\e(\{\e w^{\frac{j}{\e}}\}_{j\in Z_\e(\R^n)})\leq \psi_i^b(\{z^j\}_{j\in\Z^n})+\psi_i^b(\{w^j\}_{j\in\Z^n})+\eta.
\end{align*}
Then \ref{H5} together with \eqref{cond:finite-range0} yield
\begin{align*}
\psi_i^b(\{\varphi^jz^j+(1-\varphi^j)w^j\}_{j\in\Z^n})\leq c_3\big(\psi_i^b(\{z^j\}_{j\in\Z^n})+\psi_i^b(\{w^j\}_{j\in\Z^n})+\eta\big)+R^\e(z,w,\varphi)+\eta,
\end{align*}
where
\begin{align*}
R^\e(z,w,\varphi)=\sum_{j\in Z_\e(\e LQ)}\dsum{\xi\in Z_1(LQ)}{j+\e\xi\in\e LQ} &c_\e^{j,\xi}\big(\sup_{\substack{l\in Z_1(LQ)\\k\in\{1,\ldots,n\}}}|D_1^k\varphi(l)|^p|z(\tfrac{j}{\e}+\xi)-w(\tfrac{j}{\e}+\xi)|^p\big)\\
&+c_\e^{j,\xi}\big(|D_1^\xi z(\tfrac{j}{\e})|^p+|D_1^\xi w(\tfrac{j}{\e})|^p\big).
\end{align*}
Since $\bar{\e}\in(0,\e_0)$ we have $c_\e^{j,\xi}\leq c_4$ for all $\e\in (0,\bar{\e})$, $j\in Z_\e(\e LQ)$ and $\xi\in Z_1(LQ)$. Hence
\begin{align*}
R^\e(z,w,\varphi)\leq c_4\hspace{-0.4em}\sum_{j\in Z_1(LQ)}\hspace*{-0.2em}\dsum{\xi\in Z_1(LQ)}{j+\xi\in LQ}\hspace*{-0.8em}\Big(\sup_{\substack{l\in Z_1(LQ)\\k\in\{1,\ldots,n\}}}\hspace*{-0.5em}|D_1^k\varphi(l)|^p|z(j+\xi)-w(j+\xi)|^p+|D_1^\xi z(j)|^p+|D_1^\xi w(j)|^p\Big)
\end{align*}
and \ref{Hb6} follows by the arbitrariness of $\eta>0$. 

Using a similar argument we eventually verify \ref{Hb7}. We consider the sequence $(c_{\e,\alpha}^{j,\xi})$ provided by \ref{H4} and we remark that thanks to \eqref{H4:summability1} there exists $\e_0>0$ such that
\begin{align}\label{eps0}
\bar{c}_5:=\sup_{\e\in(0,\e_0)}\sum_{j\in Z_\e(\e LQ)}\dsum{\xi\in Z_1(LQ)}{j+\e\xi\in\e LQ}c_{\e,1}^{j,\xi}<+\infty.
\end{align}
For any $z:\Z^n\to\Rd$ we choose $\bar{\e}=\bar{\e}(z)\in(0,\e_0)$ such that $\psi_i^b(\{z^j\}_{j\in\Z^n})<\psi_{\e i}^\e(\{\e z^{j/\e}\}_{j\in Z_\e(\R^n)})+1$ for every $\e\in(0,\bar{\e})$. Moreover we define a constant function $\hat{z}:\Z^n\to\R^d$ by setting $\hat{z}^j:=z^0$ for every $j\in\Z^n$. Since $\bar{\e}<\e_0$, \ref{H4} and \eqref{upperbound:constant} in Remark \ref{rem:hypotheses:phi} yield for any $\e\in(0,\bar{\e})$ the estimate
\begin{align*}
\psi_i^b(\{z^j\}_{j\in\Z^n}) &\leq c_1+2+\hspace*{-1em}\sum_{j\in Z_\e(\e LQ)}\dsum{\xi\in Z_1(LQ)}{j+\e\xi\in\e LQ}\hspace*{-1em}c_{\e,1}^{j,\xi}|D_1^\xi z(\tfrac{j}{\e})|^p\leq c_1+2+\bar{c}_5\hspace*{-0.5em}\sum_{j\in Z_1(LQ)}\dsum{\xi\in Z_1(LQ)}{j+\xi\in LQ}\hspace*{-0.5em}|D_1^\xi z(j)|^p.
\end{align*}
Finally, the last term in the estimate above can be bounded via
\begin{align*}
\sum_{j\in Z_1(LQ)}\dsum{\xi\in Z_1(LQ)}{j+\xi\in LQ}\hspace*{-0.5em}|D_1^\xi z(j)|^p\leq 2^{p-1}(1+\#Z_1(LQ))|\nabla_{1,L}z|^p(0),
\end{align*}
hence we obtain \ref{Hb7} by setting $c_5:=\max\{c_1+2,\bar{c}_52^{p-1}(1+\#Z_1(LQ))\}$.
\end{proof}
\begin{rem}\label{rem:upperbound}
The arguments used to verify \ref{Hb7} also show that for all $\e\in(0,\e_0)$ with $\eps_0$ as in \eqref{eps0}, for all $i\in Z_\e(\R^n)$ and for all $z:Z_\e(\R^n)\to\R^d$ there holds
\begin{align*}
\psi_i^\e(\{z^j\}_{j\in Z_\e(\R^n)})\leq c_5(|\nabla_{\e,L}z|^p(0)+1).
\end{align*}
\end{rem}
\noindent Thanks to Lemma \ref{lem:hypotheses:psi} the following is a consequence of \citep[Theorem 5.1]{BK18}.
\begin{theorem}\label{Gammaconvergence:bulk}
Let $G_\e:L^1(\Omega;\Rd)\times\A(\Omega)\to[0,+\infty]$ be given by \eqref{def:bulkfunctionals} and suppose that the functions $\psi_i^\e:(\Rd)^{Z_\e(\Rn)}\to[0,+\infty)$ are $\e K$-periodic in $i$, satisfy \ref{H1}--\ref{H5} with $Z_\e(\Omega_i)$ replaced by $Z_\e(\Rn)$, and the locality condition \eqref{cond:finite-range0}. Assume that in addition Hypotheses \ref{Hpsi1} holds true. Then $G_\e$ $\Gamma$-converges in the strong $L^p(\Omega;\Rd)$-topology to the functional $G:L^p(\Omega;\Rd)\to[0,+\infty]$ given by
\begin{align*}
G(u)=\int_\Omega f_{\rm hom}(\nabla u)\dx,\qquad u\in W^{1,p}(\Omega;\Rd)
\end{align*}
and extended by $+\infty$ in $L^p(\Omega;\Rd)\setminus W^{1,p}(\Omega;\Rd)$, where the integrand $f_{\rm hom}$ is given by \eqref{fhom}. In particular, the limit defining $f_{\rm hom}$ exists and is independent of the $\Gamma$-converging subsequence.
\end{theorem}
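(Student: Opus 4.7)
The plan is to recognize that the functionals $G_\e$ fall into the framework of \cite[Theorem 5.1]{BK18}, which provides a discrete-to-continuum homogenization result for lattice energies with Sobolev-type $\Gamma$-limits, and then to invoke that result directly. All the real content of the proof is packaged in Lemma \ref{lem:hypotheses:psi} together with the structure of the rescaled densities $\psi_i^{\e,b}$, so the argument is mostly a matter of matching conventions.

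First, I would observe that the definition \eqref{def:bulkfunctionals} of $G_\e$ has exactly the form of a discrete functional at \emph{bulk} scaling driven by finite-range periodic potentials on the scaled lattice. The built-in factor $1/\e$ and the rescaling $z\mapsto z(\e\,\cdot)$ inside $\psi_{i/\e}^b$ are precisely the change of variables that sends an affine deformation $u_M(x)=Mx$ on $\e\Z^n$ to the test state $\{Mj\}_{j\in\Z^n}$ on the integer lattice; this can be seen by applying \ref{H1} to remove the translation by $(1/\e)Mi$. In particular, the two-scale structure used in \cite{BK18} to define a discrete cell problem is realized in our setting through $\psi_i^b$.

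Second, I would verify that all hypotheses of \cite[Theorem 5.1]{BK18} are met. Lemma \ref{lem:hypotheses:psi} supplies exactly what is needed: the $\psi_i^b$ are $K$-periodic in $i$, translationally invariant \ref{H1}, satisfy the upper bound on affine states \ref{H2}, the coercivity estimate \ref{Hb4}, the finite-range locality property \ref{Hb5}, the controlled non-convexity bound \ref{Hb6}, and the $p$-growth upper bound \ref{Hb7}. The finite-range property \ref{Hb5} is the discrete analogue of the ``$k$-th order interaction'' condition used in \cite[Section 5]{BK18}, so one may take $k=L$ throughout; the remaining boundary potentials $\psi_i^{\e,b}$ for $i\in Z_\e(\Omega\setminus\Omega_\e^L)$ contribute only on a boundary layer of width $O(\e L)$ and therefore do not affect the limit.

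Third, once the hypotheses are verified, \cite[Theorem 5.1]{BK18} gives directly that $G_\e$ $\Gamma$-converges in the strong $L^p(\Omega;\R^d)$-topology to a functional of the claimed form, with domain $W^{1,p}(\Omega;\R^d)$ and integrand given by an asymptotic cell formula coinciding with \eqref{fhom}. In particular the cell-formula limit exists and does not depend on the chosen subsequence. The only delicate point, and the main obstacle one should be careful about, is the matching of the two formulations: one must check that the class $\A_1^{\sqrt{n}L}(u_M,TQ)$ of admissible competitors (functions agreeing with $u_M$ on a boundary layer of width $\sqrt{n}L$) is the same as the boundary-value class used in \cite{BK18}, so that the infima in \eqref{fhom} are genuinely the cell-problem values to which the cited theorem converges. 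This is a routine matter of comparing the finite-range width $L$ with the minimal thickness needed to fix the boundary values of a discrete interaction of range $L$, which gives $\sqrt{n}L$ in the $|\cdot|_\infty$ metric; once this is noted, the identification of $f_{\rm hom}$ with \eqref{fhom} and the independence of the $\Gamma$-converging subsequence follow immediately.
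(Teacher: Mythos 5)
Your proposal follows the paper's own argument almost verbatim: cite \cite[Theorem~5.1]{BK18}, observe that Lemma~\ref{lem:hypotheses:psi} furnishes all the hypotheses (periodicity, translation invariance, coercivity, finite range, controlled non-convexity, and $p$-growth), and note that the finite-range assumption lets one shrink the boundary layer in the cell formula to the fixed width $\sqrt{n}L$, exactly as the paper records in Remark~\ref{rem:convergence:bulk}. The only slight inaccuracy is the claim that the admissible class in \eqref{fhom} ``is the same as the boundary-value class used in \cite{BK18}'' --- in \cite{BK18} the layer has width $\sqrt{T}$ growing with $T$, and one must argue (as you in fact do implicitly via the finite range $L$) that the two cell formulas agree; with that caveat the proof is sound and coincides with the paper's.
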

\begin{rem}\label{rem:convergence:bulk}
Note that Theorem \ref{Gammaconvergence:bulk} holds also locally, \ie for every $A\in\A(\Omega)$ and every $u\in W^{1,p}(\Omega;\Rd)$ we have
\begin{align*}
\Gamma\hbox{-}\lim_{\e\to 0}G_\e(u,A)=\int_A f_{\rm hom}(\nabla u)\dx.
\end{align*}
Moreover, thanks to the finite-range assumption \eqref{cond:finite-range0} the width of the boundary layer in the definition of $f_{\rm hom}$ can be chosen as $\sqrt{n}L$ (instead of $\sqrt{T}$ as in \citep[Theorem 5.1]{BK18}).
\end{rem}
Thanks to \ref{Hpsi1} we can compare the two discrete energies $F_\e$ and $G_\e$ following a similar strategy as in \citep{Ruf17}. To this end it is convenient to recall the notion of discrete maximal function and some of its properties that have been proved in \citep{Ruf17} (see also \citep{FMP}). 

Given $\e>0$, $v:Z_\e(\R^n)\to\R$ and $r>0$ we define the maximal function $\M_\e^r v:Z_\e(\R^n)\to[0,+\infty)$ by setting
\begin{align*}
\M_\e^rv(i):=\sup_{s\in (0,r)}\frac{1}{\# Z_\e(\overline{B}_s^{|\cdot|_1}(i))}\sum_{j\in Z_\e(\overline{B}_s^{|\cdot|_1}(i))}|v^j|,
\end{align*}
where $\overline{B}_s^{|\cdot|_1}(i)$ is the closed ball of radius $s$ around $i$ with respect to the $|\cdot|_1$-norm. The following lemma is a consequence of \citep[Lemma 5.16 and Remark 5.17]{Ruf17}.
\begin{lem}\label{lem:maxfunction}
There exists a constant $\bar{c}>0$ such that for all $\e>0$ and for every $u:Z_\e(\R^n)\to\R^d$ there holds
\begin{align*}
|u^i-u^j|\leq\bar{c}|i-j|_1\Big(\M_\e^{\bar{c}|i-j|_1}|\nabla_\e u|(i)+\M_\e^{\bar{c}|i-j|_1}|\nabla_\e u|(j)\Big)\quad\text{for every}\ i,j\in Z_\e(\R^n).
\end{align*}
\end{lem}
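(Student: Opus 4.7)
The estimate is a discrete analogue of the classical Hedberg-type pointwise inequality that bounds the oscillation of a Sobolev function by a maximal function of its gradient, and my plan is to adapt the standard telescoping argument on dyadic balls to the lattice $Z_\e(\R^n)$ equipped with the $|\cdot|_1$-metric.

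\textbf{Setup and telescoping.} Fix $i,j\in Z_\e(\R^n)$ with $i\neq j$ and set $R:=|i-j|_1$. For $k\in\N\cup\{0\}$ define dyadic radii $s_k:=2^{-k+1}R$, the discrete balls $B_k^i:=Z_\e(\overline{B}^{|\cdot|_1}_{s_k}(i))$ and $B_k^j:=Z_\e(\overline{B}^{|\cdot|_1}_{s_k}(j))$, and their averages
\[
a_k:=\frac{1}{\# B_k^i}\sum_{\ell\in B_k^i}u^\ell,\qquad b_k:=\frac{1}{\# B_k^j}\sum_{m\in B_k^j}u^m.
\]
For $k$ large enough, $s_k<\e$ and hence $B_k^i=\{i\}$, $B_k^j=\{j\}$, so $a_k=u^i$ and $b_k=u^j$ eventually. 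The resulting finite telescoping identity
\[
u^i-u^j=(a_0-b_0)+\sum_{k=0}^{\infty}(a_{k+1}-a_k)-\sum_{k=0}^{\infty}(b_{k+1}-b_k)
\]
reduces the task to estimating each of the three pieces by $CR\bigl(\M_\e^{CR}|\nabla_\e u|(i)+\M_\e^{CR}|\nabla_\e u|(j)\bigr)$.

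\textbf{Key Poincar\'e-type estimate on dyadic pairs.} The analytic ingredient is the following: whenever $B',B''$ are $\e$-lattice balls in the $|\cdot|_1$-metric both contained in some $\widetilde B:=Z_\e(\overline{B}^{|\cdot|_1}_{Cs}(p))$ and satisfying $\#B',\#B''\geq c\,\#\widetilde B$, one has
\[
\Bigl|\tfrac{1}{\# B'}\sum_{\ell\in B'}u^\ell-\tfrac{1}{\# B''}\sum_{m\in B''}u^m\Bigr|\leq Cs\,\M_\e^{Cs}|\nabla_\e u|(p).
\]
This is obtained by rewriting the left-hand side as a double average of $|u^\ell-u^m|$, joining each pair $(\ell,m)$ by an axis-aligned lattice path of length at most $|\ell-m|_1/\e\leq Cs/\e$ contained in $\widetilde B$, telescoping via the nearest-neighbour bound $|u^q-u^{q+\e e_k}|\leq \e |D_\e^k u(q)|$, and finally recognising the resulting spatial average of $|\nabla_\e u|$ over $\widetilde B$ as being dominated by $\M_\e^{Cs}|\nabla_\e u|(p)$.

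\textbf{Assembly and main obstacle.} Applying the key estimate to $(B_{k+1}^i,B_k^i)\subset Z_\e(\overline{B}^{|\cdot|_1}_{2s_k}(i))$ yields $|a_{k+1}-a_k|\leq Cs_k\,\M_\e^{Cs_k}|\nabla_\e u|(i)$; since $s_k\leq 2R$ the maximal function at scale $Cs_k$ is dominated by that at scale $CR$, and summing the geometric series $\sum_k s_k\leq 4R$ gives $\sum_k|a_{k+1}-a_k|\leq CR\,\M_\e^{CR}|\nabla_\e u|(i)$, and symmetrically for the $b_k$. Because $s_0=2R$, both $j\in B_0^i$ and $i\in B_0^j$, so $B_0^i,B_0^j\subset Z_\e(\overline{B}^{|\cdot|_1}_{3R}(i))$ with comparable cardinalities, and one further application of the key estimate controls $|a_0-b_0|$ by the same quantity. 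Assembling the three contributions yields the claim with $\bar c$ equal to the largest of the geometric constants that appear. The main obstacle is the Poincar\'e-type estimate of the second step: the path-counting and double-sum bookkeeping must be carried out so that the constant is uniform in $\e$, which is delicate at the smallest scales $s\sim\e$, where the balls contain only a few lattice points and one must argue by hand that $\#\widetilde B/\#B'$ stays bounded by a dimensional constant.
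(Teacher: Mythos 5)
The paper provides no proof of this lemma; it is stated as a direct consequence of \citep[Lemma 5.16 and Remark 5.17]{Ruf17}, so there is no in-paper argument for you to replicate or deviate from. Judged on its own, your outline is the natural discrete version of the Hedberg chaining argument and is essentially correct: telescope $u^i-u^j$ through dyadic ball averages centred at $i$ and $j$, control each consecutive pair by a discrete Poincar\'e inequality built from nearest-neighbour increments along axis-aligned $\ell^1$-geodesics, sum the geometric series $\sum_k s_k=4R$, and use the monotonicity of $r\mapsto\M_\e^r$ to replace the scales $\sim s_k$ by the single scale $\bar c R$.

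The obstacle you flag is the right one, and it resolves routinely. For the doubling requirement $\#B',\#B''\geq c\,\#\widetilde B$: when $s_{k+1}\geq\e$ the two dyadic lattice $\ell^1$-balls have cardinalities comparable up to a dimensional factor, and once $s_{k+1}<\e$ the inner ball is $\{i\}$ while the outer (radius $<4\e$) contains at most a dimensional-constant number of lattice points; also, distinct lattice points always satisfy $|i-j|_1\geq\e$, so $B_0^i$ is never degenerate. For the path counting: choosing coordinate-wise geodesics in a fixed axis order, the number of pairs $(\ell,m)\in B'\times B''$ whose path traverses a fixed lattice edge is of order $(Cs/\e)^{n+1}$; after dividing by $\#B'\#B''\sim(Cs/\e)^{2n}$ one is left with $Cs$ times the lattice average of $|\nabla_\e u|$ over the enlarged ball, which is dominated by the maximal function at a comparable radius. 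Two small repairs: a coordinate-wise geodesic between points of $\overline{B}^{|\cdot|_1}_{Cs}(p)$ can exit that ball (it stays in $\overline{B}^{|\cdot|_1}_{2Cs}(p)$), so $\widetilde B$ must carry the doubled radius; and since $\M_\e^r$ is a supremum over $s\in(0,r)$, the maximal-function radius must strictly exceed the averaging radius. Both only enlarge $\bar c$.
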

\noindent Moreover, the following result has been established in \citep[Lemma 5.18]{Ruf17}.
\begin{lem}\label{lem:equiint}
Let $x_0\in\R^n$, $\lambda>0$ and suppose that $u_\e:Z_\e(\R^n)\to\R^d$ satisfy
\begin{align*}
\sup_{\e>0}\sum_{i\in Z_\e(B_{(3+6\bar{c}\sqrt{n})\lambda}(x_0))}|\nabla_\e u_\e|^p(i)<+\infty,
\end{align*}
where $\bar{c}$ is as in Lemma \ref{lem:maxfunction}. Then there exist a subsequence $(\e_h)$ and functions $w_j:Z_{\e_h}(\R^n)\to\R^d$ such that $|\nabla_{\e_h}w_h|^p$ is equiintegrable on $B_{2\lambda}(x_0)$ and
\begin{align}\label{vanishingset1}
\lim_{h\to+\infty}\e_h^n\#\{i\in Z_{\e_h}(B_{2\lambda}(x_0))\colon u_{\e_h}\nequiv w_h\ \text{on}\ \clBone_{\e_h}(i)\}=0.
\end{align}
\end{lem}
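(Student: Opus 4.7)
The plan is to adapt the continuum Fonseca--Müller--Pedregal truncation to the discrete setting by using Lemma \ref{lem:maxfunction} as a substitute for the Morrey-type representation of functions via maximal functions of gradients. The starting point is the discrete Hardy--Littlewood maximal inequality: there exists $c_p>0$ such that for every $\e>0$ and every $v\colon Z_\e(\R^n)\to\R$ one has the weak-$(p,p)$ estimate
\begin{equation*}
\e^n\#\{i\in Z_\e(\R^n)\colon \M_\e^r v(i)>t\}\leq \frac{c_p}{t^p}\sum_{i\in Z_\e(B_{r+C}(x_0))}\e^n|v(i)|^p,
\end{equation*}
together with the corresponding strong-$(p,p)$ estimate after restricting to a fixed ball. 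Applied to $v=|\nabla_\e u_\e|$ and $r=\bar c(3+6\bar c\sqrt{n})\lambda$, the hypothesis yields a uniform bound on $\|\M_\e^r|\nabla_\e u_\e|\|_{L^p(B_{3\lambda}(x_0))}$, which will be the source of equiintegrability.

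First, I would extract a subsequence $(\e_h)$ and choose thresholds $\Lambda_h\to+\infty$ slowly enough that the distribution function of $\M_{\e_h}^r|\nabla_{\e_h}u_{\e_h}|$ at level $\Lambda_h$ decays: specifically, one picks $\Lambda_h$ so that
\begin{equation*}
\e_h^n\#\{i\in Z_{\e_h}(B_{3\lambda}(x_0))\colon \M_{\e_h}^r|\nabla_{\e_h}u_{\e_h}|(i)>\Lambda_h\}\longrightarrow 0,
\end{equation*}
which is possible by the uniform $L^p$-bound above and a diagonal argument. Denote by $G_h\subset Z_{\e_h}(B_{2\lambda}(x_0))$ the complementary ``good'' set. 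By Lemma \ref{lem:maxfunction}, the restriction of $u_{\e_h}$ to $G_h$ is Lipschitz in the $|\cdot|_1$-metric with constant comparable to $\Lambda_h$. I would then define $w_h$ on $Z_{\e_h}(\R^n)$ by a discrete McShane--Whitney extension, namely $w_h(i):=\inf_{j\in G_h}\{u_{\e_h}(j)+C\Lambda_h|i-j|_1\}$ componentwise (applied to each coordinate of $u_{\e_h}$ and then truncated back to a ball), so that $w_h=u_{\e_h}$ on $G_h$ and $|\nabla_{\e_h}w_h|\leq C\Lambda_h$ everywhere.

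To verify equiintegrability of $|\nabla_{\e_h}w_h|^p$ on $B_{2\lambda}(x_0)$, for any measurable $A\subset B_{2\lambda}(x_0)$ I would split the integral over the lattice cells meeting $A$ according to whether $i\in G_h$ or not. On $G_h$, Lemma \ref{lem:maxfunction} gives the pointwise bound $|\nabla_{\e_h}w_h|(i)\leq C\M_{\e_h}^r|\nabla_{\e_h}u_{\e_h}|(i)$, so the contribution is controlled by $\int_A(\M_{\e_h}^r|\nabla_{\e_h}u_{\e_h}|)^p$, which is equiintegrable by the uniform $L^p$-bound on the maximal function. On the bad set one has the crude bound $\Lambda_h^p$ multiplied by the measure, and the choice of $\Lambda_h$ ensures this tends to zero uniformly in $A$. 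Finally, \eqref{vanishingset1} follows directly: if $u_{\e_h}\nequiv w_h$ on $\clBone_{\e_h}(i)$ then some neighbour of $i$ must lie in the bad set $Z_{\e_h}(B_{2\lambda}(x_0))\setminus G_h$, whose measure tends to zero.

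The main obstacle is the balancing of $\Lambda_h$: it must grow slowly enough for the measure of the bad set (scaled by $\Lambda_h^p$) to vanish in the equiintegrability estimate, yet fast enough to exhaust the sublevel structure of $\M_{\e_h}^r|\nabla_{\e_h}u_{\e_h}|$ so that the distribution function condition above is preserved. This is the standard delicate point in the continuum FMP argument, and in the present discrete context one must also confirm that the extension procedure produces a function whose \emph{discrete} gradient, not merely a pointwise Lipschitz constant, enjoys the required bound, which is why the McShane construction on a single coordinate is preferable to a vectorial one.
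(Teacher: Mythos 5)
Your structure — good/bad set split via the discrete maximal function of $|\nabla_\e u_\e|$, Lipschitz extension off the good set, vanishing measure of the bad set — is the right Fonseca--M\"uller--Pedregal scheme, and it is indeed what the cited Lemma 5.18 of \cite{Ruf17} does (the paper itself does not prove this lemma; it only cites it). However, there is a genuine gap at the crux. You select $\Lambda_h\to+\infty$ only so that $\e_h^n\#\{i\colon\M_{\e_h}^r|\nabla_{\e_h}u_{\e_h}|(i)>\Lambda_h\}\to 0$; this is achievable for \emph{any} $\Lambda_h\to+\infty$ by Chebyshev and does not by itself give the stronger condition $\Lambda_h^p\,\e_h^n\#\{\M_{\e_h}^r|\nabla_{\e_h}u_{\e_h}|>\Lambda_h\}\to 0$, which is what your bad-set estimate actually requires. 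More seriously, you assert that the good-set contribution $\int_A(\M_{\e_h}^r|\nabla_{\e_h}u_{\e_h}|)^p$ is ``equiintegrable by the uniform $L^p$-bound on the maximal function.'' This is false: a family bounded in $L^p$ need not be $p$-equiintegrable (take $f_h=h^{1/p}\chi_{[0,1/h]}$, bounded in $L^p$ with $|f_h|^p$ concentrating), and the (restricted) maximal operator does not remove concentration. That failure is precisely what the lemma is designed to repair, so it cannot be invoked to prove it.

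The missing ingredient is a biting-lemma (Chacon) or Young-measure compactness step for the sequence $(\M_{\e_h}^r|\nabla_{\e_h}u_{\e_h}|)^p$: only after passing to a further subsequence converging in the biting sense can one choose $\Lambda_h$ so that simultaneously $\Lambda_h^p\,\e_h^n\#\{\M_{\e_h}^r|\nabla_{\e_h}u_{\e_h}|>\Lambda_h\}\to 0$ \emph{and} the truncations $\min\{\M_{\e_h}^r|\nabla_{\e_h}u_{\e_h}|,\Lambda_h\}^p$ are equiintegrable. Note that a plain Chebyshev bound on the intermediate range $\{\Lambda<\M\leq\Lambda_h\}$ only yields something of order $\Lambda_h^p/\Lambda^p$, forcing $\Lambda\gg\Lambda_h$ in the absolute-continuity estimate $\Lambda^p|A|$ and making the required $|A|$ depend on $h$, which is not allowed. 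Your closing remark that the balancing of $\Lambda_h$ is ``the standard delicate point'' is exactly right, but the diagonal argument you invoke does not supply this step; without the biting-type input, both halves of your equiintegrability split are unjustified.
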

\begin{rem}\label{rem:equiint}
Let the sequences $(u_\e)$, $(\e_h)$ and $(w_h)$ be as in Lemma \ref{lem:equiint}. Then we also have
\begin{align}\label{vanishingsetL}
\lim_{h\to+\infty}\e_h^n\#\{i\in Z_{\e_h}(B_\lambda(x_0))\colon u_{\e_h}\nequiv w_{\e_h}\ \text{on}\ Z_{\e_h}(Q_{\e_h L}(i))\}=0.
\end{align}
To verify \eqref{vanishingsetL} we denote by $\mathcal{U}_h$ the set in \eqref{vanishingset1} and by $\mathcal{U}_h^L$ the set in \eqref{vanishingsetL} and we remark that for every $i\in\mathcal{U}_h^L$ there exists $j_i\in Q_{\e_h L}(i)$ such that $u_{\e_h}^{j_i}\neq w_h^{j_i}$. Since $i\in B_\lambda(x_0)$ we have $j_i\in B_{2\lambda}(x_0)$ for $h$ sufficiently large, so that $j_i\in \mathcal{U}_h$. Hence for $h$ sufficiently large we get
\begin{align*}
\e_h^n\#\mathcal{U}_h^L\leq \e_h^n\sum_{j\in\mathcal{U}_h}\#\{i\in\mathcal{U}_h^L\colon j\in Q_{\e_h L}(i)\}\leq cL^n\e_h^n\#\mathcal{U}_h\ \to 0\ \text{as}\ h\to+\infty.
\end{align*}
\end{rem}
\noindent We are now in a position to prove the following result.
\begin{prop}\label{prop:bulk}
Let the sequence $(F_\e)$ be defined according to \eqref{def:energy} with $\phi_i^\e:(\R^d)^{Z_\e(\Omega_i)}\to[0,+\infty)$ as in \eqref{def:phi:periodic} and assume the functions $\psi_i^\e:(\Rd)^{Z_\e(\Rn)}\to[0,+\infty)$ are $\e K$-periodic in $i$, satisfy \ref{H1}--\ref{H5} with $Z_\e(\Omega_i)$ replaced by $Z_\e(\Rn)$, the locality condition \eqref{cond:finite-range0}, and \ref{Hpsi1}. Then $\bar{f}(M)=f_{\rm hom}(M)$ for every $M\in\R^{d\times n}$, where $\bar{f}$ is as in \eqref{hom:integrands}.
\end{prop}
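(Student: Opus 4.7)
The plan is to identify $\bar f$ with $f_{\rm hom}$ by showing that the $\Gamma$-limit $F$ of $F_\e$ and the $\Gamma$-limit $G$ of the purely bulk functionals $G_\e$ in \eqref{def:bulkfunctionals} agree on $W^{1,p}(\Omega;\R^d)$. Since Proposition \ref{cor:trans-inv} gives $F(u,A) = \int_A \bar f(\nabla u)\dx$ and Theorem \ref{Gammaconvergence:bulk} gives $G(u,A) = \int_A f_{\rm hom}(\nabla u)\dx$ for $u \in W^{1,p}(\Omega;\R^d)$ and $A \in \Areg(\Omega)$, testing on the affine function $u_M(x) = Mx$ will then yield $\bar f(M) = f_{\rm hom}(M)$ for every $M \in \R^{d\times n}$.

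The pointwise tool is \ref{Hpsi1}: combined with \eqref{def:phi:periodic}, \eqref{cond:finite-range1} and the definition of $\psi_i^{\e,b}$, it ensures that for every $\eta,\Lambda>0$ there exists $\bar\e(\eta,\Lambda)>0$ such that for all $\e<\bar\e$, $i\in Z_\e(\Omega_\e^L)$ and $v:Z_\e(\R^n)\to\R^d$ with $|\nabla_{\e,L} v|(i)\le\Lambda$ one has $|\phi_i^\e(\{v^{i+j}\})-\psi_i^{\e,b}(\{v^{i+j}\})|<\eta$. Summed over sites with such a uniform bound, this controls $|F_\e(v,A)-G_\e(v,A)|$ by $\eta|A|$; on exceptional sites, the upper bounds in \ref{H2} and Remark \ref{rem:upperbound}, combined with Lemma \ref{lem:comp:gradients}, control both $\phi_i^\e$ and $\psi_i^{\e,b}$ by $c_5(|\nabla_{\e,L}v|^p(i)+1)$, whose sum can be made small via equiintegrability.

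For the inequality $F(u_M,A)\le G(u_M,A)$, I would select a recovery sequence $(v_\e)$ for $G_\e$ with $v_\e\to u_M$ in $L^p$. The lower bound \ref{Hb4} (applied to $\psi^{\e,b}$) combined with the boundedness of $G_\e(v_\e)$ gives $\sup_\e\sum_i\e^n|\nabla_\e v_\e|^p<\infty$, so Lemma \ref{lem:equiint} with Remark \ref{rem:equiint} provides, up to a subsequence, functions $w_h$ with $|\nabla_{\e_h} w_h|^p$ equiintegrable on a ball containing $\Omega$ and $w_h\equiv v_{\e_h}$ on $Q_{\e_h L}(i)$ outside a set $\mathcal{U}_h$ with $\e_h^n\#\mathcal{U}_h\to 0$. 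One then splits $Z_{\e_h}(A)$ into the good part $\{i\notin\mathcal{U}_h,\ |\nabla_{\e_h,L}w_h|(i)\le\Lambda\}$ (on which the pointwise comparison applies, using finite-range locality to replace evaluations on $w_h$ by those on $v_{\e_h}$) and the bad complement (on which the upper bound and equiintegrability give a contribution vanishing as $\Lambda\to\infty$, uniformly in $h$). Passing to the $\liminf_h$ and letting $\Lambda\to\infty$, $\eta\to 0$ delivers $F(u_M,A)\le G(u_M,A)$.

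The reverse inequality is the principal obstacle. Starting from a recovery sequence $(v_\e)$ for $F_\e$ converging to $u_M$ in $L^1$, hypothesis \ref{H3} only provides $\sum_i\e^n\min\{|\nabla_\e v_\e|^p,1/\e\}$ bounded, so Lemma \ref{lem:equiint} does not apply directly. However, as $u_M\in W^{1,p}$ has $\Hn(S_{u_M})=0$, the $GSBV^p$-compactness underlying Lemma \ref{lem:dirichlet} (cf.~\cite[Lemma 5.6]{Ruf17}) forces the ``jump'' sites $\mathcal{J}_\e:=\{i\in Z_\e(A): |\nabla_\e v_\e|(i)>\e^{-1/p}\}$ to satisfy $\e^{n-1}\#\mathcal{J}_\e\to 0$. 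A local surgery on $\mathcal{J}_\e$ at a scale of several lattice spacings (replacing $v_\e$ there by a piecewise-constant interpolation of neighboring good-site values) then produces $\tilde v_\e\to u_M$ in $L^p$ with $\sum_i\e^n|\nabla_\e \tilde v_\e|^p$ uniformly bounded. Applying Lemma \ref{lem:equiint} to $\tilde v_\e$ and carrying out the symmetric good/bad split with the roles of $F_\e$ and $G_\e$ reversed finally yields $G(u_M,A)\le F(u_M,A)$, completing the identification $\bar f(M)=f_{\rm hom}(M)$.
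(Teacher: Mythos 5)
Your overall strategy is the same as the paper's: compare the discrete energies $F_\e$ and the purely bulk-scaled $G_\e$ on the affine target $u_M$, using the pointwise estimate that \ref{Hpsi1} provides when $|\nabla_{\e,L}\cdot|$ is bounded, and control the exceptional sites via upper bounds and equiintegrability. Your treatment of the direction $\bar f\le f_{\rm hom}$ matches the paper's Step~2 closely: start from a recovery sequence for $G_\e$, truncate, note that \ref{Hb4} gives a genuine $\e^n\sum_i|\nabla_\e\cdot|^p$ bound (no $\min$) so Lemma~\ref{lem:equiint} and Remark~\ref{rem:equiint} apply directly, split into good/bad sites, and pass to the limit.

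The direction $\bar f\ge f_{\rm hom}$, however, has a genuine gap. You assert that because $u_M\in W^{1,p}$ has empty jump set, the $GSBV^p$-compactness forces $\e^{n-1}\#\mathcal J_\e\to 0$ for the jump sites $\mathcal J_\e:=\{i:|\nabla_\e v_\e|(i)>\e^{-1/p}\}$ of a recovery sequence. This does not follow. The compactness result (\cite[Lemma 5.6]{Ruf17}) only yields $L^1$-convergence to a $GSBV^p$ limit and lower semicontinuity of the energy; it places no upper bound on the rescaled cardinality of discrete jump sites along a sequence whose limit happens to be smooth. A recovery sequence is only required to attain $\lim F_\e(v_\e,A)=\bar f(M)|A|$; that value is perfectly compatible with a strictly positive contribution $\gtrsim\e^{n-1}\#\mathcal J_\e$ coming from $O(\e^{n-1})$-many jump sites, and nothing in the hypotheses rules that out. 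Showing that such spurious jump sites can be removed at negligible cost is precisely the hard part, and your ``local surgery replacing $v_\e$ by a piecewise-constant interpolation of neighboring values'' does not achieve it: it gives no bound on the resulting $|\nabla_\e\tilde v_\e|^p$ along the patch boundaries, which is exactly where the large gradients move after such a substitution.

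The paper's proof of this direction is considerably more elaborate, and the extra structure is essential. The argument is carried out on shrinking balls $B_\rho$, with the outer limit $\rho\to0$ doing the real work: after blowing up by $1/\e_\rho$ one introduces a $\rho$-dependent threshold $\Lambda_\rho=\rho^{1/(1-p)}$ (which blows up as $\rho\to0$), uses a discrete maximal function to identify a set of regular sites $\mathcal R_\rho$, performs a Kirszbraun Lipschitz extension from $\mathcal R_\rho$, and controls the complement via a Vitali covering argument together with H\"older's inequality and the energy bound $\bar f(M)\le c(|M|^p+1)$. The resulting exceptional-site count is bounded by $c\,\rho^{p/(p-1)}\rho^n/\e_\rho^n$, which vanishes after the $\rho^{-n}$ rescaling only because $\rho\to0$, not because $\e\to0$ for fixed domain. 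Only then does the Lipschitz replacement have equiintegrable gradients so that Lemma~\ref{lem:equiint} applies and the pointwise comparison can be used. Without this blow-up, threshold tuning, Lipschitz extension, and covering estimate, the bound you need on the discrete gradient of the modified sequence is simply not available.
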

\begin{proof}
The strategy used to derive the formula for $\bar{f}$ follows closely the one used in \citep[Proposition 5.19]{Ruf17}. A main difference with respect to the situation in \citep{Ruf17} is the fact that the interaction-energy densities $\psi_i^\e$ are bounded from below only in terms of $|\nabla_\e u|$, while they can be bounded from above in terms of the finite-range gradient $|\nabla_{\e,L}u|$. To circumvent this additional difficulty we will frequently use Lemma \ref{lem:comp:gradients}. 

The proof is divided into two major steps establishing separately a lower and an upper bound of $\bar{f}$ in terms of $f_{\rm hom}$.
\begin{step}{Step 1} $\bar{f}\geq f_{\rm hom}$

\noindent Fix $M\in\R^{d\times n}$ and let $x_0\in\Omega$ and $\rho>0$ with $B_{\rho}(x_0)\wcont\Omega$. Then
\[|B_1|\bar{f}(M)=\frac{1}{\rho^n}F(u_{M,x_0},B_{\rho}(x_0)).\]
We now estimate $F(u_{M,x_0},B_{\rho}(x_0))$ from below. Without loss of generality we assume $x_0=0$ and for fixed $\rho_0>0$ with $B_{\rho_0}\wcont\Omega$ we choose functions $u_\e\in\A_\e(\Omega;\R^d)$ converging in $L^1(\Omega;\R^d)$ to $u_M$ and satisfying
\[\lim_{\e\to 0} F_\e(u_\e,B_{\rho_0})=F(u_M,B_{\rho_0}).\]
Then $(u_\e)$ is a recovery sequence for $u_M$ on $B_\rho$ for every $\rho\in (0,\rho_0)$, since
\begin{align*}
F(u_M,B_\rho) &=F(u_M,B_{\rho_0})-F(u_M,B_{\rho_0}\setminus\overline{B_\rho})\\
&\geq \lim_{\e\to 0}F_\e(u_\e,B_{\rho_0})-\liminf_{\e\to 0}F_\e(u_\e,B_{\rho_0}\setminus\overline{B_\rho})\geq\limsup_{\e\to 0}F_\e(u_\e,B_\rho),
\end{align*}
where in the first step we used that $F(u_M, B_\rho)$ does not concentrate on the boundary of $B_\rho$.
In particular, we have
\begin{align}\label{est:bulk1}
|B_1|\bar{f}(M)\geq\frac{1}{\rho^n}\limsup_{\e\to 0}F_\e(u_\e,B_\rho)\quad\text{for every}\ \rho\in (0,\rho_0).
\end{align}
We now introduce a constant $\bar{k}>0$ satisfying
\[\bar{k}>3+6\overline{c}\sqrt{n}+|M|,\]
where $\overline{c}$ is as in Remark \ref{lem:maxfunction}. Since $|u_M|\leq |M|\rho\leq\bar{k}\rho$ on $B_\rho$, the truncated functions $T_{\bar{k}\rho}u_\e$ converge to $u_M$ in $L^1(B_\rho,\R^d)$. In particular, in view of Remark \ref{rem:truncation} they still provide a recovery sequence for $u_M$ on $B_\rho$. 

Fix $\eta>0$ and for every $\rho\in (0, (3\bar{k}^2)^{-1}\rho_0)$ let $\bar{\e}_\rho=\bar{\e}(\eta,\frac{\sqrt{n}L}{2}\bar{k}\Lambda_\rho\#Z_1(LQ))$ be given by \ref{Hpsi1} with $\Lambda_\rho$ to be chosen later. We choose $$\e_\rho<\min\Big\{\rho^2,\rho^{\tfrac{p}{p-1}},\bar{\e}_\rho,\frac{\dist_\infty(B_{\rho_0},\partial\Omega)}{L}\Big\}$$ non-decreasing in $\rho$ and satisfying
\begin{align}
F_{\e_\rho}(T_{\bar{k}\rho}u_{\e_\rho},B_{3\bar{k}^2\rho}) &\leq c(|M|^p+1)\rho^n,\label{est:energy}\\
\frac{1}{|B_1|\rho^n}\int_{B_\rho}|T_{\bar{k}\rho}u_{\e_\rho}-u_M|^p\dx &\leq\rho^{p+1}.\label{est:lpnorm}
\end{align}
Here, the first estimate can be realized thanks to \eqref{est:bulk1} and the fact that $\bar{f}(M)\leq c(|M|^p+1)$. Observe that since $\rho<(3\bar{k}^2)^{-1}\rho_0$ our choice of $\e_\rho$ implies that $B_{3\bar{k}^2\rho}\subset B_{\rho_0}\subset\Omega_{\e_\rho}^L$ and hence
\begin{align}\label{energy:vrho}
F_{\e_\rho}(T_{\bar{k}\rho} u_{\e_\rho},B_{3\bar{k}^2\rho})=\hspace*{-1.5em}\sum_{i\in Z_{\e_\rho}(B_{3\bar{k}^2\rho})}\hspace*{-1.8em}\e_\rho^n\psi_i^{\e_\rho}(\{u_{\e_\rho}^{i+j}\chi_{\e_\rho LQ}^j\}_{j\in Z_{\e_\rho}(\R^n)})=\hspace*{-1.5em}\sum_{i\in Z_{1}(B_{3\bar{k}^2\frac{\rho}{\e_\rho}})}\hspace*{-2em}\e_\rho^n\psi_{\e_\rho i}^{\e_\rho}(\{\e_\rho v_{\rho}^{i+j/\e_\rho}\}_{j\in Z_{\e_\rho}(\R^n)}),
\end{align}
where $v_\rho:\Z^n\to\R^d$ is defined by setting 
\[v_\rho^i:=\frac{1}{\e_\rho}T_{\bar{k}\rho}u_{\e_\rho}^{\e_\rho i}\chi_{\Omega}^{\e_\rho i},\quad\text{for every}\ i\in\Z^n.\]
\end{step}
\begin{step}{Substep 1a} Construction of Lipschitz-competitors

\noindent 
We now aim to replace $v_\rho$ by a Lipschitz function $\bar{v}_\rho$ with Lipschitz constant at most $\bar{k}\Lambda_\rho$. To this end we introduce the sets of regular and singular points defined as
\[\mathcal{R}_\rho:=\{i\in Z_1(B_{\bar{k}\frac{\rho}{\e_\rho}})\colon \M^{\bar{k}^2\frac{\rho}{\e_\rho}}_1|\nabla_1v_\rho|\leq\Lambda_\rho\},\qquad \mathcal{S}_\rho:=\{i\in \Z^n\colon |\nabla_1 v_\rho|(i)\geq\Lambda_\rho/2\},\]
respectively.
Note that for every $i,j\in \mathcal{R}_\rho$ thanks to Lemma \ref{lem:maxfunction} we have the Lipschitz estimate
\[|v_\rho^i-v_\rho^j|\leq\bar{c}\sqrt{n}|i-j|\Big(\M^{\bar{k}^2\frac{\rho}{\e_\rho}}_1|\nabla_1 v_\rho|(i)+\M^{\bar{k}^2\frac{\rho}{\e_\rho}}_1|\nabla_1 v_\rho|(j)\Big)\leq\bar{k}\Lambda_\rho|i-j|.\]
Using Kirszbraun's extension theorem we thus find a function $\bar{v}_\rho:\Z^n\to\R^d$ coinciding with $v_\rho$ on $\mathcal{R}_\rho$ and satisfying
$|\bar{v}_\rho^i-\bar{v}_\rho^j|\leq \bar{k}\Lambda_\rho|i-j|$ for every $i,j\in \Z^n$.
In particular, we have
\begin{align}\label{est:gradvbar}
|\nabla_{1,L}\bar{v}_\rho|(i)\leq \frac{\sqrt{n}L}{2}\bar{k}\Lambda_\rho\#Z_1(LQ)\quad \text{for every}\ i\in\Z^n.
\end{align}
In addition, by truncation with the operator $T_{3\bar{k}\frac{\rho}{\e_\rho}}$ we can assume that $\|\bar{v}_\rho\|_{\infty}\leq 9\bar{k}\frac{\rho}{\e_\rho}$.

In the remaining part of this substep we bound the number of points in which $v_\rho$ and $\bar{v}_\rho$ do not coincide, that is the cardinality of $Z_1(B_{\bar{k}\frac{\rho}{\e_\rho}})\setminus \mathcal{R}_\rho$. We first observe that for every $i\in Z_1(B_{\bar{k}\frac{\rho}{\e_\rho}})\setminus\mathcal{R}_\rho$ there exists $s_i\in (0,\bar{k}^2\frac{\rho}{\e_\rho})$ such that
\begin{align*}
\Lambda_\rho\#Z_1(\clBone_{s_i}(i))\leq\sum_{j\in Z_1(\clBone_{s_i}(i))}|\nabla_1v_\rho|(j).
\end{align*}
Applying Vitali's covering lemma we find $\I_\rho\subset Z_1(B_{\bar{k}\frac{\rho}{\e_\rho}})\setminus\mathcal{R}_\rho$ (finite) such that the family $(\clBone_{s_i}(i))_{i\in\I_\rho}$ is disjoint and
\begin{align*}
Z_1(B_{\bar{k}\frac{\rho}{\e_\rho}})\setminus\mathcal{R}_\rho\subset\bigcup_{i\in\I_\rho}\clBone_{5 s_i}(i),
\end{align*}
hence
\begin{align}\label{est:doubling}
\#Z_1(B_{\bar{k}\frac{\rho}{\e_\rho}})\setminus\mathcal{R}_\rho &\leq\#Z_1\Big(\bigcup_{i\in\I_\rho}\clBone_{5s_i}(i)\Big)\leq 5^n\#Z_1\Big(\bigcup_{i\in\I_\rho}\clBone_{s_i}(i)\Big).
\end{align}
To estimate the cardinality of $Z_1\big(\bigcup_i\clBone_{s_i}(i)\big)$ we distinguish between the lattice points in $\bigcup_i\clBone_{s_i}(i)$ belonging to $\mathcal{S}_\rho$ and those that belong to its complement. In fact, since the balls $\clBone_{s_i}(i)$ are disjoint, the definition of $\mathcal{S}_\rho$ implies that
\begin{align*}
\Lambda_\rho\#Z_1\Big(\bigcup_{i\in\I_\rho}\clBone_{s_i}(i)\Big) &\leq\hspace*{-1em}\sum_{j\in Z_1(\bigcup_{i}\clBone_{s_i}(i))}\hspace*{-1.5em}|\nabla_1 v_\rho|(j)\leq\hspace*{-1em}\sum_{j\in\bigcup_{i}\clBone_{s_i}(i)\cap\mathcal{S}_\rho}\hspace*{-1.5em}|\nabla_1 v_\rho|(j)+\frac{\Lambda_\rho}{2}\#Z_1\Big(\bigcup_{i\in\I_\rho}\clBone_{s_i}(i)\Big),
\end{align*}
hence
\begin{align}\label{est:balls}
\#Z_1\Big(\bigcup_{i\in\I_\rho}\clBone_{s_i}(i)\Big)\leq\frac{2}{\Lambda_\rho}\hspace*{-0.3em}\sum_{j\in\bigcup_{i}\clBone_{s_i}(i)\cap\mathcal{S}_\rho}\hspace*{-1.5em}|\nabla_1 v_\rho|(j).
\end{align}
We aim to bound the term on the right-hand side of \eqref{est:balls} via $F_{\e_\rho}(T_{\bar{k}\rho}u_{\e_\rho},B_{3\bar{k}^2\rho})$.
To this end we introduce the set of jump points
\begin{align*}
\J_\rho:=\Big\{i\in\Z^n\colon |\nabla_1v_\rho|^p(i)\geq 1/\e_\rho\Big\}
\end{align*}
and we use H\"older's inequality to obtain the estimate
\begin{align}\label{est:Jrho1}
\sum_{j\in\bigcup_i\clBone_{s_i}(i)\cap\mathcal{S}_\rho\setminus\J_\rho}\hspace*{-2em}|\nabla_1 v_\rho|(j) &\leq \Big(\#\big(\bigcup_{i\in\I_\rho}\clBone_{s_i}(i)\cap\mathcal{S}_\rho\setminus\J_\rho\big)\Big)^{\frac{p-1}{p}}\Big(\sum_{i\in\bigcup_i\clBone_{s_i}(i)\cap\mathcal{S}_\rho\setminus\J_\rho}\hspace*{-2em}|\nabla_1 v_\rho|^p(j)\Big)^{\frac{1}{p}}.
\end{align}
Then by definition for every $j\in\bigcup_i\clBone_{s_i}(i)\cap\mathcal{S}_\rho\setminus\J_\rho$ we have
\begin{align*}
|\nabla_1 v_\rho|^p(j)=\min\big\{|\nabla_1 v_\rho|^p(j),\frac{1}{\e_\rho}\big\}\leq (2n)^{p-1}\Big(\min\big\{\sum_{k=1}^n|D_\e^k v_\rho^j|^p,\frac{1}{\e_\rho}\big\}+\min\big\{\sum_{k=1}^n|D_\e^k v_\rho^{j-e_k}|^p,\frac{1}{\e_\rho}\big\}\Big),
\end{align*}
where in the second step we used the subadditivity of $\min$. Moreover, for every $j\in\bigcup_i\clBone_{s_i}(i)$ there holds
\begin{align}\label{inclusionBone}
j-e_k\in\bigcup_{i\in\I_\rho}\clBone_{s_i+\e_\rho}(i)\subset B_{3\bar{k}^2\frac{\rho}{\e_\rho}}\quad\text{for every}\ k\in\{1,\ldots,n\}.
\end{align}
Thus, from \ref{H3} together the energy bound \eqref{est:energy} we infer 
\begin{align}\label{est:Jrho1a}
\sum_{j\in\bigcup_i\clBone_{s_i}(i)\cap\mathcal{S}_\rho\setminus\J_\rho}\hspace*{-2em}|\nabla_1 v_\rho|^p(j)\leq 2(2n)^{p-1}\sum_{j\in Z_1(B_{3\bar{k}^2\frac{\rho}{\e_\rho}})}\min\big\{\sum_{k=1}^n|D_1^kv_\rho(j)|^p,\frac{1}{\e_\rho}\big\}\leq c\frac{\rho^n}{\e_\rho^n},
\end{align}
where the additional factor $2$ comes from the fact that each term is counted at most twice. Finally, since $|\nabla_{1}v_\rho|\geq\frac{\Lambda_\rho}{2}$ on $\mathcal{S}_\rho$, \eqref{est:Jrho1a} gives
\begin{align}\label{est:card:Srho}
\Big(\frac{\Lambda_\rho}{2}\Big)^p\#\bigcup_{i\in\I_\rho}\clBone_{s_i}(i)\cap\mathcal{S}_\rho\setminus\J_\rho\leq\hspace*{-1em}\sum_{j\in\bigcup_i \clBone_{s_i}(i)\cap\mathcal{S}_\rho\setminus\J_\rho}\hspace*{-1.5em}|\nabla_1v_\rho|^p(j)\leq c\frac{\rho^n}{\e_\rho^n}.
\end{align}
Gathering \eqref{est:Jrho1}, \eqref{est:Jrho1a} and \eqref{est:card:Srho} we eventually deduce that
\begin{align}\label{est:Jrho2}
\frac{2}{\Lambda_\rho}\sum_{j\in\bigcup_i\clBone_{s_i}(i)\cap\mathcal{S}_\rho\setminus\J_\rho}\hspace*{-2em}|\nabla_1 v_\rho|(j)\leq c\Lambda_\rho^{-p}\frac{\rho^n}{\e_\rho^n}.
\end{align}
To estimate the remaining contributions in \eqref{est:balls} we observe that for every $j\in\J_\rho$ there exists $k(j)\in\{1,\ldots,n\}$ such that either $|D_1^kv_\rho^j|^p\geq 1/\e_\rho(2n)^p$ or $|D_1^kv_\rho^{j-e_k}|^p\geq 1/\e_\rho(2n)^p$. Using the inclusion in \eqref{inclusionBone} once more we then obtain
\begin{align*}
\frac{1}{\e_\rho(2n)^p}\#\Big(\bigcup_{i\in\I_\rho}\clBone_{s_i}(i)\cap\J_\rho\Big)\leq 2\sum_{j\in Z_1(B_{3\bar{k}^2\frac{\rho}{\e_\rho}})}\min\big\{\sum_{k=1}^n|D_1^kv_\rho(j)|^p,\frac{1}{\e_\rho}\big\}\leq c\frac{\rho^n}{\e_\rho^n},
\end{align*}
where the additional factor $2$ results again from a possible double counting of interactions.
Moreover, the uniform bound on $v_\rho$ implies $|D_1^kv_\rho(j)|\leq c\bar{k}\frac{\rho}{\e_\rho}$ for every $j\in\Z^n$, so that the above estimate yields
\begin{align}\label{est:Jrho3}
\frac{2}{\Lambda_\rho}\sum_{j\in\bigcup_{i}\clBone_{s_i}(i)\cap\J_\rho}|\nabla_1v_\rho|(j)\leq c\bar{k}\Lambda_\rho^{-1}\frac{\rho}{\e_\rho}\#\Big(\bigcup_{i\in\I_\rho}\clBone_{s_i}(i)\cap\J_\rho\Big)\leq c\rho\Lambda_\rho^{-1}\frac{\rho^n}{\e_\rho^n}.
\end{align}
Combining \eqref{est:doubling}, \eqref{est:balls}, \eqref{est:Jrho2} and \eqref{est:Jrho3} and choosing $\Lambda_\rho=\rho^{\frac{1}{1-p}}$ we finally deduce that
\begin{align}\label{est:Rrho}
\#(Z_1(B_{\bar{k}\frac{\rho}{\e_\rho}})\setminus\mathcal{R}_\rho)\leq c(\rho\Lambda_\rho^{-1}+\Lambda_\rho^{-p})\frac{\rho^n}{\e_\rho^n}=c\rho^{\frac{p}{p-1}}\frac{\rho^n}{\e_\rho^n}.
\end{align}
\end{step}
\begin{step}{Substep 1b} From Lipschitz continuity to equiintegrable gradients

\noindent In this substep we show that the rescaled functions $\tilde{v}_\rho$ obtained by setting
\begin{align*}
\tilde{v}_\rho^i:= \frac{\e_\rho}{\rho}\bar{v}_\rho^{\frac{\rho}{\e_\rho} i}\quad\text{for every}\ i\in Z_{\frac{\e_\rho}{\rho}}(\R^n)
\end{align*}
satisfy the hypotheses of Lemma \ref{lem:equiint} with $\lambda=1$ and $x_0=0$ along the vanishing sequence $\sigma_\rho:=\frac{\e_\rho}{\rho}$. We start by observing that $\tilde{v}_\rho$ satisfy the following conditions.
\begin{enumerate}[label=(\roman*)]
\setlength{\itemsep}{3pt}
\item $\|\tilde{v}_\rho\|_{\infty}\leq 9\overline{k}$;
\item $|\tilde{v}_\rho^i-\tilde{v}_\rho^j|\leq \overline{k}\Lambda_\rho|i-j|$ for all $i,j\in Z_{\sigma_\rho}(\R^n)$;
\item $\tilde{v}_\rho^i=\frac{1}{\rho}T_{\bar{k}\rho}u_{\e_\rho}^{\rho i}$ if $\frac{\rho}{\e_\rho}i\in\mathcal{R}_\rho$.
\end{enumerate} 
Note that (ii) implies that $|\nabla_{\sigma_\rho}\tilde{v}_\rho|^p(i)\leq c\Lambda_\rho^p$ for every $i\in Z_{\sigma_\rho}(\R^n)$. We thus obtain the estimate
\begin{align}\label{est:gradvtilde1}
\sum_{i\in Z_{\sigma_\rho}(B_{\bar{k}})}\sigma_\rho^n|\nabla_{\sigma_\rho}\tilde{v}_\rho|^p(i) &\leq\dsum{i\in Z_{\e_\rho}(B_{\bar{k}\rho})}{\frac{i}{\e_\rho}\in\mathcal{R}_\rho}\frac{\e_\rho^n}{\rho^n}\dsum{k=1}{\frac{i}{\e_\rho}+e_k\in\mathcal{R}_\rho}^n|D_{\e_\rho}^k T_{\bar{k}\rho}u_{\e_\rho}^i|^p+c\Lambda_\rho^p\frac{\e_\rho^n}{\rho^n}\#(Z_1(B_{\bar{k}\frac{\rho}{\e_\rho}})\setminus\mathcal{R}_\rho).
\end{align}
Thanks to \eqref{est:Rrho} we can bound the second term on the right-hand side of \eqref{est:gradvtilde1} by a constant. Moreover, the definition of the maximal function together with the choice of the set $\mathcal{R}_\rho$ implies that for all $i\in Z_{\e_\rho}(B_{\bar{k}\rho})$ with $i/\e_\rho\in\mathcal{R}_\rho$ we have
\begin{align*}
\sum_{k=1}^n|D_{\e_\rho}^k T_{\bar{k}\rho}u_{\e_\rho}^i|^p\leq |\nabla_{\e_\rho}T_{\bar{k}\rho}u_{\e_\rho}|^p(i)=|\nabla_1 v_\rho|^p\leq\Lambda_\rho^p=\rho^\frac{p}{1-p}<\frac{1}{\e_\rho},
\end{align*}
where in the last step we have used that $\e_\rho<\rho^{\frac{p}{p-1}}$. Hence we can bound the first term on the right-hand side of \eqref{est:gradvtilde1} by the energy and use \eqref{est:energy} to deduce that
\begin{align*}
\sum_{i\in Z_{\sigma_\rho}(B_{\bar{k}})}\sigma_\rho^n|\nabla_{\sigma_\rho}\tilde{v}_\rho|^p(i) &\leq \frac{c}{\rho^n}F_{\e_\rho}(T_{\bar{k}\rho}u_{\e_\rho},B_{\bar{k}\rho})+c\leq c.
\end{align*}
Thanks to our choice of $\bar{k}$ Lemma \ref{lem:equiint} then provides us with a subsequence $(\rho_h)$ and functions $w_h:Z_{\sigma_h}(\R^n)\to\R^d$ such that $|\nabla_{\sigma_h}w_h|^p$ is equiintegrable on $B_2$ and
\begin{align}\label{vanishingset}
\lim_{h\to+\infty}\sigma_h^n\#\{i\in Z_{\sigma_h}(B_2)\colon \tilde{v}_{\rho_h}\nequiv w_h\ \text{on}\ \clBone_{\sigma_h}(i)\}=0,
\end{align} 
where we have set $\sigma_h:=\sigma_{\rho_h}$. Moreover, upon truncation we can assume that $\|w_h\|_{\infty}\leq 27\bar{k}$.
\end{step}
\begin{step}{Substep 1c} Conclusion of the lower-bound inequality

\noindent We continue by proving that the sequence $(w_h)$ obtained in Substep 1b converges to $u_M$ in $L^p(B_1;\Rd)$. To simplify notation we set $\e_h:=\e_{\rho_h}$. We start by estimating
\begin{align*}
\|w_h-u_m\|_{L^p(B_1;\Rd)}\leq\|w_h-\frac{1}{\rho_h}T_{\bar{k}\rho_h}u_{\e_h}(\rho_h\cdot)\|_{L^p(B_1;\Rd)}+\|\frac{1}{\rho_h}T_{\bar{k}\rho_h}u_{\e_h}(\rho_h\cdot)-u_M\|_{L^p(B_1;\R^d)}.
\end{align*}
By a change of variables and \eqref{est:lpnorm} we obtain
\begin{align*}
\|\frac{1}{\rho_h}T_{\bar{k}\rho_h}u_{\e_h}(\rho_h\cdot)-u_M\|^p_{L^p(B_1;\R^d)}\leq\frac{1}{\rho_h^{n+p}}\int_{B_{\rho_h}}\hspace*{-0.5em}|T_{\bar{k}\rho_h}u_{\e_h}-u_M|^p\dx\leq\rho_h\ \to 0\ \text{as}\ h\to+\infty.
\end{align*}
Moreover, we denote by $\mathcal{U}_h$ the set in \eqref{vanishingset} and we remark that for all $i\in Z_{\sigma_h}(B_2)\setminus\mathcal{U}_h$ with $i/\sigma_h\in\mathcal{R}_{\rho_h}$ we have $w_h^i=1/\rho_hT_{\bar{k}\rho_h}u_{\e_h}^{\rho_h i}$. 
Thus, the uniform bound on $\|w_h\|_{\infty}$ together with \eqref{est:Rrho}, \eqref{vanishingset} yield
\begin{align*}
\|w_h\hspace{-0.2em}-\frac{1}{\rho_h}T_{\bar{k}\rho_h}u_{\e_h}(\rho_h\cdot)\|^p_{L^p(B_1;\Rd)}&\leq c|M|^p\sigma_h^n\big(\#\mathcal{U}_h\hspace{-0.2em}+\hspace{-0.2em}\#(Z_1(B_{2\frac{\rho_h}{\e_h}})\setminus\mathcal{R}_{\rho_h})\big)\leq c|M|^p\big(\sigma_h^n\#\mathcal{U}_h\hspace{-0.2em}+\hspace{-0.2em}\rho_h^{\frac{p}{p-1}}\big),
\end{align*}
where the second inequality follows from \eqref{est:Rrho}.
Thanks to \eqref{vanishingset} we conclude that $w_h\to u_M$ in $L^p(B_1;\Rd)$.

We finally show that up to a small error $1/\rho_h^n F_{\e_h}(u_{\e_h},B_{\rho_h})$ is asymptotically bounded from below by $|B_1|f_{\rm hom}$. Then the required inequality follows from \eqref{est:bulk1} by letting $h\to +\infty$. We start by introducing the sets
\begin{align*}
\mathcal{U}_h^L &:=\{i\in Z_{\sigma_h}(B_1)\colon \tilde{v}_{\rho_h}\nequiv w_h\ \text{on}\ Q_{\sigma_h L}(i)\},\\
\mathcal{V}_h &:=\{i\in Z_1(B_{\frac{\rho_h}{\e_h}})\colon Z_1(Q_L(i))\subset\mathcal{R}_{\rho_h},\ \sigma_h i\in Z_{\sigma_h}(B_1)\setminus\mathcal{U}_h^L\}.
\end{align*}
and by observing that Remark \ref{rem:equiint} and \eqref{est:Rrho} yield
\begin{align}\label{vanishingset2}
\sigma_h^n\#(Z_1(B_{\frac{\rho_h}{\e_h}})\setminus\mathcal{V}_h)\leq\sigma_h^n\big(\#\mathcal{U}_h^L+cL^n\#(Z_1(B_{\bar{k}\frac{\rho_h}{\e_h}})\setminus\mathcal{R}_{\rho_h})\big)\ \to 0\ \text{as}\ h\to+\infty.
\end{align}
Moreover, thanks to the locality property \eqref{cond:finite-range1} we have
\begin{align*}
\frac{1}{\rho_h^n}F_{\e_h}(u_{\e_h},B_{\rho_h}) &\geq\frac{1}{\rho_h^n}F_{\e_h}(T_{\bar{k}\rho_h}u_{\e_h},B_{\rho_h})\geq\sum_{i\in\mathcal{V}_h}\sigma_h^n\psi_{\e_h i}^{\e_h}(\{\e_h\bar{v}_{\rho_h}^{i+\frac{j}{\e_h}}\}_{j\in Z_{\e_h}(\R^n)})\\
&\geq\sum_{i\in\mathcal{V}_h}\sigma_h^n\psi_i^b(\{\bar{v}_{\rho_h}^{i+j}\}_{j\in\Z^n})-\eta,
\end{align*}
where the last inequality follows from \eqref{est:gradvbar} and \ref{Hpsi1} together with the fact that $\e_h<\bar{\e}_{\rho_h}$. By construction $\bar{v}_{\rho_h}^{i+j}=1/\sigma_h w_h^{\sigma_h(i+j)}$ for every $i\in\mathcal{V}_h$ and every $j\in Z_1(LQ)$, hence we obtain
\begin{align}\label{est:bulk2}
\frac{1}{\rho_h^n}F_{\e_h}(u_{\e_h},B_{\rho_h}) &\geq\hspace*{-0.5em}\sum_{i\in Z_{\sigma_h}(B_1)}\hspace*{-1em}\sigma_h^n\psi_{\frac{i}{\sigma_h}}^b(\{\frac{1}{\sigma_h}w_h^{i+\sigma_hj}\}_{j\in\Z^n})-\hspace*{-1.5em}\sum_{i\in Z_1(B_{\frac{\rho_h}{\e_h}})\setminus\mathcal{V}_h}\hspace*{-1.5em}\sigma_h^n\psi_i^b(\{\frac{1}{\sigma_h}w_h^{\sigma_h(i+j)}\}_{j\in\Z^n})-\eta\nonumber\\
&\geq G_{\sigma_h}(w_h,B_1)-c_5\hspace{-1.5em}\sum_{i\in Z_1(B_{\frac{\rho_h}{\e_h}})\setminus\mathcal{V}_h}\hspace{-1.5em}\sigma_h^n\big(|\nabla_{\sigma_h,L} w_h|^p(\sigma_h i)+1)-\eta\nonumber\\
&\geq G_{\sigma_h}(w_h,B_1)-c_5\hspace{-1.5em}\sum_{i\in Z_1(B_{\frac{\rho_h}{\e_h}})\setminus\mathcal{V}_h}\hspace{-1.5em}\sigma_h^n\Big(\hat{c}_1\hspace{-0.5em}\sum_{j\in Z_{\sigma_h}(\sigma_h Q_L(i))}\hspace{-1.5em}|\nabla_{\sigma_h}w_h|^p(j)+1\Big)-\eta,
\end{align}
where $\hat{c}_1$ is given by \eqref{comparison:gradients}. In order to further estimate the second term in \eqref{est:bulk2} we consider the set
\begin{align*}
\mathcal{W}_h:=\{j\in Z_{\sigma_h}(B_{3/2})\colon\exists\ i\in Z_1(B_{\frac{\rho_h}{\e_h}})\setminus\mathcal{V}_h\ \text{s.t.}\ j\in\sigma_h LQ(i)\}
\end{align*}
and for every $j\in\mathcal{W}_h$ we define
\begin{align*}
\gamma_h(j):=\#\{i\in Z_1(B_{\frac{\rho_h}{\e_h}})\setminus\mathcal{V}_h\colon j\in\sigma_h LQ(i)\}.
\end{align*}
Then for $h$ sufficiently large we have
\begin{align}\label{est:bulk4}
\sum_{i\in Z_1(B_{\frac{\rho_h}{\e_h}})\setminus\mathcal{V}_h}\hspace{-1em}\sigma_h^n\sum_{j\in Z_{\sigma_h}(\sigma_h Q_L(i))}\hspace{-1.5em}|\nabla_{\sigma_h}w_h|^p(j)\leq \sum_{j\in\mathcal{W}_h}\sigma_h^n\gamma_h(j)|\nabla_{\sigma_h}w_h|^p(j)\leq c(n,L)\sum_{j\in\mathcal{W}_h}\sigma_h^n|\nabla_{\sigma_h}w_h|^p(j),
\end{align}
where in the second step we used that $\gamma_h(j)\leq\#Z_{\sigma_h}(Q_{\sigma_h L}(j))\leq c L^n$ for every $j\in\mathcal{W}_h$.
We eventually observe that $\#\mathcal{W}_h\leq cL^n\#(Z_1(B_{\frac{\rho_h}{\e_h}})\setminus\mathcal{V}_h)\to 0$ as $h\to+\infty$. Hence the equiintegrability of $|\nabla_{\sigma_h}w_h|^p$ on $B_2$ yields the existence of some $h_\eta>0$ such that
\begin{align*}
c_5\hat{c}_1 c(n,L)\sum_{j\in\mathcal{W}_h}\sigma_h^n|\nabla_{\sigma_h}w_h|^p(j)<\eta\quad\text{for every}\ h\geq h_\eta.
\end{align*}
As a consequence, combining \eqref{est:bulk2} and \eqref{est:bulk4} we obtain
\begin{align}\label{est:bulk5}
\frac{1}{\rho_h^n}F_{\e_h}(u_{\e_h},B_{\rho_h})\geq G_{\sigma_h}(w_h,B_1)-c_5\sigma_h^n\#(Z_1(B_{\frac{\rho_h}{\e_h}})\setminus\mathcal{V}_h)-2\eta,
\end{align}
for all $h\geq h_\eta$.
Thus, since $w_h\to u_M$ in $L^p(B_1,\R^d)$, from \eqref{est:bulk1}, \eqref{vanishingset2} and \eqref{est:bulk5} together with Theorem \ref{Gammaconvergence:bulk} and Remark \ref{rem:convergence:bulk} we deduce that
\begin{align*}
|B_1|\bar{f}(M) &\geq\liminf_{h\to+\infty}G_{\sigma_h}(w_h,B_1)-2\eta\geq G(u_M,B_1)-2\eta=|B_1|f_{\rm hom}(M)-2\eta
\end{align*}
and we conclude by letting $\eta\to 0$.
\end{step}
\begin{step}{Step 2} $\bar{f}\leq f_{\rm hom}$

\noindent In order to prove this inequality we choose a sequence $(u_\e)$ converging to $u_M$ in $L^p(\Omega;\Rd)$ and satisfying
\begin{align*}
\lim_{\e\to 0} G_\e(u_\e,B_{\rho_0})=G(u_M,B_{\rho_0}).
\end{align*}
Fix $\rho\in (0,(3\bar{k}^2)^{-1}\rho_0)$; then the truncated functions $T_{\bar{k}\rho}u_\e$ still provide a recovery sequence for $u_M$ on $B_\rho$. In particular, we obtain
\begin{align}\label{est:fhom1}
|B_1|f_{\rm hom}(M)=\frac{1}{\rho^n}G(u_M,B_\rho)\geq\frac{1}{\rho^n}\limsup_{\e\to 0}G_\e(T_{\bar{k}\rho}u_\e,B_\rho).
\end{align}
In order to use \ref{Hpsi1} to pass from $G_\e$ to $F_\e$ we need replace $T_{\bar{k}\rho u_\e}$ by a sequence of functions with equiintegrable discrete gradients. This can be done by using Lemma \ref{lem:equiint} along the vanishing sequence $\e$ with $\lambda=\rho$. We start by observing that thanks to \ref{Hb4} the functions $T_{\bar{k}\rho}u_\e$ satisfy the assumptions of Lemma \ref{lem:equiint}. In fact,
\begin{align*}
\frac{c_2}{(2n)^{p}}\sum_{i\in Z_\e(B_{\bar{k}\rho})}\e^n|\nabla_\e T_{\bar{k}\rho} u_\e|^p(i)
&\leq c_2\sum_{i\in Z_\e(B_{2\bar{k}\rho})}\e^n\sum_{k=1}^n|D_\e^kT_{\bar{k}\rho}u_\e^i|^p\leq G_\e(u_\e,B_{\rho_0})\leq c\rho^n,
\end{align*}
for some $c>0$ uniformly with respect to $\e$. Thus, Lemma \ref{lem:equiint} ensures the existence of a subsequence $\e_h$ and functions $w_h:Z_{\e_h}(\R^n)\to\R^d$ (possibly depending on $\rho$) such that $|\nabla_{\e_h}w_h|^p$ is equiintegrable on $B_{2_\rho}$ and such that
\begin{align}\label{vanishingset3}
\lim_{h\to +\infty}\e_h^n\#\{i\in Z_{\e_h}(B_{2\rho})\colon T_{\bar{k}\rho}u_{\e_h}\nequiv w_h\ \text{on}\ \clBone_{\e_h}(i)\}=0.
\end{align}
Moreover, upon truncation we can assume that $\|w_h\|_\infty\leq 9\bar{k}$. Denoting by $\mathcal{U}_{\e_h}$ the set in \eqref{vanishingset3} the uniform bound on $\|T_{\bar{k}\rho}u_{\e_h}\|_\infty$ and $\|w_h\|_\infty$ together with \eqref{vanishingset3} give
\begin{align*}
\|w_h-u_M\|_{L^p(B_\rho;\Rd)} &\leq\|w_h-T_{\bar{k}\rho}u_{\e_h}\|_{L^p(B_\rho;\Rd)}+\|T_{\bar{k}\rho}u_{\e_h}-u_M\|_{L^p(B_\rho;\Rd)}\\
&\leq c|M|(\e_h^n\#\mathcal{U}_{\e_h})^{\frac{1}{p}}+\|T_{\bar{k}\rho}u_{\e_h}-u_M\|_{L^p(B_\rho;\Rd)}\ \to 0\ \text{as}\ h\to+\infty.
\end{align*}
Hence, Theorem \ref{thm:int:rep} implies that
\begin{align}\label{est:fbar1}
|B_1|\bar{f}(M)=\frac{1}{\rho^n}F(u_M,B_\rho)\leq\frac{1}{\rho^n}\liminf_{h\to+\infty}F_{\e_h}(w_h,B_\rho),
\end{align}
and it remains to compare $F_{\e_h}(w_h,B_\rho)$ and $G_{\e_h}(T_{\bar{k}\rho}u_{\e_h},B_\rho)$. We start by comparing $G_{\e_h}(T_{\bar{k}\rho}u_{\e_h},B_\rho)$ and $G_{\e_h}(w_h,B_\rho)$. To this end we introduce the sets
\begin{align*}
\mathcal{U}_{\e_h}^L &:=\{i\in Z_{\e_h}(B_\rho)\colon T_{\bar{k}\rho}u_{\e_h}\nequiv w_h\ \text{on}\ Q_{\e_hL}(i)\},\\
\mathcal{V}_{\e_h}^L &:=\{j\in Z_{\e_h}(B_{3\rho/2})\colon \exists\ i\in\mathcal{U}_{\e_h}^L\ \text{s.t.}\ j\in Q_{\e_h L}(i)\},
\end{align*}
and we remark that as in Substep 1c one can show that
\begin{align*}
\lim_{h\to+\infty}\e_h^n\#\mathcal{U}_{\e_h}^L=0,\qquad\lim_{h\to+\infty}\e_h^n\#\mathcal{V}_{\e_h}^L=0.
\end{align*}
Thus, arguing as in \eqref{est:bulk4} and using the equiintegrability of $|\nabla_{\e_h}w_h|^p$ on $B_{2\rho}$ we deduce that there exists $h_1=h_1(\eta,\rho)>0$ such that for all $h\geq h_1$ we have
\begin{align*}
\frac{c_5}{\rho^n}\sum_{i\in\mathcal{U}_{\e_h}^l}\e_h^n(|\nabla_{\e_h,L}w_h|^p(i)+1)\leq\frac{c_5}{\rho^n}\hat{c}_1 c(n,L)\sum_{i\in\mathcal{V}_{\e_h}^L}\e_h^n\big(|\nabla_{\e_h}w_h|^p(i)+1\big)<\eta.
\end{align*}
As a consequence, thanks to the upper bound \ref{Hb7} we obtain
\begin{align}\label{est:Geps}
\frac{1}{\rho^n}G_{\e_h}(T_{\bar{k}\rho}u_{\e_h},B_\rho) &\geq\frac{1}{\rho^n}G_{\e_h}(w_h,B_\rho)-\frac{1}{\rho^n}\sum_{i\in\mathcal{U}_{\e_h}^L}\e_h^n\psi_{\frac{i}{\e_h}}^b(\{\tfrac{1}{\e_h}w_h^{i+\e_hj}\}_{j\in\Z^n})\nonumber\\
&\geq \frac{1}{\rho^n}G_{\e_h}(w_h,B_\rho)-\eta\quad\text{for all}\ h\geq h_1.
\end{align}
We finally estimate from below $G_{\e_h}(w_h,B_\rho)$ in terms of $F_{\e_h}(w_h,B_\rho)$. For every $\Lambda>0$ we set
\begin{align*}
\mathcal{S}_{\e_h}(\Lambda):=\{i\in Z_{\e_h}(B_{2\rho})\colon |\nabla_{\e_h,L}w_h|^p(i)\geq\Lambda\}.
\end{align*}
For every $i\in\mathcal{S}_{\e_h}(\Lambda)$ Lemma \ref{lem:comp:gradients} gives
\begin{align*}
\Lambda\leq|\nabla_{\e_h,L}w_h|^p(i)\leq \hat{c}_1\sum_{j\in Z_{\e_h}(Q_{\e_h L}(i))}|\nabla_{\e_h} w_h|^p(j)\leq\hat{c}_1\#Z_1(LQ)\max_{j\in Z_{\e_h}(Q_{\e_h L}(i))}|\nabla_{\e_h} w_h|^p(j).
\end{align*}
In particular, for every $i\in\mathcal{S}_{\e_h}(\Lambda)\cap B_\rho$ there exists $j_i\in Z_{\e_h}(B_{2\rho})$ with $|\nabla_{\e_h}w_h|^p(j_i)\geq \Lambda/(\hat{c}_1\#Z_1(LQ))$. Setting $\hat{c}:=\hat{c}_1\#Z_1(LQ)$ this gives
\begin{align*}
\sum_{i\in\mathcal{S}_{\e_h}(\Lambda)\cap B_\rho}|\nabla_{\e_h,L}w_h|^p(i) &\leq\sum_{j\in\mathcal{S}_{\e_h}(\Lambda/\hat{c})}|\nabla_{\e_h}w_h|^p(j)\#\{i\in\mathcal{S}_{\e_h}(\Lambda)\colon j\in Q_{\e_h L}(i)\}\\
&\leq\#Z_1(LQ)\sum_{j\in\mathcal{S}_{\e_h}(\Lambda/\hat{c})}|\nabla_{\e_h}w_h|^p(j).
\end{align*}
Thus, for fixed $\eta>0$ the equiintegrability of $|\nabla_{\e_h}w_h|^p$ on $B_{2\rho}$ ensures the existence of $\bar{\Lambda}=\bar{\Lambda}(\eta,\rho)>0$ and $h_2=h_2(\eta,\rho)>0$ such that for every $h\geq h_2$ we have
\begin{align}\label{est:singularset}
\frac{c_5}{\rho^n}\sum_{i\in \mathcal{S}_{\e_h}(\bar{\Lambda})\cap B_\rho}\e_h^n(|\nabla_{\e_h,L}w_h|^p(i)+1)\leq\frac{c_5}{\rho^n}\#Z_1(LQ)\sum_{j\in \mathcal{S}_{\e_h}(\bar{\Lambda}/\hat{c})}\e_h^n(|\nabla_{\e_h}w_h|^p(i)+1)<\eta.
\end{align}
In addition, since $|\nabla_{\e_h,L}w_h|(i)<\bar{\Lambda}^{\frac{1}{p}}$ for all $i\in Z_{\e_h}(B_\rho)\setminus S_{\e_h}(\bar{\Lambda})$, in view of \ref{Hpsi1} there exists $h_3=h_3(\eta,\rho)>0$ such that for all $h\geq h_3$ and for all $i\in Z_{\e_h}(B_\rho)\setminus S_{\e_h}(\bar{\Lambda})$ there holds
\begin{align}\label{est:Hpsi}
|\psi_i^{\e_h}(\{w_h^{i+j}\}_{j\in Z_{\e_h}(\Rn)})-\psi_{\frac{i}{\e_h}}^b(\{\tfrac{1}{\e_h}w_h^{i+\e_hj}\}_{j\in\Z^n})|<\frac{\eta}{|B_1|}.
\end{align}
Combining \eqref{est:singularset} and \eqref{est:Hpsi} in view of Remark \ref{rem:upperbound} we deduce that for all $h\geq\max\{h_2,h_3\}$ we have
\begin{align}\label{est:GepsFeps}
\frac{1}{\rho^n}G_{\e_h}(w_h,B_\rho) &\geq\frac{1}{\rho^n}\sum_{i\in Z_{\e_h}(B_\rho)\setminus\mathcal{S}_{\e_h}(\Lambda_{\eta,\rho})}\hspace*{-2em}\e_h^n(\psi_i^{\e_h}(\{w_h^{i+j}\}_{j\in Z_{\e_h}(\R^n)})-\eta)\nonumber\\
&\geq\frac{1}{\rho^n} F_{\e_h}(w_h,B_\rho)-\eta-o(\e_h)-\frac{c_5}{\rho^n}\sum_{i\in\mathcal{S}_{\e_h}(\bar{\Lambda})\cap B_\rho}\e_h^n(|\nabla_{\e_h,L}w_h|^p(i)+1)\nonumber\\
&\geq\frac{1}{\rho^n}F_{\e_h}(w_h,B_\rho)-2\eta-o(\e_h).
\end{align}
Eventually, gathering \eqref{est:fbar1}, \eqref{est:fhom1}, \eqref{est:Geps} and \eqref{est:GepsFeps} we obtain
\begin{align*}
|B_1|f_{\rm hom}(M)\geq\frac{1}{\rho^n}\liminf_{h\to+\infty}F_{\e_h}(w_h,B_\rho)-3\eta\geq |B_1|\bar{f}(M)-3\eta,
\end{align*}
hence we may conclude letting $\eta\to 0$.
\end{step}
\end{proof}
\subsubsection{The surface energy density}\label{sect:surface}
In this section we finally characterize the surface-energy density of the $\Gamma$-limit. We start by proving some properties of the unscaled interaction-energy densities $\psi_i^s$. Since these properties can be obtained in a similar way as the corresponding properties of $\psi_i^b$ in Lemma \ref{lem:hypotheses:psi} we only sketch the proof.
\begin{lem}\label{lem:properties:psi:s}
Suppose that $\psi_i^\e:(\R^d)^{Z_\e(\R^n)}\to[0,+\infty)$ are $\e K$-periodic in $i$, satisfy \ref{H1}--\ref{H5} with $Z_\e(\Omega_i)$ replaced by $Z_\e(\R^n)$, and suppose that in addition  \eqref{cond:finite-range0} is satisfied. Assume moreover that there exists $\psi_i^s:(\R^d)^{\Z^n}\to[0,+\infty)$ such that \ref{Hpsi2} holds true. Then the functions $\psi_i^s$ are $K$-periodic in $i$ and satisfy Hypotheses \ref{H1}--\ref{H6} with $Z_\e(\Omega_i)$ replaced by $\Z^n$. Moreover, the following holds for every $i\in\Z^n$.
\begin{enumerate}[label={\rm(${\rm H_s}$\arabic*)}]
\setcounter{enumi}{2}
\item\label{Hs3}{\rm (upper bound for constant functions)} For all $z:\Z^n\to\Rd$ with $z\equiv w$ for some $w\in\Rd$ we have $\psi_i^s(\{z^j\}_{j\in\Z^n})=0$;
\item\label{Hs3a}{\rm (upper bound)} there exists $c_6=c_6(n,L)>0$ such that for all $z:\Z^n\to\Rd$ there holds
\begin{align*}
\psi_i^s(\{z^j\}_{j\in\Z^n})\leq c_6(\|z\|_{L^\infty(LQ)}+1);
\end{align*}
\item\label{Hs4}{\rm (locality)} for all $z,w:\Z^n\to\R^d$ with $z^j=w^j$ for all $j\in Z_1(LQ)$ we have
\begin{align*}
\psi_i^s(\{z^j\}_{j\in\Z^n})=\psi_i^s(\{w^j\}_{j\in\Z^n}).
\end{align*}
In particular, $\psi_i^s(\{z^j\}_{j\in\Z^n})=0$ for all $z:\Z^n\to\Rd$ with $z\equiv w$ on $Z_1(LQ)$ for some $w\in\Rd$.
\end{enumerate}
\end{lem}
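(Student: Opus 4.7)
My plan is to follow the same pattern as the proof of Lemma \ref{lem:hypotheses:psi}, replacing the bulk scaling $\psi_{\e i}^\e(\{\e z^{j/\e}\})\to\psi_i^b(\{z\})$ with the surface scaling $\e\psi_{\e i}^\e(\{z^{j/\e}\})\to\psi_i^s(\{z\})$ provided by \ref{Hpsi2}. The key observation I will exploit is that for any fixed $z:\Z^n\to\R^d$ the hypothesis of \ref{Hpsi2} holds for all sufficiently small $\e$: either $|\nabla_{1,L}z|(0)=0$, in which case it holds for every $\e$, or $|\nabla_{1,L}z|(0)>0$ and then $\e^{(1-p)/p}|\nabla_{1,L}z|(0)\geq\Lambda(\eta)$ for all $\e$ below an explicit threshold depending on $z$. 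Uniformity in $i$ of \ref{Hpsi2} then allows a standard sandwich argument to transfer to $\psi_i^s$ each property of $\psi_{\e i}^\e$ that survives the limit.

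First I would handle the straightforward items. Locality \ref{Hs4} is immediate from the finite-range condition \eqref{cond:finite-range1}. The vanishing on constants \ref{Hs3} follows from \eqref{upperbound:constant}: for $z\equiv w$ one has $|\nabla_{1,L}z|(0)=0$, so \ref{Hpsi2} applies and $\e\psi_{\e i}^\e(\{z^{j/\e}\})\leq\e(c_1+1)\to 0$, forcing $\psi_i^s(\{z\})=0$. The $K$-periodicity and the translational invariance \ref{H1} are then inherited from the corresponding properties of $\psi_{\e i}^\e$ by uniformity in $i$, while the monotonicity \ref{H6} uses the observation that $|z^j-z^l|\leq|w^j-w^l|$ forces $|\nabla_{1,L}z|(0)\leq|\nabla_{1,L}w|(0)$, so that either both quantities vanish (and both sides of the desired inequality are zero) or $|\nabla_{1,L}w|(0)>0$ and \ref{Hpsi2} can be invoked for $z$ and $w$ simultaneously at the same $\e$.

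The upper bound \ref{Hs3a} is more delicate: I would apply \ref{H4} to $\psi_{\e i}^\e$ with $\alpha=1$, comparing $z_\e(k):=z(k/\e)$ with the constant configuration $\hat z_\e\equiv z(0)$, which agree at $k=0$. The leading term on the right-hand side is bounded by $c_1+1$ via \eqref{upperbound:constant}, while the remainder is estimated using $\min\{|D_\e^\xi z_\e|^p,(1+|z_\e-\hat z_\e|)/\e\}\leq(1+2\|z\|_{L^\infty(LQ)})/\e$ together with the finite-range restriction \eqref{cond:finite-range0} and the summability \eqref{eps0}; after multiplication by $\e$ and passage to the limit one obtains $\psi_i^s(\{z\})\leq c_6(\|z\|_{L^\infty(LQ)}+1)$ with $c_6=2\bar c_5$. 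As a consequence, the upper bound for linear functions \ref{H2} follows by inserting $z(j)=Mj$ and comparing $\tfrac{L}{2}|M|+1$ with $|M|^p+1$ up to a multiplicative constant.

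For the lower bound \ref{H3} I would use the elementary identity $\e\cdot c_2\min\{\sum_k|D_\e^kz_\e(0)|^p,1/\e\}=c_2\min\{\sum_k|D_1^kz(0)|^p/\e^{p-1},1\}$, whose limit as $\e\to 0$ equals $c_2$ whenever the gradient sum is nonzero and $0$ otherwise; this is always at least $c_2\min\{\sum_k|D_1^kz(0)|^p,1\}$ as required. Finally, mild non-locality \ref{H4} and controlled non-convexity \ref{H5} for $\psi_i^s$ are transferred from $\psi_{\e i}^\e$ by setting limit coefficients $c_\alpha^{j,\xi}:=\limsup_{\e\to 0}c_{\e,\alpha}^{j,\xi}$ and $c^{j,\xi}:=\limsup_{\e\to 0}c_\e^{j,\xi}$; by the finite-range restriction \eqref{cond:finite-range0} only coefficients with $\alpha,|j|,|\xi|<L$ can be nonzero, so that \eqref{H4:summability1}--\eqref{H5:summability} reduce to finite sums at the unscaled level. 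I expect this last step to be the main obstacle: one has to verify carefully that the error terms in \ref{H4}--\ref{H5} behave correctly under multiplication by $\e$ and the limit in \ref{Hpsi2}, and in particular that the tail condition \eqref{H4:summability2} is respected; the locality \ref{Hs4} already proven simplifies matters by confining the relevant indices to a bounded set.
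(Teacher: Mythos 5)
Your proposal is essentially the paper's own argument: \ref{Hs3} from \ref{Hpsi2} together with \eqref{upperbound:constant}; \ref{Hs3a} from \ref{H4} with $\alpha=1$ against the constant configuration, the finite-range restriction and the summability constant $\bar c_5$ of \eqref{eps0}; \ref{Hs4} from \eqref{cond:finite-range1}; and $K$-periodicity, translational invariance and monotonicity transferred from $\psi_{\e i}^\e$ via the uniform-in-$i$ sandwich in \ref{Hpsi2}. Your constant in \ref{Hs3a} ($2\bar c_5$) is in fact slightly sharper than the paper's, since you bound the total coefficient sum rather than each coefficient by $\bar c_5$; both are valid.

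One caution on your additional claims: the reference "\ref{H1}--\ref{H6}" in the lemma compiles to the pair (H1)--(H2) — the label \texttt{H6} marks the \emph{second} item of the hypothesis list (monotonicity) — so the lemma only asserts translational invariance and monotonicity for $\psi_i^s$, plus \ref{Hs3}--\ref{Hs4}. Your extra transfers of the lower bound and the upper bound for affine data do go through by the arguments you sketch, but the mild non-locality and controlled non-convexity transfers (which you rightly single out as the delicate part) would not yield the required $\min$-form of the remainder: after multiplying by $\e$, the quantity $\e|D_\e^\xi z_\e|^p=\e^{1-p}|D_1^\xi z|^p$ diverges for $p>1$ whenever the difference quotient is nonzero, so the $\min$ degenerates to the $1+|z-w|$ branch. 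Since the lemma does not claim these items, this is not a gap in your proof of the statement as written, but it does explain why the paper restricts the surface-level structure to \ref{Hs3}, \ref{Hs3a}, \ref{Hs4}.
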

\begin{proof}
The periodicity of $\psi_i^s$, \ref{H1}--\ref{H6} and \ref{Hs4} follow from the corresponding properties of $\psi_i^\e$ as in the case of $\psi_i^b$. Thus, we only prove \ref{Hs3} and \ref{Hs3a} here. To this end, fix $\eta>0$ and suppose that $z:\Z^n\to\Rd$ is such $z\equiv w$ for some $w\in\Rd$. Then $|\nabla_{1,L}z|(0)=0$ and according to \ref{Hpsi2} we find $\hat{\e}=\hat{\e}(\eta)>0$ such that $\psi_i^s(\{z^j\}_{j\in\Z^n})< \e\psi_{\e i}^\e(\{z^\frac{j}{\e}\}_{j\in Z_\e(\Rn)})+\eta$ for every $\e\in(0,\hat{\e})$ and every $i\in\Z^n$. Thus, \eqref{upperbound:constant} gives
\begin{align*}
\psi_i^s(\{z^j\}_{j\in\Z^n})<\e(c_1+1)+\eta,
\end{align*}
and we obtain \ref{Hs3} by letting first $\e\to 0$ and then $\eta\to 0$.

We continue proving \ref{Hs3a}. Let $\e_0$ and $\bar{c}_5$ be as in \eqref{eps0} and let $z:\Z^n\to\R^d$. Note that either $|\nabla_{1,L}z|(0)=0$ or we can find $\e(z)\in(0,\e_0)$ such that $\e^\frac{1-p}{p}|\nabla_{1,L}z|(0)\geq \Lambda(1)$ for any $\e\in(0,\e(z))$. Thanks to \ref{Hpsi2} there exists $\hat{\e}\in(0,\e(z))$ such that $\psi_i^s(\{z^j\}_{j\in\Z^n})< \e\psi_{\e i}^\e(\{z^\frac{j}{\e}\}_{j\in Z_\e(\Rn)})+1$ for every $\e\in(0,\hat{\e})$ and every $i\in\Z^n$. Arguing as in the proof of Lemma \ref{lem:hypotheses:psi} to obtain \ref{Hb7} we deduce
\begin{align*}
\psi_i^s(\{z^j\}_{j\in\Z^n}) &<\e(c_1+1)+1+\e\bar{c}_5\hspace*{-0.5em}\sum_{j\in Z_1(LQ)}\dsum{\xi\in Z_1(LQ)}{j+\xi\in LQ}\hspace*{-0.5em}\frac{1+|z^{\frac{j}{\e}+\xi}-z^0|}{\e}\\
&\leq\e(c_1+1)+1+\bar{c}_5(1+2\|z\|_{L^\infty(LQ;\Rd)})(\#Z_1(LQ))^2,
\end{align*}
hence \ref{Hs3a} follows by setting $c_6:=2\max\{\bar{c}_5(\#Z_1(LQ))^2,1\}$ and letting $\e\to 0$.
\end{proof}
\begin{rem}\label{rem:continuity:psi}
Thanks to \ref{Hs3} and \ref{Hs4} the continuity assumption \ref{Hpsi3} reads as follows. For every $z,w:\Z^n\to\Rd$ with $|\nabla_{1,L}z|(0)>0$ there holds
\begin{equation*}
\psi_i^s(\{z^j\}_{j\in\Z^n})\geq \psi_i^s(\{w^j\}_{j\in\Z^n})-c_s\sum_{j\in Z_1(Q_L(i))}\dsum{\xi\in Z_1(Q_L(i))}{j+\xi\in Q_L(i)}|z^{j+\xi}-w^{j+\xi}|\ \text{for every}\ i\in\Z^n.
\end{equation*}
\end{rem}
\noindent On account of Lemma \ref{lem:properties:psi:s} we now prove the following proposition.
\begin{prop}\label{prop:surface}
Let $F_\e$ be given by \eqref{def:energy} with $\phi_i^\e:(\R^d)^{Z_\e(\Omega_i)}\to[0,+\infty)$ as in \eqref{def:phi:periodic} and assume that the functions $\psi_i^\e:(\Rd)^{Z_\e(\Rn)}\to[0,+\infty)$ are $\e K$-periodic in $i$, satisfy \ref{H1}--\ref{H5} with $Z_\e(\Omega_i)$ replaced by $Z_\e(\Rn)$, and the locality condition \eqref{cond:finite-range0}. Suppose in addition that there exist $\psi_i^s:(\Rd)^{Z^n}\to[0,+\infty)$ such that \ref{Hpsi2} and \ref{Hpsi3} are satisfied. Then for each pair $(\zeta,\nu)\in\R^{d}\times S^{n-1}$ there exists the limit defining $g_{\rm hom}$ in \eqref{ghom} and $g_{\rm hom}(\zeta,\nu)=\bar{g}(\zeta,\nu)$, where $\bar{g}$ is as in \eqref{hom:integrands}.
\end{prop}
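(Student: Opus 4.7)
The plan is to compare $\bar g(\zeta,\nu)$, which is known to exist and to be independent of the position by Proposition \ref{cor:trans-inv}, with the integer-lattice infimum appearing in \eqref{ghom}, which I denote by $m^s(\zeta,\nu,T)$. Fix $x_0\in\Omega$; by Remark \ref{rem:dirichlet},
\begin{equation*}
\bar g(\zeta,\nu) = \limsup_{\rho\to 0}\frac{1}{\rho^{n-1}}\lim_{\delta\to 0}\liminf_{j\to+\infty}\m_{\e_j}^\delta(u_{\zeta,x_0}^\nu, Q_\rho^\nu(x_0)),
\end{equation*}
and the same identity holds with $\limsup_{j\to+\infty}$. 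Given an $\e$-lattice competitor $u$ of $\m_\e^\delta(u_{\zeta,x_0}^\nu,Q_\rho^\nu(x_0))$, set $v(i):=u(\e i+y_\e)$, with $y_\e\in\e K\Z^n$ the closest such point to $x_0$ so that $\e K$-periodicity of $\psi_i^\e$ gives $\psi_{\e j+y_\e}^\e=\psi_{\e j}^\e$ for every $j\in\Z^n$. With $T:=\rho/\e$, this produces an integer-lattice function on a copy of $TQ^\nu$ satisfying the Dirichlet datum $u_{\zeta,0}^\nu$ in a band of width $\delta/\e$, which for $\e$ small exceeds $\sqrt{n}L$, so $v$ is admissible for $m^s(\zeta,\nu,T)$.

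Under this rescaling and using \eqref{cond:finite-range1},
\begin{equation*}
F_\e(u,Q_\rho^\nu(x_0)) = \e^{n-1}\sum_{j\in Z_1(TQ^\nu)}\e\,\psi_{\e j}^\e(\{v^{j+k}\}_{k\in Z_1(LQ)}) + o(\rho^{n-1}),
\end{equation*}
the remainder coming from the boundary strip. The key step is to invoke \ref{Hpsi2} to replace $\e\,\psi_{\e j}^\e$ by $\psi_j^s(\{v^{j+k}\})$ at each $j$. This is only allowed when $|\nabla_{1,L}v|(j)$ is either zero or at least $\Lambda(\eta)\e^{(p-1)/p}$, so I would handle the intermediate regime by a Lipschitz-type modification analogous to Substep 1a of Proposition \ref{prop:bulk}. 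The energy bound $\m_\e^\delta\leq C(1+|\zeta|)\rho^{n-1}$ (furnished by Proposition \ref{prop:ub} applied to $u_{\zeta,x_0}^\nu$) together with the lower bound \ref{H3} controls the number of lattice points at which $v$ must be modified. The modified function is either locally constant (so $\psi_j^s=0$ by \ref{Hs3}) or retains a sharp enough interface at these points, loses only $o(\rho^{n-1})$ of energy thanks to \ref{Hs3a} and the continuity estimate \ref{Hpsi3}, and preserves the boundary condition. The opposite comparison (from integer-lattice competitors to $\e$-lattice ones) follows symmetrically.

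Combining the two comparisons and passing to the infimum gives
\begin{equation*}
\lim_{\delta\to 0}\liminf_{j\to+\infty}\frac{\m_{\e_j}^\delta(u_{\zeta,x_0}^\nu,Q_\rho^\nu(x_0))}{\rho^{n-1}} = \liminf_{T\to+\infty}\frac{m^s(\zeta,\nu,T)}{T^{n-1}},
\end{equation*}
together with the analogous identity for $\limsup$. Since the right-hand sides do not depend on $\rho$, the outer $\limsup_{\rho\to 0}$ in Remark \ref{rem:dirichlet} becomes trivial and yields
\begin{equation*}
\bar g(\zeta,\nu)=\liminf_{T\to +\infty}\frac{m^s(\zeta,\nu,T)}{T^{n-1}}=\limsup_{T\to +\infty}\frac{m^s(\zeta,\nu,T)}{T^{n-1}},
\end{equation*}
which simultaneously proves that the limit defining $g_{\rm hom}(\zeta,\nu)$ exists and that it equals $\bar g(\zeta,\nu)$. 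I expect the main obstacle to be the Lipschitz modification on intermediate-regime points: Substep 1a of Proposition \ref{prop:bulk} exploits a bulk energy bound of size $O(\rho^n)$, whereas here only the surface bound $O(\rho^{n-1})$ is available, so the cardinality of the bad set must be estimated more sharply. This requires combining \ref{H3} with the finite-range comparison of Lemma \ref{lem:comp:gradients} to separate the contribution of the genuine jump region (where local differences are already large by design) from the truly intermediate lattice points.
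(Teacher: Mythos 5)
Your skeleton (rescale competitors between $Q_\rho^\nu(x_0)$ on the $\e$-lattice and $TQ^\nu$ on the integer lattice with $T=\rho/\e$, apply \ref{Hpsi2} where the discrete gradient is large, \ref{Hpsi3} to transfer the estimate, and \ref{Hs3a} on the bad set) matches the paper's Step 1, but the point you yourself flag as ``the main obstacle'' is exactly where the proof lives, and the tool you propose for it cannot work. A Lipschitz/maximal-function modification as in Substep 1a of Proposition \ref{prop:bulk} produces a competitor with \emph{small} discrete gradient at the intermediate-regime points, but $\psi_i^s$ does not see the size of the gradient: by \ref{Hs4} it vanishes only where the function is \emph{locally constant} on $Q_L(i)$, and otherwise it is of order one by \ref{Hs3a}. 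The intermediate regime $\{0<|\nabla_{\e,L}u_\e|(i)<\Lambda(\eta)\e^{-1/p}\}$ may contain $O(\rho^n/\e^n)$ points, so summing $\e^{n-1}\psi_i^s$ over it gives $O(\rho^n/\e)$, which diverges. The paper's mechanism is different: a coarea/quantization argument. For each component $m$ one selects levels $t_l^m$ spaced $N_\rho^{-1}\approx\rho^{\alpha}$ apart by a mean-value argument applied to the discrete coarea inequality \eqref{est:coarea1}, and replaces $u_\e$ by the piecewise-constant function $v_\e$ taking only these values. Then $v_\e$ is locally constant off $\bigcup_{l,m}\mathcal{R}_\e^m(t_l^m)$, whose cardinality outside the jump set $\J_\e$ is $o(\rho^{n-1}/\e^{n-1})$ by \eqref{est:optimaljumpset}, while $\|v_\e-u_\e\|_{L^\infty}\leq c\rho^{\alpha}$ makes \ref{Hpsi3} applicable on $\J_\e$, where \ref{Hpsi2} holds by definition of $\J_\e$. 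This quantization idea is absent from your proposal and is not recoverable from the bulk construction.

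A second gap: the two comparisons do not ``simultaneously prove that the limit defining $g_{\rm hom}$ exists.'' Your first comparison controls $g_T(\zeta,\nu)/T^{n-1}$ (your $m^s(\zeta,\nu,T)/T^{n-1}$) only along the sequences $T_j=\rho/\e_j$, so combined with the periodic-extension upper bound it yields $\bar g(\zeta,\nu)=\liminf_{T\to+\infty}g_T(\zeta,\nu)/T^{n-1}$, but it gives no bound on $\limsup_{T\to+\infty}$ along arbitrary $T$. The existence of the full limit in \eqref{ghom} must be established separately, by the almost-subadditivity of $T\mapsto g_T(\zeta,\nu)$ (which uses \ref{Hs3a} and \ref{Hs4}); this is what the paper does by invoking the corresponding result of [BC17, Proposition 4.5] before starting the comparison. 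Your Step 2 direction (periodic extension of a near-optimal integer-lattice competitor, with \ref{Hpsi2} applicable everywhere because the extended function takes finitely many values) is essentially correct.
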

\begin{proof}
Having Lemma \ref{lem:properties:psi:s} at hand the existence of the limit in \eqref{ghom} can be proved as in \citep[Proposition 4.5]{BC17} and we thus omit its proof here. 

Let $(\zeta,\nu)\in\Rd\times S^{n-1}$ be fixed and let us show that $\bar{g}(\zeta,\nu)=g_{\rm hom}(\zeta,\nu)$. To reduce notation for every $T>0$ we set
\begin{align*}
g_T(\zeta,\nu):=\inf\Big\{\sum_{i\in Z_1(TQ^\nu)}\psi_i^s(\{u^{i+j}\}_{j\in\Z^n})\colon u\in\A_1^{\sqrt{n}L}(u_{\zeta,\nu},TQ^\nu)\Big\},
\end{align*}
so that $g_{\rm hom}(\zeta,\nu)=\lim_T 1/T^{n-1} g_T(\zeta,\nu)$.
\begin{step}{Step 1} $\bar{g}(\zeta,\nu)\geq g_{\rm hom}(\zeta,\nu)$

\noindent Let $\bar{g}$ be as in \eqref{hom:integrands}; thanks to formula \ref{derivationformula} in Theorem \ref{thm:int:rep} and Proposition \ref{cor:trans-inv} there exists $x_0\in\Omega$ such that
\begin{align*}
\bar{g}(\zeta,\nu)=\limsup_{\rho\to 0}\lim_{\delta\to 0}\limsup_{\e\to 0}\frac{1}{\rho^{n-1}}\inf\{F_\e(u,Q_\rho^\nu(x_0)\colon u\in\A_{\e}^{\delta}(u_{\zeta,x_0}^\nu, Q_\rho^\nu(x_0))\}.
\end{align*}
Note that to simplify notation we do not relabel the $\Gamma$-converging subsequence. Moreover, from now on we assume $x_0=0$. We fix a number $\alpha\in (0,(p-1)/p)$ whose meaning will become clear later and for every $\rho>0$ we denote by $N_\rho:=\lfloor \rho^{-\alpha}\rfloor$ the integer part of $\rho^{-\alpha}$. We further write $\zeta=(\zeta^1,\ldots,\zeta^d)$ and we choose $\rho\in(0,1)$ with $Q_{2\rho}\wcont\Omega$ such that $2/N_\rho<|\zeta^m|$ for every $m\in\{1,\ldots,d\}$ with $\zeta^m\neq 0$. Let $\delta\in (0,\rho/2)$ and for every $\e>0$ with $\e\sqrt{n} L<\delta$ let $u_\e\in\A_{\e}^{\delta}(u_\zeta^\nu,Q_\rho^\nu)$ be such that
\begin{align}\label{energybound:surface}
F_\e(u_\e,Q_\rho^\nu)\leq F_\e(u_\zeta^\nu,Q_\rho^\nu)\leq c\rho^{n-1}.
\end{align}
Since $\e\sqrt{n}L<\delta<\rho/2$ and $Q_{2\rho}\wcont\Omega$ we can extend $u_\e$ by $0$ outside $\Omega$ without modifying the energy or changing the boundary conditions. Moreover, by truncation we can assume that $\|u_\e\|_{L^\infty}\leq 3|\zeta|$. 

Let us fix $\eta>0$; in the remaining part of this step we construct functions $w_\e:\Z^n\to\Rd$ which are admissible for the minimum problem defining $g_{T_\e}(\zeta,\nu)$ with $T_\e=\rho/\e$ and satisfying for $\e$ sufficiently small (depending on $\eta$) the estimate
\begin{align}\label{est:surface:lb}
\frac{1}{\rho^{n-1}}F_\e(u_\e,Q_\rho^\nu)\geq\frac{1}{T_\e^{n-1}}\sum_{i\in Z_1(T_\e Q^\nu)}\psi_i^s(\{w_\e^{i+j}\}_{j\in\Z^n})-R(\e,\rho)-c\eta,
\end{align}
where the remainder $R(\e,\rho)$ is such that $\lim_\rho\lim_\e R(\e,\rho)=0$ and the constant $c$ depends only on $n,L$ and $\zeta$. Passing to the limit first in $\e$ then in $\delta$ and finally in $\rho$, thanks to the arbitrariness of $u_\e\in\A_\e^\delta(u_{\zeta}^\nu,Q_\rho^\nu)$ we may then deduce that
\begin{align}\label{est:gbar-ghom}
\bar{g}(\zeta,\nu)\geq\liminf_{\e\to 0}\frac{1}{T_\e^{n-1}}g_{T_\e}(\zeta,\nu)-c\eta=g_{\rm hom}(\zeta,\nu)-c\eta,
\end{align} 
which will eventually give the desired inequality by letting $\eta\to 0$.

To obtain the required sequence $(w_\e)$ we carefully combine the arguments used in \citep[Proposition 5.21]{Ruf17} in the discrete setting with those used in \citep[Proposition 6.2]{BDV96} and \citep[Theorem 5.2(d)]{CDSZ17} in the continuum setting. We start by introducing some notation. For every $m\in\{1,\ldots, d\}$ we denote by $(u_\e^i)^m$ the $m$-th component of $u_\e$ and for every $t\in\R$ we consider the superlevel set
\begin{align*}
\mathcal{S}_\e^m(t):=\{i\in Z_\e(Q_\rho^\nu)\colon (u_\e^i)^m\geq t\}.
\end{align*}
Further we introduce the set
\begin{align*}
\mathcal{R}_\e^m(t):=\{i\in Z_\e(Q_\rho^\nu)\colon\exists\ \xi\in Z_1(LQ)\ \text{s.t.}\ i+\e \xi\in Z_\e(\Rn)\setminus\mathcal{S}_\e^m(t),\ i\in\mathcal{S}_\e^m(t)\ \text{or vice versa}\}.
\end{align*}
Finally, let $N\in\N$ with $3|\zeta|+1/N_\rho\leq N$; note that for any $t\in[-N,N]$ and any $m\in\{1,\ldots,d\}$ a point $i\in Z_\e(Q_\rho^\nu)$ belongs to $\mathcal{R}_\e^m(t)$ if and only if $t\in[(u_\e^i)^m,(u_\e^{i+\e \xi})^m)$ or $t\in((u_\e^{i+\e \xi})^m,(u_\e^i)^m]$ for some $\xi\in Z_1(LQ)$. Thus, for any $i\in Z_\e(Q_\rho^\nu)$ we have
\begin{align}\label{est:coarea1}
\int_{-N}^N\chi_{\mathcal{R}_\e^m(t)}(i)\dt\leq\e|\nabla_{\e,L}u_\e|(i).
\end{align}
We choose $\Lambda(\eta)$ according to \ref{Hpsi2} and denote by
\begin{align*}
\J_\e:=\big\{i\in Z_\e(Q_\rho^\nu)\colon |\nabla_{\e,L}u_\e|^p(i)\geq\frac{\Lambda(\eta)^p}{\e}\big\}
\end{align*}
the set of jump points. Without restriction we assume that $\Lambda(\eta)\geq 1$. Summing up \eqref{est:coarea1} over all $i\in Z_\e(Q_\rho^\nu)\setminus\J_\e$ from H\"older's inequality we deduce that
\begin{align}\label{est:coarea2}
\e^{n-1}\int_{-N}^N\#(\mathcal{R}_\e^m(t)\setminus\mathcal{J}_\e)\dt &\leq\sum_{i\in Z_\e(Q_\rho^\nu)\setminus\J_\e}\e^n|\nabla_{\e,L}u_\e|(i)\nonumber\\
&\leq \e^{\frac{n(p-1)}{p}}\Big(\#\big(Z_\e(Q_\rho^\nu)\setminus\J_\e\big)\Big)^{\frac{p-1}{p}}\Big(\sum_{i\in Z_\e(Q_\rho^\nu)\setminus\J_\e}\e^n|\nabla_{\e,L}u_\e|^p(i)\Big)^{\frac{1}{p}}\nonumber\\
&\leq c\Lambda(\eta)\rho^{\frac{n(p-1)}{p}}\Big(\sum_{i\in Z_\e(Q_\rho^\nu)}\hspace*{-0.5em}\e^n\min\big\{|\nabla_{\e,L}u_\e|^p(i),\frac{1}{\e}\big\}\Big)^{\frac{1}{p}}.
\end{align}
Moreover, thanks to Estimate \ref{comparison:gradients:min} in Lemma \ref{lem:comp:gradients} and \ref{H3} we have
\begin{align}\label{est:Lgradient1}
&\sum_{i\in Z_\e(Q_\rho^\nu)}\hspace*{-0.5em}\e^n\min\big\{|\nabla_{\e,L}u_\e|^p(i),\frac{1}{\e}\big\}\leq 2\hat{c}_2\hspace{-2em}\sum_{i\in Z_\e(Q_\rho^\nu+\e L[-1,1]^n)}\hspace*{-2.5em}\e^n\min\Big\{\sum_{k=1}^n|D_\e^ku(i)|^p,\frac{1}{\e}\Big\}\nonumber\\
&\hspace*{2em}\leq 2\hat{c}_2\Bigg(\frac{1}{c_2}F_\e(u_\e,Q_\rho^\nu)+\hspace{-2em}\sum_{i\in Z_\e(Q_\rho^\nu+\e L[-1,1]^n)\setminus Q_\rho^\nu}\hspace*{-3em}\e^n\min\Big\{\sum_{k=1}^n|D_\e^ku_\zeta^\nu(i)|^p,\frac{1}{\e}\Big\}\Bigg),
\end{align}
where in the second step we used the boundary conditions satisfied by $u_\e$. Note that the last term on the right-hand side of \eqref{est:Lgradient1} can be bounded by
\begin{align*}
\e^{n-1}\#\{i\in Z_\e(Q_\rho^\nu+\e L[-1,1])\setminus Q_\rho^\nu\colon\dist(i,\Pi_\nu)\leq\e\}\leq c(L)\e.
\end{align*}
Inserting the above estimate and the energy bound \eqref{energybound:surface} in \eqref{est:Lgradient1}, the estimate in \eqref{est:coarea2} can be continued to
\begin{align*}
\e^{n-1}\int_{-N}^N\#(\mathcal{R}_\e^m(t)\setminus\mathcal{J}_\e)\dt\leq c\Lambda(\eta)\rho^{\frac{n(p-1)}{p}}\big(\rho^{n-1}+\e\big)^{\frac{1}{p}}\leq c\Lambda(\eta)\big(\rho^{\frac{np-1}{p}}+\rho^\frac{n(p-1)}{p}\e^{\frac{1}{p}}\big).
\end{align*}
Hence for every integer $l$ with $-N N_\rho\leq l\leq N N_\rho$ there exists $t_l^m\in [l/N_\rho,(l+1)/N_\rho)$ such that
\begin{align}\label{est:optimaljumpset}
\e^{n-1}\sum_{l=-N N_\rho}^{N N_\rho-1}\hspace{-1em}\#(\mathcal{R}_\e^m(t_l^m)\setminus\J_\e)\leq \e^{n-1} N_\rho\int_{-N}^N\#(\mathcal{R}_\e^m(t)\setminus\J_\e)\dt\leq c\Lambda(\eta)\big(\rho^{\frac{np-1}{p}-\alpha}+\e^{\frac{1}{p}}\rho^{\frac{n(p-1)}{p}-\alpha}\big).
\end{align}
Note that $\alpha$ was chosen such that $(np-1)/p-\alpha>n-1$. Moreover, since $\|u_\e\|_{L^\infty}\leq N-1/N_\rho$ the sets $\mathcal{S}_\e^m(t_l^m)\setminus\mathcal{S}_\e^m(t_{l+1}^m)$, $m\in\{1,\ldots,d\}$, $l=-N N_\rho,\ldots,N N_\rho-1$ form a partition of $Z_\e(Q_\rho^n)$. Thus, we can define a discrete function $v_\e$ componentwise by its restriction to $\mathcal{S}_\e^m(t_l^m)\setminus\mathcal{S}_\e^m(t_{l+1}^m)$ setting
\begin{align*}
(v_\e^i)^m_{|\mathcal{S}_\e^m(t_l^m)\setminus\mathcal{S}_\e^m(t_{l+1}^m)}:=
\begin{cases}
0 &\text{if}\ t_{l}^m\leq 0< t_{l+1}^m,\\
\zeta^m &\text{if}\ t_{l}^m\leq\zeta^m<t_{l+1}^m,\\
t_l^m &\text{otherwise}.
\end{cases}
\end{align*}
Note that $v_\e$ is well-defined since $2/N_\rho<|\zeta^m|$ if $\zeta^m\neq 0$, so that in this case $\zeta^m$ and $0$ can not belong to the same interval $[t_l^m,t_{l+1}^m)$.

We claim that the required sequence $(w_\e)$ is obtained by setting $w_\e^i:=v_\e^{\e i}$ for every $i\in\Z^n$. First note that by construction the functions $v_\e$ satisfy the required boundary conditions, \ie $v_\e\in\A_{\e}^{\delta}(u_\zeta^\nu,Q_\rho^\nu)$. Thus, since $\e L<\delta$ the rescaled functions $w_\e$ are admissible for the minimum problem defining $g_{T_\e}(\zeta,\nu)$. We finally show that there exists $\hat{\e}=\hat{\e}(\eta)>0$ such that for all $\e\in(0,\hat{\e})$ the functions $w_\e$ satisfy \eqref{est:surface:lb}.
To this end we show that $\psi_i^s(\{w_\e^{i+j}\}_{j\in\Z^n})$ essentially only gives a contribution to the energy when $\e i\in\J_\e$, in which case it will turn out to be comparable to $\e\psi_i^\e(\{u_\e^{\e i+j}\}_{j\in Z_\e(\R^n)})$ thanks to \ref{Hpsi2} and \ref{Hpsi3}. We start by introducing the rescaled functions $\tilde{u}_\e$ defined by setting $\tilde{u}_\e^i:=u_\e^{\e i}$ for every $i\in\Z^n$ and we observe that
for $i\in Z_1(T_\e Q^\nu)$ with $\e i\in\J_\e$ we have
\begin{align}\label{lb:L-gradient}
\e^{\frac{1-p}{p}}|\nabla_{1,L}\tilde{u}_\e|(i)=\e^{\frac{1}{p}}|\nabla_{\e,L}u_\e|(\e i)\geq \Lambda(\eta).
\end{align}
Hence, from \ref{Hpsi2} we deduce the existence of $\hat{\e}=\hat{\e}(\eta)>0$ such that for every $\e\in(0,\hat{\e})$ and every $i\in\Z^n$ with $\e i\in\J_\e$ there holds
\begin{align}\label{est:psi-ue-phi-ue}
\e\phi_i^\e(\{u_\e^{\e i+j}\}_{j\in Z_\e(\Omega_i)})=\e\psi_{\e i}^\e(\{u_\e^{\e i+j}\}_{j\in Z_\e(\Rn)})\geq\psi_i^s(\{\tilde{u}_\e^{i+j}\}_{j\in\Z^n})-\eta.
\end{align}
We now compare $\psi_i^s(\{\tilde{u}_\e^{i+j}\}_{j\in\Z^n})$ and $\psi_i^s(\{w_\e^{i+j}\}_{j\in\Z^n})$. By construction we have
\begin{align}\label{est:linfty}
\|w_\e-\tilde{u}_\e\|_{L^\infty}=\|v_\e-u_\e\|_{L^\infty}\leq\frac{2\sqrt{d}}{N_\rho}\leq 4\sqrt{d}\rho^\alpha.
\end{align} 
For every $i\in\Z^n$ with $|\nabla_{1,L}\tilde{u}_e|(i)>0$ \eqref{est:linfty} together with \ref{Hpsi3} and Remark \ref{rem:continuity:psi}  gives
\begin{align}\label{est:psi-ue-we}
\psi_i^s(\{\tilde{u}_\e^{i+j}\}_{j\in\Z^n})\geq \psi_i^s(\{w_\e^{i+j}\}_{j\in\Z^n}) - c_s\hspace{-1.5em}\sum_{j\in Z_1(Q_L(i))}\dsum{\xi\in Z_1(Q_L(i)}{j+\xi\in Q_L(i)}\hspace{-1em}|w_\e^{j+\xi}-\tilde{u}_\e^{j+\xi}|\geq \psi_i^s(\{w_\e^{i+j}\}_{j\in\Z^n})- c\rho^\alpha,
\end{align}
where $c>0$ depends only on $n$, $d$ and $L$. In particular, \eqref{est:psi-ue-we} holds for every $i\in\Z^n$ with $\e \in\J_\e$ thanks to \eqref{lb:L-gradient}.
Gathering \eqref{est:psi-ue-we} and \eqref{est:psi-ue-phi-ue} we thus obtain
\begin{align}\label{est:surface1}
\frac{1}{\rho^{n-1}}F_\e(u_\e,Q_\rho^\nu) &\geq\frac{\e^{n-1}}{\rho^{n-1}}\sum_{i\in Z_\e(Q_\rho^\nu)\cap\J_\e}\hspace{-1em}\e\phi_i^\e(\{u_\e^{i+j}\}_{j\in Z_\e(\Omega_i)})\nonumber\\
&\geq\frac{1}{T_\e^{n-1}}\dsum{i\in Z_1(T_\e Q^\nu)}{\e i\in\J_\e}\hspace{-1em}\psi_i^s(\{w_\e^{i+j}\}_{j\in\Z^n})-\big(\rho^\alpha+\eta\big)\frac{\e^{n-1}}{\rho^{n-1}}\#(Z_\e(Q_\rho^\nu)\cap\J_\e).
\end{align}
Moreover, since $1/\e\leq|\nabla_{\e,L}u_\e|^p(i)$ for every $i\in\J_\e$, we can argue as in \eqref{est:Lgradient1} to bound the cardinality of the set $Z_\e(Q_\rho^\nu)\cap\J_\e$ via
\begin{align}\label{card:jumppoints}
\frac{\e^{n-1}}{\rho^{n-1}}\#(Z_\e(Q_\rho^\nu)\cap\J_\e) &\leq\frac{1}{\rho^{n-1}}\hspace*{-1em}\sum_{i\in Z_\e(Q_\rho^\nu)\cap\J_\e}\hspace*{-1.5em}\e^n\min\big\{|\nabla_{\e,L}u_\e|^p(i),\frac{1}{\e}\big\}\leq\frac{c}{\rho^{n-1}}\big(F_\e(u_\e,Q_\rho^\nu)+\e)\leq c+\frac{c\e}{\rho^{n-1}},
\end{align}
where the last inequality follows from \eqref{energybound:surface}. 
It then remains to show that the contributions of $\psi_i^s(\{w_\e^{i+j}\}_{j\in\Z^n})$ for $\e i\not\in\J_\e$ are negligible. First note that for every $i\in Z_1(T_\e Q^\nu)$ with $w_\e\equiv w_\e^i$ on $Z_1(Q_L(i))$ Hypotheses \ref{Hs4} gives $\psi_i^s(\{w_\e^{i+j}\}_{j\in\Z^n})=0$. On the other hand, if $i\in Z_1(T_\e Q^\nu)$ is such that $w_\e\nequiv w_\e^i$ on $Z_1(Q_L(i))$ then $i$ belongs to $\mathcal{R}_\e^m(t_l^m)$ for some $m\in\{1,\ldots,d\}$ and $l\in\{-NN_\rho,\ldots,NN_\rho-1\}$. Thus, we have
\begin{align}\label{est:regularpoints1}
\frac{1}{T_\e^{n-1}}\dsum{i\in Z_1(T_\e Q^\nu)}{\e i\not\in\J_\e}\psi_i^s(\{w_\e^{i+j}\}_{j\in\Z^n})\leq\frac{1}{T_\e^{n-1}}\sum_{m=1}^d\sum_{l=-NN_\rho}^{NN_\rho-1}\sum_{\e i\in\mathcal{R}_\e^m(t_l^m)\setminus\J_\e}\hspace*{-1.5em}\psi_i^s(\{w_\e^{i+j}\}_{j\in\Z^n}).
\end{align}
We finally observe that \eqref{est:linfty} and our choice of $\rho$ imply that $\|w_\e\|_{L^\infty}\leq 4|\zeta|$, so that we can use the upper bound in \ref{Hs3a} together with \eqref{est:optimaljumpset} to bound the sum on the right-hand side of \eqref{est:regularpoints1}. In fact, we have
\begin{align}\label{est:regularpoints2}
\frac{1}{T_\e^{n-1}}\sum_{m=1}^d\sum_{l=-NN_\rho}^{NN_\rho-1}\sum_{\e i\in\mathcal{\R}_\e^m(t_l^m)\setminus\J_\e}\hspace*{-1.5em}\psi_i^s(\{w_\e^{i+j}\}_{j\in\Z^n}) &\leq c_6(4|\zeta|+1)\frac{\e^{n-1}}{\rho^{n-1}}\sum_{m=1}^d\sum_{l=-N N_\rho}^{N N_\rho-1}\#(\mathcal{R}_\e^m(t_l^m)\setminus\J_\e)\nonumber\\
&\leq c\Lambda(\eta)\big(\rho^{\frac{p-1}{p}-\alpha}+\e^{\frac{1}{p}}\rho^{\frac{p-n}{p}-\alpha}\big).
\end{align}
Gathering \eqref{est:surface1}-\eqref{est:regularpoints2} we deduce that the sequence $(w_\e)$ satisfies \eqref{est:surface:lb} with 
\begin{align*}
R(\e,\rho)=c\Lambda(\eta)\big(\rho^\alpha+\e\rho^{1-n}+\rho^{\frac{p-1}{p}-\alpha}+\e^{\frac{1}{p}}\rho^{\frac{p-n}{p}-\alpha}\big) \to 0\quad \text{as first}\ \e\to 0\ \text{and then}\ \rho\to 0,
\end{align*}
where the convergence of $R(\e,\rho)$ is guaranteed by the choice of $\alpha\in(0,(p-1)/p)$. Thus the argument in \eqref{est:gbar-ghom} concludes this step providing us with the inequality $\bar{g}\geq g_{\rm hom}$.
\end{step}
\begin{step}{Step 2} $\bar{g}(\zeta,\nu)\leq g_{\rm hom}(\zeta,\nu)$

\noindent In order to prove the inequality we construct a recovery sequence for $u_{\zeta,x_0}^\nu$ on $Q_\rho^\nu(x_0)$, where $x_0\in\Omega$ and $\rho>0$ are such that $Q_\rho^\nu(x_0)\wcont\Omega$.
To simplify the exposition we only consider the case $\nu=e_n$ here and we assume that $x_0=0$ and $\rho=1$. We fix $\eta>0$ and set
\begin{align*}
Q(\eta):=\big(-1/2,1/2\big)^{n-1}\times\big(-\eta/2,\eta/2\big).
\end{align*}
Moreover, we choose $T=T(\eta)\in\N$ as a multiple of $K$ with $1/T<\eta$ and $u_T\in\A_1^{\sqrt{n}L}(u_{\zeta}^{e_n},TQ)$ satisfying
\begin{align}\label{est:almostoptimal}
\frac{1}{T^{n-1}}\sum_{i\in Z_1(TQ)}\psi_i^s(\{u_T^{i+j}\}_{j\in\Z^n})\leq g_{\rm hom}(\zeta,e_n)+\eta.
\end{align}
Starting from $u_T$ we now construct a sequence $(u_\e)$ converging in $L^1(\Omega;\Rd)$ to $u_\zeta^{e_n}$ and satisfying
\begin{align}\label{est:surface:ub1}
\limsup_{\e\to 0}F_\e(u_\e,Q(\eta))\leq g_{\rm hom}(\zeta,e_n)+c\eta,
\end{align}
where the constant $c>0$ depends only on $L,n,\zeta$. Then Proposition \ref{cor:trans-inv} gives
\begin{align*}
\bar{g}(\zeta,e_n)=F(u_\zeta^{e_n},Q(\eta))\leq\liminf_{\e\to 0}F_\e(u_\e,Q(\eta))\leq g_{\rm hom}(\zeta,e_n)+c\eta,
\end{align*}
and we obtain the required inequality thanks to the arbitrariness of $\eta>0$.
 
As a first step we define a function $\bar{u}_T:\Z^n\to\Rd$ which is $T$-periodic in the directions $(e_1,\ldots,e_{n-1})$ inside the stripe $\{|\langle x,e_n\rangle|<T/2\}$ by setting
\begin{align*}
\bar{u}_T:=
\begin{cases}
u_T^{i-Tj'} &\text{if}\ i\in Z_1(Tj'+TQ)\ \text{for some}\ j'\in\Z^{n-1}\times\{0\},\\
u_\zeta^{e_n}(i) &\text{otherwise in}\ \Z^n.
\end{cases}
\end{align*}
For every $\e>0$ and every $i\in Z_\e(\R^n)$ we then set $u_\e^i:=u_T^{i/\e}$ and we observe that as $\e\to 0$ the sequence $(u_\e)$ converges in $L^1(\Omega;\Rd)$ to $u_{\zeta}^{e_n}$. It remains to show that $(u_\e)$ satisfies \eqref{est:surface:ub1}. To this end, for every $\e>0$ we consider the stripe
\begin{align*}
S_\e(T):=\{x\in\Rn\colon |\langle x,e_n\rangle|<\e T/2\}.
\end{align*}
For $\e<\eta/T$ we can rewrite the energy as
\begin{align}\label{est:surface:ub2}
F_\e(u_\e,Q(\eta))=\hspace*{-1em}\sum_{i\in Z_1(1/\e Q)\cap S_1(T)}\hspace*{-1.5em}\e^n\psi_{\e i}^\e(\{\bar{u}_T^{i+\tfrac{j}{\e}}\}_{j\in Z_\e(\Rn)})+\hspace*{-1.5em}\sum_{i\in Z_\e(Q(\eta))\setminus S_\e(T)}\hspace*{-1.5em}\e^n\psi_i^\e(\{u_\zeta^{e_n}(i+j)\}_{j\in Z_\e(\Rn)}).
\end{align}
Thanks to the upper bound for constant functions \eqref{upperbound:constant} the second term on the right-hand side of \eqref{est:surface:ub2} is at most proportional to $\eta$. In fact we have
\begin{align}\label{est:surface:ub3}
\sum_{i\in Z_\e(Q(\eta))\setminus S_\e(T)}\hspace*{-1.5em}\e^n\psi_i^\e(\{u_\zeta^{e_n}(i+j)\}_{j\in Z_\e(\Rn)})\leq (c_1+1)\e^n\#\{i\in Z_\e(Q(\eta))\}\leq c\eta
\end{align}
with $c$ depending only on $n$. We continue estimating the first term on the right-hand side of \eqref{est:surface:ub2}. Since $T$ is fixed, the function $\bar{u}_T$ takes only finitely many values. Thus, there exists $\e_0=\e_0(T,\eta)>0$ such that for every $\e\in(0,\e_0)$ and every $i\in\Z^n$ we either have $\e^{\frac{1-p}{p}
}|\nabla_{1,L}\bar{u}_T|(i)\geq \Lambda(\eta/T)$ or $|\nabla_{1,L}\bar{u}_T|(i)=0$, where $\Lambda(\eta/T)$ is given by \ref{Hpsi2}. As a consequence, setting $\e_1:=\min\{\e_0,\hat{\e}(\eta/T)\}$ with $\hat{\e}(\eta/T)$ again given by \ref{Hpsi2}, for every $\e\in (0,\e_1)$ and every $i\in\Z^n$ we obtain
\begin{align*}
|\psi_i^s(\{\bar{u}_T^{i+j}\}_{j\in\Z^n}-\e\psi_{\e i}^\e(\{\bar{u}_T^{i+\tfrac{j}{\e}}\}_{j\in Z_\e(\Rn)}|<\frac{\eta}{T}.
\end{align*}
Combining the above estimate with \eqref{est:surface:ub2} and \eqref{est:surface:ub3} we deduce that for every $\e\in (0,\e_1)$ there holds
\begin{align}\label{est:surface:ub4}
F_\e(u_\e,Q(\eta))\leq\hspace*{-1.5em}\sum_{i\in Z_1(1/\e Q)\cap S_1(T)}\hspace*{-1.5em}\e^{n-1}\psi_{i}^s(\{\bar{u}_T^{i+j}\}_{j\in \Z^n})+\frac{\eta}{T}\e^{n-1}\#(Z_1(\frac{1}{\e}Q)\cap S_1(T))+c\eta.
\end{align}
Note that there exists a constant $c>0$ depending only on $n$ such that 
\begin{align*}
\e^{n-1}\#(Z_1(\frac{1}{\e}Q)\cap S_1(T))\leq cT,\quad\text{for every}\ \e>0.
\end{align*}
Thus, setting
\begin{align*}
\mathcal{Z}_\e(T):=\{j'\in\Z^{n-1}\times\{0\}\colon \e Tj'+\e TQ\cap Q\neq\emptyset\}
\end{align*}
the estimate in \eqref{est:surface:ub4} can be continued to
\begin{align}\label{est:surface:ub5}
F_\e(u_\e,Q(\eta)) &\leq\e^{n-1}\sum_{j'\in\mathcal{Z}_\e(T)}\sum_{i\in Z_1(Tj'+T\overline{Q})}\psi_i^s(\{\bar{u}_T^{i+j}\}_{j\in\Z^n})+c\eta.
\end{align}
Note that for every $j'\in\mathcal{Z}_\e(T)$ and for every $i\in Z_1(Tj'+T\overline{Q})$ we have
\begin{align}\label{eq:periodic:extension}
\bar{u}_T^{i+j}=u_T^{i-Tj'+j}\quad\text{for every}\ j\in Z_1(LQ).
\end{align}
In fact, the above equality holds true by definition of $\bar{u}_T$ if $i\in Z_1(Tj'+TQ)$ is such that $Q_L(i)\subset Tj'+TQ$. If instead $i\in Z_1(Tj'+TQ)$ is such that $Q_L(i)\cap(\Rn\setminus Tj'+TQ)\neq\emptyset$, then the boundary conditions satisfied by $u_T$ together with the fact that $\langle j',e_n\rangle=0$ ensure that
\begin{align*}
\bar{u}_T^{i+j}=u_\zeta^{e_n}(i+j)=u_T^{i-Tj'+j}.
\end{align*}
Moreover, in combination with the locality property and periodicity, \eqref{eq:periodic:extension} gives
\begin{align*}
\sum_{i\in Z_1(Tj'+T\overline{Q})}\psi_i^s(\{\bar{u}_T^{i+j}\}_{j\in\Z^n})=\sum_{i\in Z_1(Tj'+T\overline{Q})}\psi_i^s(\{u_T^{i-Tj'+j}\}_{j\in\Z^n})=\sum_{i\in Z_1(T\overline{Q})}\psi_i^s(\{u_T^{i+j}\}_{j\in\Z^n}).
\end{align*}
Thus, since $\#\mathcal{Z}_\e(T)\leq(\lfloor\tfrac{1}{\e T}\rfloor+1)^{n-1}$, from \eqref{est:surface:ub5} we deduce that
\begin{align*}
&F_\e(u_\e,Q(\eta))\leq (\e T)^{n-1}\Big(\Big\lfloor\frac{1}{\e T}\Big\rfloor+1\Big)^{n-1}\frac{1}{T^{n-1}}\sum_{i\in Z_1(T\overline{Q})}\psi_i^s(\{u_T^{i+j}\}_{j\in\Z^n})+c\eta\\
&\hspace*{1em}\leq(\e T)^{n-1}\Big(\Big\lfloor\frac{1}{\e T}\Big\rfloor+1\Big)^{n-1}\Big(g_{\rm hom}(\zeta,e_n)+\eta+\frac{1}{T^{n-1}}\sum_{i\in Z_1(\partial TQ)}\psi_i^s(\{u_\zeta^{e_n}(i+j)\}_{j\in\Z^n})\Big)+c\eta,
\end{align*}
where to establish the second inequality we also used \eqref{est:almostoptimal} and the boundary conditions satisfied by $u_T$. We finally remark that for every $i\in Z_1(\partial TQ)$ with $|\langle i,e_n\rangle|\geq L/2$ the function $u_\zeta^{e_n}(i+\cdot)$ coincides with the constant function ${\rm sign}\langle i,e_n\rangle$ on $LQ$, so that $\psi_i^s(\{u^{i+j}\}_{j\in\Z^n})=0$. If instead $|\langle i,e_n\rangle|<L/2$ we use the upper bound in \ref{Hs3} to deduce that $\psi_i^s(\{u^{i+j}\}_{j\in\Z^n})\leq c_6(|\zeta|+1)$. Hence, we obtain
\begin{align*}
\frac{1}{T^{n-1}}\sum_{i\in Z_1(\partial TQ)}\psi_i^s(\{u_\zeta^{e_n}(i+j)\}_{j\in\Z^n})\leq c\#Z_1(\partial TQ\cap\{|\langle i,e_n\rangle|<L/2\})\leq \frac{c}{T}<c\eta,
\end{align*}
where the constant $c$ depends only on $n,L,\zeta$. Letting $\e\to 0$ we eventually find
\begin{align*}
\limsup_{\e\to 0}F_\e(u_\e,Q(\eta))\leq g_{\rm hom}(\zeta,e_n)+c\eta,
\end{align*}
that is, the sequence $(u_\e)$ satisfies \eqref{est:surface:ub1} and we may conclude.
\end{step}
\end{proof}
\begin{proof}[Proof of Theorem \ref{thm:homogenization}]
The result follows combining Theorem \ref{thm:int:rep}, Proposition \ref{cor:trans-inv}, Proposition \ref{prop:bulk} and Proposition \ref{prop:surface}.
\end{proof}
\section{Examples}\label{sect:examples}
\subsection{Pair interactions}
In the special case of interaction-energy densities $\phi_i^\e$ that take into account only pairwise interactions of the point $i$ with the remaining lattice points Theorem \ref{thm:int:rep} provides an analogous result to \citep[Theorem 3.1]{AC04} in the $GSBV$-setting (see also \citep{BG02} and \citep{Chambolle99} for the case of interaction-energy densities that are independent of the position $i$). More in detail, our result can be applied to energies of the form
\begin{align*}
F_\e(u)=\sum_{i\in Z_\e(\Omega)}\e^n\dsum{\xi\in\Z^n}{i+\e\xi\in\Omega}f_\e^\xi(i,D_\e^\xi u(i)),
\end{align*}
\ie when $\phi_i^\e:(\Rd)^{Z_\e(\Omega_i)}\to[0,+\infty)$ are given by
\begin{align*}
\phi_i^\e(\{z^j\}_{j\in Z_\e(\Omega_i)}):=\dsum{\xi\in\Z^n}{i+\e\xi\in\Omega}f_\e^\xi(i,D_\e^\xi z(0)).
\end{align*}
Here we assume that for every $\e>0$ and every $i\in Z_\e(\Omega)$ the function $f_\e^\xi(i,\cdot):\R^d\to[0,+\infty)$ is increasing in the sense that 
\begin{align}\label{cond:increasing}
f_\e(i,\zeta_1)\leq f_\e(i,\zeta_2)\ \text{for all}\ \zeta_1,\zeta_2\in\Rd\ \text{with}\ |\zeta_1|\leq|\zeta_2|. 
\end{align}
Moreover, we suppose that there exist constants $a_\e^{\xi},\hat{a}_\e^{\xi}\geq 0$ and $b_\e^{\xi},\hat{b}_\e^{i\xi}\geq 0$ such that for every $\e>0$ and every $\xi\in\Z^n$ we have
\begin{align}\label{growthcond:f}
\min\Big\{a_\e^{\xi}|\zeta|^p,\frac{b_\e^{\xi}}{\e}\Big\}\leq f_\e(i,\zeta)\leq\min\Big\{\hat{a}_\e^{\xi}|\zeta|^p,\frac{\hat{b}_\e^{\xi}}{\e}\Big\}\ \text{for every}\ (i,\zeta)\in Z_\e(\Omega)\times\Rd,
\end{align}
where the constants $a_\e^{\xi},\hat{a}_\e^{\xi},b_\e^{\xi},\hat{b}_\e^{\xi}$ satisfy the following hypotheses.
\begin{enumerate}[label={\rm (${\rm H}_{\rm pw}$\arabic*)}]
\setlength{\itemsep}{3pt}
\item\label{Hpw1} (upper bound) We have 
\begin{align}\label{Hpw1:summability1}
\limsup_{\e\to 0}\sum_{\xi\in\Z^n}(\hat{a}_\e^\xi+\hat{b}_\e^\xi)<+\infty
\end{align}
and for every $\eta>0$ there exists $M_\eta>0$ such that
\begin{align}\label{Hpw1:summability2}
\limsup_{\e\to 0}\dsum{\alpha\in\N}{\alpha >M_\eta}\dsum{\xi\in\Z^n}{|\xi|_\infty\geq\min\{\e\frac{\alpha}{2},\e\frac{M_\eta}{\sqrt{n}}\}}(\hat{a}_\e^\xi+\hat{b}_\e^\xi)<\eta;
\end{align}
\item\label{Hpw2} (lower bound) there exist $a,b>0$ such that $a_\e^{e_k}\geq a$, $b_\e^{e_k}\geq b$ for every $\e>0$ and every $k\in\{1,\ldots,n\}$;
\item\label{Hpw3} (relative control) there exists $\gamma>0$ such that for every $\e>0$ and every $\xi\in\Z^n$ with $\hat{a}_\e^\xi\neq 0$ there holds $|\xi|\hat{b}_\e^\xi\leq\gamma\hat{a}_\e^\xi$.
\end{enumerate}
Under the above assumptions $\phi_i^\e$ satisfy hypotheses \ref{H1}--\ref{H5}. In fact, \ref{H1} is automatically satisfied, since $\phi_i^\e$ depends on $\{z^j\}_{j\in\Z_\e(\Omega_i)}$ only through differences $z^j-z^l$, and \eqref{cond:increasing} ensures that \ref{H6} holds true. Moreover, for $\e$ small enough the upper bound \ref{H2} is satisfied with $c_1:=\limsup_\e\sum_\xi \hat{a}_\e^\xi+1$, which is finite thanks to \eqref{Hpw1:summability1}. The lower bound \ref{H3} holds true in view of \ref{Hpw2}. 

To verify the mild non-locality condition \ref{H4} we observe that for any $\e>0$, $i\in Z_\e(\Omega)$, $\alpha\in\N$ and $z,w:Z_\e(\Omega_i)\to\Rd$ with $z^j=w^j$ for all $j\in Z_\e(\e\alpha Q)$ we have
\begin{align*}
\phi_i^\e(\{z^j\}_{j\in Z_\e(\Omega_i)}) &=\sum_{\xi\in Z_1(\alpha Q)}f_\e(i,D_\e^\xi w^0)+\dsum{|\xi|_\infty\geq\frac{\alpha}{2}}{i+\e\xi\in\Omega}f_\e^\xi(i,D_\e^\xi z(0))\\
&\leq \phi_i^\e(\{w^j\}_{j\in Z_\e(\Omega_i)})+\dsum{|\xi|_\infty\geq\frac{\alpha}{2}}{i+\e\xi\in\Omega}\min\Big\{\hat{a}_\e^{\xi}|D_\e^\xi z(0)|^p,\frac{\hat{b}_\e^{\xi}}{\e}\Big\},
\end{align*}
where the second inequality follows from the positiveness of the $f_\e^\xi$ and \eqref{growthcond:f}. Thus, the required sequence $c_{\e,\alpha}^{j,\xi}$ in \ref{H4} is obtained by setting
\begin{align*}
c_{\e,\alpha}^{j,\xi}:=
\begin{cases}
\hat{a}_\e^\xi+\hat{b}_\e^\xi &\text{if}\ |\xi|_\infty\geq\frac{\alpha}{2},\ j=0,\\
0 &\text{otherwise},
\end{cases}
\end{align*}
which satisfies \eqref{H4:summability1} and \eqref{H4:summability2} thanks to \eqref{Hpw1:summability1} and \eqref{Hpw1:summability2}, respectively. 

It remains to establish \ref{H5}. To this end, let $z,w: Z_\e(\Omega_i)\to \Rd$ and $\varphi:Z_\e(\Omega_i)\to[0,1]$ a cut-off and set $v:=\varphi z+(1-\varphi)w$. Let us show that $\phi_i^\e(\{v^j\}_{j\in Z_\e(\Omega_i)})\leq R_i^\e(z,w,\varphi)$ with $R_i^\e(z,w,\varphi)$ as in \ref{H5}. We start by observing that
\begin{align}\label{gradient:cutoff}
D_\e^\xi v(0)=\varphi(0) D_\e^\xi z(0)+(1-\varphi(0))D_\e^\xi w(0)+D_\e^\xi\varphi(0)(z^{\e\xi}-w^{\e\xi})\ \text{for every}\ \xi\in\Z^n.
\end{align}
Thus, \eqref{growthcond:f} together with the convexity of $|\cdot|^p$ and the subadditivity of the $\min$ ensure that
\begin{align*}
\phi_i^\e(\{v^j\}_{j\in Z_\e(\Omega_i)} &\leq\hspace{-0.7em}\dsum{\xi\in\Z^n}{i+\e\xi\in\Omega}\hspace{-0.7em}\min\Big\{\hat{a}_\e^{\xi}|D_\e^\xi z(0)|^p,\frac{\hat{b}_\e^{\xi}}{\e}\Big\}+\min\Big\{\hat{a}_\e^{\xi}|D_\e^\xi w(0)|^p,\frac{\hat{b}_\e^{\xi}}{\e}\Big\}+\hat{a}_\e^\xi|D_\e^\xi \varphi(0)|^p|z^{\e\xi}-w^{\e\xi}|^p.
\end{align*}
Eventually, from \ref{Hpw3} we deduce that for every $\xi\in\Z^n$ there holds
\begin{align*}
\min\Big\{\hat{a}_\e^{\xi}|D_\e^\xi z(0)|^p,\frac{\hat{b}_\e^{\xi}}{\e}\Big\}=\hat{a}_\e^{\xi}\min\Big\{|D_\e^\xi z(0)|^p,\frac{\hat{b}_\e^{\xi}}{\hat{a}_\e^{\xi}\e}\Big\}\leq \hat{a}_\e^{\xi}\min\Big\{|D_\e^\xi z(0)|^p,\frac{\gamma}{\e|\xi|}\Big\},
\end{align*}
and the same estimate holds with $w$ in place of $z$. Since moreover 
\begin{align}\label{est:grad:phi}
|D_\e^\xi\varphi(0)|^p\leq\sup_{\substack{l\in Z_\e(\Omega_i)\\k\in\{1,\ldots,n\}}}|D_\e^k\varphi(l)|^p\ \text{for every}\ \xi\in\Z^n\ \text{with}\ i+\e\xi\in \Omega,
\end{align}
we obtain $\phi_i^\e(\{v^j\}_{j\in Z_\e(\Omega_i)})\leq R_i^\e(z,w,\varphi)$ with
\begin{align*}
R_i^\e(z,w,\varphi)=\hspace{-0.7em}\dsum{\xi\in\Z^n}{i+\e\xi\in\Omega}\hspace{-0.7em}\hat{a}_\e^\xi(\gamma+1)\Big( &\min\Big\{|D_\e^\xi z(0)|^p,\frac{\gamma}{\e|\xi|}\Big\}+\min\Big\{|D_\e^\xi w(0)|^p,\frac{\gamma}{\e|\xi|}\Big\}\\
&+\hspace{-0.7em}\sup_{\substack{l\in Z_\e(\Omega_i)\\k\in\{1,\ldots,n}}\hspace{-1em}|D_\e^k\varphi(l)|^p|z^{\e\xi}-w^{\e\xi}|^p\Big).
\end{align*}
It then suffices to remark that \eqref{H5:summability} is satisfied due to \eqref{Hpw1:summability1} to conclude.
\subsection{Multibody weak-membrane energies}
A prototipical example of functionals $F_\e$ as in \eqref{def:energy} where the interaction-energy densities $\phi_i^\e$ do not depend only on pairwise interactions of $i$ with $i+\e\xi$ but on multiple interactions of $i$ with $i+\e\xi_1,\ldots, i+\e\xi_N$ for some $N\in\N$ are so-called generalized weak-membrane energies, that have been studied in detail in \citep{Ruf17}. In our setting a generalized weak-membrane energy can be written as in \eqref{def:energy} with $\phi_i^\e$ given by
\begin{align}\label{def:wm}
\phi_i^\e(\{z^j\}_{j\in Z_\e(\Omega_i)}):=f_\e\Big(i,\sum_{\xi\in Z_1(LQ)}\dsum{j\in Z_\e(\e LQ)}{j+\e\xi\in\e LQ}c^\xi|D_\e^\xi z(j)|^p\Big),
\end{align}
where $L\in\N$ is the maximal range of interaction, $c^\xi\geq 0$ for every $\xi\in Z_1(LQ)$, and for every $\e>0$ and $i\in Z_\e(\Omega)$ the function $f_\e(i,\cdot):[0,+\infty)\to[0,+\infty)$ is increasing and satisfies
\begin{align}\label{bounds:wm}
\min\Big\{a_\e^i t,\frac{b_\e^i}{\e}\Big\}\leq f_\e(i,t)\leq\min\Big\{\hat{a}_\e^i t,\frac{\hat{b}_\e^i}{\e}\Big\},
\end{align}
for some $a_\e^i,\hat{a}_\e^i,b_\e^i,\hat{b}_\e^i\geq 0$. By construction the functions $\phi_i^\e$ satisfy \ref{H1} and \ref{H6}. To ensure that Hypotheses \ref{H2}--\ref{H5} are fulfilled we assume that the following holds.
\begin{enumerate}[label={\rm (${\rm H}_{\rm wm}$\arabic*)}]
\setlength{\itemsep}{3pt}
\item\label{Hwm1} There exist $a,\hat{a},b,\hat{b}\in (0,+\infty)$ such that $a_\e^i\geq a$, $b_\e^i\geq b$, $\hat{a}_\e^i\leq\hat{a}$, $\hat{b}_\e^i\leq\hat{b}$ for every $\e>0$ and every $i\in Z_\e(\Omega)$;
\item\label{Hwm2} for every $k\in\{1,\ldots,n\}$ there holds $c^{e_k}>0$.
\end{enumerate}
The uniform bounds on $\hat{a}_\e^i$ in \ref{Hwm1} together with the upper bound in \eqref{bounds:wm} imply that \ref{H2} holds true with $c_1:=\hat{a}\max\{c^\xi\colon \xi\in Z_1(LQ)\}(\#Z_1(LQ))^2$, while thanks to the uniform bounds on $a_\e^i, b_\e^i$ in \ref{Hwm1}, \ref{Hwm2}, the lower bound in \eqref{bounds:wm} and the monotonicity of $f_\e(i,\cdot)$ Hypotheses \ref{H3} is satisfied with $c_2:=\min\{a,b\}\min\{c^{e_k}\colon 1\leq k\leq n\}>0$.

Moreover, the mild-nonlocality condition \ref{H4} holds true by construction, since only finite-range interactions are taken into account. More precisely, in view of \ref{Hwm1} we can choose the sequence $c_{\e,\alpha}^{j,\xi}$ in \ref{H4} as
\begin{align*}
c_{\e,\alpha}^{j,\xi}:=
\begin{cases}
\max\{\hat{a},\hat{b}\}c^\xi &\text{if}\ \alpha<L,\ \xi\in Z_1(LQ),\ j\in Z_\e(\e LQ),\\
0 &\text{otherwise},
\end{cases}
\end{align*}
which satisfies \eqref{H4:summability1} and \eqref{H4:summability2}. 

Eventually, for every $z,w:Z_\e(\Omega_i)\to\Rd$ and every cut-off $\varphi:Z_\e(\Omega_i)\to[0,1]$ we can combine \eqref{gradient:cutoff} and \eqref{est:grad:phi} with the upper bounds in \eqref{bounds:wm} and \ref{Hwm1} to deduce that
\begin{align*}
\phi_i^\e(\{\varphi^j z^j+(1-\varphi^j)w^j\}_{j\in Z_\e(\Omega_i)})\leq\max\{\hat{a},\hat{b}\}\hspace{-1em}\sum_{\xi\in Z_1(LQ)}\hspace{-1em}c^\xi\Bigg(  &\dsum{j\in Z_\e(\e LQ)}{j+\e\xi\in\e LQ}\sup_{\substack{l\in Z_\e(\Omega_i)\\k\in\{1,\ldots,n\}}}|D_\e^k\varphi(l)|^p|z^{\e\xi}-w^{\e\xi}|^p\\
&+\min\Big\{|D_\e^\xi z^j|^p,\frac{1}{\e}\Big\}+\min\Big\{|D_\e^\xi w^j|^p,\frac{1}{\e}\Big\}\Bigg),
\end{align*}
which gives \ref{H5} by setting $c_\e^{j,\xi}:=\max\{\hat{a},\hat{b}\}c^\xi$ for $\xi\in Z_1(LQ)$, $j\in Z_\e(\e LQ)$ and $c_\e^{j,\xi}:=0$ otherwise.

\smallskip

Under the above assumptions the functionals $F_\e$ defined according to \eqref{def:energy} with $\phi_i^\e$ as in \eqref{def:wm} satisfy all the assumptions of Theorem \ref{thm:int:rep} and thus $\Gamma$-converge up to subsequences to a free-discontinuity functional of the form \eqref{int:form}. We eventually give sufficient conditions under which the sequence $(F_\e)$ satisfies the assumptions of Theorem \ref{thm:homogenization}. The first condition is $\e K$-periodicity of $f_\e$ in $i$, that is $f_\e(i+\e Ke_k,\cdot)=f_\e(i,\cdot)$ for every $k\in\{1,\ldots,n\}$, every $\e>0$ and every $i\in Z_\e(\Omega)$. We then extend $f_\e$ to $Z_\e(\Rn)\times[0,+\infty)$ by periodicity and in the same way we extend $\phi_i^\e$ to $(\Rd)^{Z_\e(\Rn)}$ . Moreover, we can assume that $a_\e^i,\hat{a}_\e^i,b_\e^i,\hat{b}_\e^i$ are $\e K$-periodic in $i$. We finally show that \ref{Hpsi1}--\ref{Hpsi3} are satisfied if we assume that in addition for every $i\in Z_1([0,K)^n)$ there exist $a^i,b^i>0$ such that
\begin{align}\label{conv:aebe}
a_\e^{\e i}\to a^i,\ \hat{a}_\e^{\e i}\to a^i\qquad\text{and}\qquad b_\e^{\e i}\to b^i,\ \hat{b}_\e^{\e i}\to b^i\quad\text{as}\ \e\to 0,
\end{align}
that is, the functions $f_\e(i,\cdot)$ approach a single truncated potential. By periodicity \eqref{conv:aebe} extends to $i\in\Z^n$. We claim that the required functions $\psi_i^b,\psi_i^s:(\Rd)^{\Z^n}\to[0,+\infty)$ are obtained by setting
\begin{align*}
\psi_i^b(\{z^j\}_{j\in\Z^n}) &:= a^i\sum_{\xi\in Z_1(LQ)}\dsum{j\in Z_1(LQ)}{j+\xi\in LQ}|D_1^\xi z(j)|^p,\\
\psi_i^s(\{z^j\}_{j\in\Z^n}) &:=
\begin{cases}
0 &\text{if}\ z^j=z^0\ \text{for every}\ j\in Z-1(LQ),\\
b^i &\text{otherwise}.
\end{cases}
\end{align*}
First note that \ref{Hpsi3} is automatically satisfied. We next establish \ref{Hpsi1}. Let $\eta>0$, $\Lambda>0$ and suppose that $z:\Z^n\to\Rd$ is such that $|\nabla_{1,L} z|(0)<\Lambda$. Set $z_\e^j:=\e z^{\frac{j}{\e}}$ for every $j\in\Z_\e(\Rn)$. Arguing as in Lemma \ref{lem:hypotheses:psi} to establish \ref{Hb7} we deduce that
\begin{align}\label{bound:grad:ze}
\sum_{\xi\in Z_1(LQ)}\dsum{j\in Z_\e(\e LQ)}{j+\e\xi\in\e LQ}c^\xi|D_\e^\xi z_\e(j)|^p=\sum_{\xi\in Z_1(LQ)}\dsum{j\in Z_1(LQ)}{j+\xi\in LQ}|D_1^\xi z(j)|^p< 2^{p-1}\underset{\xi\in Z_1(LQ)}{\max} c^\xi(1+\#Z_1(LQ))\Lambda^p
\end{align}
Let us choose $\bar{\e}=\bar{\e}(\eta,\Lambda)>0$ sufficiently small such that
\begin{align}\label{closeness:bulk}
\Lambda_0:=2^{p-1}\underset{\xi\in Z_1(LQ)}{\max} c^\xi(1+\#Z_1(LQ))\Lambda^p\leq\frac{b}{\hat{a}\e},\qquad |a_\e^{\e i}-a^i|\leq\frac{\eta}{\Lambda_0},\qquad |\hat{a}_\e^{\e i}-a^i|\leq\frac{\eta}{\Lambda_0},
\end{align}
for every $\e\in (0,\bar{\e}$ and every $i\in Z_1([0,K)^n)$. The first condition in \eqref{closeness:bulk} together with \eqref{bound:grad:ze} and \ref{Hwm1} ensure that
\begin{align*}
a_\e^{\e i}\sum_{\xi\in Z_1(LQ)}\dsum{j\in Z_\e(\e LQ)}{j+\e\xi\in\e LQ}c^\xi|D_\e^\xi z_\e(j)|^p\leq\frac{b_\e^{\e i}}{\e}\quad\text{for every}\ i\in\Z^n.
\end{align*}
Thus, \eqref{bounds:wm} gives
\begin{align*}
a_\e^{\e i}\sum_{\xi\in Z_1(LQ)}\dsum{j\in Z_\e(\e LQ)}{j+\e\xi\in\e LQ}c^\xi|D_\e^\xi z_\e(j)|^p\leq\phi_{\e i}^\e(\{z_\e^j\}_{j\in Z_\e(\Rn)})\leq\hat{a}_\e^{\e i}\sum_{\xi\in Z_1(LQ)}\dsum{j\in Z_\e(\e LQ)}{j+\e\xi\in\e LQ}c^\xi|D_\e^\xi z_\e(j)|^p,
\end{align*}
which in view of the second and third estimate in \eqref{closeness:bulk} and \eqref{bound:grad:ze} finally gives
\begin{align*}
|\psi_i^b(\{z^j\}_{j\in\Z^n})-\phi_{\e i}^\e(\{z_\e^j\}_{j\in Z_\e(\Rn)})|<\eta.
\end{align*}
It remains to show that $\psi_i^s$ satisfies \ref{Hpsi2}. We start by choosing $\Lambda>0$ such that
\begin{align}\label{cond:Lambda}
\frac{\Lambda^p \underset{1\leq k\leq n}{\min} c^{e_k}}{\hat{c}_1n^{p-1}2^p}>\frac{\hat{b}}{a},
\end{align} 
where $\hat{c}_1$ is the constant provided by Lemma \ref{lem:comp:gradients}. Moreover, given $\eta>0$ we choose $\hat{\e}=\hat{\e}(\eta)$ small enough such that $|b_\e^{\e i}-b^i|<\eta$, $|\hat{b}_\e^{\e i}-b^i|<\eta$ for every $\e\in(0,\hat{\e})$ and every $i\in Z_1([0,K)^n)$. Let $\e\in(0,\hat{\e})$ and suppose that $z:\Z^n\to\R^d$ satisfies $\e^\frac{1-p}{p}|\nabla_{1,L}z|(0)\geq\Lambda$. Then Lemma \ref{lem:comp:gradients} together with Jensen's inequality yield
\begin{align*}
\Lambda^p\leq\e^{1-p}|\nabla_{1,L}z|^p(0)\leq\e^{1-p}\hat{c}_1 n^{p-1}2^p\sum_{k=1}^n\dsum{j\in Z_1(LQ)}{j+e_k\in LQ}|D_1^kz(j)|^p.
\end{align*}
In particular, the rescaled functions $\hat{z}_\e$ obtained by setting $\hat{z}_\e:=z^\frac{j}{\e}$ for every $j\in Z_\e(\Rn)$ satisfy
\begin{align*}
\sum_{k=1}^n c^{e_k}\hspace{-1em}\dsum{j\in Z_\e(\e LQ)}{j+\e e_k\in \e LQ}|D_\e^k\hat{z}_\e(j)|^p\geq\e^{-p}\underset{1\leq k\leq n}{\min} c^{e_k}\sum_{k=1}^n\dsum{j\in Z_1(LQ)}{j+e_k\in LQ}|D_1^kz(j)|^p\geq\frac{\Lambda^p \underset{1\leq k\leq n}{\min} c^{e_k}}{\hat{c}_1n^{p-1}2^p}\frac{1}{\e},
\end{align*}
hence the choice of $\Lambda$ in \eqref{cond:Lambda} and \ref{Hwm1} ensure that
\begin{align*}
\frac{b_\e^{\e i}}{\e}=\min\Big\{a_\e^{\e i}\sum_{k=1}^n c^{e_k}\hspace{-1em}\dsum{j\in Z_\e(\e LQ)}{j+\e e_k\in \e LQ}|D_\e^k\hat{z}_\e(j)|^p,\frac{1}{\e}\Big\}\leq\phi_{\e i}^\e(\{\hat{z}_\e^j\}_{j\in Z_\e(\Rn)})\leq\frac{\hat{b}_\e^{\e i}}{\e}
\end{align*}
for every $i\in Z_\e(\Rn)$. Eventually, since $\e\in(0,\hat{\e}(\eta))$, this gives
\begin{align*}
b^i-\eta\leq b_\e^{\e i}\leq\e\phi_{\e i}^\e(\{\hat{z}_\e^j\}_{j\in Z_\e(\Rn)})\leq\hat{b}_\e^{\e i}\leq b^i+\eta.
\end{align*}
If on the other hand $z:\Z^n\to\Rd$ is such that $|\nabla_{1,L}z|(0)=0$ we obtain 
\begin{align*}
\phi_{\e i}^\e(\{\hat{z}_\e^j\}_{j\in Z_\e(\Rn)})=0=\psi_i^s(\{z^j\}_{j\in Z^n})
\end{align*}
for every $i\in\Z^n$, and we conclude that the functions $\psi_i^s$ satisfy \ref{Hpsi2}.
\subsection{Weak membrane with long-range small-tail interactions}\label{sect:example:nonlocal}
In \cite{B00} the author studies the asymptotic behavior of  weak-membrane energies of the form
\begin{align}\label{ex:nonlocal}
F_\e(u)=\sum_{\xi\in\Z}\dsum{i\in Z_\e(\Omega)}{i+\e\xi\in\Omega}\e\rho_\e(\e\xi-i)\min\Big\{|D_\e^\xi u(i)|^2,\frac{1}{\e}\Big\},
\end{align}
where $\Omega\subset\R$ is an open, bounded interval.
Assuming only a locally uniform summability condition for the functions $\rho_\e:\e\Z\to[0,+\infty)$ it is shown that the $\Gamma$-limit is a non-local integral functional. Moreover, the author provides examples of specific functions $\rho_\e$ including very long-range interactions with small tails, for which the $\Gamma$-limit is a (local) free-discontinuity functional. Among them are the discrete functionals as in \eqref{ex:nonlocal} with $\rho_\e:\e\Z\to[0,+\infty)$ given by
\begin{align*}
\rho_\e(t):=
\begin{cases}
1 &\text{if}\ t=\e,\\
\sqrt{\e} &\text{if}\ t=\e\lfloor\frac{1}{\sqrt{\e}}\rfloor,\\
0 &\text{otherwise},
\end{cases}
\end{align*}
which are shown to $\Gamma$-converge to the functional
\begin{align*}
F(u)=\int_\Omega|u'|^2\dt+\sum_{t\in S_u}\min\{1+|u^+(t)-u^-(t)|^2,2\}.
\end{align*}
We observe that thanks to our very mild non-locality condition \ref{H4} the above example can be recast in our framework by setting
\begin{align*}
\phi_i^\e(\{z^j\}_{j\in Z_\e(\Omega_i)}):=\min\Big\{\Big|\frac{z^\e-z^0}{\e}\Big|^2,\frac{1}{\e}\Big\}+\sqrt{\e}\min\Big\{\Big|\frac{z^{\e\lfloor\frac{1}{\e}\rfloor}-z^0}{\e^2\lfloor\frac{1}{\sqrt{\e}}\rfloor}\Big|^2,\frac{1}{\e}\Big\}.
\end{align*}
Indeed, note that $\phi_i^\e$ satisfies \ref{H1}--\ref{H3} for every $\e>0$ and every $i\in Z_\e(\Omega)$. Moreover, \ref{H4} is satisfied with the sequence $(c_{\e,\alpha}^{j,\xi})$ defined by setting
\begin{align*}
c_{\e,\alpha}^{j,\xi}:=
\begin{cases}
1 &\text{if}\ \alpha\leq 2,\ j=0,\ \xi=1,\\
\sqrt{\e} &\text{if}\ \alpha\leq 2\lfloor\frac{1}{\sqrt{\e}}\rfloor,\ j=0,\ \xi=\lfloor\frac{1}{\sqrt{\e}}\rfloor,\\
0 &\text{otherwise}.
\end{cases}
\end{align*}
The sequence $(c_{\e,\alpha}^{j,\xi})$ fulfills the required summability condition \eqref{H4:summability1}, since
\begin{align*}
\sum_{\alpha\in\N}\sum_{j\in Z_\e(\R)}\sum_{\xi\in\Z}c_{\e,\alpha}^{j,\xi}=2+\sum_{\alpha=1}^{2\lfloor\frac{1}{\sqrt{\e}}\rfloor}\sqrt{\e}\leq 4\quad\text{for every}\ \e>0.
\end{align*}
Moreover, the decaying-tail condition \eqref{H4:summability2} is satisfied since $c_{\e,\alpha}^{j,\xi}=0$ for every $\alpha>2\lfloor\frac{1}{\sqrt{\e}}\rfloor$. Thus, for every $\eta>0$ the sequence $(M_\eta^\e)$ can be chosen independently of $\eta$ as $M_\eta^\e=2\lfloor\frac{1}{\sqrt{\e}}\rfloor$, which satisfies the constraint $\e M_\eta^\e\to 0$ as $\e\to 0$. Eventually, \ref{H5} can be verified by using expression \eqref{gradient:cutoff} together with the convexity of $z\mapsto z^p$ and the subadditivity of the $\min$.

\bigskip

\noindent{\bf Acknowledgments.}
The work of Annika Bach and Marco Cicalese was supported by the DFG Collaborative Research Center TRR 109, ``Discretization in Geometry and Dynamics''.
Andrea Braides acknowledges the MIUR Excellence Department Project awarded to the Department of Mathematics, University of Rome Tor Vergata, CUP E83C18000100006.
\addcontentsline{toc}{section}{References}
\bibliographystyle{plainnat}
\bibliography{references}
\end{document}